\newtheorem{theorem}{Theorem}[section]
\newtheorem{definition}{Definition}[section]
\newtheorem{proposition}{Proposition}[section]
\newtheorem{lemma}{Lemma}[section]
\newtheorem{corollary}{Corollary}[section]
\newtheorem{remark}{Remark}[section]
\theoremstyle{plain} 
\let\div\undefined
\DeclareMathOperator{\div}{div}
\begin{document}


\title[Irregular Evolution double-phase problem]
{Irregular double-phase evolution problem: existence and global regularity}

\author{Rakesh Arora}
\address{Department of Mathematical Sciences, Indian Institute of Technology (IIT-BHU), Varanasi 221005, India}
\email{rakesh.mat@iitbhu.ac.in}

\author{Sergey Shmarev}
\address{Department of Mathematics, University of Oviedo, c/Federico Garc\'{i}a Lorca 18, 33007, Oviedo, Spain}
\email{shmarev@uniovi.es}
\thanks{The first author acknowledges the support of the ANRF (erstwhile SERB) Research Grant SRG/2023/000308, India, and Seed grant IIT(BHU)/DMS/2023-24/493. The second author was partially by the Research Grant MCI-21-PID2020-116287GB-I00, Spain}

\keywords{Nonlinear parabolic equation, Double-phase, Global Gradient estimates, Second-order regularity}
\subjclass[2020]{35K65, 35K67, 35B65,  35K55, 35K99}

\begin{abstract}
We investigate the homogeneous Dirichlet problem for the irregular double-phase evolution equation,
\[
u_t-\operatorname{div} \left( a(z)|\nabla u|^{p(z)-2} \nabla u + b(z)|\nabla u|^{q(z)-2} \nabla u\right)=f(z),\quad z=(x,t)\in Q_T:=\Omega\times (0,T),
 \]
where $\Omega \subset \mathbb{R}^N$, $N \geq 2$ is a bounded domain, $T>0$, The non-differentiable coefficients $a(z)$, $b(z)$, the free term $f$, and the variable exponents $p$, $q$ are given functions. The coefficients $a$ and $b$ are nonnegative, bounded, satisfy the inequality
\[ a(z)+b(z)\geq \alpha \quad \text{in} \ Q_T, \quad \text{and} \quad |\nabla a|, |\nabla b|, a_t, b_t \in L^d(Q_T)
\]
for some constant $\alpha>0$, and with $d>2$ depending on $\sup p(z)$, $\sup q(z)$, $N$, and the regularity of initial data $u(x,0)$. The free term $f$ and initial data $u(x,0)$  satisfy
\[ f\in L^\sigma(Q_T) \ \text{with} \ \sigma>2 \quad \text{and} \quad |\nabla u(x,0)|\in L^{r}(\Omega) \ \text{with} \ r\geq \max \bigg\{2,\sup_{Q_T}p(z),\sup_{Q_T}q(z)\bigg\}.
\]
The variable exponents $p,q \in C^{0,1}(\overline{Q}_T)$ satisfy the balance condition
\[
\frac{2N}{N+2} < p(z), q(z)< +\infty \ \text{in} \ \overline Q_T \quad \text{and} \quad \max\limits_{\overline Q_T}|p(z)-q(z)|< \dfrac{2}{N+2}.
\]
Under the above assumptions, we establish the existence of a solution, which is obtained as the limit of classical solutions to a family of regularized problems and

\begin{itemize}
    \item preserves initial temporal integrability: $$|\nabla u(\cdot, t)| \in L^r(\Omega) \ \text{for a.e.} \ t \in (0,T),$$
    \item gains global higher integrability: $$|\nabla u|^{\min\{p(z), q(z)\} + s +r} \in L^1(Q_T) \ \text{for any} \ s \in \left(0, \frac{4}{N+2}\right),$$
    \item attains second-order regularity:
    \[
a(z) |\nabla u|^{\frac{p+r-2}{2}}+b(z) |\nabla u|^{\frac{q+r-2}{2}}\in L^2(0,T;W^{1,2}(\Omega)).
\]
\end{itemize}
\end{abstract}

\maketitle

\tableofcontents

\section{Introduction}
In this work, we consider the following irregular double-phase parabolic problem
\begin{equation}
\label{eq:main}
\begin{split}
& u_t-\operatorname{div}\mathcal{F}(z,\nabla u)\nabla u=f(z)\quad \text{in $Q_T=\Omega\times (0,T)$},
\\
& \text{$u=0$ on $\partial\Omega\times (0,T)$},\qquad \text{$u(x,0)=u_0(x)$ in $\Omega$},
\end{split}
\end{equation}
where $\Omega\subset \mathbb{R}^N$, $N\geq 2$, is a bounded domain, $T>0$ and the nonlinear flux has the form
\begin{equation}
\label{eq:flux}
\mathcal{F}(z,\xi)\nabla u=\left(a(z)|\xi|^{p(z)-2} + b(z)|\xi|^{q(z)-2}\right)\nabla u.
\end{equation}
The coefficients $a$, $b$ and the exponents $p$, $q$ are given functions of the variable  $z=(x,t)\in Q_T$. The exponents $p,q: Q_T \mapsto \mathbb{R}$  are assumed to satisfy the conditions
\begin{equation}
\label{assum1}
\begin{split}
&\frac{2N}{N+2} < p^- \leq p(z) \leq p^+,\quad
\frac{2N}{N+2} < q^- \leq q(z) \leq q^+ \quad  \text{in $\overline{Q}_T$}
\end{split}
\end{equation}
with positive constants $p^\pm$, $q^\pm$, and
\begin{equation}
\label{eq:Lip-p-q}
\begin{split}
\text{$p,q \in C^{0,1}(\overline{Q}_T)$ with the Lipschitz constant $L_{p,q}$}.
\end{split}
\end{equation}
We assume that the exponents satisfy the balance condition
\begin{equation}
\label{eq:structure-prelim-2}
\max\limits_{\overline Q_T}|p(z)-q(z)|< \dfrac{2}{N+2}.
\end{equation}
Introduce the functions
\begin{equation}
\label{eq:s}
\underline{s}(z)=\min\{p(z),q(z)\}, \qquad \overline{s}(z)=\max\{p(z),q(z)\},
\end{equation}
and make the agreement to use the upper indexes $^\pm$ for the maximum/minimum of a function in its domain. The free term $f$ and the initial datum $u_0$ satisfy the conditions
\begin{equation}
\label{eq:coeff}
\begin{split}
f\in L^{\sigma}(Q_T) & \quad \text{with} \quad \sigma>2 \quad \text{and}
\\
u_0\in W^{1,r}_0(\Omega) &\quad  \text{with} \quad
\begin{cases}
\max\{p^+,q^+,2\} \leq r < +\infty \quad & \text{if} \ \sigma\geq N+2,
\\
\max\{p^+, q^+, 2\}\leq r \leq \frac{N(\min\{p^-, q^-\}(\sigma-1) -\sigma + 2)}{N+2-\sigma} \quad & \text{if} \ \sigma\in (2,N+2).
\end{cases}
\end{split}
\end{equation}
The modulating coefficients $a,b : Q_T \to \mathbb{R}$ are nonnegative, bounded, and such that
\begin{equation}
\label{eq:mod-coeff}
\begin{split}
& \text{there exists a constant $\alpha>0$ such that $a(z)+b(z)\geq \alpha>0$ in $\overline{Q}_T$},
\\
&
|\nabla a|,|\nabla b|,a_t, b_t\in L^d(Q_T)\quad
\text{with $d>2+\dfrac{N+2}{2}\left(\overline{s}^++r\right)$}.
\end{split}
\end{equation}

Equation \eqref{eq:main} arises within the framework of ``double-phase problems", a class of nonlinear PDEs in which the growth of the energy flux $\mathcal{F}$ depends on two competing spatial-temporal power law behaviors. This heterogeneous structure is modulated by the coefficients $a(z)$ and $b(z)$, where $z = (x,t)\in Q_T$. The term double-phase reflects the phenomenon of the transition between distinct growth regimes: in regions where both $a(z)$ and $b(z)$ are positive, the flux exhibits a composite growth influenced by both exponents $p(z)$ and $q(z)$, whereas in sub regions where either $a(z) = 0$ or $b(z) = 0$, the behavior degenerates to a single-phase expression governed by only one of the exponents. The interplay between the spatial variability of the coefficients and the nonstandard growth conditions gives rise to intricate mathematical challenges related to existence theory, regularity theory, compactness methods, and variational convergence.

In recent years, the double-phase setting in its stationary and evolutionary forms has been the focus of extensive analytical study. We refer the reader to the comprehensive contributions in \cite{Mingione-Radulescu-2021, Arora-Blanco-Winkert-2025, Ar-Sh-RACSAM-2023, Chl-Gw-Gw-Wr-2021} for an in-depth overview of the current theory and recent developments concerning both stationary and evolution double-phase problems.


In this work, we investigate two analytical questions concerning the irregular double-phase evolution problem \eqref{eq:main}, which features nonsmooth modulating coefficients and a free term.
\begin{itemize}
    \item \textit{Can a weak solution to the evolution double-phase problem \eqref{eq:main} be obtained when the modulating coefficients $a(z)$, $b(z)$, and the free term $f(z)$ lack smoothness?}
    \item \textit{How does the regularity (or lack thereof) of the initial condition $u_0(x)$, modulating coefficients $a(z)$, $b(z)$ and the free term $f(z)$ influence the integrability and second-order regularity properties of the constructed solution?}
\end{itemize}

The first question is central to developing a robust existence theory for double-phase problem with variable growth. The key idea is to consider a sequence of regularized non-degenerate parabolic problems with smoothed data, for which known parabolic theory ensures the existence of classical solutions.

The second question addresses the qualitative dependence of the solution on the data and seeks to understand to what extent initial spatial integrability propagate through the evolution governed by the double-phase operator. In particular, we aim to determine whether finer summability of the free term and improved integrability of the initial gradient, yield better regularity for the solution, in the sense of integrability of gradient and in a second-order regularity estimates.


The problem of improved integrability and second-order regularity of the flux function associated with nonlinear parabolic equations involving the $p(z)$-Laplace or double-phase type problem \eqref{eq:main} has been studied recently under various structural assumptions on the coefficients and exponents. Below, we summarize several notable contributions that fall into specific subclasses of the general framework addressed in this paper.

\textbf{(a) The evolution $p(z)$-Laplace equation $a(z)=b(z)$, $p(z)=q(z)$.} Under these assumptions
\[
\mathcal{F}(z,\nabla u)\nabla u=2a(z)|\nabla u|^{p(z)-2}\nabla u,
\]
and equation \eqref{eq:main} renders the evolution $p(z)$-Laplace equation. The weak solutions of such equations are known to exhibit \textit{improved integrability} of their gradients, going beyond the natural integrability $|\nabla u|^{p(z)} \in L^1(Q_T)$ dictated by the equation. More precisely, \[
\int_{S}|\nabla u|^{p(z)+\delta}\,dz\leq C
\]
in the arbitrary strictly interior cylinder $S\Subset Q_T$, with a constant $\delta>0$, which depends on the distance from $S$ to the parabolic boundary of $Q_T$. For variable exponent spaces, this phenomenon was first established in~\cite{Ant-Zhikov-2005} under the assumption $p(z)\geq 2$, and extended in~\cite{Zhikov-Past-2010} to the range $\frac{2N}{N+2} < p(z) \leq p^+ < \infty$ for exponents with logarithmic modulus of continuity. A global form of this estimate was obtained in~\cite{Ar-Sh-2021} for Lipschitz continuous exponents and domains with $\partial\Omega \in C^2$:
\begin{equation}
\label{eq:high-int-intr}
\int_{Q_T} |\nabla u|^{p(z)+\delta}\, dz \leq C \quad \text{for any } \delta \in \left(0, \frac{4}{N+2}\right),
\end{equation}
where the constant $C$ depends on $N$, $\max p$, $\min p$, $\delta$, and $\|u\|_{L^{\infty}(0,T;L^2(\Omega))}$. For related results involving Orlicz growth, we refer to~\cite{Hasto-OK-2021}, which addresses local higher integrability for systems of parabolic type.

The results on the second-order regularity of solutions to equations and systems that involve the $p$-Laplace operator are commonly formulated in terms of the inclusions
\[
|\nabla u|^{\lambda} \nabla u \in L^2(0,T;W^{1,2}(\Omega))^N,
\]
with $\lambda$ depending on $p$ and $N$, as demonstrated in~\cite{Seregin-Acerbi-Mignione-2004, Duzaar-Mignione-Steffen-2011, Feng-Parviainen-Sarsa-2023, DeFilippis-2020}, while global results are discussed in~\cite{Berselli-Ruzicka-2022}, see also references therein. The global second-order regularity of weak solutions to the Dirichlet problem for $p(z)$-Laplace evolution equation with different choices of the source terms and regularity of initial data was studied in \cite{Cianchi-Maz'ya-2019-1,Ar-Sh-2021,Ar-Sh-2024}.

In~\cite{Cianchi-Maz'ya-2019-1}, optimal second-order regularity is shown for approximable solutions of the homogeneous Dirichlet problem associated with the constant exponent $p$-Laplace evolution equation, proving that
\[
u_t \in L^2(Q_T), \quad |\nabla u|^{p-2} \nabla u \in L^2(0,T;W^{1,2}(\Omega))^N.
\]
The variable exponent analogue is addressed in~\cite{Ar-Sh-2021}, where Galerkin approximations are employed for $f\in L^2(0,T;W^{1,2}_0(\Omega))$, and $u_0\in W^{1,q(\cdot)}_0(\Omega)$ with $q(x)=\max\{2,p(x,0)\}$, yielding
\[
u_t \in L^2(Q_T), \quad D_{x_i}\left(|\nabla u|^{\frac{p(z)-2}{2}} D_{x_j} u\right) \in L^2(Q_T).
\]
A more flexible regularization framework was developed in~\cite{Ar-Sh-2024}, involving smooth approximations of both the data and the nonlinear terms. Classical solutions to these regularized equations are shown to converge to a weak solution of the original problem. Under the assumptions $p(z) > \frac{2(N+1)}{N+2}$, $u_0 \in W^{1,p(\cdot,0)}_0(\Omega)$, and $f \in L^2(Q_T)$, the limiting solution satisfies
\begin{equation}
\label{eq:approx-L-2}
\begin{split}
& |\nabla u|^{2(p(z)-1)+\delta} \in L^1(Q_T), \quad \text{for any } \delta \in \left(0,\frac{4}{N+2}\right), \\
& |\nabla u|^{p(z)-2} \nabla u \in L^2(0,T;W^{1,2}(\Omega)).
\end{split}
\end{equation}
These properties are derived from uniform estimates for the approximating classical solutions and the pointwise convergence of their gradients. Thus, the results of~\cite{Cianchi-Maz'ya-2019-1} are naturally extended to the variable exponent framework.

Very recently, in \cite{Ar-Sh-2025}, it is shown that if $a(z)\equiv const$ and the data satisfy conditions \eqref{eq:coeff} with $p\equiv q$, then the gradient of the solution to problem \eqref{eq:main} with the nonlinear source $f(z)+F(z,u,\nabla u)$ maintains the initial order of integrability $r\geq \max\{2,p^+\}$, achieves higher integrability, and the solution acquires the second-order regularity:
\begin{equation}\label{eq:prelim-res}
\begin{split}
& \|\nabla u\|_{L^{\infty}(0,T;L^r(\Omega))}\leq C,\quad \int_{Q_T}|\nabla u|^{p(z)+r+\rho-2}\,dz\leq C,
\\
& |\nabla u|^{\frac{p(z)+r}{2}-2}\nabla u\in L^2(0,T;W^{1,2}(\Omega))^N,
\end{split}
\end{equation}
with any $\rho\in \left(0,\frac{4}{N+2}\right)$ and constants $C$, $C'$ depending on the data, $r$ and $\rho$. For equations with the general flux $a(x,t,\nabla u)$ of constant $(p,q)$-growth and $f\equiv 0$, the local boundedness of the gradient and the second-order regularity of the solution are established in \cite{DeFilippis-2020} under suitable conditions on the gap between the exponents $p,q$ and assumptions on the regularity of $a(x,t,\nabla u)$. Systems of parabolic equations

          \[
          u^{i}_t - \sum_{k=1}^N\left(\mathbf{a}(|\nabla u|)u^{i}_{x_k}\right)_{x_k}=f^{i}\in L^{\sigma}(Q_T),\qquad \sigma>N+2,
          \]
with $\mathbf{a}(s)+s\mathbf{a}'(s)\approx (\mu^2+s^2)^{\frac{p-2}{2}}$, $\mu\in [0,1]$,  $p=const>\frac{2N}{N+2}$, are considered in \cite{Bogelein-2021}. It is shown that the gradient is bounded up to the lateral boundary of the cylinder $Q_T$ on every time interval $(\epsilon,T)$ with $\epsilon>0$. The optimal conditions for local boundedness of the gradient of solutions to systems of $p$-Laplace evolution equations are found in \cite{Kuusi-Mignione-2012} in the framework of borderline rearrangement invariant function spaces
of Lorentz type under the assumption $f\in L(N + 2,1)$, see also \cite{Kuusi-Mingione-2012-NA,Kuusi-Mingione-2013-1}.

Higher integrability of the gradient near to the initial plane is proven in \cite{Byun-Kim-Lim-2020} for systems of equations with the $p$-Laplace structure and the right-hand side $\operatorname{div}\left(|F|^{p-2}F\right)+f$. It is shown that higher integrability of $|\nabla u(x,0)|$, $F$ and $f$ yields higher integrability of $|\nabla u|$ in $Q_T$. The order of integrability is improved within an interval defined by the data.

\textbf{(b) Double phase structure with H\"older-continuous coefficients}: $a(z)=1$, $b(z)\geq 0$ in $Q_T$, $1<p(z)<q(z)$ This choice of the data renders equation \eqref{eq:main} the double-phase equation with the flux

    \[
    \mathcal{F}(z,\nabla u)\nabla u=\left(|\nabla u|^{p(z)-2}+b(z)|\nabla u|^{q(z)-2}\right)\nabla u.
    \]
The property of improved integrability is often formulated in terms of the flux. It is proven in \cite{KKM-2023} that for the solutions of the equation

\[
u_t-\operatorname{div} \mathcal{F}(z,\nabla u)\nabla u= -\operatorname{div} \mathcal{F}(z,F)F
\]
with $b\in C^{\alpha,\frac{\alpha}{2}}(Q_T)$, $\alpha\in (0,1)$,  constant exponents $p,q$ subject to the conditions $2\leq p<q\leq p+\frac{2\alpha}{N+2}$, and a given vector $F$, the flux possesses the higher integrability property: the inclusions $\mathcal{F}(z,\nabla u)|\nabla u|^2\in L^1_{loc}(Q_T)$, $\mathcal{F}(z,F)|F|^2\in L^{1+\epsilon}_{loc}(Q_T)$ imply
$\mathcal{F}(z,\nabla u)|\nabla u|^2\in L^{1+\epsilon}_{loc}(Q_T)$ with some $\epsilon>0$ depending on the data. For the counterpart singular equations with $\frac{2N}{N+2}<p\leq 2$, higher integrability of the flux is established in \cite{Wontae-Kim-NoDeA-2024} under the same relations between $p$, $q$ and $\alpha$.

\textbf{(c) Perturbed Double-phase structure with H\"older-continuous coefficients}: $0< \nu\leq a(z)<\infty$ and $b(z)=a(z)c(z)>0$ The corresponding flux function has the form
\[
  \mathcal{F}(z,\nabla u)\nabla u=a(z)\mathcal{G}(z,\nabla u)\nabla u,\qquad \mathcal{G}(z,\nabla u)=|\nabla u|^{p(z)-2}+c(z)|\nabla u|^{q(z)-2}.
\]
This class of double-phase equations is studied in \cite{Wontae-Kim-JMAA-2025} for the constant exponents $p\leq 2$ and $p<q\leq p+\frac{\alpha(p(N+2)-2N))}{2(N+2)}$, being $\alpha\in (0,1)$ the H\"older exponent of $c(z)$. The exponent $c(z)$ satisfies the vanishing mean oscillation property. It is shown that for the solutions of the equation
\[
u_t- \operatorname{div}\left(a(z)\mathcal{G}(z,\nabla u)\nabla u\right)=\operatorname{div}\mathcal{G}(z,\nabla F)\nabla F
\]
with $\inf_{Q_T}c(z)>0$ and a given vector $F$ the inclusions $\mathcal{G}(z,\nabla u)|\nabla u|^2\in L^1_{loc}(Q_T)$, $\mathcal{G}(z,F)|F|^2\in L^{1+\sigma}_{loc}(Q_T)$ imply $\mathcal{G}(z,F)|F|^2\in L^{1+\sigma}_{loc}(Q_T)$, where $\sigma>0$ belongs to an interval defined by the data.

\textbf{(d) Double-phase structure with Lipschitz-continuous coefficients and unordered exponents}: Problem \eqref{eq:main} with Lipschitz-continuous in $Q_T$ coefficients $a,b$ satisfying the inequality $a(z)+b(z)\geq \alpha>0$, and variable exponents $p,q$, is studied in \cite{RACSAM-2023}. It is shown that for $f\in L^2(0,T;W^{1,2}_0(\Omega))$ the solutions of problem \eqref{eq:main} possess the property of global higher integrability of the gradient and the second-order regularity:
\[
\begin{split}
    & \text{$|\nabla u|^{\underline{s}(z)+\rho}\in L^{1}(Q_T)$ with any $\rho\in \left(0,\frac{4}{N+2}\right)$},
    \\
    & D_{x_i}\left(\sqrt{\mathcal{F}(z,\nabla u)}D_{x_j}u\right)\in L^{2}(Q_T), \quad i,j=\overline{1,N}.
\end{split}
\]

To the best of our knowledge, no results on the existence and regularity of weak solutions to evolution double-phase problems are available when the modulating coefficients $a(z)$, $b(z)$, and the free term $f(z)$ lack regularity.
The novelty of the present work lies in extending the global regularity results previously established for the evolution \(p(z)\)-Laplace equation to the general double-phase setting with irregular coefficients \(a(z)\), \(b(z)\). Assuming only the structural conditions~\eqref{eq:coeff} and~\eqref{eq:mod-coeff}, we show that if $f\in L^{N+2}(Q_T)$, then the gradients of the solutions to problem \eqref{eq:main} maintain any initial order of integrability $r$, independently of the order of integrability of the free term. For $f\in L^{\sigma}(Q_T)$ with $\sigma\in (2,N+2)$, the order of integrability of the gradient is preserved within the limits depending on the data. The second-order regularity of the solution is proven in terms of the flux. We do not distinguish between the cases of the degenerate equation with $p^-\geq 2$, $q^-\geq 2$, and the singular one with $p^+,q^+<2$, and do not require any order relation between $p(z)$ and $q(z)$. The exponents $p(z)$, $q(z)$ may vary within the limits given in \eqref{assum1}, the restriction \eqref{eq:structure-prelim-2} means that their values are not too different. The proofs of the main results rely on the possibility of approximation of  solutions to problem \eqref{eq:main} by classical solutions of the regularized equations.

\section{Main results}
\subsection{The function spaces}
\label{subsec:spaces}
The solution to problem \eqref{eq:main} will be sought as an element of the variable Lebesgue and Sobolev spaces. While the generalized Musielak-Orlicz spaces provide a natural analytical framework for double-phase problems, the approach based on the regularization of the equation and the data allows one to reduce the study of the regularized problem with smooth data to dealing with elements of the parabolic H\"older spaces, and the variable Lebesgue and Sobolev spaces.

1) Variable Lebesgue spaces. Let $p:\Omega\mapsto [p^-,p^+]\subset (1,\infty)$ be a Lipschitz-continuous function. The space

\[
L^{p(\cdot)}(\Omega)=\left\{\text{$u:\Omega\mapsto \mathbb{R}$ is measurable}: \,\int_{\Omega}|u|^{p(x)}\,dx<\infty\right\}
\]
equipped with the norm

\[
\|u\|_{p(\cdot),\Omega}=\inf\left\{\lambda>0:\, \int_{\Omega}\left(\frac{|u|}{\lambda}\right)^{p(x)}\,dx\leq 1\right\}
\]
is a separable Banach space.

2) Variable Sobolev spaces. Given a Lipschitz-continuous function $\rho:\Omega\mapsto \mathbb{R}$, $\rho\in [\rho^-,\rho^+]\subset (1,\infty)$, we introduce the space

\[
\begin{split}
& \mathbb{V}_{\rho(\cdot)}(\Omega)=\left\{u:\Omega\mapsto \mathbb{R}\; | \; u\in W^{1,1}_0(\Omega),\,|\nabla u|^{\rho(x)} \in L^1(\Omega)\right\},
\\
& \|u\|_{\mathbb{V}_{\rho(\cdot)}}(\Omega)=\|\nabla u\|_{\rho(\cdot),\Omega}.
\end{split}
\]
If $\rho:Q_T\mapsto \mathbb{R}$, $\rho\in [\rho^-,\rho^+]\subset (1,\infty)$, we define

\[
\begin{split}
& \mathbb{W}_{\rho(\cdot,\cdot)}(Q_T)=\left\{u:(0,T)\mapsto \mathbb{V}_{\rho(\cdot,t)}(\Omega) \,|\,u\in L^2(Q_T), \; |\nabla u|^{\rho(x,t)}\in L^1(Q_T)\right\},
\\
& \|u\|_{\mathbb{W}_{\rho(\cdot,\cdot)(Q_T)}}=\|u\|_{2,Q_T}+\|\nabla u\|_{\rho(\cdot,\cdot),Q_T}.
\end{split}
\]
The spaces $\mathbb{V}_\rho(\cdot)(\Omega)$ and $\mathbb{W}_{\rho(\cdot,\cdot)}(Q_T)$ are Banach spaces.

3) By $C_{x,t}^{2+\beta,\frac{2+\beta}{2}}(Q_T)$, $\beta\in (0,1)$,  we denote the parabolic H\"older spaces, see \cite{LSU}.

\subsection{The main results}
\begin{definition}[Strong solution]
\label{def:strong-sol}
A function $u$ is called strong solution of problem \eqref{eq:main} if
\begin{enumerate}
\item $u\in C([0,T];L^2(\Omega))$, $u\in \mathbb{W}_{\overline{s}(\cdot)}(Q_T)$, $\mathcal{F}(z,\nabla u)|\nabla u|^2\in L^{\infty}(0,T;L^1(\Omega))$, $u_t\in L^2(Q_T)$;

\item for every $\phi\in \mathbb{W}_{\overline{s}(\cdot)}(Q_T)$

\begin{equation}
\label{eq:def-nondiv}
\int_{Q_T}\left(u_t\phi+ \mathcal{F}(z,\nabla u)\nabla u\cdot \nabla \phi-f\phi\right)\,dz=0;
\end{equation}

\item $\|u(\cdot,t)-u_0\|_{2,\Omega}\to 0$ as $t\to 0$.
\end{enumerate}
\end{definition}

\begin{theorem}
\label{th:main-1}
Assume that $\partial\Omega\in C^{2+\beta}$, $\beta\in (0,1)$, and conditions \eqref{assum1}, \eqref{eq:Lip-p-q}, \eqref{eq:structure-prelim-2}, \eqref{eq:coeff}, \eqref{eq:mod-coeff} are fulfilled. Then problem \eqref{eq:main} has a unique strong solution, which satisfies the estimates

\[
\|u_t\|^2_{2,Q_T}+ \int_{Q_T}|\nabla u|^{\underline{s}(z)+r+s}\,dz\leq C_1\qquad \text{with any $s\in \left(0,\frac{4}{N+2}\right)$},
\]

\begin{equation}
\label{eq:a-priori-2}
\begin{split}
\operatorname{ess}\sup_{(0,T)} \int_{\Omega}|\nabla u|^r\,dx  &
\leq C_2 + \int_{\Omega}|\nabla u_0|^r\,dx
\end{split}
\end{equation}
with positive constants $C_i$, depending on $r$, $N$, $\|\nabla u_0\|_{r,\Omega}$, $\|f\|_{\sigma,Q_T}$, and the properties of $p$, $q$, $a$, $b$; $C_1$ depends also on $s$.
\end{theorem}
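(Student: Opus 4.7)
The plan is to construct the solution as a limit of classical solutions to a family of regularized problems, extract uniform a priori estimates matching each of the claimed bounds, and then pass to the limit. The first step is to produce smooth data $a_\epsilon,b_\epsilon,f_\epsilon,p_\epsilon,q_\epsilon,u_{0,\epsilon}$ by mollification (and suitable extensions across $\partial Q_T$), preserving the lower bound $a_\epsilon+b_\epsilon\geq \alpha/2$, the Lipschitz bound and the balance condition \eqref{eq:structure-prelim-2}. Introducing a non-degenerate correction, for instance replacing $|\nabla u|^{p(z)-2}$ by $(\epsilon+|\nabla u|^2)^{(p_\epsilon(z)-2)/2}$, renders the regularized equation uniformly parabolic with smooth coefficients, so that the theory of \cite{LSU} yields classical solutions $u_\epsilon \in C_{x,t}^{2+\beta,\frac{2+\beta}{2}}(\overline Q_T)$ thanks to the assumption $\partial\Omega\in C^{2+\beta}$.

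The heart of the proof is the derivation of uniform-in-$\epsilon$ estimates insensitive to the regularization. Testing the regularized equation with $u_{\epsilon,t}$ yields control of $\|u_{\epsilon,t}\|_{L^2(Q_T)}^2$ together with the modular energy. To propagate the initial gradient integrability and simultaneously obtain the second-order information, one differentiates the regularized equation in $x_k$, multiplies by $u_{\epsilon,x_k}|\nabla u_\epsilon|^{r-2}$, sums in $k$, and integrates in $Q_T$. Integration by parts produces the coercive term
\[
\int_{Q_T}\Bigl(a_\epsilon |\nabla u_\epsilon|^{p_\epsilon+r-4}+b_\epsilon |\nabla u_\epsilon|^{q_\epsilon+r-4}\Bigr)|D^2 u_\epsilon|^2\,dz,
\]
which encodes the claimed inclusions $a|\nabla u|^{(p+r-2)/2},\,b|\nabla u|^{(q+r-2)/2}\in L^2(0,T;W^{1,2}(\Omega))$, together with remainder terms involving $\nabla a_\epsilon,\nabla b_\epsilon,a_{\epsilon,t},b_{\epsilon,t}$, $\nabla p_\epsilon,\nabla q_\epsilon$, the logarithmic factor $\log|\nabla u_\epsilon|$ generated by differentiating the variable exponent, and $f_\epsilon$. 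These are estimated by H\"older's and Young's inequalities with $\epsilon$-splitting. The hypothesis $d>2+\frac{N+2}{2}(\overline{s}^++r)$ in \eqref{eq:mod-coeff} is precisely calibrated to let the $L^d$-norms of $\nabla a,\nabla b,a_t,b_t$ be absorbed into the coercive term after applying a parabolic Sobolev embedding, while the two cases of \eqref{eq:coeff} for $\sigma$ govern how the free-term contribution is absorbed and fix the admissible range of $r$.

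The higher integrability $|\nabla u|^{\underline{s}(z)+r+s}\in L^1(Q_T)$ for $s\in(0,4/(N+2))$ will then follow by a Gagliardo--Nirenberg interpolation between the $L^\infty(0,T;L^r(\Omega))$ bound on $|\nabla u_\epsilon|$ delivered by \eqref{eq:a-priori-2} and the second-order estimate on the weighted powers $a_\epsilon^{1/2}|\nabla u_\epsilon|^{(p_\epsilon+r-2)/2}$ and $b_\epsilon^{1/2}|\nabla u_\epsilon|^{(q_\epsilon+r-2)/2}$; the extra summability margin $4/(N+2)$ is the parabolic gain supplied by this embedding in dimension $N$. The balance condition \eqref{eq:structure-prelim-2} enters precisely at this stage, guaranteeing that when one of $a_\epsilon,b_\epsilon$ degenerates the other supplies an exponent close enough to preserve the interpolation, so that the gain beyond $\underline{s}+r$ is uniform in $\epsilon$ and independent of any ordering between $p$ and $q$.

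With these uniform bounds in hand, Aubin--Lions compactness combined with pointwise convergence of $\nabla u_\epsilon$, obtained via a Minty-type monotonicity argument for the double-phase operator as in \cite{Ar-Sh-2024,RACSAM-2023}, permits the passage to the limit in the weak formulation and identification of the limit as a strong solution in the sense of Definition~\ref{def:strong-sol}. Uniqueness follows from strict monotonicity of $\xi\mapsto a(z)|\xi|^{p(z)-2}\xi+b(z)|\xi|^{q(z)-2}\xi$ on the set $\{a+b\geq\alpha\}$, tested with the difference of two strong solutions. The main obstacle I anticipate is the bookkeeping in the differentiated equation: the simultaneous presence of variable exponents $p(z),q(z)$ with no assumed ordering, non-smooth coefficients with only $L^d$-derivatives, and logarithmic cross terms from $\nabla p_\epsilon,\nabla q_\epsilon$ generates many remainders whose Young-type splitting parameters must be chosen in concert so that every term is absorbed into the single coercive second-order expression, uniformly as $\epsilon\to 0$.
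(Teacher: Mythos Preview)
Your proposal is correct and follows essentially the same strategy as the paper: regularize to obtain classical solutions, test with (the equivalent of) $-\operatorname{div}\bigl((\epsilon^2+|\nabla u|^2)^{(r-2)/2}\nabla u\bigr)$ to produce the coercive second-order term and propagate $L^r$-integrability of the gradient, use interpolation to extract the extra $4/(N+2)$ gain, estimate the $f$-term via Sobolev embedding split according to whether $\sigma\geq N+2$ or $\sigma\in(2,N+2)$, and pass to the limit via monotonicity and compactness. The paper organizes the passage to the limit in two stages (smooth data $m\to\infty$ in Section~\ref{sec:nonsmooth-data}, then degeneracy $\epsilon\to 0$ in Section~\ref{sec:degenerate-problem}), multiplies the equation directly by the regularized $r$-Laplacian rather than differentiating first (which keeps $f$ undifferentiated on the right-hand side), and pivots the interpolation inequalities of Section~\ref{sec:interpolation} on $\sup_t\|u(t)\|_{2,\Omega}$ rather than on $\|\nabla u\|_{L^\infty(0,T;L^r)}$, but these are organizational rather than substantive differences.
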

\begin{remark}
    It is worth noting that the gradient of the solution attains global higher integrability, characterized by two different parameters $r$ and $s$, as a result of the improved integrability of the initial data $|\nabla u_0| \in L^r(\Omega)$ and due to the application of interpolation inequalities, respectively.
\end{remark}
\begin{theorem}
\label{th:main-2}
Under the conditions of Theorem \ref{th:main-1}, the solution $u$ of problem \eqref{eq:main} possesses the second-order regularity:
\[
\mathcal{G} \equiv a(z) |\nabla u|^{\frac{p+r-2}{2}}+b(z) |\nabla u|^{\frac{q+r-2}{2}}\in L^2(0,T;W^{1,2}(\Omega)),\qquad \|\mathcal{G}\|_{L^2(0,T;W^{1,2}(\Omega))}\leq C
\]
with a constant $C$ depending only on the data.
\end{theorem}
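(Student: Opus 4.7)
The plan is to establish the estimate at the level of the regularized problem constructed in the proof of Theorem~\ref{th:main-1}, and then pass to the limit using the almost-everywhere convergence of the approximating gradients. Let $u_\varepsilon$ denote the classical $C_{x,t}^{2+\beta,(2+\beta)/2}$ solutions of the regularized equations with smoothed coefficients $a_\varepsilon,b_\varepsilon$, mollified exponents $p_\varepsilon,q_\varepsilon$ and regularized source $f_\varepsilon$. Since $u_\varepsilon$ is classical, I would differentiate the equation in $x_k$, test with $|\nabla u_\varepsilon|^{r-2}D_{x_k}u_\varepsilon$, sum over $k$ and integrate over $Q_t$. The time derivative contributes $\frac{1}{r}\int_{\Omega}|\nabla u_\varepsilon(t)|^r\,dx$ minus the initial datum, already controlled via~\eqref{eq:a-priori-2}. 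After integration by parts in space, the elliptic contribution splits into the principal quadratic piece
\[
\sum_{i,j,k}\int_{Q_t}\partial_{\xi_j}[\mathcal{F}(z,\nabla u_\varepsilon)\nabla u_\varepsilon]_i\,D_{x_kx_j}u_\varepsilon\,D_{x_i}\bigl(|\nabla u_\varepsilon|^{r-2}D_{x_k}u_\varepsilon\bigr)\,dz
\]
together with commutator terms that carry the partial derivatives of $a_\varepsilon,b_\varepsilon,p_\varepsilon,q_\varepsilon$.

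The central pointwise computation uses the ellipticity bound
\[
\partial_{\xi_j}[\mathcal{F}(z,\xi)\xi]_i\,\eta_i\eta_j\geq c_0\bigl(a(z)|\xi|^{p(z)-2}+b(z)|\xi|^{q(z)-2}\bigr)|\eta|^2,
\]
with $c_0$ depending only on $p^\pm,q^\pm$, which is an immediate consequence of the representation $\mathcal{F}\xi=a|\xi|^{p-2}\xi+b|\xi|^{q-2}\xi$. Expanding $D_{x_i}(|\nabla u|^{r-2}D_{x_k}u)$ and summing on $k$ converts the principal piece into a nonnegative quadratic form in $D^2u_\varepsilon$ that dominates a multiple of $\bigl(a|\nabla u_\varepsilon|^{p+r-4}+b|\nabla u_\varepsilon|^{q+r-4}\bigr)|D^2u_\varepsilon|^2$. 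This is precisely the quantity required to control $|\nabla\mathcal{G}_\varepsilon|^2$, up to lower-order terms of the form $|\nabla a_\varepsilon|^2|\nabla u_\varepsilon|^{p+r-2}$, $|\nabla b_\varepsilon|^2|\nabla u_\varepsilon|^{q+r-2}$ and logarithmic factors arising from $\nabla p_\varepsilon,\nabla q_\varepsilon$.

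What remains is to absorb the source contribution and the commutator terms. The source term, after integrating by parts back, equals $-\int_{Q_t}f_\varepsilon\,D_{x_k}(|\nabla u_\varepsilon|^{r-2}D_{x_k}u_\varepsilon)\,dz$ and is estimated by H\"older and Young inequalities with exponents adapted to $f\in L^\sigma(Q_T)$, absorbing one factor into the ellipticity term and controlling the other by the higher integrability $|\nabla u_\varepsilon|\in L^{\underline{s}+r+s}(Q_T)$ furnished by Theorem~\ref{th:main-1}. The commutator terms contain products of $\nabla a_\varepsilon,\nabla b_\varepsilon,\partial_t a_\varepsilon,\partial_t b_\varepsilon,\nabla p_\varepsilon,\nabla q_\varepsilon$ with powers of $|\nabla u_\varepsilon|$, and are handled via the $L^d$-hypothesis~\eqref{eq:mod-coeff} together with~\eqref{eq:Lip-p-q}. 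The threshold $d>2+(N+2)(\overline{s}^++r)/2$ in~\eqref{eq:mod-coeff} combined with the balance condition~\eqref{eq:structure-prelim-2} is precisely what guarantees that every commutator introduces an integrability index of $|\nabla u_\varepsilon|$ no larger than $\underline{s}+r+s$, which is the threshold Theorem~\ref{th:main-1} delivers.

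The hardest part will be the simultaneous balancing of these integrability exponents: each commutator, and each term produced by expanding $|\nabla\mathcal{G}_\varepsilon|^2$, introduces a separate constraint coupling $d$, $r$, $\sigma$, and the gap $\overline{s}-\underline{s}$, and all of them must fit below $\underline{s}+r+s$ so as to be absorbable by the higher integrability of Theorem~\ref{th:main-1}. Once this bookkeeping is completed, the collection of estimates yields a uniform-in-$\varepsilon$ bound $\|\mathcal{G}_\varepsilon\|_{L^2(0,T;W^{1,2}(\Omega))}\leq C$. Since the proof of Theorem~\ref{th:main-1} furnishes $\nabla u_\varepsilon\to\nabla u$ almost everywhere in $Q_T$ together with uniform convergence of $a_\varepsilon,b_\varepsilon,p_\varepsilon,q_\varepsilon$ to $a,b,p,q$, we obtain $\mathcal{G}_\varepsilon\to\mathcal{G}$ a.e., and the uniform bound implies $\nabla\mathcal{G}_\varepsilon\rightharpoonup\nabla\mathcal{G}$ in $L^2(Q_T)$. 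Weak lower semicontinuity then transfers the estimate to the limit and proves the claim.
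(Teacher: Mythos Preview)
Your proposal is correct and follows the same strategy as the paper: multiply the regularized equation by the (regularized) $r$-Laplacian, extract the coercive piece $(a\,w_\epsilon^{(p+r-4)/2}+b\,w_\epsilon^{(q+r-4)/2})|D^2u|^2$ via the pointwise ellipticity computation (Proposition~\ref{pro:pointwise}/Lemma~\ref{le:pointwise-1}), absorb the commutator and source terms using the higher integrability from Theorem~\ref{th:main-1} together with the $L^d$-assumption on $\nabla a,\nabla b$, and pass to the limit by a.e.\ convergence of the gradients plus weak $L^2$-compactness. The one point you omit is the boundary contribution: since the test function $|\nabla u|^{r-2}\nabla u$ does not vanish on $\partial\Omega$, the integration by parts produces a surface integral involving the second fundamental form of $\partial\Omega$, which the paper handles separately via a trace estimate (Lemma~\ref{th:trace-main}) that is again absorbed by the same ellipticity term and higher integrability.
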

\begin{remark}
    In case of constant exponents $p, q=\text{const.}$, the second-order regularity result in Theorem \ref{th:main-2} signifies the propagation of integrability in the following sense: for any $r \geq \max\{p, q, 2\}$
    \[
    a(z) |\nabla u|^{p-1 + \frac{r-p}{2}}+b(z) |\nabla u|^{q-1 + \frac{r-q}{2}}\in L^2(0,T;W^{1,2}(\Omega)).
    \]
In case of $p=q=\text{const.}$ and as $r \to p$, the above second order regularity coincides with the optimal regularity results in \cite{Cianchi-Maz'ya-2019-1}.
\end{remark}
\begin{remark}
\label{rem:admiss}
Inequalities \eqref{eq:coeff} furnish the interval of the permissible values of $r$ in case of low integrability of the free term $f$.
\begin{itemize}
    \item[{\rm (i)}] When $\sigma\nearrow N+2$, the supremum of the admissible values $r$ tends to $+\infty$  independently on the properties of $p(z)$ and $q(z)$.
    \item[{\rm (ii)}]  For $\sigma \in (2,N+2)$, inequality \eqref{eq:coeff} entails the conditions on the exponents $p(z), q(z)$:
\[
\begin{split}
{\rm (a)} & \qquad \text{if $\overline{s}^+ \leq 2$, then $\displaystyle \underline{s}^-  >  \frac{\sigma}{\sigma-1}- \frac{2(\sigma-2)}{N(\sigma-1)}$},
\\
{\rm (b)} & \qquad \text{if $\overline{s}^+>2$, then $\displaystyle \overline{s}^+  < \frac{ N(\underline{s}^-(\sigma-1) -\sigma + 2)}{N +2 -\sigma}$}.
\end{split}
\]
Moreover, condition {\rm (b)} can be transformed into the restriction on the oscillation $\overline{s}^+-\underline{s}^-$ of $\underline{s}(z)$ and $\overline{s}(z)$ in $Q_T$.

\item[{\rm (iii)}] 
   The order $r$ of the gradient integrability is preserved in time if $r$ is sufficiently large, but when $\sigma \searrow 2$, the interval of the admissible values of $r$ becomes empty. These observations indicate the need to modify the method of the present work to address both issues. We leave these problems open for future study.
\end{itemize}
\end{remark}

\subsection{Organization of the paper:}
In Section \ref{sec:reg-smooth}, we study the regularized problem \eqref{eq:main} with strictly positive smooth coefficients, smooth exponents of nonlinearity, and smooth data. We prove that the problem has a classical solution in the parabolic H\"older space. The existence of a solution follows from the classical parabolic theory, provided that the global H\"older continuity of the gradient of solution is known a priori. To achieve this, we extend the solution across the lateral boundary and the bottom of the cylinder $ Q_T$ and then apply the known local result to the extended solution. For the sake of completeness, we offer a detailed description of the extension procedure. Section \ref{sec:interpolation} is devoted to refining the interpolation inequalities derived in \cite{RACSAM-2023} for the double-phase operators of the type \eqref{eq:flux} with Lipschitz-continuous coefficients $a$, $b$. We adapt the proofs to the case of irregular coefficients; these inequalities entail the property of global higher integrability of the gradient of solution. In Section \ref{sec:int-by-parts}, we derive estimates that involve the second-order derivatives, which follow after the multiplication of the regularized equation by the regularized $r$-Laplacian and application of Green's formula to the diffusion term. The estimates derived in Sections \ref{sec:interpolation}, \ref{sec:int-by-parts} are uniform concerning the parameters of regularization of the equation and the data. The interpolation inequalities and the estimates on the second-order derivatives are used in Section \ref{sec:est-source} to estimate the integral of the product of the free term $f$ and the regularized $r$-Laplacian. The main results are proven in Sections \ref{sec:nonsmooth-data}, \ref{sec:degenerate-problem}. The uniform estimates on the higher power of the gradients and the second-order derivatives allow one to extract a subsequence of solutions to the regularized problems, which converges to a strong solution of problem \eqref{eq:main} and preserves some regularity properties of smooth solutions. It is worth noting that a byproduct of the proof of the existence of strong
solutions to the regularized problem (3.1) and the main problem (1.1) is the stability of the solutions concerning the data perturbation. In Appendix \ref{sec:aux}, we collect several properties of the double-phase flux and the corresponding modular space.

By agreement, we denote

\[
D_iu=\dfrac{\partial u}{\partial x_i},\qquad  D^2_{ij}u=\dfrac{\partial^2 u}{\partial x_i\partial x_j},\qquad\vert u_{xx}\vert ^2=\sum_{i,j=1}^{N}\left( D^2_{ij}u\right)^2.
\]
The notation $C$ is used for the constants that can be calculated or estimated through the data but whose exact values are unimportant. The value of $C$ may change from line to line even inside the same formula.

\section{Regularized problem with smooth data. Existence and basic properties of solution}
\label{sec:reg-smooth}

\subsection{Existence of a unique strong solution}
Consider the problem

\begin{equation}
\label{eq:main-reg}
\begin{split}
& u_t-\operatorname{div}\mathcal{F}_\epsilon(z,\nabla u)\nabla u=f(z),
\\
& \text{$u=0$ on $\partial\Omega\times (0,T)$},\qquad \text{$u(x,0)=u_0(x)$ in $\Omega$},
\end{split}
\end{equation}
with the regularized flux

\begin{equation}
\label{eq:structure-prelim-1}
\begin{split}
&
\mathcal{F}_\epsilon(z,\xi)=a_\epsilon(z)(\epsilon^2+|\xi|^{2})^{\frac{p-2}{2}} + b_\epsilon(z)(\epsilon^2+|\xi|^{2})^{\frac{q-2}{2}},
\\
& a_\epsilon=\epsilon+a(z),\qquad b_\epsilon=\epsilon+b(z), \quad \epsilon\in (0,1).
\end{split}
\end{equation}
It is known that problem \eqref{eq:main-reg} has a unique strong solution.

\begin{proposition}[\cite{RACSAM-2023}, Theorems 7.1, 7.3]
\label{pro:exist-strong-sol}
Assume that $\partial\Omega\in C^{2}$, and $a,b,p,q$ are Lipschitz-continuous in $Q_T$. For every $f\in L^2(0,T;W^{1,2}_0(\Omega))$ and $u_0$ such that

\[
\int_\Omega\left(|\nabla u_0|^2+a(x,0)|\nabla u_0|^{p(x,0)}+b(x,0)|\nabla u_0|^{q(x,0)}\right)\,dx<\infty
\]
problem \eqref{eq:main} has a unique strong solution that satisfies the estimates

\begin{equation}
\label{eq:est-sol-Gal}
\begin{split}
& \|u\|_{\mathbb{W}_{\overline{s}(\cdot)},Q_T}\leq C_0,
\\
& \operatorname{ess}\sup_{(0,T)}\|u\|_{2,Q_T}^2+\|u_t\|_{2,Q_T}^2+ \operatorname{ess}\sup_{(0,T)}\|\nabla u\|_{2,Q_T}^2
+ \operatorname{ess}\sup_{(0,T)}\int_{\Omega}\mathcal{F}_{\epsilon}(z,\nabla u)|\nabla u|^2 \,dx\leq C_1,
\end{split}
\end{equation}

\begin{equation}
\label{eq:improved-integr-grad-1}
\int_{Q_T}|\nabla u|^{\underline{s}(z)+r}\,dz\leq C_2,\qquad r\in \left(0,\frac{4}{N+2}\right)
\end{equation}
with constants $C_i$ depending only on the data.
\end{proposition}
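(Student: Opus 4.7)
The plan is to construct the strong solution by Galerkin approximation and pass to the limit via compactness and monotonicity; the fact that $\epsilon>0$ in \eqref{eq:structure-prelim-1} renders the flux $\mathcal{F}_\epsilon$ nondegenerate and strictly monotone in $\xi$ is what makes every step quantitative. First I would fix a smooth basis $\{w_k\}\subset W^{1,\infty}_0(\Omega)$ orthonormal in $L^2(\Omega)$ (e.g.\ Dirichlet eigenfunctions of $-\Delta$) and seek finite-dimensional approximations $u_m(x,t)=\sum_{k=1}^m c_k^m(t)w_k(x)$ solving the ODE system obtained by testing \eqref{eq:main-reg} against each $w_k$. Carath\'eodory/Peano theory yields local existence of the coefficients $c_k^m$, and the a priori estimates below extend them globally to $[0,T]$.

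Next I would derive three layers of uniform (in $m$) estimates on $u_m$. Testing with $u_m$ itself gives $\|u_m\|_{L^\infty(0,T;L^2(\Omega))}^2+\int_{Q_T}\mathcal{F}_\epsilon(z,\nabla u_m)|\nabla u_m|^2\,dz\leq C$. Testing with $(u_m)_t$ and integrating the diffusion term by parts converts it into the time derivative of a convex antiderivative of $\mathcal{F}_\epsilon$; the commutator terms generated by the $z$-dependence of $a_\epsilon, b_\epsilon, p, q$ are controlled by their Lipschitz bounds and absorbed with Young's inequality, yielding $\|(u_m)_t\|_{2,Q_T}^2+\operatorname{ess}\sup_{(0,T)}\int_\Omega \mathcal{F}_\epsilon(z,\nabla u_m)|\nabla u_m|^2\,dx\leq C_1$, finite precisely because the hypothesis on $u_0$ bounds this quantity at $t=0$. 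The higher integrability bound \eqref{eq:improved-integr-grad-1} is then produced by the parabolic interpolation inequality that combines the $L^\infty(0,T;L^2)$ bound with the modular control of $\mathcal{F}_\epsilon(z,\nabla u_m)|\nabla u_m|^2$, in the spirit of the refined interpolation prepared in Section \ref{sec:interpolation}.

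The main obstacle is identification of the nonlinear limit. By Aubin--Lions, a subsequence satisfies $u_m\to u$ strongly in $L^2(Q_T)$, $\nabla u_m\rightharpoonup\nabla u$ weakly in the modular sense, and $(u_m)_t\rightharpoonup u_t$ in $L^2(Q_T)$; but passing to the limit in $\mathcal{F}_\epsilon(z,\nabla u_m)\nabla u_m$ requires a.e.\ convergence of the gradients. I would recover this by the Minty--Browder trick: the strict monotonicity of $\xi\mapsto \mathcal{F}_\epsilon(z,\xi)\xi$, which holds because $\epsilon>0$, combined with testing the difference $u_m-u$ and using the already-established strong $L^2$ convergence to handle the time term, forces both the weak limit of the flux to equal $\mathcal{F}_\epsilon(z,\nabla u)\nabla u$ and $\nabla u_m\to\nabla u$ a.e. Uniqueness of the strong solution follows from the same monotonicity: subtracting two candidate solutions, testing with their difference, and applying Gronwall gives $u_1\equiv u_2$.
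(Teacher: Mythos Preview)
Your Galerkin scheme and test-function program match the construction the paper cites: immediately after the proposition it notes that the solution in \cite{RACSAM-2023} is constructed ``as the limit of the sequence of Galerkin's approximations,'' and your energy test, $(u_m)_t$ test, and Minty--Browder identification are the standard ingredients.

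There is, however, a genuine gap in how you obtain \eqref{eq:improved-integr-grad-1}. You attribute it to an interpolation that ``combines the $L^\infty(0,T;L^2)$ bound with the modular control of $\mathcal{F}_\epsilon(z,\nabla u_m)|\nabla u_m|^2$,'' but the interpolation inequalities of Section~\ref{sec:interpolation} do not have that structure: inspect \eqref{eq:principal} and Theorem~\ref{th:integr-par}, whose right-hand sides require a uniform bound on the second-order quantity $\int \mathcal{F}^{(0,0)}_\epsilon(z,\nabla u_m)\,|(u_m)_{xx}|^2$. Your outline never produces that bound. In the Galerkin framework one obtains it by an additional test with $-\Delta u_m$ (admissible precisely because your basis is the Dirichlet eigenbasis, so $-\Delta u_m$ stays in the span); this same test is also what delivers the term $\operatorname{ess}\sup_{(0,T)}\|\nabla u\|_{2,\Omega}^2$ appearing in \eqref{eq:est-sol-Gal}, which does not follow from the $u_m$ and $(u_m)_t$ tests alone when $p,q$ are allowed to dip below $2$. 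Once this second-order estimate is in hand, Theorem~\ref{th:integr-par} with $r=0$ closes \eqref{eq:improved-integr-grad-1} and the rest of your passage to the limit goes through.
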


The strong solution of problem \eqref{eq:main-reg} is constructed in \cite{RACSAM-2023} as the limit of the sequence of Galerkin's approximations. We show first that if the data of problem \eqref{eq:main-reg} are sufficiently smooth, the strong solution is, in fact, a classical solution.

\subsection{Global regularity of the weak solution}
\subsubsection{Local H\"older continuity of the gradient}
Consider the auxiliary problem

\begin{equation}
\label{eq:nondiv}
\begin{split}
& u_t-\operatorname{div}\mathbf{a}(z,\nabla u)=f\quad \text{in $Q_T$},
\\
& \text{$u=0$ on $\partial\Omega\times (0,T)$},\qquad \text{$u(x,0)=u_0(x)$ in $\Omega$}.
\end{split}
\end{equation}
Assume that there exist a Lipschitz-continuous function $p(\cdot)$ and nonnegative constants $a_0$, $a_1$, $a_2$, $C$ such that the function $\mathbf{a}(z,\xi):Q_T\times \mathbb{R}^N\mapsto \mathbb{R}^N$ satisfies the following conditions.

\begin{equation}
\label{eq:structure}
\begin{split}
& \text{(i) \quad  Monotonicity:} \quad \forall \xi,\eta\in \mathbb{R}^N
\\
& \qquad \qquad
(\mathbf{a}(z,\xi)- \mathbf{a}(z,\eta))\cdot (\xi-\eta)
\geq C\begin{cases}
(|\xi|+|\eta|)^{p(z)-2}|\xi-\eta|^2 & \text{if \text{if $1<p(z)<2$}},
\\
|\xi-\eta|^{p(z)} & \text{if $p(z)\geq 2$}.
\end{cases}
\\
& \text{(ii) \quad Growth and boundedness: there exist positive constants $a_0,a_1,a_2$ such that }
\\
&
\qquad \qquad \begin{array}{l}
\mathbf{a}(z,\xi)\cdot \xi\geq a_0|\xi|^{p(z)}-a_1,
\qquad
|\mathbf{a}(z,\xi)|\leq a_2 |\xi|^{p(z)-1} + a_2.
\end{array}
\\
& \text{(iii) \quad Continuity:}
\\
&
\qquad \qquad \begin{array}{l} \text{there exist $\alpha\in (0,1)$ and $R>0$ such that for every cylinder}
\\
\text{$Q^{(R)}(x_0,t_0)=B_R(x_0)\times (t_0,t_0+R)\Subset Q_T$ and $z_1,z_2\in Q^{(R)}(x_0,t_0)$}
\\
|\mathbf{a}(z_1,\xi)-\mathbf{a}(z_2,\xi)|\leq Cd^{\alpha}(z_1,z_2)\left(1+\ln(1+|\xi|^2)\right)\left(1+|\xi|^{p^+_R-1}\right),
\end{array}
\end{split}
\end{equation}
where $p^+_R=\max_{Q^{(R)}(x_0,t_0)}p(z)$ and $d(\cdot,\cdot)$ denotes the parabolic distance such that
\[
d^\alpha(z_1, z_2)= |x_1-x_2|^\alpha + |t_1 - t_2|^\frac{\alpha}{2}, \quad \text{for} \ z_i=(x_i, t_i) \in Q_T,\ i=1,2 \ \text{and} \ \alpha \in (0,1].
\]
\begin{definition}[Weak solution]
\label{def:weak-sol} A function $u$ is called weak solution of problem \eqref{eq:nondiv} if
\begin{enumerate}
\item $u\in C([0,T];L^2(\Omega))$, $\nabla u\in L^{p(\cdot)}(Q_T)$,

\item for every $\phi\in W^{1,2}(0,T;L^2(\Omega))$, $\nabla \phi\in L^{p(\cdot)}(Q_T)$ such that $\phi=0$ on $\partial\Omega\times (0,T)$ and for $t=T$

\begin{equation}
\label{eq:def-nondiv-1}
\int_{0}^T\int_{\Omega}\left(-u\phi_t+ \mathbf{a}(z,\nabla u)\cdot \nabla \phi\right)\,dxdt=\int_{\Omega}u_0(x)\phi(x,0)\,dx+ \int_{0}^T\int_{\Omega}f\phi\,dxdt.
\end{equation}
\end{enumerate}
\end{definition}

\begin{proposition}[Proposition 4.1, \cite{DZZ-2020}]
\label{pro:interior}
Let $u$ be a bounded weak solution of problem \eqref{eq:nondiv}. Assume conditions \eqref{eq:structure} are fulfilled and $f\in L^{\infty}(Q_T)$. Then $\nabla u\in C^{\alpha,\frac{\alpha}{2}}_{loc}(Q_T)$ with some $\alpha\in (0,1)$.
\end{proposition}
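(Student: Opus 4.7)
My plan is to follow the DiBenedetto-style intrinsic scaling scheme, adapted to the variable exponent setting as in Acerbi-Mingione, with the logarithmic term in condition \eqref{eq:structure}(iii) playing the role of a log-Hölder modulus that makes the exponent "freezable". The starting point is local Lipschitz continuity: test the equation (in a second-difference / Moser-Stampacchia quotient sense) with truncated powers of $D_k u$, absorbing the dependence of $\mathbf{a}$ on $z$ through the logarithmic factor, to obtain a Caccioppoli inequality for $|\nabla u|$. Iterating this inequality à la De Giorgi, using the Sobolev-Poincar\'e inequality in variable exponent form and the boundedness of $u$ and $f$, yields $\nabla u \in L^\infty_{\mathrm{loc}}(Q_T)$ with a quantitative bound on every compact subcylinder.

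Once $\nabla u$ is locally bounded, fix a point $z_0=(x_0,t_0)\in Q_T$ and consider the intrinsic cylinder $Q^\theta_\rho(z_0)=B_\rho(x_0)\times (t_0-\theta\rho^2,t_0)$, where $\theta=\lambda^{2-p(z_0)}$ and $\lambda$ is of the order of $\sup_{Q^\theta_\rho}|\nabla u|$. In these intrinsic cylinders the equation behaves, to leading order, as the $p(z_0)$-Laplace equation with constant exponent. The dichotomy between the degenerate case $p(z_0)\ge 2$ and the singular case $\frac{2N}{N+2}<p(z_0)<2$ is handled by two separate choices of intrinsic scaling, but the subsequent iteration is structurally the same; in each case one works with the rescaled function $v(y,s)=u(x_0+\rho y,t_0+\theta\rho^2 s)/(\lambda\rho)$, which satisfies a uniformly parabolic equation whose coefficients have small oscillation on $B_1\times(-1,0)$.

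The core step is the comparison / freezing argument. Let $w$ solve the homogeneous "frozen" equation $w_s-\operatorname{div}\mathbf{a}(z_0,\nabla w)=0$ in $B_1\times(-1,0)$ with $w=v$ on the parabolic boundary. Standard results for constant-exponent $p$-Laplace type equations (Lieberman, DiBenedetto-Friedman) give $\nabla w \in C^{\alpha_0,\alpha_0/2}$ with an explicit oscillation decay $\operatorname{osc}_{Q_\tau}\nabla w \le C\tau^{\alpha_0}\operatorname{osc}_{Q_1}\nabla w$ for $\tau\in(0,1)$. The difference $v-w$ is estimated through the monotonicity of $\mathbf{a}$ in \eqref{eq:structure}(i) together with the continuity bound \eqref{eq:structure}(iii): the perturbation term contributes a factor $\rho^\alpha(1+\log(1+\lambda^2))(1+\lambda^{p^+_R-1})$, which, thanks to the $(1+\log)$ factor, can be made arbitrarily small by choosing $\rho$ small even as $\lambda$ grows. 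This is precisely the variable-exponent analogue of the log-Hölder mechanism and is what allows one to absorb the dependence of $p$ on $z$ into a standard comparison estimate.

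Combining the decay estimate for $w$ with the smallness of $\|\nabla(v-w)\|_{L^\infty}$ gives, after rescaling back, an inequality of the form $\operatorname{osc}_{Q^\theta_{\tau\rho}}\nabla u\le \max\{\tau^{\alpha_0}\operatorname{osc}_{Q^\theta_\rho}\nabla u, C\rho^{\alpha_1}\}$. A Campanato-type iteration on $\tau$ then yields $\nabla u\in C^{\alpha,\alpha/2}_{\mathrm{loc}}(Q_T)$ for some $\alpha\in(0,1)$ depending on $N$, $p^\pm$, the Lipschitz constant of $p(\cdot)$, and the data in \eqref{eq:structure}. The main obstacle I expect is the interplay between intrinsic scaling and the variable exponent, specifically ensuring that the choice of $\theta=\lambda^{2-p(z_0)}$ remains compatible with the Caccioppoli and comparison estimates uniformly as $z_0$ varies across the transition $p(z_0)=2$; this is handled by splitting into the degenerate and singular alternatives and checking that both produce the same qualitative Hölder exponent.
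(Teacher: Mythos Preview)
The paper does not prove this proposition at all: it is quoted verbatim from \cite{DZZ-2020} (Proposition~4.1 there) and used as a black box. There is therefore nothing in the present paper to compare your argument against.

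That said, your sketch is broadly in line with the methodology of \cite{DZZ-2020} and the surrounding literature (Acerbi--Mingione freezing for variable exponents combined with DiBenedetto--Friedman intrinsic scaling for the constant-exponent comparison problem). One caveat: your first step, ``local Lipschitz continuity'' via Moser iteration on $D_k u$, is stated too casually. In the variable-exponent parabolic setting one does not usually obtain $\nabla u\in L^\infty_{\mathrm{loc}}$ directly by a Moser scheme on difference quotients; rather, local boundedness of the gradient and the H\"older decay are established together through the intrinsic-cylinder alternative (degenerate/nondegenerate regimes) and the comparison estimate. The logarithmic factor in \eqref{eq:structure}(iii) is exactly what lets the freezing error be absorbed, as you note, but the a~priori $L^\infty$ bound on $\nabla u$ is an output of the iteration, not a separate preliminary step. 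If you were writing this up in full, you would want to follow the alternative argument from the start rather than presupposing $\nabla u\in L^\infty_{\mathrm{loc}}$.
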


\subsection{Properties of the regularized flux}
We check first that conditions \eqref{eq:structure} hold true for the regularized flux $\mathbf{a}(z,\nabla u)=\mathcal{F}_{\epsilon}(z,\nabla u)\nabla u$.

(i) Monotonicity. By \cite[Proposition~3.1]{RACSAM-2023}, for all $\xi,\eta\in \mathbb{R}^N$

\[
\begin{split}
(\mathcal{F}_{\epsilon}(z,\xi)\xi & - \mathcal{F}_{\epsilon}(z,\eta)\eta)\cdot (\xi-\eta) \geq C_q b_\epsilon\begin{cases}
|\xi-\eta|^q & \text{if $q\geq 2$},
\\
(\epsilon^2+|\xi|^2+|\eta|^2)^{\frac{q-2}{2}}|\xi-\eta|^2 & \text{if $1<q<2$}
\end{cases}
\\
& + C_p a_\epsilon\begin{cases}
|\xi-\eta|^p & \text{if $p\geq 2$},
\\
(\epsilon^2+|\xi|^2+|\eta|^2)^{\frac{p-2}{2}}|\xi-\eta|^2 & \text{if $1<p<2$}.
\end{cases}
\end{split}
\]
At every point $z\in Q_T$, either $\overline{s}(z)=p(z)$, or $\overline{s}(z)=q(z)$. Keeping one term on the right-hand side and dropping the rest we obtain

\[
(\mathcal{F}_{\epsilon}(z,\xi)\xi  - \mathcal{F}_{\epsilon}(z,\eta)\eta)\cdot (\xi-\eta)
\geq \epsilon C_{p,q} \begin{cases}
|\xi-\eta|^{\overline{s}} & \text{if $\overline{s}\geq 2$},
\\
(\epsilon^2+|\xi|^2+|\eta|^2)^{\frac{\overline{s}-2}{2}}|\xi-\eta|^2 & \text{if $1<\overline{s}<2$}.
\end{cases}
\]

(ii) Boundedness and growth. For all $\xi\in \mathbb{R}^N$

\[
|\mathcal{F}_\epsilon(z,\xi)\xi|\leq a_\epsilon C_p'\left(1+|\xi|^{p-1}\right)+ b_\epsilon C'_q\left(1+|\xi|^{q-1}\right)\leq C_{q,p}\left(1+|\xi|^{\overline{s}-1}\right).
\]
For every $\mu>0$

\[
|\xi|^\mu\leq (\epsilon^2+|\xi|^2)^{\frac{\mu}{2}}\;\begin{cases}
\leq (2\epsilon^2)^{\frac{\mu}{2}} & \text{if $|\xi|<\epsilon$},
\\
=(\epsilon^2+|\xi|^2)^{\frac{\mu-2}{2}}(\epsilon^2+|\xi|^2)\leq 2(\epsilon^2+|\xi|^2)^{\frac{\mu-2}{2}}|\xi|^2 & \text{if $|\xi|\geq \epsilon$},
\end{cases}
\]
whence

\[
\begin{split}
\mathcal{F}_\epsilon(z,\xi)|\xi|^2 & =a_\epsilon (\epsilon^2+|\xi|^2)^{\frac{p-2}{2}}|\xi|^2+b_\epsilon (\epsilon^2+|\xi|^2)^{\frac{q-2}{2}}|\xi|^2
\geq C'|\xi|^{\overline{s}}-C''.
\end{split}
\]

(iii) Continuity w.r.t. $z=(x,t)$. It is sufficient to check continuity for a single component of one of the two terms of $\mathcal{F}_\epsilon(z,\xi)\xi$. Fix $i\in \overline{1,N}$, a point $(x_0,t_0)$, and two arbitrary points $z_1,z_2\in Q^{(R)}(x_0,t_0)$. Write

\[
\begin{split}
a_\epsilon(z_2) & (\epsilon^2+|\xi|^2)^{\frac{p(z_2)-2}{2}}\xi_i - a_\epsilon(z_1)(\epsilon^2+|\xi|^2)^{\frac{p(z_1)-2}{2}}\xi_i
\\
& =\left(a(z_2)-a(z_1)\right)(\epsilon^2+|\xi|^2)^{\frac{p(z_2)-2}{2}}\xi_i +a_\epsilon(z_1)\left((\epsilon^2+|\xi|^2)^{\frac{p(z_2)-2}{2}} -(\epsilon^2+|\xi|^2)^{\frac{p(z_1)-2}{2}}\right)\xi_i
\\
& \equiv I_1+I_2.
\end{split}
\]
Apart from the number $p^+_R$ defined in \eqref{eq:structure} we will also need $q^+_R=\sup_{Q^{(R)}(x_0, t_0)}q(z)$. The estimate on $I_1$ follows if the coefficient $a \in C^{\alpha,\frac{\alpha}{2}}(Q_T)$, $\alpha \in (0,1)$:

\[
\begin{split}
|I_1|\leq |a(z_2)-a(z_1)|(\epsilon^2+|\xi|^2)^{\frac{p(z_2)-1}{2}} & \leq C |a(z_2)-a(z_1)| \left(1+|\xi|^{p_R^+-1}\right)  \leq C d^\alpha(z_1, z_2) \left(1+ |\xi|^{p_R^+-1}\right).
\end{split}
\]
Since
\[
\begin{split}
I_2 & = a_\epsilon(z_1)\xi_i\int_{0}^1\dfrac{d}{d\theta} \left(\epsilon^2+|\xi|^{2}\right)^{\frac{p(\theta z_2+(1-\theta)z_1)}{2}-1}\,d\theta
\\
& = \frac{1}{2}a_\epsilon(z_1)\xi_i\int_{0}^1 \left(\epsilon^2+|\xi|^{2}\right)^{\frac{p(\theta z_2+(1-\theta)z_1)}{2}-1}\ln (\epsilon^2+|\xi|^{2})\dfrac{d}{d\theta}p(\theta z_2+(1-\theta)z_1)\,d\theta,
\end{split}
\]
then

\[
\begin{split}
   |I_2| & \leq L_p\max a_\epsilon \int_{0}^1 \left(\epsilon^2+|\xi|^{2}\right)^{\frac{p(\theta z_2+(1-\theta)z_1)-1}{2}}\,d\theta|\ln (\epsilon^2+|\xi|^{2})|\left(|x_2-x_1|+|t_2-t_1|\right)
   \\
   & \leq C_\epsilon L_p \int_{0}^1 \left(1+|\xi|^{2}\right)^{\frac{p(\theta z_2+(1-\theta)z_1)-1}{2}}\,d\theta \left(1+\ln (1+|\xi|^{2})\right)|\left(|x_2-x_1|+|t_2-t_1|\right)\\
   & \leq C_\epsilon L_p d(z_1, z_2) \left(1+\ln(1+|\xi|^2)\right) \left(1+|\xi|^{p_R^+-1}\right)
\end{split}
\]
where we denoted by $L_p$ the Lipschitz constant of $p(z)$, represented

\[
\begin{split}
\ln (\epsilon^2+|\xi|^{2}) & = \ln \epsilon^2 + \ln\left(1+\dfrac{|\xi|^2}{\epsilon^2}\right)
= \ln \epsilon^2 +\int_0^{|\xi|^2}\left(\ln\left(1+\frac{s}{\epsilon^2}\right))\right)'\,ds
=  \ln \epsilon^2 + \frac{1}{\epsilon^2}\int_0^{|\xi|^2}\dfrac{ds}{1+\frac{s}{\epsilon^2}},
\end{split}
\]
and used the inequality (for $\epsilon\in (0,1)$)

\[
|\ln (\epsilon^2+|\xi|^{2})|\leq |\ln \epsilon^2| + \frac{1}{\epsilon^2}\int_0^{|\xi|^2}\dfrac{ds}{1+s}\leq |\ln \epsilon^2| + \frac{1}{\epsilon^2}\ln(1+|\xi|^2)\leq C_\epsilon \left(1+\ln(1+|\xi|^2)\right)
\]
and by Young's inequality, for $z_1,z_2\in Q^{(R)}(x_0,t_0)$ and $\theta\in (0,1)$

\[
\left(1+|\xi|^{2}\right)^{\frac{p(\theta z_2+(1-\theta)z_1)-1}{2}}\leq C(1+|\xi|^{p^+_R -1 }).
\]
Gathering these inequalities for all terms of $\mathcal{F}_\epsilon(z,\xi)\xi$ we obtain

\[
\begin{split}
|\mathcal{F}_\epsilon(z_1,\xi)\xi-\mathcal{F}_\epsilon(z_2,\xi)\xi| &\leq C(L_p+L_q) d^\alpha(z_1, z_2) \left(1+\ln(1+|\xi|^2)\right) \left(1+|\xi|^{p^+_R-1}+ |\xi|^{q^+_R-1}\right)\\
& \leq C' d^\alpha (z_1, z_2) \left(1+\ln(1+|\xi|^2)\right) \left(1+ |\xi|^{\overline{s}_R-1}\right),\quad \overline{s}_R=\sup_{Q^{(R)}}\overline{s}(z).
\end{split}
\]
It follows that $\mathcal{F}_\epsilon(z,\xi)\xi$ are continuous with respect to  $z$.

\begin{lemma}[Theorem 1, \cite{DZZ-2020}]
\label{le:global-bounded}
Let $N\geq 2$ and $\partial \Omega\in C^1$. Assume that the function $\mathbf{a}$ satisfies conditions \eqref{eq:structure}. Then the weak solution of the homogeneous Dirichlet problem for equation \eqref{eq:nondiv} satisfies the estimate $\forall t\in (0,T)$

\begin{equation}
\label{eq:global-bounded}
\operatorname{ess}\sup_{\Omega}|u(\cdot,t)|\leq C\left(\operatorname{ess}\sup_{\Omega}|u_0| +C\left[1+\int_{Q_T}|u|^{\delta(x,\tau)}\,dxd\tau\right]\right)
\end{equation}
with $\delta(x,\tau)=\max\left\{\overline{s}(x,\tau),2\right\}$ and a constant $C$ depending only on the structural constants in \eqref{eq:structure}.
\end{lemma}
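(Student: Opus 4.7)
I would prove \eqref{eq:global-bounded} by De Giorgi's super-level-set iteration, adapted to the variable-exponent structure encoded in \eqref{eq:structure}.

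For levels $k\geq k_0:=\operatorname{ess}\sup_\Omega|u_0|$, I first test the weak formulation \eqref{eq:def-nondiv-1} against a Steklov-averaged version of the truncation $\varphi=(u-k)_+$. Since $(u_0-k)_+\equiv 0$, after removing the regularization and using the coercivity $\mathbf{a}(z,\xi)\cdot\xi\geq a_0|\xi|^{p(z)}-a_1$ from \eqref{eq:structure}(ii), together with absorption of the source term via $f\in L^\infty(Q_T)$ and Young's inequality, I obtain the Caccioppoli-type bound
\[
\sup_{t\in(0,T)}\|(u-k)_+(\cdot,t)\|_{2,\Omega}^2 + \int_{A_k}|\nabla u|^{p(z)}\,dz \leq C\Bigl(|A_k| + \int_{A_k}(u-k)\,dz\Bigr),
\]
where $A_k:=\{z\in Q_T : u(z)>k\}$. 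A parabolic Gagliardo--Nirenberg interpolation tailored to variable exponents, applied to $v:=(u-k)_+ \in L^\infty(0,T;L^2(\Omega))\cap L^{p(\cdot)}(0,T;W^{1,p(\cdot)}_0(\Omega))$ and using the Lipschitz continuity of $p(z)$ in \eqref{eq:Lip-p-q} to keep the Sobolev exponent uniform over $Q_T$, then upgrades the energy estimate to a bound for a higher integral $\int_{Q_T}v^{\,p(z)(1+2/N)}\,dz$ in terms of the right-hand side above.

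Introducing the dyadic sequence of levels $k_n:=k_0+d(1-2^{-n})$ with a parameter $d>0$ to be chosen, the two ingredients combine to yield a recursive De Giorgi inequality
\[
Y_{n+1}\leq \frac{C\,b^{n}}{d^{\gamma}}\,Y_n^{1+\nu},\qquad Y_n:=|A_{k_n}|+\int_{A_{k_n}}(u-k_n)^{\delta(z)}\,dz,
\]
for positive constants $b>1$, $\gamma,\nu>0$ depending only on $N$, $p^\pm$ and $\overline{s}^\pm$. By the classical iteration lemma (Ladyzhenskaya--Solonnikov--Uraltseva), $Y_n\to 0$ provided $d^{\gamma}\gtrsim Y_0^{\nu}$, which forces $u\leq k_0+d$ a.e.\ in $Q_T$. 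Since $Y_0$ is controlled by $C\bigl(1+\int_{Q_T}|u|^{\delta(z)}\,dz\bigr)$ via the energy estimate and the definition of $\delta$, plugging the resulting admissible $d$ back gives the upper bound in \eqref{eq:global-bounded}; the same argument applied to $-u$ bounds $\operatorname{ess}\inf u$, and the two estimates together yield the lemma.

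\textbf{Main obstacle.} The decisive step is the variable-exponent parabolic Sobolev/Gagliardo--Nirenberg inequality used to close the iteration: in the absence of a single Sobolev conjugate to $p(\cdot)$, the exponent on the left must be chosen pointwise and the local estimates patched together by means of the Lipschitz (or at least log-H\"older) modulus of continuity of $p(z)$. The appearance of $\delta(z)=\max\{\overline s(z),2\}$, rather than $\overline s(z)$ alone, is precisely what is needed to dominate the $L^\infty_t L^2_x$ control of $u$ in the singular regime $\overline s(z)<2$, where it is the stronger of the two natural quantities. Since the statement coincides with Theorem~1 of \cite{DZZ-2020}, in practice I would quote their quantitative iteration bound rather than rerun the full induction.
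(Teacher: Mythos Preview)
The paper does not give its own proof of this lemma: it is stated as a direct citation of Theorem~1 in \cite{DZZ-2020} and is used as a black box to conclude boundedness of the strong solution. Your final remark---that in practice you would simply quote the result from \cite{DZZ-2020}---is therefore exactly what the paper does, and your De Giorgi level-set iteration sketch is the standard route one expects that reference to follow. One minor notational point: in the Caccioppoli estimate and the subsequent interpolation you write the exponent as $p(z)$, but in the setting of \eqref{eq:structure} the relevant structural exponent is the one appearing in conditions (i)--(iii), which in the application to the double-phase flux becomes $\overline{s}(z)$; this is why $\delta(z)=\max\{\overline{s}(z),2\}$ shows up on the right-hand side rather than $\max\{p(z),2\}$.
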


By \eqref{eq:est-sol-Gal} and \cite[Lemma 4.4]{ShSS-2022} we know that for every $\gamma\in (0,\frac{1}{N})$

\[
\operatorname{ess}\sup_{(0,T)}\|u\|_{2,\Omega}+\|u\|_{\overline{s}(\cdot)+\gamma,Q_T}\leq C
\]
with a constant $C$ depending only on the data. Since the strong solution is also a weak solution of problem \eqref{eq:main-reg}, it follows from Lemma \ref{le:global-bounded} that the strong solution of problem \eqref{eq:main-reg} with $u_0\in L^{\infty}(\Omega)$ and $f\in L^{\infty}(Q_T)$ is bounded in $Q_T$. The local H\"older continuity of the gradient follows from Proposition \ref{pro:interior} if we consider problem \eqref{eq:main-reg} as problem \eqref{eq:nondiv} with the flux $\mathbf{a}(z,\xi)=\mathcal{F}_\epsilon(z,\xi)\xi$ that satisfies conditions \eqref{eq:structure} with the exponent $\overline{s}(z)$.

\begin{lemma}
\label{le:local-Hold-grad-double-phase}
Assume that $\partial\Omega\in C^1$, $a, b \in C^{\alpha,\frac{\alpha}{2}}(\overline Q_T)$, $\alpha\in (0,1)$, $p, q \in C^{0,1}(\overline Q_T)$, $f\in L^{\infty}(Q_T)$, $u_0\in L^\infty(\Omega)$. Then the strong solution $u$ of problem \eqref{eq:main-reg} is bounded in $Q_T$ and

\[
\nabla u\in C^{\beta,\frac{\beta}{2}}_{loc}(Q_T),\qquad \beta\in (0,1).
\]
\end{lemma}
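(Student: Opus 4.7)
The plan is to assemble two previously prepared ingredients: the global boundedness estimate Lemma \ref{le:global-bounded} to get $u\in L^\infty(Q_T)$, and the interior gradient regularity Proposition \ref{pro:interior} to upgrade this to $\nabla u\in C^{\beta,\beta/2}_{\mathrm{loc}}(Q_T)$. The key observation is that the verification of the structural hypotheses \eqref{eq:structure} for the model flux $\mathbf{a}(z,\xi)=\mathcal{F}_\epsilon(z,\xi)\xi$, with exponent $\overline s(z)$, has been carried out in full in the preceding subsection, so both results apply directly. The proof of the lemma is therefore essentially a bookkeeping step that glues these pieces together.

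For the boundedness step, I would begin with the strong solution $u$ furnished by Proposition \ref{pro:exist-strong-sol}, observe that a strong solution is automatically a weak solution in the sense of Definition \ref{def:weak-sol} (the test-function class of \eqref{eq:def-nondiv-1} is broader but the identity is equivalent after integration by parts in time), and then feed $u$ into Lemma \ref{le:global-bounded}. The right-hand side of \eqref{eq:global-bounded} contains $\int_{Q_T}|u|^{\delta(z)}\,dz$ with $\delta(z)=\max\{\overline s(z),2\}$, so I need an a priori bound on this integral. The energy estimate \eqref{eq:est-sol-Gal} gives $\sup_t\|u(\cdot,t)\|_{2,\Omega}\le C$, and the parabolic embedding \cite[Lemma~4.4]{ShSS-2022} upgrades this to $\|u\|_{\overline s(\cdot)+\gamma,Q_T}\le C$ for any $\gamma\in(0,1/N)$. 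Splitting $Q_T$ into the sets $\{\overline s\ge 2\}$ and $\{\overline s<2\}$, the quantity $\int_{Q_T}|u|^{\delta(z)}\,dz$ is controlled by $\int_{Q_T}(1+|u|^{\overline s(z)+\gamma})\,dz+\int_{Q_T}|u|^2\,dz$, both of which are finite. Lemma \ref{le:global-bounded} then returns the desired $L^\infty(Q_T)$ bound, with a constant depending on $\|u_0\|_{\infty,\Omega}$, $\|f\|_{\infty,Q_T}$ and the energy data.

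For the gradient regularity, I would read \eqref{eq:main-reg} as the auxiliary equation \eqref{eq:nondiv} with $\mathbf{a}(z,\xi)=\mathcal{F}_\epsilon(z,\xi)\xi$ and the variable exponent $p(\cdot)$ of \eqref{eq:structure} taken to be $\overline s(\cdot)$. Conditions \eqref{eq:structure}(i)--(iii) have been verified above, with \eqref{eq:structure}(iii) using precisely the hypotheses $a,b\in C^{\alpha,\alpha/2}(\overline Q_T)$ and $p,q\in C^{0,1}(\overline Q_T)$. Since $u$ is now known to be bounded in $Q_T$ and $f\in L^\infty(Q_T)$ by assumption, Proposition \ref{pro:interior} applies and produces $\nabla u\in C^{\beta,\beta/2}_{\mathrm{loc}}(Q_T)$ for some $\beta\in(0,1)$. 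The only delicate point I anticipate is condition \eqref{eq:structure}(iii): its constant depends on $\epsilon$ through a $|\ln\epsilon^2|$ factor in the estimate of $I_2$. This is harmless for the present lemma, where $\epsilon$ is fixed, but it signals that the limit passage $\epsilon\to 0$ cannot rest on local H\"older-gradient arguments and must instead rely on the $\epsilon$-uniform integrability and second-order estimates developed in Sections \ref{sec:interpolation}--\ref{sec:est-source}.
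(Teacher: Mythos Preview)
Your proposal is correct and follows essentially the same route as the paper: use the energy bound \eqref{eq:est-sol-Gal} together with \cite[Lemma~4.4]{ShSS-2022} to control $\int_{Q_T}|u|^{\delta(z)}\,dz$, apply Lemma~\ref{le:global-bounded} to obtain $u\in L^\infty(Q_T)$, and then invoke Proposition~\ref{pro:interior} with $\mathbf{a}(z,\xi)=\mathcal{F}_\epsilon(z,\xi)\xi$ and exponent $\overline{s}(\cdot)$, whose structural conditions \eqref{eq:structure} have already been verified. Your added remark on the $\epsilon$-dependence of the continuity constant is a useful observation not made explicit in the paper.
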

\subsubsection{H\"older continuity of the gradient up to the lateral boundary}
Let $\partial \Omega\in C^{1+\sigma}$, $\sigma\in (0,1)$. Take a point $x_0\in \partial \Omega$, place the origin of the coordinate system $\{x\}$ at $x_0$ and rotate the new system so that $x_N$ points in the direction of the interior normal to $\partial\Omega$ at the origin and $x'=\{x_1,\ldots,x_{N-1}\}$ belongs to the tangent plane. There exists $\delta>0$ such that the part of the hypersurface $B_\delta(0)\cap \partial\Omega$ is the graph of a $C^{1+\sigma}$ function $\Phi(x')$: $x_N=\Phi(x')$. Introduce the new local coordinate system $\{y_i\}_{i=1}^{N}$:

\[
y_N=x_N-\Phi(y'),\qquad y_i=x_i-x_{0 i}, \quad i\not=N.
\]
Given a solution of problem \eqref{eq:main-reg}, define the function $v(y,t)=u(x,t)$. Denote by $\nabla'_{y'}$ the gradient with respect to the variables $y'=(y_1,\ldots,y_{N-1})$:

\[
\begin{split}
D_{x_i} u & =D_{x_i} v= D_{y_i}v - D_{y_N}v D_{y_i} \Phi(y'), \quad i\not=N,\qquad D_{x_N}u= D_{y_N}v,
\\
|\nabla_x u|^2 & = \sum_{i=1}^N(D_{x_i}u)^2= (D_{y_N}v)^2+\sum_{i=1}^{N-1}\left(D_{y_i}v - D_{y_N}v D_{y_i} \Phi(y')\right)^2
\\
& = (1+|\nabla'_{y'}\Phi(y')|^2)(D_{y_N}v)^2 -2D_{y_N}v (\nabla'_{y'}v,\nabla'_{y'}\Phi(y')) + |\nabla'_{y'}v|^2.
\end{split}
\]
By the definition of the change of variables $x\mapsto y$
\[
\nabla_x u= \nabla_yv\cdot\mathbf{B},\qquad \mathbf{B}=\left(
\begin{array}{cc}
\mathbf{I}_{N-1} & 0 \\
-\nabla'\Phi(y') & 1 \\
\end{array}
\right), \qquad \operatorname{det}\dfrac{\partial (x_1,\ldots,x_N)}{\partial (y_1,\ldots,y_N)}=1\;(\text{the Jacobian}),
\]
where $\mathbf{I}_{N-1}$ is the unit $(N-1)\times (N-1)$ matrix. Take some $r>0$ and define the parallelepiped

\[
K_r^+=\{y_i\in (-r,r),\,i\not=N,\;\;y_N\in (0,r)\}.
\]
The number $r$ can be chosen so small that the image of $K^+_r$ under the inverse transformation $y\mapsto x$ belongs to $\Omega\cap B_\delta(0)$. Denote $K_r=\{y_i\in (-r,r),\,i=\overline{1,N}\}$ and accept the notation

\[
\widehat p(y,t)=p(x,t),\quad \widehat q(y,t)=q(x,t), \quad \widehat a(y,t)=a(x,t),\quad \widehat b(y,t)=b(x,t),\quad \widehat f(y,t)=f(x,t)
\]
for the functions $p$, $q$, $a$, $b$ defined in $\Omega\cap B_\delta(0)$ and re-defined in $K_r^+$ as functions of the new variables $y$. Introduce the odd extensions from $K_r^+$ to $K_r$,

\[
\widetilde v(y,t)=\begin{cases}
v((y',y_N),t) & \text{if $y_N\geq 0$},
\\
-v((y',-y_N),t) & \text{if $y_N<0$}
\end{cases},\qquad \widetilde f(y,t)=\begin{cases}
\widehat f((y',y_N),t) & \text{if $y_N\geq 0$},
\\
-\widehat f((y',-y_N),t) & \text{if $y_N<0$},
\end{cases}
\]
and denote by $\widetilde p$, $\widetilde q$, $\widetilde a$, $\widetilde b$,  and $\overline{\mathfrak{s}}$ the even extensions of the coefficients and exponents $\widehat p$, $\widehat q$, $\widehat a$, $\widehat b$, $\overline{s}$ from $K_r^+$ to $K_r$. Denote
\[
\widetilde{F}_\epsilon ((y,t),\nabla_y v)\nabla_y v=(\mathbf{B}^\ast\cdot\mathbf{B})\left(\widehat a (\epsilon^2+|\nabla_y v\cdot \mathbf B|^2)^{\frac{\widehat p-2}{2}}+\widehat b (\epsilon^2+|\nabla_y v\cdot \mathbf B|^2)^{\frac{\widehat q-2}{2}}\right)\nabla_yv.
\]
The function $v$ is a local solution of the equation

\begin{equation}
\label{eq:change-coord}
v_t -\operatorname{div}_y\left(\widetilde{F}_\epsilon ((y,t),\nabla_y v)\nabla_y v\right)=\widehat f \quad \text{in $K_r^+\times (0,T)$}.
\end{equation}
By Definition \ref{eq:def-nondiv} and the rule of change of variables $x\mapsto y$, for every $\phi$ such that $\nabla \phi\in L^{\overline{\mathfrak{s}}(\cdot)}(K_r^+\times (0,T))$, $\phi_t\in L^2(K_r^+\times (0,T))$, $\phi(\cdot,t)=0$ on $\partial K_r^+$ for a.e. $t\in (0,T)$ and $\phi(y,T)=0$

\begin{equation}
\label{eq:test+}
\int_{K_r^+\times (0,T)}\left(-v\phi_t+\widetilde{F}_{\epsilon}((y,t),\nabla_y v)\nabla_yv\cdot\nabla_y\phi-\widetilde f\phi\right)\,dydt-\int_{K_r^+}v(y,0)\phi(y,0)\,dy=0.
\end{equation}
Set

\[
I(\widetilde v,\phi)=\int_{K_r\times (0,T)}\left(-\widetilde v \phi_t+\widetilde {\mathcal{F}}_\epsilon((y,t),\nabla_y \widetilde  v)\nabla_y\widetilde v\cdot \nabla_y\phi-\widetilde f \phi\right)\,dydt -\int_{K_r}\widetilde v(y,0)\phi(y,0)\,dy.
\]
Take an arbitrary $\phi(y,t)\in W^{1,\overline{\mathfrak{s}}(\cdot)}(K_r\times (0,T))$ with $\operatorname{supp} \phi(\cdot,t)\Subset K_r$ and represent $\phi(y,t)=\psi+\chi$, where $\psi$ and $\chi$ are the even and odd parts of $\phi$ with respect to $y_N$ defined by

\[
\psi(y,t)=\dfrac{\phi((y',y_N),t)+\phi((y',-y_N),t)}{2}, \qquad \chi(y,t)=\dfrac{\phi((y',y_N),t)-\phi((y',-y_N),t)}{2}.
\]
Write
\[
I(\widetilde v,\phi)= I(\widetilde v,\psi)+I(\widetilde v,\chi).
\]
Since $\widetilde v$ and $\widetilde f$ are odd functions of $y_N$, and $\psi$ is even, changing the variable $y_N\mapsto -y_N$ we obtain the equality

\[
I(\widetilde v,\psi)=\left.I(\widetilde v,\psi)\right|_{K_r^+}+\left.I(\widetilde v,\psi)\right|_{K_r\cap \{y_N\leq 0\}} = \left.I(\widetilde v,\psi)\right|_{K_r^+}-\left.I(\widetilde v,\psi)\right|_{K_r^+}=0.
\]
On the other hand, for odd $\widetilde v$, $\widetilde f$, and $\chi$

\[
I(\widetilde v,\chi)=\left.I(\widetilde v,\chi)\right|_{K_r^+}+\left.I(\widetilde v,\chi)\right|_{K_r\cap \{y_N\leq 0\}} = 2\left.I(\widetilde v,\chi)\right|_{K_r^+}.
\]
By definition, $\widetilde v$ is the odd continuation of the local solution from $K^+_r\times (0,T)$ to $K_r\times (0,T)$. Since $\chi$ is an odd function of $y_N$, it is necessary that $\chi((y',0),t)=0$, whence $\chi=0$ on $\partial K_r^+\times (0,T)$. Because $v$ is a local solution in $K^{+}_{r}\times (0,T)$, it follows that

\[
I(\widetilde v,\phi)=2\left.I(\widetilde v,\chi)\right|_{K^+_r}=0.
\]
Thus, $I(\widetilde v,\phi)=0$ for every test-function $\phi(y,t)$ such that $\nabla \phi\in L^{\overline{\mathfrak{s}}(\cdot)}(K_r\times (0,T))$, $\phi_t\in L^2(K_r\times (0,T))$, $\phi=0$ on $\partial K_r\times (0,T)$ and for $t=T$. It follows that if $v$ is a local weak solution of equation \eqref{eq:change-coord} in the cylinder $K_r^+\times (0,T)$, its odd extension $\widetilde v$ is a weak solution of the same equation in the cylinder $K_r\times (0,T)$.

To conclude about the local H\"older continuity of the gradient of the extended solution $\widetilde v$ it remains to check that the flux $\widetilde{\mathcal{F}}_{\epsilon}((y,t),\xi)\xi$ satisfies conditions \eqref{eq:structure}. The fulfillment of these conditions follows by the same arguments that were used for the flux written in the original variables $x$. Applying Lemma \ref{le:local-Hold-grad-double-phase} to the function $\widetilde v(y,t)$ in $K_r$ and then reverting to the variable $x$ we obtain the following assertion.

\begin{lemma}
\label{le:lateral-Hold-grad}
Assume $\partial\Omega\in C^{1+
\sigma}$. If the conditions of Lemma \ref{le:local-Hold-grad-double-phase} are fulfilled, then for every bounded solution $u$ of equation \eqref{eq:main-reg} and any $\lambda\in (0,T)$

\[
\nabla u\in C^{\gamma,\frac{\gamma}{2}}(\overline{\Omega}\times (\lambda,T)),\qquad \gamma\in (0,1).
\]
\end{lemma}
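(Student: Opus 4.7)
The plan is to reduce the boundary claim to the interior result Lemma~\ref{le:local-Hold-grad-double-phase} by exploiting the flattening and odd-reflection construction already carried out above, combined with a finite covering of $\partial\Omega$. Fix $\lambda\in(0,T)$ and cover $\overline\Omega$ by a relatively compact interior set $\Omega'\Subset\Omega$ together with a finite family of boundary charts $\Psi_k\colon B_{\delta_k}(x_0^{(k)})\cap\overline\Omega\to\overline{K_{r_k}^+}$, $x_0^{(k)}\in\partial\Omega$, of the form $y_N=x_N-\Phi(y')$, $y_i=x_i-x_{0i}^{(k)}$. Since $\partial\Omega\in C^{1+\sigma}$, each $\Psi_k$ and $\Psi_k^{-1}$ are $C^{1+\sigma}$, so the transformed coefficients $\widehat a,\widehat b$ remain in $C^{\alpha,\alpha/2}$ and the transformed exponents $\widehat p,\widehat q$ remain in $C^{0,1}$ on $K_{r_k}^+\times(0,T)$.

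On $\Omega'\times(\lambda,T)$, a direct application of Lemma~\ref{le:local-Hold-grad-double-phase} to finitely many cylinders compactly contained in $Q_T$ yields $\nabla u\in C^{\beta,\beta/2}(\overline{\Omega'}\times[\lambda,T])$. For each boundary chart I would set $v(y,t)=u(\Psi_k^{-1}(y),t)$ on $K_{r_k}^+\times(0,T)$, reflect it oddly across $\{y_N=0\}$ to obtain $\widetilde v$ on $K_{r_k}\times(0,T)$, and reflect $\widehat a,\widehat b,\widehat p,\widehat q$ evenly; as shown in the excerpt, $\widetilde v$ is a local weak solution on $K_{r_k}\times(0,T)$ of the reflected equation driven by $\widetilde{\mathcal F}_\epsilon$. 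Even reflection preserves $C^{\alpha,\alpha/2}$ and $C^{0,1}$ regularity, while $\mathbf B^\ast\mathbf B$ is independent of $y_N$ and inherits $C^\sigma$-regularity in $y'$ from $\nabla'\Phi$; combining these with the structural computations already verified for $\mathcal F_\epsilon$, the reflected flux $\widetilde{\mathcal F}_\epsilon((y,t),\xi)\xi$ satisfies the monotonicity, growth and $C^{\alpha,\alpha/2}$-in-$z$ conditions of \eqref{eq:structure} with exponent $\overline{\mathfrak s}(z)$, and $\widetilde v$ is bounded because $u$ is, by Lemma~\ref{le:local-Hold-grad-double-phase}.

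I would then apply Lemma~\ref{le:local-Hold-grad-double-phase} to $\widetilde v$ on the concentric cylinder $K_{r_k/2}\times(\lambda,T)\Subset K_{r_k}\times(0,T)$, obtaining $\nabla_y\widetilde v\in C^{\beta,\beta/2}(\overline{K_{r_k/2}}\times[\lambda,T])$ up to and across $\{y_N=0\}$. Pulling back via the identity $\nabla_x u=\nabla_y v\cdot\mathbf B$ and using that $\mathbf B\in C^\sigma(\overline{K_{r_k/2}^+})$ is bounded, one gets $\nabla_x u\in C^{\gamma,\gamma/2}$ with $\gamma=\min\{\beta,\sigma\}\in(0,1)$ on $\Psi_k^{-1}(K_{r_k/2}^+)\times[\lambda,T]$. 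A finite covering argument stitches the interior piece and the boundary pieces together to produce $\nabla u\in C^{\gamma,\gamma/2}(\overline\Omega\times(\lambda,T))$, which is the desired conclusion; the restriction $t>\lambda>0$ is precisely what keeps every cylinder used in the covering away from the initial plane, where Lemma~\ref{le:local-Hold-grad-double-phase} offers no control.

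The main technical point I expect is the verification of condition \eqref{eq:structure}(iii) for the reflected flux, because the matrix $\mathbf B^\ast\mathbf B$ is not the identity and enters nonlinearly through $(\epsilon^2+|\nabla_y v\cdot\mathbf B|^2)^{(\widehat p-2)/2}$. Since $\nabla'\Phi$ is bounded and $C^\sigma$ on $K_{r_k}^+$ and the ellipticity constants of $\mathbf B^\ast\mathbf B$ are bounded uniformly away from zero, the dependence on $\mathbf B$ can be absorbed into the constants in the logarithmic-plus-power bound and in the monotonicity lower bound already established for $\mathcal F_\epsilon$ in the original coordinates; once that verification is in place, the proof becomes a straightforward composition of results already stated in the excerpt.
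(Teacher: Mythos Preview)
Your proposal is correct and follows essentially the same approach as the paper: flatten the boundary via the $C^{1+\sigma}$ chart, extend $v$ oddly and the coefficients/exponents evenly across $\{y_N=0\}$, verify that the reflected flux $\widetilde{\mathcal F}_\epsilon$ still satisfies the structural conditions \eqref{eq:structure}, apply the interior result Lemma~\ref{le:local-Hold-grad-double-phase} to $\widetilde v$ in the full cube $K_r$, and then pull back to the $x$-variables. The paper's treatment is in fact more compressed---it simply asserts that the structural conditions for $\widetilde{\mathcal F}_\epsilon$ ``follow by the same arguments'' and that one ``reverts to the variable $x$''---so your added remarks on the role of $\mathbf B^\ast\mathbf B$, the finite covering, and the choice $\gamma=\min\{\beta,\sigma\}$ only make explicit what the paper leaves implicit.
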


\subsubsection{Continuation to the cylinder $\Omega\times (-T,T)$}
Let $u$ be a weak solution of problem \eqref{eq:main-reg} in $Q_T$. Define the functions

\[
F(z)=\begin{cases}
f(z) & \text{in $Q_T=\Omega\times (0,T)$},
\\
-\operatorname{div} \mathcal{F}_\epsilon ((x,0),\nabla u(x,0))\nabla u(x,0) & \text{in $\Omega\times (-T,0)$},
\end{cases}
\qquad
w(z)=\begin{cases}
u(z) & \text{in $Q_T$},
\\
u(x,0) & \text{in $\Omega\times (-T,0)$}.
\end{cases}
\]
The function $w$ is a weak solution of the equation

\begin{equation}
\label{eq:main-extended}
w_t - \operatorname{div} \left(\mathcal{F}_\epsilon (z,\nabla w)\nabla w\right)=F(z)\qquad \text{in $\Omega\times (-T,T)$}
\end{equation}
in the sense of Definition \ref{def:weak-sol}. Assume that the data satisfy the conditions of Lemma \ref{le:lateral-Hold-grad} and, moreover, $u(x,0)\in C^2(\overline \Omega)$. Then $F\in L^\infty(\Omega\times (-T,T))$ and we may apply Lemma \ref{le:lateral-Hold-grad} to the weak solution $w$ of equation \eqref{eq:main-extended} in the cylinder $\Omega\times (-T,T)$.

\begin{lemma}
\label{le:global-Hold-grad}
Let the conditions of Lemma \ref{le:lateral-Hold-grad} be fulfilled and $u(x,0)\in C^2(\overline \Omega)$. Then there exist $\gamma\in (0,1)$ and a finite $\rho>0$ such that

\begin{equation}
\label{eq:global-grad}
\nabla u\in C^{\gamma,\frac{\gamma}{2}}(\overline Q_T),\qquad \|\nabla u\|_{C^{\gamma,\frac{\gamma}{2}}(Q_T)}\leq \rho.
\end{equation}
\end{lemma}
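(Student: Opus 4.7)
The strategy is to realize the desired Hölder estimate up to $t=0$ as an interior-in-time estimate for a suitably extended problem, and then invoke Lemma \ref{le:lateral-Hold-grad}. I would follow the construction already laid out before the statement: define $w$ as $u$ on $Q_T$ and $w(x,t):=u_0(x)$ on $\Omega\times(-T,0)$, and $F$ as $f$ on $Q_T$ and $F(x,t):=-\operatorname{div}\bigl(\mathcal{F}_\epsilon((x,0),\nabla u_0(x))\nabla u_0(x)\bigr)$ on $\Omega\times(-T,0)$. The target is to check that $w$ is a weak solution of \eqref{eq:main-extended} on the enlarged cylinder $\Omega\times(-T,T)$ in the sense of Definition \ref{def:weak-sol} and that $F\in L^{\infty}(\Omega\times(-T,T))$, so that Lemma \ref{le:lateral-Hold-grad} becomes applicable.

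First, I would verify the weak formulation across the glueing hyperplane $t=0$. On $\Omega\times(-T,0)$ the function $w$ is independent of $t$, so $w_t=0$ in the distributional sense and the equation $0-\operatorname{div}\mathcal{F}_\epsilon(z,\nabla w)\nabla w = F$ holds pointwise there by the very definition of $F$. On $Q_T$, $w=u$ is a strong solution of \eqref{eq:main-reg} with datum $f$. The compatibility $w(\cdot,0^-)=u_0=u(\cdot,0^+)$ together with $w\in C([-T,T];L^2(\Omega))$ eliminates any jump term in the integration by parts in $t$, so writing the weak formulation separately on $(-T,0)$ and $(0,T)$ and summing yields the single integral identity of Definition \ref{def:weak-sol} on $\Omega\times(-T,T)$ with right-hand side $F$ and initial datum $w(\cdot,-T)=u_0$.

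Next, I would check that $F\in L^\infty(\Omega\times(-T,T))$. On $Q_T$ this is already assumed in Lemma \ref{le:local-Hold-grad-double-phase}. On $\Omega\times(-T,0)$ the expression $F=-\operatorname{div}\bigl(\mathcal{F}_\epsilon((x,0),\nabla u_0)\nabla u_0\bigr)$ involves only spatial derivatives; expanding it yields terms of the form $(\nabla a+\nabla b)\,G(\nabla u_0)$, $(\nabla p+\nabla q)\,G(\nabla u_0)\log(\epsilon^2+|\nabla u_0|^2)$, and $(a_\epsilon+b_\epsilon)\,G'(\nabla u_0)\,D^2 u_0$, where $G$ denotes the appropriate scalar factor of $\mathcal{F}_\epsilon$. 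Because $a,b,p,q$ are Lipschitz on $\overline{Q}_T$ and $u_0\in C^2(\overline{\Omega})$ (so $\nabla u_0$ and $D^2u_0$ are bounded), every factor is bounded on $\overline{\Omega}$, giving $F\in L^\infty(\Omega\times(-T,0))$ uniformly. This is really the only non-trivial computation, and it is the step where the assumption $u_0\in C^2(\overline\Omega)$ is fully exploited.

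Finally, I would apply Lemma \ref{le:lateral-Hold-grad}, which is stated for a generic cylinder whose lateral boundary is $C^{1+\sigma}$, to the weak solution $w$ of equation \eqref{eq:main-extended} on the cylinder $\Omega\times(-T,T)$. Picking any $\lambda\in(-T,0)$ yields $\nabla w\in C^{\gamma,\gamma/2}(\overline{\Omega}\times[\lambda,T])$ for some $\gamma\in(0,1)$, with a norm bounded by a constant $\rho$ depending on the data and on $|\lambda|$. Since $\overline{Q}_T\subset \overline{\Omega}\times[\lambda,T]$ and $w\equiv u$ on $Q_T$, this gives \eqref{eq:global-grad}. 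The only obstacle worth flagging is the boundedness of $F$ near $t=0^-$, which is precisely controlled by the $C^2$ regularity of $u_0$; everything else is a routine verification that the extension preserves the structural hypotheses.
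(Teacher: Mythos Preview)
Your proposal is correct and follows essentially the same argument as the paper: extend $u$ by $u_0$ and $f$ by $-\operatorname{div}\bigl(\mathcal{F}_\epsilon((x,0),\nabla u_0)\nabla u_0\bigr)$ to negative times, note that $u_0\in C^2(\overline\Omega)$ makes $F\in L^\infty(\Omega\times(-T,T))$, and apply Lemma~\ref{le:lateral-Hold-grad} to $w$ on the enlarged cylinder with a cutoff at some $\lambda\in(-T,0)$. The paper records this construction immediately before the lemma and states the conclusion; your write-up simply makes the verification of the weak formulation across $t=0$ and of the boundedness of $F$ more explicit.
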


\subsection{Existence of a classical solution}
Assume that $a,b,p,q,f\in C^{\infty}(Q_T)$ and $u_0\in C^{\infty}(\Omega)$, $\operatorname{supp}u_0\Subset \Omega$ and $\operatorname{supp} f(\cdot,t)\Subset \Omega$ for all $t \in [0,T]$. Consider the problem

\begin{equation}
\label{eq:main-reg-equiv}
\begin{split}
& u_t - \sum_{i,j=1}^N A_{ij}(z,\nabla u)D^2_{ij}u -\sum_{i=1}^NB_{i}(z,\nabla u) D_iu=f\quad \text{in $Q_T$},
\\
& \text{$u=0$ on $\partial\Omega\times (0,T)$},\qquad \text{$u(x,0)=u_0$ in $\Omega$}
\end{split}
\end{equation}
with the coefficients

\[
\begin{split}
A_{ij}(z,\nabla u) & =a_{\epsilon}w_\epsilon^{\frac{p-2}{2}} \left[\delta_{ij}+(p-2)\dfrac{D_iu D_ju}{w_\epsilon}\right]+b_{\epsilon}w_\epsilon^{\frac{q-2}{2}} \left[\delta_{ij}+(q-2)\dfrac{D_iu D_ju}{w_\epsilon}\right],
\\
B_i(z,\nabla u) & = \ln(\epsilon^2+|\nabla u|^2)\left[a_\epsilon (\epsilon^2+|\nabla u|^2)^{\frac{p-2}{2}}D_ip + b_\epsilon (\epsilon^2+|\nabla u|^2)^{\frac{q-2}{2}}D_iq\right]
\\
&\qquad  + \left((\epsilon^2+|\nabla u|^2)^{\frac{p-2}{2}}D_ia + (\epsilon^2+|\nabla u|^2)^{\frac{q-2}{2}}D_ib\right),
\\
w_\epsilon & = \epsilon^2+|\nabla u|^2.
\end{split}
\]
Take a function $v\in C^{1+\alpha,\frac{1+\alpha}{2}}(Q_T)$ and consider the auxiliary linear problem

\begin{equation}
\label{eq:aux-1}
\begin{split}
& u_t - \sum_{i,j=1}^N A_{ij}(z,\nabla v)D^2_{ij}u -\sum_{i=1}^NB_{i}(z,\nabla v) D_iu=\tau f
\\
& \text{$u=0$ on $\partial\Omega\times (0,T)$},\qquad \text{$u(x,0)=\tau u_0$ in $\Omega$},\qquad \tau\in [0,1].
\end{split}
\end{equation}
For every $\epsilon\in (0,1)$ and $v\in C^{1+\alpha,\frac{1+\alpha}{2}}(Q_T)$ with $\alpha\in (0,1)$ the linear equation \eqref{eq:aux-1} is uniformly parabolic with the ellipticity constants depending on $\epsilon$, $a^\pm$, $b^\pm$, $p^\pm$, $q^\pm$, and the coefficients $A_{ij}(z,\nabla v),B_i(z,\nabla v)\in C^{\beta,\frac{\beta}{2}}(Q_T)$ with $\beta\in (0,1)$. According to the classical parabolic theory \cite[Ch. IV,Th. 5.2]{LSU}, for every $(v,\tau)$ problem \eqref{eq:aux-1} has a unique classical solution $u\in C^{2+\beta,\frac{2+\beta}{2}}(Q_T)$, which satisfies the estimate

\begin{equation}
\label{eq:Schauder-1}
\|u\|_{C^{2+\beta,\frac{2+\beta}{2}}(Q_T)}\leq C\left(\tau\|u_0\|_{C^{2+\beta,\frac{2+\beta}{2}}(Q_T)} +\tau\|f\|_{C^{\beta,\frac{\beta}{2}}(Q_T)}\right)
\end{equation}
with a constant $C$ depending on $p$, $q$, $a$, $b$, $\epsilon$, $N$, $\partial \Omega$, $\|\nabla v\|_{C^{\alpha,\frac{\alpha}{2}}(Q_T)}$ but independent of $u$. Problem \eqref{eq:aux-1} generates the mapping $u=\Phi(v,\tau)$ that puts the solution $u$ into correspondence to the pair $(v,\tau)$. The fixed points of the mapping $\Phi(v,1)$ are solutions of the nonlinear problem \eqref{eq:main-reg-equiv}. Let us denote

\[
B_R=\left\{v\in C^{\alpha,\frac{\alpha}{2}}(Q_T):\,\| v\|_{C^{1+\alpha,\frac{1+\alpha}{2}}(Q_T)}<R\right\},\qquad S_R=B_R\times [0,1],
\]
with $R$ to be defined.  The existence of a fixed point of the mapping
\[
\Phi(v,\tau):\, \overline S_R\to C^{1+\alpha,\frac{1+\alpha}{2}}(Q_T)
\]
follows from the Leray-Schauder principle \cite[Ch. IV,Th. 8.1]{LU}: the equation $u=\Phi(u,\tau)$ has at least one solution for all $\tau\in [0,1]$ if

\begin{enumerate}
\item the equation $u=\Phi(v,0)$ only has the trivial solution;

\item the mapping $\Phi: \, C^{1+\alpha,\frac{1+\alpha}{2}}(Q_T)\times [0,1]\mapsto C^{1+\alpha,\frac{1+\alpha}{2}}(Q_T)$ is compact;

\item $\Phi$ is uniformly continuous w.r.t. $\tau$ on $\overline S_R$;

\item the boundary of $B_R$ does contain fixed points of $\Phi$.
\end{enumerate}
Items (1) and (3) follow from inequality \eqref{eq:Schauder-1}. Item (2) follows from \eqref{eq:Schauder-1} and compactness of the embedding $C^{2+\beta,\frac{2+\beta}{2}}(Q_T)\subset C^{1+\alpha,\frac{1+\alpha}{2}}(Q_T)$ with any $\alpha,\beta\in (0,1)$. Item (4) is inferred from the global a priori estimate \eqref{eq:global-grad} by choosing $R$ large enough. Let $u\in C^{1+\alpha,\frac{1+\alpha}{2}}(Q_T)$ be a fixed point of the mapping $\Phi$. Then $u$ is a weak solution of problem \eqref{eq:main-reg} with smooth data, and it follows from \eqref{eq:global-grad} that $u$ does not belong to the boundary of the ball $B_R$ if we take $R=\rho+1$.

The functions $D_iu$ satisfy equation \eqref{eq:aux-1} differentiated in $x_i$. For every $\delta\in (0,T)$ and $\Omega'\Subset \Omega$, the inclusions $D_iu \in C^{2+\alpha,1+\frac{\alpha}{2}}(\Omega'\times (\delta,T))$ follow from the local estimates on the classical solutions to parabolic equations \cite{LSU}.

\begin{lemma}
\label{le:class-sol-existence}
Let the data of problem \eqref{eq:aux-1} satisfy the following conditions: $\partial\Omega\in C^{2+\gamma}$, $\gamma\in (0,1)$, $u_0\in C^\infty(\overline\Omega)$, $\operatorname{supp} u_0\Subset \Omega$, $f\in C^{\infty}(Q_T)$ with $\operatorname{supp} f(\cdot,t)\Subset \Omega$ for every $t\in (0,T)$, $a,b\in C^{\infty}(\overline Q_T)$, $p,q\in C^{\infty}(\overline Q_T)$. If $p^-, q^->\frac{2N}{N+2}$, then for every $\epsilon \in (0,1)$ problem \eqref{eq:aux-1} admits a classical solution $u\in C^{2+\alpha,1+\frac{\alpha}{2}}(\overline Q_T)$ with $D_iu\in C^{2+\alpha,1+\frac{\alpha}{2}}_{loc}(Q_T)$.
\end{lemma}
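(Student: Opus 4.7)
The plan is to apply the Leray-Schauder fixed point principle \cite[Ch. IV, Th. 8.1]{LU} to the mapping $\Phi(v,\tau)$ defined by the auxiliary linear problem \eqref{eq:aux-1}, exactly as sketched in the preceding discussion, and then upgrade the regularity of the resulting solution by differentiating the equation and appealing to interior parabolic Schauder theory. The four hypotheses of the Leray-Schauder principle have already been highlighted in the text; my task is to explain how each of them is verified and where the main difficulty lies.

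First I would check that $\Phi: \overline{S}_R \to C^{1+\alpha,\frac{1+\alpha}{2}}(\overline{Q}_T)$ is well-defined by invoking, for each fixed $v\in B_R$ and $\tau\in[0,1]$, the classical Solonnikov-Ladyzhenskaya-Ural'tseva Schauder theory for linear uniformly parabolic equations \cite[Ch. IV, Th. 5.2]{LSU}. For $v\in C^{1+\alpha,\frac{1+\alpha}{2}}(\overline Q_T)$ the coefficients $A_{ij}(z,\nabla v)$ and $B_i(z,\nabla v)$ lie in $C^{\beta,\frac{\beta}{2}}(\overline Q_T)$ for some $\beta\in(0,1)$, and the ellipticity constants of the frozen linear operator are controlled from below and above in terms of $\epsilon$, $a^\pm$, $b^\pm$, $p^\pm$, $q^\pm$ (this is where nondegeneracy of the regularized flux is crucial, since it allows us to treat $p(z),q(z)$ close to or even below $2$ without distinction). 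The linear theory then delivers a unique $u\in C^{2+\beta,\frac{2+\beta}{2}}(\overline Q_T)$ together with the Schauder estimate \eqref{eq:Schauder-1}.

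With $\Phi$ in hand, conditions (1) and (3) of Leray-Schauder (the trivial solution at $\tau=0$ and continuity in $\tau$) are immediate from the linearity in $\tau$ of the data in \eqref{eq:aux-1} together with \eqref{eq:Schauder-1}; condition (2) (compactness of $\Phi$) follows from \eqref{eq:Schauder-1} combined with the compact embedding $C^{2+\beta,\frac{2+\beta}{2}}(\overline Q_T)\hookrightarrow C^{1+\alpha,\frac{1+\alpha}{2}}(\overline Q_T)$ for any $\alpha\in(0,\beta)$. The essential step, and the main obstacle, is condition (4): proving that no fixed point of $\Phi(\cdot,\tau)$ can lie on $\partial B_R$ for any $\tau\in[0,1]$. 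This is precisely what Lemma \ref{le:global-Hold-grad} provides: any fixed point $u$ of $\Phi(\cdot,\tau)$ is a weak (in fact strong) solution of the regularized problem \eqref{eq:main-reg} with data $(\tau u_0,\tau f)$ satisfying the hypotheses of Lemmas \ref{le:local-Hold-grad-double-phase}--\ref{le:global-Hold-grad}, hence \eqref{eq:global-grad} yields a $\tau$-independent bound $\|\nabla u\|_{C^{\gamma,\gamma/2}(\overline Q_T)}\leq \rho$. Choosing $R=\rho+1$ rules out fixed points on $\partial B_R$. Leray-Schauder then produces $u=\Phi(u,1)\in C^{1+\alpha,\frac{1+\alpha}{2}}(\overline Q_T)$, which is a classical solution of \eqref{eq:main-reg-equiv}, and bootstrapping $u$ back into the Schauder estimate \eqref{eq:Schauder-1} applied to \eqref{eq:aux-1} with $v=u$ and $\tau=1$ shows $u\in C^{2+\beta,\frac{2+\beta}{2}}(\overline Q_T)$.

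Finally, to obtain the claimed interior regularity $D_iu\in C^{2+\alpha,1+\frac{\alpha}{2}}_{loc}(Q_T)$, I would differentiate equation \eqref{eq:main-reg-equiv} formally in $x_i$; the resulting linear equation for $w_i=D_iu$ is uniformly parabolic on every $\Omega'\times(\delta,T)\Subset Q_T$ with coefficients that are as smooth as $a,b,p,q$ and $\nabla u$ permit, and whose Hölder norms are controlled by the already established $C^{2+\beta,\frac{2+\beta}{2}}$ bound on $u$ and the $C^\infty$ smoothness of the data. Iterating the interior Schauder estimates from \cite{LSU} then yields $w_i\in C^{2+\alpha,1+\frac{\alpha}{2}}_{loc}(Q_T)$. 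The hypothesis $p^-,q^->\frac{2N}{N+2}$ does not enter the existence argument directly at fixed $\epsilon$ but is preserved here for consistency with Proposition \ref{pro:exist-strong-sol} and for the subsequent passage $\epsilon\to 0$.
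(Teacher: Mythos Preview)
Your proposal is correct and follows essentially the same route as the paper: set up $\Phi(v,\tau)$ via the linear Schauder theory \cite[Ch.~IV, Th.~5.2]{LSU}, verify the four Leray--Schauder hypotheses with the key a priori bound coming from Lemma~\ref{le:global-Hold-grad} (choosing $R=\rho+1$), and then differentiate the equation in $x_i$ and apply interior Schauder estimates to obtain $D_iu\in C^{2+\alpha,1+\frac{\alpha}{2}}_{loc}(Q_T)$. One small remark: the hypothesis $p^-,q^->\frac{2N}{N+2}$ does enter the argument, since it is needed along the chain Proposition~\ref{pro:exist-strong-sol} $\to$ Lemma~\ref{le:global-bounded} $\to$ Lemma~\ref{le:global-Hold-grad} that produces the global gradient bound \eqref{eq:global-grad} underlying condition~(4).
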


\section{Interpolation inequalities}
\label{sec:interpolation}

We denote $r_1, r_2: \overline{\Omega} \times [0,T] \to \mathbb{R}$, use the notation $\underline{s}$, $\overline{s}$ introduced in \eqref{eq:s} and repeatedly use the threshold values
\begin{equation}
\label{eq:threshold}
\begin{split}
r^\sharp=\dfrac{4}{N+2}, \qquad r_\ast=\dfrac{2}{N+2}.
\end{split}
\end{equation}

The bulk of this section is devoted to deriving interpolation inequalities for functions of the variable $x\in \Omega$. For this reason, we denote by $p(x)$, $q(x)$, $\underline{s}(x)$, $\overline{s}(x)$, $a(x)$, $b(x)$, $r_1(x)$, $r_2(x)$ the functions of the variables $(x,t)$ with ``frozen" $t\in [0,T]$.
\begin{equation}
\label{eq:gamma}
\begin{split}
\mathcal{F}^{(r_1,r_2)}_{\epsilon}(x,\xi)= a_\epsilon (\epsilon^2+|\xi|^2)^{\frac{p(x)+r_1(x)-2}{2}} + b_\epsilon  (\epsilon^2+|\xi|^2)^{\frac{q(x)+r_2(x)-2}{2}}.
\end{split}
\end{equation}
The function $\mathcal{F}_{\epsilon}^{(r_1,r_2)}(x,\xi)$ depends on $x$ implicitly, through the exponents $p$, $q$, $r_1$, $r_2$ and the coefficients $a$, $b$. We will write $\mathcal{F}_{\epsilon}^{(r_1,r_2)}(x,\xi)$ if the exponents and coefficients depend only on $x\in\Omega$,
or $\mathcal{F}_{\epsilon}^{(r_1,r_2)}(z,\xi)$ if at least one of these functions depends on $z=(x,t)\in Q_T$.

The inequalities proven in this section extend the interpolation inequalities derived in \cite{RACSAM-2023} to the case of irregular coefficients $a$ and $b$. We omit the parts of the proofs that can be recovered from \cite{RACSAM-2023} and present in full detail the study of the terms that contain $|\nabla a|$, $|\nabla b|$. As distinguished from \cite{RACSAM-2023}, the functions $|\nabla a|$, $|\nabla b|$ are now assumed to belong to $L^d(\Omega)$ with some $d=d(N,p(\cdot),q(\cdot))$ and may be unbounded.

\begin{lemma}[Lemma 4.1, \cite{RACSAM-2023}]
\label{lem:interpol} Let $\partial \Omega \in C^2$, $u\in
C^1(\overline{\Omega})\cap H^{2}(\Omega)\cap H_0^1(\Omega)$. Assume that $p$, $q$, $a$, $b$ are functions of $x\in \Omega$ and satisfy conditions \eqref{eq:structure-prelim-1} and \eqref{eq:s}. If

\[
\int_\Omega\mathcal{F}^{(0,0)}_{\epsilon}(x,\nabla
u)\vert u_{xx}\vert ^2\,dx<\infty,\qquad \int_{\Omega}u^2\,dx= M_0,
\]
then for every nonnegative functions $r_1,r_2\in C^{0,1}(\Omega)$ and arbitrary parameters $\delta, \nu \in (0,1)$
\begin{equation}
\label{eq:principal}
\begin{split}
\int_{\Omega} & \mathcal{F}_{\epsilon}^{(r_1,r_2)}(x,\nabla u)\vert \nabla u\vert ^2\,dx  \leq \delta \int_{\Omega}\mathcal{F}^{(0,0)}_{\epsilon}(x,\nabla
u)\vert u_{xx}\vert ^{2}\,dx
\\
& +
C_0 + C_1  \int_{\Omega}u^2\,dx
+ C_2 \int_{\Omega} \left( a_\epsilon \vert u\vert ^{\frac{p(x)+r_1(x)}{1-\nu}}+ b_\epsilon \vert u\vert ^{\frac{q(x)+r_2(x)}{1-\nu}}\right)\,dx
\\
&
+ C_3  \int_\Omega  \vert u\vert \ \left( |\nabla a| \ w_\epsilon ^{\frac{p(x)+r_1(x)-1}{2}} + |\nabla b| \ w_{\epsilon}^{\frac{q(x)+r_2(x)-1}{2}}\right)\,dx\\
& + C_4\int_\Omega \vert u\vert ^2 \ \left(  a_\epsilon w_{\epsilon}^{\frac{p(x)+2r_1(x)-2}{2}}+ a_\epsilon w_{\epsilon}^{\frac{q(x)+2r_2(x)-2}{2}}\right)\,dx
\end{split}
\end{equation}
with independent of $u$ constants $C_i=C_i(\partial
\Omega,\delta,\kappa, \nu, \underline{s}^-, \overline{s}^+,N,r_i^\pm, L_{p,q},L_{r_i}, M_0, \|a\|_{\infty,\Omega}, \|b\|_{\infty,\Omega})$.
\end{lemma}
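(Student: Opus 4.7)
The plan is to follow the integration-by-parts strategy of Lemma~4.1 in \cite{RACSAM-2023}, working out in detail only the contributions that contain $\nabla a$ and $\nabla b$. Decompose $\mathcal{F}_\epsilon^{(r_1,r_2)}(x,\nabla u) = a_\epsilon w_\epsilon^{(p+r_1-2)/2} + b_\epsilon w_\epsilon^{(q+r_2-2)/2}$, where $w_\epsilon := \epsilon^2 + |\nabla u|^2$, and handle the two summands identically; for the $a$-part, since $u\in H_0^1(\Omega)$ I would integrate by parts (summation over $i$) to obtain
\[
\int_\Omega a_\epsilon w_\epsilon^{(p+r_1-2)/2} |\nabla u|^2\, dx = -\int_\Omega u\, D_i\bigl(a_\epsilon w_\epsilon^{(p+r_1-2)/2} D_iu\bigr)\, dx.
\]
Expanding the divergence by the product rule generates four contributions: (I) the derivative falls on $a_\epsilon$; (II) on the $x$-dependent exponent $(p+r_1-2)/2$, producing a factor $\ln(w_\epsilon)\,\nabla(p+r_1)$; (III) on $w_\epsilon$ through the chain rule, producing a Hessian factor $D_ku\, D_{ik}u$; (IV) on $D_iu$, yielding $-\int u\, a_\epsilon w_\epsilon^{(p+r_1-2)/2}\Delta u\, dx$.

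Contributions (II)--(IV) reproduce the $C_2$, $\delta$, and $C_4$ pieces exactly as in \cite{RACSAM-2023}. For (II), I would first use $|\ln w_\epsilon|\leq C + C_\nu w_\epsilon^{\nu/2}$ (treating the region $\{w_\epsilon<1\}$ via a uniform constant absorbed into $C_0,C_1$) together with $|\nabla u|\leq w_\epsilon^{1/2}$ to reduce the integrand to $C a_\epsilon L_{p,q}|u| w_\epsilon^{(p+r_1-1+\nu)/2}$, then apply Young's inequality with conjugate exponents chosen so that the $w_\epsilon$-side power reaches $(p+r_1)/2$; the $w_\epsilon$-piece then represents a portion of $\mathcal{F}_\epsilon^{(r_1,r_2)}|\nabla u|^2$ and is reabsorbed into the left-hand side, while the $|u|$-piece is precisely the $C_2$ contribution $a_\epsilon|u|^{(p+r_1)/(1-\nu)}$. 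For (III) and (IV), the algebraic identity $\tfrac{p+r_1-2}{2} = \tfrac{p-2}{2} + \tfrac{r_1}{2}$ lets Young's inequality split $|u|\,a_\epsilon w_\epsilon^{(p+r_1-2)/2}|u_{xx}|$ as $\delta\,a_\epsilon w_\epsilon^{(p-2)/2}|u_{xx}|^2 + C_\delta u^2 a_\epsilon w_\epsilon^{(p+2r_1-2)/2}$, the first entering the $\delta\int\mathcal{F}_\epsilon^{(0,0)}|u_{xx}|^2\, dx$ absorption and the second the $C_4$ integrand. The new contribution (I) I would bound by
\[
\Bigl|\int_\Omega u\, D_ia \cdot w_\epsilon^{(p+r_1-2)/2}D_iu\, dx\Bigr| \leq \int_\Omega |u|\,|\nabla a|\,w_\epsilon^{(p+r_1-1)/2}\, dx,
\]
using $|\nabla u|\leq w_\epsilon^{1/2}$, and retain it as the $C_3$ term on the right-hand side; its integrability will be secured in later sections through the $L^d$ hypothesis on $|\nabla a|,|\nabla b|$ combined with interpolation.

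Summing the $a$- and $b$-parts yields the stated inequality, with $C_0$ and $C_1\int u^2\, dx$ collecting the $\epsilon$-uniform constants arising from the $|\ln w_\epsilon|$-bound on $\{w_\epsilon<1\}$, from $\|a\|_\infty,\|b\|_\infty$, and from the Lipschitz constants $L_{p,q},L_{r_i}$. The main delicate point is the bookkeeping in step (II): the parameter $\nu$ in the log-bound and the Young conjugate exponents must be coordinated so that (a) the reabsorbed $w_\epsilon$-tail is a genuine multiple of $\mathcal{F}_\epsilon^{(r_1,r_2)}|\nabla u|^2$, and (b) the complementary $|u|$-factor has exactly the exponent $(p+r_1)/(1-\nu)$. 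A secondary subtlety is keeping the constants $\epsilon$-independent, which requires handling the region $\{w_\epsilon<1\}$ separately; there the integrand is bounded by quantities depending only on $\|a\|_\infty,\|b\|_\infty$ and the exponent ranges, and is absorbed into $C_0$.
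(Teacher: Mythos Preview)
Your proposal is correct and follows essentially the same route as the paper's proof: integrate by parts to produce the four contributions $J_1,\dots,J_4$ (your (IV), (I), (III), (II) respectively), then estimate them termwise, with the $\nabla a$-contribution retained verbatim as the $C_3$ integrand and the remaining terms handled via Young's inequality exactly as you describe. Your bookkeeping for the Young split in (III)--(IV) and the choice of conjugate exponents in (II) matches what is needed to land on the $C_4$ and $C_2$ integrands, and your treatment of the $\{w_\epsilon<1\}$ region to keep constants $\epsilon$-independent is the right way to close the argument.
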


The proof consists in application of the Green formula,

\[
\begin{split}
\int_{\Omega}a_\epsilon w_{\epsilon}^{\frac{p+r_1-2}{2}}
\vert \nabla u\vert ^2\,dx & =\int_{\Omega} a_\epsilon w_{\epsilon}^{\frac{p+r_1-2}{2}} \nabla u\cdot \nabla u\,dx
\\
&= \int_\Omega a_\epsilon u\ w_{\epsilon}^{\frac{p+r_1-2}{2}} \Delta u\,dx +
 \int_{\Omega} u w_{\epsilon}^{\frac{p+r_1-2}{2}}  \nabla u \cdot
\nabla a \,dx
\\
&  \qquad + \int_\Omega (p+r_1-2)\ a_\epsilon u w_{\epsilon}^{\frac{p+r_1-2}{2}-1}\sum_{i=1}^n
\left(u_{x_i}\sum_{j=1}^{n}u_{x_j}u_{x_ix_j}\right)\,dx
\\
&  \qquad + \int_{\Omega}a_\epsilon u w_{\epsilon}^{\frac{p+r_1-2}{2}} \ln w_{\epsilon} \nabla u \cdot
\nabla (p+r_1) \,dx
:= J_1 + J_2 + J_3 + J_4,
\end{split}
\]
and the termwise estimating of $J_i$. Inequality \eqref{eq:principal} follows after gathering the results with similar estimates for the integral of $b_\epsilon w_\epsilon^{\frac{q+r_2-2}{2}}|\nabla u|^2$.

Let us rewrite inequality \eqref{eq:principal} in the form
\begin{equation}\label{eq:complete}
\begin{split}
\int_\Omega \mathcal{F}_\epsilon^{(r_1,r_2)}(x,\nabla u)\vert \nabla u\vert ^2\,dx & \leq \delta \int_{\Omega}\mathcal{F}^{(0,0)}_{\epsilon}(x,\nabla
u)\vert u_{xx}\vert ^{2}\,dx
+ C + C_0  \int_{\Omega}u^2\,dx
\\
&
\qquad + C_1  \mathcal{Q}_1^{(r_1,r_2)} + C_2  \mathcal{Q}_2^{(r_1,r_2)} + C_3 \mathcal{Q}_3^{(r_1,r_2)},
\end{split}
\end{equation}
where
\[
\begin{split}
& \mathcal{Q}_1^{(r_1,r_2)}:= \int_{\Omega} \left(a_\epsilon \vert u\vert ^{\frac{p+r_1}{1-\nu}} + b_\epsilon \vert u\vert ^{\frac{q+r_2}{1-\nu}}\right)\,dx \quad \ \text{for some} \ \nu \in (0,1),
\\
&
\mathcal{Q}_2^{(r_1,r_2)}:= \int_\Omega  \vert u \vert \ \left( |\nabla a| w_{\epsilon}^{\frac{p+r_1-1}{2}} + |\nabla b| w_{\epsilon}^{\frac{q+r_2-1}{2}}\right)\,dx,
\\
& \mathcal{Q}_3^{(r_1,r_2)}:=  \int_\Omega \vert u\vert ^2 \ \left(  a_\epsilon w_{\epsilon}^{\frac{p+2r_1-2}{2}}+ b_\epsilon w_{\epsilon}^{\frac{q+2r_2-2}{2}}\right)\,dx.
\end{split}
\]
The integrals $\mathcal{Q}_i^{(r_1, r_2)}$ are estimated separately and under a special choice of the functions $r_1$, $r_2$. Given $p$, $q$, we introduce the Lipschitz-continuous functions
\begin{equation}
\label{eq:r-i}
r_1(x):= r^\sharp + \underline{s}(x) - p(x), \quad
r_2(x):=
r^\sharp + \underline{s}(x) - q(x)
\end{equation}
with $r^\sharp$ defined in \eqref{eq:threshold}.
Let $\{\Omega_i\}_{i=1}^{K}$ be a finite cover of $\Omega$, $\Omega_i\subset \Omega$,
$\partial\Omega_i\in C^2$, $\Omega\subseteq \bigcup_{i=1}^K\Omega_i$. Set
\[
\begin{split}
\underline{s}_i^+=\max_{\overline{\Omega}_i}\underline{s}(x), \quad \underline{s}_i^-=\min_{\overline{\Omega}_i}\underline{s}(x).
\end{split}
\]
Fix an arbitrary number $\varsigma \in (0, r^\sharp)$. By continuity of the map $x \mapsto \underline{s}(x)$,  the cover $\{\Omega_i\}_{i=1}^{K}$ can be chosen in such a way that for every $i=1,2,\ldots,K$
\begin{equation}
\label{eq:osc-loc}
0\leq  \underline{s}_i^+-\underline{s}_i^- <  \varsigma.
\end{equation}
The number of elements in the cover $\{\Omega_i\}$ depends on $\varsigma$ and the Lipschitz constant $L_{p,q}$ of the exponents  $p$ and $q$ in $\Omega$: $K=K(L_{p,q},\varsigma)$. Under the choice \eqref{eq:r-i} we have $p(x)+r_1(x)=q(x)+r_2(x)=\underline{s}(x)+r^\sharp$.

\begin{lemma}[Lemma 4.3, \cite{RACSAM-2023}]
\label{le:Q1}
Let the conditions of Lemma \ref{lem:interpol} be fulfilled and $a, b \in L^\infty(\Omega)$. Then for every $
\varsigma \in \left(0, \frac{4}{N+2} \right)$,
every $\sigma >0$, and $r_1(x)$, $r_2(x)$ defined in \eqref{eq:r-i},

\[
\mathcal{Q}_{1}^{(r_1-\varsigma, r_2-\varsigma)}\leq C+\sigma  \int_{\Omega}\mathcal{F}_{\epsilon}^{(r_1-\varsigma, r_2-\varsigma)}(x,\nabla u)\vert \nabla u\vert ^2\,dx
\]
with a constant $C$ depending on $\sigma$, $N$, $M_0$, $r_1$, $r_2$, $L_{p,q}$, $p^\pm$, $q^\pm$, $\varsigma$.
\end{lemma}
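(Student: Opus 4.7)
The proof hinges on the structural property of the special choice \eqref{eq:r-i}: by construction
\[
p(x)+r_1(x)-\varsigma=q(x)+r_2(x)-\varsigma=\underline{s}(x)+r^\sharp-\varsigma \quad \text{in } \Omega.
\]
Combined with the uniform bound $\|a_\epsilon\|_{\infty,\Omega}+\|b_\epsilon\|_{\infty,\Omega}\leq C$, this reduces the estimate for $\mathcal{Q}_1^{(r_1-\varsigma,r_2-\varsigma)}$ to controlling
\[
\int_{\Omega}|u|^{\frac{\underline{s}(x)+r^\sharp-\varsigma}{1-\nu}}\,dx
\]
by the quantity $\int_\Omega \mathcal{F}_\epsilon^{(r_1-\varsigma,r_2-\varsigma)}(x,\nabla u)|\nabla u|^2\,dx$ plus a constant depending on $M_0$. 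The lower bound $a(z)+b(z)\geq\alpha$ together with the elementary inequality $|\nabla u|^{\rho}\leq C\,(\epsilon^2+|\nabla u|^2)^{(\rho-2)/2}|\nabla u|^2 + C$ shows that this latter integral dominates $\int_\Omega |\nabla u|^{\underline{s}(x)+r^\sharp-\varsigma}\,dx$ up to a harmless constant.

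Next I would pass to the finite cover $\{\Omega_i\}_{i=1}^{K}$ satisfying \eqref{eq:osc-loc}, so that $\underline{s}_i^+-\underline{s}_i^-<\varsigma$ on every $\Omega_i$. On each subdomain the bound $|u|^{\underline{s}(x)}\leq 1+|u|^{\underline{s}_i^+}$ reduces matters to a constant-exponent estimate with $m_i:=(\underline{s}_i^++r^\sharp-\varsigma)/(1-\nu)$. The key analytic tool is the classical Gagliardo--Nirenberg inequality
\[
\|u\|_{L^{m_i}(\Omega_i)}\leq C\,\|\nabla u\|_{L^{\rho_i}(\Omega_i)}^{\theta_i}\|u\|_{L^2(\Omega_i)}^{1-\theta_i}+C\|u\|_{L^2(\Omega_i)},
\]
with $\rho_i:=\underline{s}_i^-+r^\sharp-\varsigma$. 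The choice $r^\sharp=\tfrac{4}{N+2}$ is precisely the threshold that makes $\theta_i m_i=\rho_i$ in the limiting case $\nu=0$ and $\underline{s}_i^+=\underline{s}_i^-$; since $\varsigma<r^\sharp$ and the oscillation is controllably small, a sufficiently small positive $\nu$ yields the strict subcriticality $\theta_i m_i<\rho_i$ uniformly in $i$.

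Once subcriticality is secured, Young's inequality with parameter $\sigma$ absorbs $\|\nabla u\|_{L^{\rho_i}(\Omega_i)}^{\theta_i m_i}$ into $\sigma\int_{\Omega_i}|\nabla u|^{\rho_i}\,dx$, paying only a factor that is a power of $\|u\|_{L^2(\Omega_i)}\leq M_0^{1/2}$. Summing the resulting local estimates over $i=1,\ldots,K$ and invoking the first-paragraph reduction yields the claimed inequality, with the constant $C$ absorbing $K=K(L_{p,q},\varsigma)$, $\nu$, $M_0$ and the structural quantities.

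The main obstacle is keeping the Gagliardo--Nirenberg scaling compatible with the variable exponent. One has to choose the cover $\{\Omega_i\}$ fine enough (depending only on $L_{p,q}$ and $\varsigma$) and then $\nu$ small enough (depending on $\varsigma$, $\underline{s}^\pm$, $N$) so that $\theta_i m_i<\rho_i$ holds uniformly across all $\Omega_i$; the strict inequality $\varsigma<r^\sharp$ is what makes such a choice feasible. Once this scaling has been pinned down, the remaining manipulations are the same as in the proof of \cite[Lemma 4.3]{RACSAM-2023}, and in particular no use is made of the $L^d$-integrability of $\nabla a$ and $\nabla b$ at this stage -- only their boundedness matters here.
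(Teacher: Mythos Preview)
The paper does not supply its own proof of this lemma; it is quoted directly from \cite{RACSAM-2023}. Your sketch reproduces the standard argument one expects there and, in particular, parallels the treatment the present paper gives for $\mathcal{Q}_2^{(r_1-\varsigma,r_2-\varsigma)}$ in Lemma~\ref{le:Q2}: reduce via the identity $p+r_1=q+r_2=\underline{s}+r^\sharp$ and the boundedness of $a_\epsilon,b_\epsilon$, localize to the cover $\{\Omega_i\}$ with small oscillation, apply Gagliardo--Nirenberg with the $L^2$ base, and finish with Young's inequality. This is correct and is the intended route.

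One small inaccuracy: your claim that ``$r^\sharp=\tfrac{4}{N+2}$ is precisely the threshold that makes $\theta_i m_i=\rho_i$ in the limiting case $\nu=0$, $\underline{s}_i^+=\underline{s}_i^-$'' is not quite right. In that limit $m_i=\rho_i$, and a direct computation of the Gagliardo--Nirenberg exponent shows $\theta_i<1$ strictly, hence $\theta_i m_i<\rho_i$ automatically. The genuine role of $r^\sharp$ here is rather that the critical Lebesgue exponent in the interpolation is $\rho_i\tfrac{N+2}{N}$ (compare the treatment of $\mathcal{Q}_2$), and one needs $m_i\le \rho_i\tfrac{N+2}{N}$; the oscillation bound $\underline{s}_i^+-\underline{s}_i^-<\varsigma$ together with a sufficiently small $\nu$ (depending on $N$, $\varsigma$, $\underline{s}^\pm$) secures this. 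This misstatement does not affect the validity of the argument.
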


\begin{lemma}[Lemma 4.4, \cite{RACSAM-2023}]
\label{le:Q3}
Let the conditions of Lemma \ref{lem:interpol} be fulfilled and $a, b \in L^\infty(\Omega)$. Then, for every $\varsigma \in (0, r_\ast/2)$ and every $\mu>0$ and $r_1(x)$, $r_2(x)$ defined in \eqref{eq:r-i},
\[
\begin{split}
\mathcal{Q}_{3}^{(r_1-\varsigma, r_2-\varsigma)} & \leq C+\mu \int_{\Omega}\mathcal{F}_\epsilon^{(r_1-\varsigma, r_2-\varsigma)}(x,\nabla u)\vert \nabla u\vert ^2\,dx
\end{split}
\]
with a constant $C=C(\mu, L_{p,q},N,M_0,K,\alpha,\varsigma, p^\pm, q^\pm)$
\end{lemma}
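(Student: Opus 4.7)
The plan is to reduce to the $a$-term and exploit that, by the choice \eqref{eq:r-i}, the exponents satisfy $p(x)+r_1(x)=\underline{s}(x)+r^\sharp$. This makes the deficit between the integrand of $\mathcal{Q}_3^{(r_1-\varsigma,r_2-\varsigma)}$ and that of the target gradient term equal to $r_1-\varsigma>0$, so the question reduces to whether this deficit can be absorbed through a Sobolev--Gagliardo--Nirenberg interpolation. Because $a_\epsilon$ and $b_\epsilon$ are bounded above by $1+\|a\|_\infty$ and $1+\|b\|_\infty$ respectively, it suffices to bound
\[
I:=\int_\Omega u^{2}\,w_\epsilon^{\frac{p+2(r_1-\varsigma)-2}{2}}\,dx
\]
by $\mu$ times $\int_\Omega w_\epsilon^{\frac{p+r_1-\varsigma-2}{2}}|\nabla u|^2\,dx$ plus a constant, the $b$-contribution being completely analogous with $(q,r_2)$ in place of $(p,r_1)$.

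First I would localize to the finite cover $\{\Omega_i\}_{i=1}^K$ supplied by \eqref{eq:osc-loc}, so that on each $\Omega_i$ the exponents $p$ and $\underline s$ may be replaced by their local extrema up to a controllable multiplicative error coming from the Lipschitz continuity \eqref{eq:Lip-p-q}. On each $\Omega_i$, I would apply H\"older's inequality with a conjugate pair $(\sigma_i/2,\sigma_i/(\sigma_i-2))$ to split
\[
\int_{\Omega_i}u^{2}\,w_\epsilon^{\frac{p+2(r_1-\varsigma)-2}{2}}\,dx\leq \|u\|_{L^{\sigma_i}(\Omega_i)}^{2}\,\Bigl(\int_{\Omega_i}w_\epsilon^{\frac{\sigma_i}{\sigma_i-2}\cdot\frac{p+2(r_1-\varsigma)-2}{2}}\,dx\Bigr)^{(\sigma_i-2)/\sigma_i},
\]
choosing $\sigma_i$ so that the $w_\epsilon$-exponent in the second factor equals $\tfrac{1}{2}(\underline s_i^-+r^\sharp-\varsigma)$; this aligns the second factor with a positive power of the good integral on $\Omega_i$, up to an $\epsilon^2$-harmless remainder obtained by writing $w_\epsilon^{(p+r_1-\varsigma)/2}=w_\epsilon^{(p+r_1-\varsigma-2)/2}(|\nabla u|^2+\epsilon^2)$. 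I would then bound $\|u\|_{L^{\sigma_i}(\Omega_i)}$ by the Gagliardo--Nirenberg interpolation between $L^2(\Omega_i)$ and $W^{1,\underline s_i^-+r^\sharp-\varsigma}(\Omega_i)$, using the a priori bound $\|u\|_{2,\Omega}^2\leq M_0$, and close the estimate with Young's inequality carrying the small parameter $\mu$. Summation over the $K=K(L_{p,q},\varsigma)$ pieces then delivers the global bound.

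The principal difficulty is the exponent bookkeeping: one must verify that the $\sigma_i$ demanded by the H\"older step satisfies the Sobolev embedding constraint $\sigma_i\leq N(\underline s_i^-+r^\sharp-\varsigma)/(N-\underline s_i^--r^\sharp+\varsigma)^+$ and, simultaneously, that after Young's inequality the power of the good integral is strictly less than $1$, so that absorption is admissible. Both constraints reduce to the single numerical inequality $\varsigma<r_\ast/2=1/(N+2)$, which is precisely the quantitative restriction in the hypothesis. Since the boundedness of $a$ and $b$ is invoked only to discard the multipliers $a_\epsilon$, $b_\epsilon$ at the outset, and no use is made of $|\nabla a|$ or $|\nabla b|$, the proof is a direct adaptation of its Lipschitz counterpart in \cite{RACSAM-2023} with no new ingredients beyond the two interpolations above.
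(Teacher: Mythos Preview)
The paper does not prove this lemma; it is quoted directly from \cite[Lemma~4.4]{RACSAM-2023}. Your outline---discard the bounded multipliers $a_\epsilon,b_\epsilon$, localize to the cover $\{\Omega_i\}$ so that the variable exponents can be frozen, then run H\"older $+$ Gagliardo--Nirenberg $+$ Young with the target exponent $\rho_i=\underline{s}_i^-+r^\sharp-\varsigma$---is exactly the mechanism behind the cited result (and mirrors the treatment of $\mathcal{Q}_2$ given here in Lemma~\ref{le:Q2}), so there is no substantive divergence.

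One small correction to your bookkeeping. You attribute the restriction $\varsigma<r_\ast/2$ to the Sobolev range for $\sigma_i$ and to the sub-unity power needed for absorption. In the constant-exponent reduction these two checks are in fact automatic for every $\varsigma>0$: with $\sigma_i=2\rho_i/(\rho_i-e_i)$ one has $\rho_i-e_i=2-r^\sharp+\varsigma=\tfrac{2N}{N+2}+\varsigma$, and a short computation gives both $\sigma_i\le\rho_i^\ast$ and total power $\dfrac{(N+2)e_i}{(N+2)\rho_i-2N}<1$. The place where $\varsigma<r_\ast/2$ is genuinely needed is one step earlier, at the admissibility of the H\"older exponents: you need $\sigma_i>2$, i.e.\ $e_i>0$, where for the $a$-term
\[
e_1(x)=p+2(r_1-\varsigma)-2=2\underline{s}(x)-p(x)+2r^\sharp-2\varsigma-2.
\]
Using the gap condition \eqref{eq:structure-prelim-2} and $\underline{s}^->\tfrac{2N}{N+2}$ one obtains only $e_1>r_\ast-2\varsigma$, so positivity of $e_1$ (and of $e_2$) forces precisely $\varsigma<r_\ast/2$. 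The conclusion is the same, but the constraint enters at the H\"older step rather than at the interpolation or absorption stage.
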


\begin{lemma}
\label{le:Q2}
Assume that in the conditions of Lemma \ref{lem:interpol}
\begin{equation}
\label{eq:a-b-d}
\text{$|\nabla a|, \ |\nabla b| \in L^d(\Omega)$ with $d > 2+\frac{N+2}{2}\overline{s}^+$.}
\end{equation}
Then, for every

\begin{equation}
\label{eq:varsigma}
\varsigma \in \left(0, \dfrac{d}{d-1}\left(\dfrac{2}{N+2} - \dfrac{(\overline{s}^+ +r^{\sharp})}{d}\right)\right),
\end{equation}
and any $\lambda>0$
\[
\begin{split}
\mathcal{Q}_2^{(r_1-\varsigma, r_2-\varsigma)} & \leq C +\lambda\int_{\Omega}\mathcal{F}_\epsilon^{(r_1-\varsigma, r_2-\varsigma)}(x,\nabla u)\vert \nabla
u\vert ^2\,dx
\end{split}
\]
with a constant
\[
C=C'\left(\lambda,L_{p,q},N,M_0,\varsigma,\alpha, p^\pm, q^\pm)\right) +C''\left(\|\nabla a\|^d_{d,\Omega}+\|\nabla b\|^d_{d,\Omega}\right).
\]
\end{lemma}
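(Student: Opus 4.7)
Setting $\beta(x) := \underline{s}(x) + r^\sharp - \varsigma$, the choice \eqref{eq:r-i} yields $p(x) + r_1(x) - \varsigma = q(x) + r_2(x) - \varsigma = \beta(x)$, so
\[
\mathcal{Q}_2^{(r_1-\varsigma,\, r_2-\varsigma)} = \int_\Omega |u|\bigl(|\nabla a| + |\nabla b|\bigr)\,w_\epsilon^{(\beta-1)/2}\,dx.
\]
Since $a_\epsilon + b_\epsilon \geq \alpha$ and $|\nabla u|^2 \geq \tfrac{1}{2} w_\epsilon$ whenever $|\nabla u|\geq \epsilon$, the target expression satisfies
\[
\int_\Omega \mathcal{F}_\epsilon^{(r_1-\varsigma,\, r_2-\varsigma)}(x, \nabla u)|\nabla u|^2\,dx \;\geq\; \tfrac{\alpha}{2}\int_\Omega w_\epsilon^{\beta/2}\,dx - C_0,
\]
so it will suffice to prove $\mathcal{Q}_2^{(r_1-\varsigma,\, r_2-\varsigma)} \leq \lambda \int_\Omega w_\epsilon^{\beta/2}\,dx + C$ for arbitrary $\lambda>0$.

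The plan is H\"older $\to$ Gagliardo--Nirenberg $\to$ Young, carried out patchwise on the cover from \eqref{eq:osc-loc} exactly as in Lemmas \ref{le:Q1} and \ref{le:Q3}. On each $\Omega_i$, the variable exponent $\beta(x)$ is replaced by a local constant $\beta_i$ up to oscillation $\varsigma$ (with the resulting errors absorbed into the final constants via the finite cover). Apply three-exponent H\"older with $d$, $m$, $\ell_i$ subject to $\tfrac{1}{d}+\tfrac{1}{m}+\tfrac{1}{\ell_i}=1$, choosing $\ell_i=\beta_i/(\beta_i-1)$ so that $(\beta_i-1)\ell_i/2=\beta_i/2$; this forces $m = d\beta_i/(d-\beta_i)$ and turns the $w_\epsilon$--factor into $\bigl(\int_{\Omega_i} w_\epsilon^{\beta/2}\,dx\bigr)^{(\beta_i-1)/\beta_i}$. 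The requirement $\ell_i>1$ reduces to $d>\beta_i$, which follows from the hypothesis $d>2+\tfrac{N+2}{2}\overline{s}^+$. For the middle factor, Gagliardo--Nirenberg interpolation between $L^2(\Omega)$ and $W_0^{1,\beta_i}(\Omega)$ gives
\[
\|u\|_{m,\Omega_i}\;\leq\; C\,\|\nabla u\|_{\beta_i,\Omega}^{\theta_i}\|u\|_{2,\Omega}^{1-\theta_i}\;\leq\; C\,M_0^{(1-\theta_i)/2}\,\Bigl(\int_\Omega w_\epsilon^{\beta/2}\,dx\Bigr)^{\theta_i/\beta_i},
\]
with $\theta_i\in(0,1)$ determined by the Sobolev scaling $\tfrac{1}{m}=\theta_i\bigl(\tfrac{1}{\beta_i}-\tfrac{1}{N}\bigr)+\tfrac{1-\theta_i}{2}$.

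Combining these estimates produces $\mathcal{Q}_2^{(r_1-\varsigma,\,r_2-\varsigma)} \leq C\,(\|\nabla a\|_d + \|\nabla b\|_d)\bigl(\int_\Omega w_\epsilon^{\beta/2}\,dx\bigr)^{\Theta_i}$ with $\Theta_i = (\theta_i+\beta_i-1)/\beta_i$; Young's inequality then delivers the sought bound as soon as $\Theta_i<1$, i.e., $\theta_i<1$ uniformly in $i$. Substituting $m=d\beta_i/(d-\beta_i)$ into the Sobolev-scaling identity, collecting terms, and using the bound $\beta_i\leq \overline{s}^++r^\sharp-\varsigma$, this reduces precisely to the upper bound on $\varsigma$ in \eqref{eq:varsigma}; the hypothesis $d>2+\tfrac{N+2}{2}\overline{s}^+$ is exactly what makes the admissible interval for $\varsigma$ non-empty. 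Young's inequality finally absorbs the gradient term into $\lambda\int w_\epsilon^{\beta/2}$, with the constant $C$ taking the advertised form (an additive piece from $\|\nabla a\|_d^d+\|\nabla b\|_d^d$ via Young, and a multiplicative piece depending on $M_0$, $N$, $L_{p,q}$, $\alpha$, $p^\pm$, $q^\pm$). The main obstacle is the variable-exponent bookkeeping across the cover $\{\Omega_i\}_{i=1}^K$: the oscillation of $\underline{s}(x)$ must be tracked carefully so that the patchwise GN exponents $\theta_i$ assemble into a single uniform bound yielding the sharp threshold \eqref{eq:varsigma}, rather than a cruder one.
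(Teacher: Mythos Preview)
Your three-way H\"older $\to$ Gagliardo--Nirenberg $\to$ Young route is genuinely different from the paper's, which instead applies Young's inequality twice (first to peel off $|\nabla a|^d$, then to split $|u|^{d'}$ from the $w_\epsilon$ factor) and only afterward invokes Gagliardo--Nirenberg with the fixed target exponent $\rho(N+2)/N$ and $\theta = N/(N+2)$. The paper's restriction \eqref{eq:varsigma} arises specifically from requiring the residual exponent $\kappa$ after the second Young to satisfy $\kappa < \rho$; it is an artifact of that particular splitting, not an intrinsic obstruction.

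Your central algebraic claim, however, is wrong. From $1/m = \theta_i(1/\beta_i - 1/N) + (1-\theta_i)/2$ and $1/m = 1/\beta_i - 1/d$ one computes directly that $\theta_i < 1 \Longleftrightarrow d > N$, with no dependence on $\varsigma$ whatsoever; since the hypothesis $d > 2 + \tfrac{N+2}{2}\overline{s}^+$ already forces $d > N+2$, the condition $\theta_i < 1$ is automatic and cannot ``reduce to'' \eqref{eq:varsigma}. The constraint you should actually be tracking is the one needed to put the constant into the stated form $C'' (\|\nabla a\|_d^d + \|\nabla b\|_d^d)$: after the final Young, the gradient factor carries exponent $1/(1-\Theta_i) = \beta_i/(1-\theta_i)$, and a short computation shows this is $\leq d$ precisely when $\beta_i \leq 2d/(N+2)$, which again follows automatically from the hypothesis on $d$ (since $2d/(N+2) > \overline{s}^+ + r^\sharp \geq \beta_i$). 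So your argument, carried through correctly, actually proves the estimate for \emph{all} small $\varsigma > 0$, not just those in \eqref{eq:varsigma}; this is stronger than the lemma asserts and is consistent with Lemma~\ref{le:final-ell}, where the paper recovers the full range $(0, r^\sharp)$ only via a separate monotonicity argument. The variable-exponent bookkeeping you flag is real but routine, handled as in Lemmas~\ref{le:Q1} and~\ref{le:Q3}.
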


\begin{proof}
Take $\varsigma$ satisfying \eqref{eq:varsigma} and choose a finite cover $\{\Omega_i\}$ of $\Omega$ according to condition \eqref{eq:osc-loc}. It is sufficient to consider the integrals $\mathcal{Q}_{2,i}^{(r_1-\varsigma, r_2-\varsigma)}$ over each element of the cover $\{\Omega_i\}$. Fix $\Omega_i$ and set $\rho=\underline{s}_i^-+r^\sharp -\varsigma$. Notice first that conditions \eqref{eq:a-b-d} and \eqref{eq:varsigma} yield the inequality $d' < \dfrac{\rho(N+2)}{N}$.
By repeated use of Young's inequality,
\begin{equation}
\label{eq:Q-2-prim}
\begin{split}
\mathcal{Q}_{2,i}^{(r_1-\varsigma, r_2-\varsigma)} & \leq C_1 \left(1+ M_0 +
\|\nabla a\|^d_{d,\Omega} + \|\nabla b\|^d_{d,\Omega}\right) + C_2 \int_{\Omega_i} \vert u\vert^{d'} \  w_{\epsilon}^{\frac{d'(\underline{s}_i(x)+r^{\sharp}-\varsigma-1)}{2}} \,dx
\\
&
\leq C + \lambda \int_{\Omega_i}\vert u\vert ^{\rho\frac{N+2}{N}}\,dx+C_\lambda \int_{\Omega_i} w_\epsilon^{\frac{\kappa}{2}}\,dx
\end{split}
\end{equation}
with an arbitrary $\lambda>0$ and the exponent
\[
\quad \kappa=(\underline{s}_i(x)+r^{\sharp}-\varsigma-1) \frac{\rho\frac{N+2}{N}}{\rho\frac{N+2}{d'N}-1}>0.
\]
The second term on the right-hand side of \eqref{eq:Q-2-prim} is estimated by the Gagliardo-Nirenberg inequality

\[
\int_{\Omega_i}\vert u\vert ^{\rho\frac{N+2}{N}}\,dx\leq C\left(1+\|  \nabla u\|  _{\rho,\Omega_i}^{\theta \rho \frac{N+2}{N}}\right)\quad\text{with}\quad \theta=\frac{\frac{1}{2}- \frac{N}{\rho(N+2)}}{\frac{N+2}{2N}-\frac{1}{\rho}} =\frac{N}{N+2}\in (0,1),
\]
whence

\[
\lambda\int_{\Omega_i}\vert u\vert ^{\rho\frac{N+2}{N}}\,dx\leq C+ C\lambda\int_{\Omega_i}\vert \nabla u\vert ^{\underline s(x)+r^\sharp-\varsigma}\,dx,\qquad C=C(\underline{s}_i^-,\overline{s}_i^+,N,M_0,\lambda).
\]
To estimate the last term of \eqref{eq:Q-2-prim} we claim that

\[
d'(\underline{s}_i^++r^{\sharp}-\varsigma-1) \frac{\rho\frac{N+2}{d'N}}{\rho\frac{N+2}{d'N}-1} =\max\limits_{\Omega_i}\kappa<\rho=\underline{s}_i^-+r^\sharp-\varsigma.
\]
This inequality is equivalent to
\[
\underline{s}_i^++r^{\sharp}-\varsigma-1  < \frac{\underline{s}_i^-+r^{\sharp}-\varsigma}{d'}\left(1-\frac{d'N}{\rho(N+2)}\right) = \frac{\underline{s}_i^-+r^{\sharp}-\varsigma}{d'}-\frac{N}{N+2},
\]
which can be written in the form
\[
\begin{split}
\underline{s}_i^+- \underline{s}_i^- & <
\left(1-\frac{N}{N+2}\right)
 + \frac{\underline{s}_i^-+r^{\sharp}-\varsigma}{d'} - (\underline{s}_i^-+r^{\sharp}-\varsigma) = \frac{2}{N+2} - \frac{\underline{s}_i^-+r^{\sharp}-\varsigma}{d}.
\end{split}
\]
By virtue of \eqref{eq:osc-loc}, to fulfill the last inequality suffices to claim
\begin{equation}
\label{eq:equiv-check}
\begin{split}
0<\varsigma < \frac{2}{N+2} - \frac{\overline{s}^++r^{\sharp}-\varsigma}{d}.
\end{split}
\end{equation}
Since

\[
\dfrac{2}{N+2}- \frac{\overline{s}^++r^{\sharp}}{d}>0\quad \Leftrightarrow \quad d>\dfrac{N+2}{2}\left(\overline{s}^++\dfrac{4}{N+2}\right)=2+\dfrac{N+2}{2}\overline{s}^+,
\]
\eqref{eq:equiv-check} is equivalent to \eqref{eq:varsigma}. Inequality \eqref{eq:Q-2-prim} continues as follows:
\[
\begin{split}
\mathcal{Q}_{2,i}^{(r_1-\varsigma, r_2-\varsigma)}
&
\leq C+\delta \int_{\Omega_i}\vert \nabla u\vert ^{\underline{s}_i(x)+r^{\sharp}-\varsigma}\,dx
\end{split}
\]
with any $\delta>0$ and $\varsigma$ satisfying \eqref{eq:varsigma}. The estimate on $\mathcal{Q}_2^{(r_1-\varsigma, r_2-\varsigma)}$ follows by collecting the estimates in $\Omega_i$ for all $i=1,2,\ldots,K$.
\end{proof}
Denote

\[
r_{d}:= \min\left\{\frac{r_\ast}{2},
\frac{d}{d-1}\left(\frac{2}{N+2} - \frac{(\overline{s}^+ +r^{\sharp})}{d}\right)\right\}.
\]

\begin{lemma}
\label{le:final-ell}
Let the conditions of Lemma \ref{lem:interpol} and condition \eqref{eq:structure-prelim-2} be fulfilled. Assume that $a$, $b$ satisfy condition \eqref{eq:a-b-d}. For every $u\in C^1(\overline{\Omega})\cap H^1_0(\Omega)\cap H^2(\Omega)$, any $\varsigma \in (0,r^\sharp)$, and an arbitrary $\delta>0$
\begin{equation}
\label{eq:complete-new}
\int_{\Omega}\mathcal{F}_{\epsilon}^{(r_1-\varsigma, r_2-\varsigma)}(x,\nabla u)\vert \nabla u\vert ^2\,dx\leq \delta \int_{\Omega}\mathcal{F}_{\epsilon}^{(0,0)}(x,\nabla u)\vert u_{xx}\vert ^2\,dx +C
\end{equation}
with a constant
\[
C=C'(L_{p,q},N,M_0,\alpha,\delta,\varsigma)+C''\left(\|\nabla a\|_{d,\Omega}^d + \|\nabla b\|_{d,\Omega}^d\right).
\]
\end{lemma}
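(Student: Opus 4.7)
The plan is to assemble the lemma by substituting the three termwise bounds of Lemmas \ref{le:Q1}, \ref{le:Q3}, \ref{le:Q2} into the structural identity \eqref{eq:complete} supplied by Lemma \ref{lem:interpol}, and then absorbing every instance of the target integral that appears on the right-hand side into the left.

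As a preliminary step, I would verify that the admissible range of $\varsigma$ is nonempty. With the choice \eqref{eq:r-i}, condition \eqref{eq:structure-prelim-2} gives $r_1, r_2 \geq 0$ in $\overline{\Omega}$, so that $p + r_1 = q + r_2 = \underline{s} + r^\sharp$. Assumption \eqref{eq:a-b-d} on $d$ makes the upper bound \eqref{eq:varsigma} in Lemma \ref{le:Q2} strictly positive, so the intersection of the admissible ranges in Lemmas \ref{le:Q1}, \ref{le:Q3}, \ref{le:Q2} is the interval $(0, r_d)$ with $r_d > 0$ defined immediately before the present lemma.

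For any $\varsigma \in (0, r_d)$, I would apply \eqref{eq:complete} with the parameter $\delta/2$ in place of $\delta$. The right-hand side then splits into (i) the absorbable second-order term $(\delta/2)\int_\Omega \mathcal{F}^{(0,0)}_\epsilon(x,\nabla u)|u_{xx}|^2\,dx$, (ii) the harmless additive constants $C + C_0 M_0$, and (iii) the three integrals $\mathcal{Q}_j^{(r_1-\varsigma, r_2-\varsigma)}$ multiplied by fixed constants $C_1, C_2, C_3$. To each $\mathcal{Q}_j^{(r_1-\varsigma, r_2-\varsigma)}$ I would apply the corresponding lemma, with free parameters $\sigma, \lambda, \mu$ chosen so that $C_1 \sigma + C_2 \lambda + C_3 \mu \leq \tfrac{1}{2}$. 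Each application produces a pure constant plus a small multiple of $\int_\Omega \mathcal{F}_\epsilon^{(r_1-\varsigma, r_2-\varsigma)}(x,\nabla u)|\nabla u|^2\,dx$; summing, the coefficient of this target integral on the right is $\leq \tfrac{1}{2}$, and moving it to the left yields \eqref{eq:complete-new}. The split $C = C' + C''(\|\nabla a\|_{d,\Omega}^d + \|\nabla b\|_{d,\Omega}^d)$ is immediate because only Lemma \ref{le:Q2} carries the norms of $\nabla a, \nabla b$, while Lemmas \ref{le:Q1} and \ref{le:Q3} contribute constants depending only on $L_{p,q}, N, M_0, \alpha, p^\pm, q^\pm$ and $\|a\|_\infty, \|b\|_\infty$.

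To cover the full range $\varsigma \in (0, r^\sharp)$ stated in the lemma, I would fix any $\varsigma_0 \in (0, r_d)$ and for $\varsigma \in [\varsigma_0, r^\sharp)$ exploit $\underline{s} + r^\sharp - \varsigma \leq \underline{s} + r^\sharp - \varsigma_0$ pointwise, so that by Young's inequality
\[
\mathcal{F}_\epsilon^{(r_1-\varsigma, r_2-\varsigma)}(x,\nabla u)|\nabla u|^2 \leq C\bigl(1 + \mathcal{F}_\epsilon^{(r_1-\varsigma_0, r_2-\varsigma_0)}(x,\nabla u)|\nabla u|^2\bigr),
\]
reducing the problem to the already-settled case $\varsigma = \varsigma_0$. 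The only delicate point is the bookkeeping in the absorption step: tracking which contributions carry $\|\nabla a\|_d^d + \|\nabla b\|_d^d$ and which do not, and ensuring that the arithmetic of the small parameters $\sigma, \lambda, \mu$ can be arranged independently of $\delta$ so that $\delta$ remains a free parameter in the final estimate.
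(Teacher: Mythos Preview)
Your proposal is correct and follows essentially the same route as the paper: for $\varsigma\in(0,r_d)$ you substitute the bounds of Lemmas~\ref{le:Q1}, \ref{le:Q3}, \ref{le:Q2} into \eqref{eq:complete} with small enough auxiliary parameters and absorb, and for $\varsigma\in[r_d,r^\sharp)$ you reduce to a smaller $\widetilde\varsigma\in(0,r_d)$ via Young's inequality, exactly as the paper does. Your writeup is in fact slightly more explicit than the paper's about the absorption arithmetic (the paper simply writes ``$\sigma+\lambda+\mu<1$'', suppressing the constants $C_1,C_2,C_3$ from \eqref{eq:complete}).
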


\begin{proof} For $\varsigma\in (0,r_{d})$ inequality \eqref{eq:complete-new} follows from the estimates of Lemmas \ref{le:Q1}, \ref{le:Q3}, \ref{le:Q2} with $\sigma+\lambda+\mu<1$. If $\varsigma \in (r_{d},r^\sharp)$, it is sufficient to observe that by virtue of Young's inequality and \eqref{eq:null-eps}
\[
\begin{split}
  & \mathcal{F}_{\epsilon}^{(r_1-\varsigma, r_2-\varsigma)}(x,\nabla u)\vert \nabla u\vert ^2\,dx \leq \mathcal{F}_{\epsilon}^{(r_1-\varsigma, r_2-\varsigma)}(x,\nabla u) w_\epsilon \\
  &\leq C_1 +\mathcal{F}_{\epsilon}^{(r_1-\widetilde \varsigma, r_2-\widetilde \varsigma)}(x,\nabla u) w_\epsilon
    \leq C_2 + 2 \mathcal{F}_{\epsilon}^{(r_1-\widetilde\varsigma , r_2-\widetilde \varsigma)}(x,\nabla u)\vert \nabla u\vert ^2\,dx
\end{split}
\]
with $\widetilde \varsigma < \varsigma$, $\widetilde \varsigma \in (0, r_{d})$, and independent of $u$ constants $C_1$, $C_2$.
\end{proof}

\begin{corollary}
\label{cor:d-r}
Let in the conditions of Lemma \ref{le:final-ell} $|\nabla a|, |\nabla b|\in L^{d}(\Omega)$ where

\begin{equation}
\label{eq:d-1}
d>2+\frac{N+2}{2}\left(\overline{s}^++r\right)\quad \text{with $r\geq 0$.}
\end{equation}
Then, for every $\delta>0$ and $s\in (0,r^\sharp)$

\begin{equation}
\label{eq:d-r-ell}
\begin{split}
\int_\Omega (\epsilon^2+|\nabla u|^2)^{\frac{\underline{s}(z)+s+r-2}{2}}|\nabla u|^2\,dx
&
\leq \delta \int_{\Omega}\mathcal{F}_{\epsilon}^{(r,r)}(x,\nabla u)\vert u_{xx}\vert ^2\,dx +C
\end{split}
\end{equation}
with a constant $C$ depending on the same quantities as the constant in \eqref{eq:complete-new}, $s$ and $r$.
\end{corollary}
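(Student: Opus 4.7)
The plan is to deduce Corollary~\ref{cor:d-r} from Lemma~\ref{le:final-ell} by a uniform shift of the exponents. I would set $\tilde p(x)=p(x)+r$ and $\tilde q(x)=q(x)+r$ and apply Lemma~\ref{le:final-ell} to the double-phase flux built from the pair $(\tilde p,\tilde q)$, with the same coefficients $a$, $b$, the same function $u$, and the same regularization parameter $\epsilon$. The shift by the constant $r\geq 0$ preserves every structural hypothesis: the Lipschitz constant $L_{p,q}$ is unchanged, the lower bound $\min\{p,q\}>2N/(N+2)$ remains valid, and the oscillation $\max|p-q|$ appearing in \eqref{eq:structure-prelim-2} is invariant. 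Only the maximum is shifted, to $\tilde{\overline{s}}^+=\overline{s}^++r$, so the condition \eqref{eq:d-1} assumed here is precisely the hypothesis \eqref{eq:a-b-d} for the shifted problem.

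The Lipschitz functions $\tilde r_1,\tilde r_2$ determined by \eqref{eq:r-i} for the pair $(\tilde p,\tilde q)$ satisfy $\tilde r_1=r^\sharp+\tilde{\underline{s}}-\tilde p=r^\sharp+\underline{s}-p=r_1$ and, analogously, $\tilde r_2=r_2$, since the shift by $r$ cancels. A direct substitution in the definition \eqref{eq:gamma} of the flux then yields
\[
\tilde{\mathcal F}_\epsilon^{(r_1-\varsigma,r_2-\varsigma)}(x,\nabla u)=(a_\epsilon+b_\epsilon)(\epsilon^2+|\nabla u|^2)^{\frac{\underline{s}(x)+r+r^\sharp-\varsigma-2}{2}},
\qquad
\tilde{\mathcal F}_\epsilon^{(0,0)}(x,\nabla u)=\mathcal F_\epsilon^{(r,r)}(x,\nabla u).
\]
Given $s\in(0,r^\sharp)$ I would choose $\varsigma=r^\sharp-s\in(0,r^\sharp)$, which is admissible in Lemma~\ref{le:final-ell}. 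Inequality \eqref{eq:complete-new}, applied to the shifted exponents with this $\varsigma$, then reads
\[
\int_\Omega(a_\epsilon+b_\epsilon)(\epsilon^2+|\nabla u|^2)^{\frac{\underline{s}(x)+s+r-2}{2}}|\nabla u|^2\,dx\leq\delta\int_\Omega\mathcal F_\epsilon^{(r,r)}(x,\nabla u)|u_{xx}|^2\,dx+C,
\]
with a constant $C$ of the form provided by Lemma~\ref{le:final-ell}, now carrying an additional dependence on $r$ and $s$.

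The final step is to invoke the structural lower bound $a_\epsilon(z)+b_\epsilon(z)=2\epsilon+a(z)+b(z)\geq\alpha$ from \eqref{eq:mod-coeff}, divide out the factor $a_\epsilon+b_\epsilon$ on the left-hand side, and absorb $\alpha$ into $\delta$ and $C$; this yields exactly \eqref{eq:d-r-ell}. The only real obstacle is bookkeeping: verifying that every quantity on which the constant of Lemma~\ref{le:final-ell} depends transforms in a controlled way under $p\mapsto p+r$, $q\mapsto q+r$, and confirming that the full range $s\in(0,r^\sharp)$ is covered (which follows because Lemma~\ref{le:final-ell} already treats both sub-intervals of $\varsigma$, including the upper one via Young's inequality). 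No new analytic input beyond the interpolation machinery of Section~\ref{sec:interpolation} is required.
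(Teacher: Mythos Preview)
Your proposal is correct and is exactly the argument the paper has in mind: although the corollary is stated without proof, the paper makes the same substitution explicit in the proof of Theorem~\ref{th:integr-par}, where it applies inequality~\eqref{eq:complete-new} ``with $p$ and $q$ substituted by $p+r$ and $q+r$.'' Your verification that the shift preserves the gap condition~\eqref{eq:structure-prelim-2}, leaves $r_1,r_2$ unchanged, and turns the hypothesis~\eqref{eq:a-b-d} into~\eqref{eq:d-1} is precisely the bookkeeping required, and the final use of $a_\epsilon+b_\epsilon\geq\alpha$ together with the arbitrariness of $\delta$ is the intended conclusion.
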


The estimate of Lemma \ref{le:final-ell} can be extended to the functions defined on the cylinder $Q_T$, provided that for a.e. $t\in (0,T)$ the exponents $p,q$ and the coefficients $a,b$ satisfy the conditions of Lemma \ref{lem:interpol}, \eqref{eq:structure-prelim-2}, and \eqref{eq:structure-prelim-1}.

\begin{theorem}
\label{th:integr-par} Let $\partial \Omega \in C^2$, $u\in
C^{0}([0,T];C^1(\overline{\Omega})\cap H^1_0(\Omega)\cap H^2(\Omega))$.  Assume that the exponents $p(\cdot)$, $q(\cdot)$ satisfy conditions \eqref{assum1}, \eqref{eq:Lip-p-q}, \eqref{eq:structure-prelim-2}, and the coefficients $a$, $b$ satisfy conditions \eqref{eq:structure-prelim-1}. If $r\geq 0$ and

\begin{equation}
\label{eq:cond-embed-par}
\begin{split}
& \int_{Q_T}\mathcal{F}^{(r,r)}_{\epsilon}(z,\nabla
u)\vert u_{xx}\vert ^2\,dz<\infty,\quad \sup_{(0,T)}\|  u(t)\|  _{2,\Omega}^2= M_0,
\\
& \text{$\nabla a,\nabla b\in L^d(Q_T)$ with $d>2+\frac{N+2}{2}\left(\overline{s}^++r\right)$},
\end{split}
\end{equation}
then for every $\varsigma \in (0,r^\sharp)$ and every $\beta\in (0,1)$
\begin{equation}
\label{eq:principal2}
\begin{split}
\int_{Q_T} \mathcal{F}_{\epsilon}^{(r+r_1-\varsigma, r+r_2-\varsigma)}(z,\nabla u)\vert \nabla u\vert ^2\,dz\leq \beta\int_{Q_T}\mathcal{F}^{(r,r)}_{\epsilon}(z,\nabla
u)\vert u_{xx}\vert ^{2}\,dx+C
\end{split}
\end{equation}
with
\[
r_1(z):=\underline{s}(z)+r^\sharp - p(z), \quad \text{and} \quad r_2(z):=\underline{s}(z)+r^\sharp - q(z)
\]
and a constant $C=C(\partial
\Omega,\beta,\alpha, L_{p,q},N,M_0, L, \varsigma,r)$. Moreover, for every $s\in (0,r^\sharp)$
\begin{equation}
\label{eq:principal-3}
\alpha\int_{Q_T}\vert \nabla u\vert ^{\underline{s}(z)+s+r}
\,dz\leq \beta\int_{Q_T}\mathcal{F}^{(r,r)}_{\epsilon}(z,\nabla
u)\vert u_{xx}\vert ^{2}\,dx+C
\end{equation}
with a constant $C$ depending on the same quantities as the constant in \eqref{eq:principal2}.
\end{theorem}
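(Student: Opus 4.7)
The plan is to deduce Theorem \ref{th:integr-par} from its elliptic counterpart, namely Lemma \ref{le:final-ell} together with Corollary \ref{cor:d-r}, by applying the spatial estimates pointwise in $t\in (0,T)$ and then integrating in time. Under the assumptions \eqref{assum1}, \eqref{eq:Lip-p-q}, \eqref{eq:structure-prelim-2}, \eqref{eq:structure-prelim-1}, for a.e.\ $t$ the frozen-time functions $p(\cdot,t), q(\cdot,t), a(\cdot,t), b(\cdot,t)$ satisfy the hypotheses of Lemma \ref{lem:interpol} uniformly. The key point is that, by Fubini applied to the assumption $\nabla a,\nabla b \in L^d(Q_T)$, for a.e.\ $t$ one has $\|\nabla a(\cdot,t)\|_{d,\Omega}, \|\nabla b(\cdot,t)\|_{d,\Omega}<\infty$, and the threshold $d>2+\frac{N+2}{2}(\overline{s}^++r)$ in \eqref{eq:cond-embed-par} is precisely \eqref{eq:d-1} of Corollary \ref{cor:d-r}.

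To obtain \eqref{eq:principal2}, I would apply Lemma \ref{le:final-ell} with the shifted exponents $p(\cdot,t)+r$ and $q(\cdot,t)+r$ in the roles of $p$ and $q$. Since $\min\{p+r,q+r\}=\underline{s}+r$, the functions $r_1,r_2$ defined through \eqref{eq:r-i} for the shifted exponents coincide with those in the theorem's statement, so the elliptic inequality reads
\[
\int_\Omega \mathcal{F}_\epsilon^{(r+r_1-\varsigma,r+r_2-\varsigma)}(x,\nabla u)\,|\nabla u|^2\,dx \le \delta \int_\Omega \mathcal{F}_\epsilon^{(r,r)}(x,\nabla u)\,|u_{xx}|^2\,dx + C(t),
\]
where $C(t)=C'+C''\bigl(\|\nabla a(\cdot,t)\|_{d,\Omega}^d+\|\nabla b(\cdot,t)\|_{d,\Omega}^d\bigr)$, with $C'$ independent of $t$ and depending only on $L_{p,q}, N, M_0, \alpha, \delta, \varsigma, r$. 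Integration over $(0,T)$, combined with $\nabla a,\nabla b\in L^d(Q_T)$, produces \eqref{eq:principal2} with $\beta=\delta$ and the total constant absorbing $T\,C'+C''(\|\nabla a\|_{d,Q_T}^d+\|\nabla b\|_{d,Q_T}^d)$.

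For \eqref{eq:principal-3} I would exploit the collapse of the two exponents under the choice \eqref{eq:r-i}: indeed $p+r+r_1-\varsigma=q+r+r_2-\varsigma=\underline{s}+r^\sharp-\varsigma+r$, so on the left-hand side of \eqref{eq:principal2} the flux factors as
\[
\mathcal{F}_\epsilon^{(r+r_1-\varsigma,r+r_2-\varsigma)}(z,\nabla u)\,|\nabla u|^2 = (a_\epsilon+b_\epsilon)\bigl(\epsilon^2+|\nabla u|^2\bigr)^{\frac{\underline{s}+r^\sharp-\varsigma+r-2}{2}}|\nabla u|^2 \ge \alpha\,|\nabla u|^{\underline{s}+r^\sharp-\varsigma+r},
\]
using $a_\epsilon+b_\epsilon\ge \alpha$ together with $\epsilon^2+|\nabla u|^2\ge |\nabla u|^2$. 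Setting $s=r^\sharp-\varsigma$ parameterizes the permissible range $\varsigma\in(0,r^\sharp)$ bijectively with $s\in(0,r^\sharp)$, and inserting this lower bound into \eqref{eq:principal2} immediately yields \eqref{eq:principal-3}.

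The principal obstacle is purely bookkeeping: one must check that the time-dependence of the constant from Lemma \ref{le:final-ell} enters only through $\|\nabla a(\cdot,t)\|_d^d+\|\nabla b(\cdot,t)\|_d^d$, so that integration in $t$ is controlled by the global $L^d(Q_T)$-norms assumed in \eqref{eq:cond-embed-par}. A secondary point is that the full range $\varsigma\in(0,r^\sharp)$ is admissible, not only the subrange $(0,r_d)$ in which the three integrals $\mathcal{Q}_j$ are directly estimated; this was resolved in Lemma \ref{le:final-ell} via the Young-inequality argument that trades a larger $\varsigma$ for a smaller $\widetilde{\varsigma}\in(0,r_d)$, and the same device applies verbatim pointwise in $t$ before integrating.
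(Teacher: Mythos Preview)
Your approach matches the paper's: freeze $t$, apply Lemma \ref{le:final-ell} with the shifted exponents $p+r,\,q+r$, integrate in time, and pass from \eqref{eq:principal2} to \eqref{eq:principal-3} via the collapse $p+r_1=q+r_2=\underline{s}+r^\sharp$. The paper phrases the first step through a finite space--time cover $\{Q^{(i)}\}$, but this is only to guarantee that the oscillation bound \eqref{eq:osc-loc} and the constants in Lemma \ref{le:final-ell} are uniform in $t$; your observation that the time-dependence of the elliptic constant enters solely through $\|\nabla a(\cdot,t)\|_{d,\Omega}^d+\|\nabla b(\cdot,t)\|_{d,\Omega}^d$ is exactly what makes the time integration work.

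One genuine slip: the pointwise lower bound
\[
(a_\epsilon+b_\epsilon)\bigl(\epsilon^2+|\nabla u|^2\bigr)^{\frac{\mu-2}{2}}|\nabla u|^2\;\ge\;\alpha\,|\nabla u|^{\mu},\qquad \mu=\underline{s}+r+r^\sharp-\varsigma,
\]
fails when $\mu<2$, since then the exponent $\frac{\mu-2}{2}$ is negative and $\epsilon^2+|\nabla u|^2\ge|\nabla u|^2$ works against you. This can actually occur (take $r=0$, $\underline{s}$ close to $\frac{2N}{N+2}$, $\varsigma$ near $r^\sharp$). The fix is to use \eqref{eq:null-eps} instead, which gives $\alpha|\nabla u|^{\mu}\le C+2\,\mathcal{F}_\epsilon^{(r+r_1-\varsigma,r+r_2-\varsigma)}(z,\nabla u)|\nabla u|^2$ with an additive constant that gets absorbed into the right-hand side of \eqref{eq:principal-3}. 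This is precisely how the paper closes the argument.
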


\begin{proof}
Since the exponents $p,q$ are Lipschitz-continuous in $\overline{Q}_T$, for every $0 < \varsigma < r_{d}$ there exists a finite cover of $Q_T$ composed of the cylinders $Q^{(i)}=\Omega_i\times (t_{i-1},t_i)$, $i=1,2,\ldots,K$, such that
\[
\begin{split}
& t_0=0,\quad t_K=T,\quad t_i-t_{i-1}=\rho,\quad Q_T\subset \bigcup_{i=1}^{K}Q^{(i)},\quad \partial\Omega_i\in C^2,
\\
& \underline{s}_i^+=\max_{\overline{Q}^{(i)}} \underline{s}(z),\quad \underline{s}_i^{-}=\min_{\overline{Q}^{(i)}}\underline{s}(z),
\quad \underline{s}_i^+-\underline{s}_i^- <\dfrac{\varsigma}{4},\quad i=1,\ldots,K,\;\;K=K(L_{p,q}\varsigma).
\end{split}
\]
For a.e. $t\in (0,T)$ the function $u(\cdot,t):\Omega\mapsto \mathbb{R}$  satisfies inequality \eqref{eq:complete-new} on each of $\Omega_i$ with $p$ and $q$ substituted by $p+r$ and $q+r$. Integrating these inequalities over the intervals $(t_{i-1},t_i)$ and summing the results we obtain \eqref{eq:principal2} with $\varsigma<r_{d}$. The case $\varsigma\in [r_{d},r^\sharp)$ is considered as in the proof of Lemma \ref{le:final-ell}. Due to the choice of $r_1(z)$, $r_2(z)$, inequality \eqref{eq:principal-3} follows from \eqref{eq:principal2} with the help of \eqref{eq:null-eps}.
\end{proof}

\section{Estimates involving the second-order derivatives}
\label{sec:int-by-parts}
The results of this section hold for all sufficiently smooth functions. As in Section \ref{sec:interpolation}, we first derive estimates for functions depending on the space variables $x$ and then extend them to the functions defined on the cylinder $Q_T$. The notation $\mathcal{F}_\epsilon(x,\nabla u)\nabla u\equiv \mathcal{F}_\epsilon^{(0,0)}(x,\nabla u)\nabla u$ stands for the regularized flux introduced in \eqref{eq:structure-prelim-1}.

\subsection{Integration by parts}
\begin{proposition}\cite[Proposition 3.1]{Ar-Sh-2025}
    Let $\partial\Omega\in C^2$ and $\vec \alpha$, $\vec \beta$ be arbitrary vectors with the components
\begin{equation}
\label{eq:vec-reg}
\alpha_i \in C^0(\overline{\Omega})\cap C^1(\Omega), \qquad \beta_i\in C^1(\overline{\Omega})\cap C^2(\Omega).
\end{equation}
Denote by $\vec\nu$ the unit exterior normal to $\partial\Omega$ and represent $\vec \alpha=\alpha_\nu\vec\nu+ \vec\alpha_\tau$, $\vec \beta=\beta_\nu\vec\nu+ \vec\beta_\tau$, where $\alpha_{\nu}=(\vec \alpha,\vec \nu)$, $\beta_\nu=(\vec \beta,\vec \nu)$ are the normal components of $\vec \alpha$, $\vec \beta$, and $\vec\alpha_\tau$, $\vec\beta_{\tau}$ belong to the tangent plane to $\partial\Omega$. If $\vec\alpha_\tau=0$ and $\vec\beta_{\tau}=0$, then
\begin{equation}
\label{eq:by-parts}
\begin{split}
\int_\Omega & \operatorname{div}\vec \alpha \operatorname{div}\vec \beta\,dx  = -\int_{\partial\Omega}\alpha_{\nu}\beta_{\nu}\operatorname{trace}\mathcal{B} \,dS + \int_\Omega \sum_{i,j=1}^N D_j\alpha_i D_i\beta_j\,dx.
\end{split}
\end{equation}
\end{proposition}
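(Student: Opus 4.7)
The plan is to reduce the statement to a pointwise algebraic identity in $\Omega$, apply the divergence theorem, and identify the resulting boundary term with the shape-operator integral on $\partial\Omega$ via a tangential decomposition.

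First I would establish the pointwise identity
\[
\operatorname{div}\vec\alpha\,\operatorname{div}\vec\beta - \sum_{i,j=1}^{N} D_j\alpha_i\,D_i\beta_j = \operatorname{div}\bigl(\vec\alpha\,\operatorname{div}\vec\beta - (\vec\alpha\cdot\nabla)\vec\beta\bigr)\quad\text{in }\Omega,
\]
which follows by expanding the right-hand side and cancelling the two second-order terms $\alpha_i D_i D_j\beta_j - \alpha_i D_j D_i\beta_j$ using equality of mixed partials of $\vec\beta$ (available since $\beta_i\in C^2(\Omega)$). Integrating over $\Omega$ and applying the divergence theorem then yields
\[
\int_\Omega \operatorname{div}\vec\alpha\,\operatorname{div}\vec\beta\,dx = \int_\Omega\sum_{i,j=1}^{N}D_j\alpha_i\,D_i\beta_j\,dx + \int_{\partial\Omega}\Bigl[\alpha_\nu\operatorname{div}\vec\beta - ((\vec\alpha\cdot\nabla)\vec\beta)\cdot\vec\nu\Bigr]dS.
\]

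Next I would process the boundary integral. The hypotheses $\vec\alpha_\tau=\vec\beta_\tau=0$ give $\vec\alpha=\alpha_\nu\vec\nu$ and $\vec\beta=\beta_\nu\vec\nu$ on $\partial\Omega$, so $(\vec\alpha\cdot\nabla)\vec\beta=\alpha_\nu\,(\vec\nu\cdot\nabla)\vec\beta$ there, and the boundary integrand reduces to $\alpha_\nu\bigl[\operatorname{div}\vec\beta-((\vec\nu\cdot\nabla)\vec\beta)\cdot\vec\nu\bigr]$. Choosing at each point of $\partial\Omega$ a local orthonormal frame $\{\vec\tau_1,\dots,\vec\tau_{N-1},\vec\nu\}$, one decomposes
\[
\operatorname{div}\vec\beta = \sum_{k=1}^{N-1}(\vec\tau_k\cdot\nabla)\vec\beta\cdot\vec\tau_k + ((\vec\nu\cdot\nabla)\vec\beta)\cdot\vec\nu,
\]
so the bracket equals the tangential trace $\sum_{k}(\vec\tau_k\cdot\nabla)\vec\beta\cdot\vec\tau_k$. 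Tangential derivatives depend only on the boundary values, and $\vec\beta|_{\partial\Omega}=\beta_\nu\vec\nu$ gives
\[
(\vec\tau_k\cdot\nabla)\vec\beta\bigr|_{\partial\Omega} = (\vec\tau_k\beta_\nu)\vec\nu + \beta_\nu\,\nabla_{\vec\tau_k}\vec\nu.
\]
The first summand contributes nothing to the $\vec\tau_k$-component since $\vec\nu\perp\vec\tau_k$; for the second, $|\vec\nu|=1$ forces $\nabla_{\vec\tau_k}\vec\nu$ to be tangential, and by definition of the Weingarten map $\mathcal{B}$ of $\partial\Omega$ one has $\nabla_{\vec\tau_k}\vec\nu\cdot\vec\tau_k=-\mathcal{B}(\vec\tau_k)\cdot\vec\tau_k$. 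Summing over $k$ produces $-\beta_\nu\operatorname{trace}\mathcal{B}$, and substituting gives precisely the formula \eqref{eq:by-parts}.

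I expect the principal obstacle to be the geometric step: careful justification that the bulk quantity $(\vec\tau_k\cdot\nabla)\vec\beta\cdot\vec\tau_k$ — computed from $\vec\beta$ defined on $\overline\Omega$ — coincides with the intrinsic tangential object built from $\beta_\nu$ and the shape operator of $\partial\Omega$, together with frame-independence of the resulting trace; this is where the hypothesis $\partial\Omega\in C^2$ is essential. A secondary technical point concerns regularity: the identity is immediate when $\vec\alpha\in C^1(\overline\Omega)$, and to cover the class $\alpha_i\in C^0(\overline\Omega)\cap C^1(\Omega)$ I would mollify $\vec\alpha$ in the interior, pass to the limit in the quadratic interior integral by dominated convergence (using the finiteness of $\int_\Omega\operatorname{div}\vec\alpha\,\operatorname{div}\vec\beta\,dx$ as implicit hypothesis), and exploit the uniform convergence of $\alpha_\nu$ on $\partial\Omega$ together with the bounded factor $\beta_\nu\operatorname{trace}\mathcal{B}\in C^0(\partial\Omega)$ to handle the surface term.
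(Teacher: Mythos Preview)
Your argument is correct and matches the standard derivation that underlies the cited result: the paper does not reprove the proposition here (it is quoted from \cite{Ar-Sh-2025}), but the intermediate boundary form $\int_{\partial\Omega}\bigl(\alpha_\nu\operatorname{div}\vec\beta-((\vec\alpha\cdot\nabla)\vec\beta)\cdot\vec\nu\bigr)\,dS$ that the paper records immediately after the statement is exactly the one you obtain, confirming that the intended route is the same pointwise identity plus tangential decomposition via the Weingarten map. Your remarks on the sign convention for $\mathcal{B}$ and on the approximation needed to accommodate $\alpha_i\in C^0(\overline\Omega)\cap C^1(\Omega)$ are appropriate refinements.
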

Given a smooth function $u$ defined on $\overline \Omega$, $u=0$ on $\partial\Omega$, introduce the vectors

\begin{equation}
\label{eq:p+r}
\vec \alpha= \mathcal{F}_\epsilon(x,\nabla u)\nabla u\equiv \left(a_\epsilon w_\epsilon ^{\frac{p(x)-2}{2}} + b_\epsilon w_\epsilon ^{\frac{q(x)-2}{2}}\right)\nabla u
\qquad \vec \beta=  w_\epsilon ^{\frac{r-2}{2}}\nabla u,\quad r=const\geq 2,
\end{equation}
To apply formula \eqref{eq:by-parts}  we claim that $\alpha_i$, $\beta_i$ satisfy assumptions \eqref{eq:vec-reg}. By the straightforward computation

\[
\begin{split}
D_j \alpha_i & = a_\epsilon w_\epsilon^{\frac{p-2}{2}}D^2_{ij}u+(p-2) a_\epsilon w_\epsilon^{\frac{p-2}{2}-1}D_iu\sum_{k=1}^N D_ku D^2_{kj}u
+ \frac{1}{2} a_\epsilon w_\epsilon ^{\frac{p-2}{2}}\ln w_\epsilon D_iuD_jp
\\
& \qquad  + w_\epsilon^{\frac{p-2}{2}} D_i u D_j a + w_\epsilon^{\frac{q-2}{2}} D_i u D_j b
\\
& \quad + b_\epsilon w_\epsilon^{\frac{q-2}{2}}D^2_{ij}u+(q-2) b_\epsilon w_\epsilon^{\frac{q-2}{2}-1} D_iu \sum_{k=1}^N D_ku D^2_{kj}u
+ \frac{1}{2} b_\epsilon w_\epsilon ^{\frac{q-2}{2}}\ln w_\epsilon D_iuD_jq.
\end{split}
\]

For every $\epsilon>0$ the inclusion $\alpha_i\in C^2({\Omega})$ holds if $u\in C^3({\Omega})$, $p,q,a,b \in C^2(\Omega)$. Since $u=0$ on $\partial\Omega$, then $\vec\nu=\dfrac{\nabla u}{|\nabla u|}$ and
\[
\begin{split}
&
\alpha_\nu=\vec{\alpha}\cdot \vec{\nu}=\vec{\alpha}\cdot \dfrac{\nabla u}{|\nabla u|}= a_\epsilon w_\epsilon^{\frac{p(z)-2}{2}}|\nabla u| + b_\epsilon w_\epsilon^{\frac{q(z)-2}{2}}|\nabla u|,\qquad \vec \alpha_\tau=\vec \alpha-\alpha_\nu \vec\nu=0,
\\
& \beta_\nu=\vec{\beta}\cdot \vec{\nu}=\vec{\beta}\cdot \dfrac{\nabla u}{|\nabla u|}= w_\epsilon ^{\frac{r-2}{2}}|\nabla u|,\qquad \vec \beta_\tau=\vec \beta-\beta_\nu \vec\nu=0.
\end{split}
\]
The boundary integral in \eqref{eq:by-parts} transforms into

\[
\int_{\partial \Omega} \left(\alpha_\nu \operatorname{div}\vec \beta - ((\vec \alpha\cdot \nabla)\vec \beta)\cdot \vec \nu\right)\,dS= -\int_{\partial\Omega} \left(a_\epsilon w_\epsilon^{\frac{p+r}{2}-2} + b_\epsilon w_\epsilon^{\frac{q+r}{2}-2} \right)|\nabla u|^2\operatorname{trace}\mathcal{B}\,dS
\]
and formula \eqref{eq:by-parts}  becomes

\begin{equation}
\label{eq:double-final-e}
\begin{split}
\int_{\Omega} \operatorname{div} & \left(\mathcal{F}_\epsilon(z,w_\epsilon)\nabla u\right)
\operatorname{div}\left( w_\epsilon ^{\frac{r-2}{2}}\nabla u\right)\,dx
=- \int_{\partial\Omega} \left(a_\epsilon w_\epsilon^{\frac{p+r}{2}-2} + b_\epsilon w_\epsilon^{\frac{q+r}{2}-2} \right)|\nabla u|^2\operatorname{trace}\mathcal{B}\,dS \\
& \qquad + \sum_{i,j=1}^N\int_{\Omega}D_{i}\left( \left(a_\epsilon w_\epsilon ^{\frac{p-2}{2}} + b_\epsilon  w_\epsilon^{\frac{q-2}{2}} \right)D_{j}u \right) D_{j}\left(w_\epsilon ^{\frac{r-2}{2}}D_{i}u\right)\,dx.
\end{split}
\end{equation}

\subsection{Pointwise inequalities}

For a given function $u$ we denote $\vec{\eta}=\dfrac{\nabla u}{  \sqrt{w_\epsilon} }$, $|\vec \eta|<1$. By the straightforward computation
\[
\begin{split}
D_{i} \left(a_\epsilon w_\epsilon ^{\frac{p-2}{2}}D_{j}u\right)
& = w_\epsilon^{\frac{p-2}{2}}\left[ a_\epsilon \left(D^2_{ij}u + (p-2)\eta_j\sum_{k=1}^N D^2_{ki}u\eta_k+ \frac{1}{2}\ln w_\epsilon D_juD_ip\right) + D_j u D_i a\right],
\\
D_j\left(w_\epsilon^{\frac{r-2}{2}} D_ju\right) & =
 w_\epsilon ^{\frac{r-2}{2}}\left(D^2_{ij}u + (r-2)\eta_i\sum_{k=1}^N D^2_{kj}u\eta_k\right),
\end{split}
\]
the the analogous formula holds for $D_{i} \left(b_\epsilon w_\epsilon ^{\frac{q-2}{2}}D_{j}u\right)$. Combining these formulas we have

\begin{equation}
\label{eq:product}
\begin{split}
D_{i} & \left(\mathcal{F}_\epsilon(z,\nabla u)D_ju\right)
D_{j}\left( w_\epsilon ^{\frac{r-2}{2}}D_{i}u\right)
\\
&
=  w_\epsilon^{\frac{p+r}{2}-2} \left[ a_\epsilon \left(D^2_{ij}u + (p-2)\eta_j\sum_{k=1}^N D^2_{ki}u\eta_k+ \frac{1}{2}  \sqrt{w_\epsilon} \ln (\epsilon^2+ |\nabla u|^2) \eta_jD_ip \right) + \sqrt{w_\epsilon} \eta_j D_ia\right]
\\
& \qquad \times
\left(D^2_{ij}u + (r-2)\eta_i\sum_{k=1}^N D^2_{kj}u\eta_k\right)
\\
& \quad + w_\epsilon ^{\frac{q+r}{2}-2} \left[ b_\epsilon  \left(D^2_{ij}u + (q-2)\eta_j\sum_{k=1}^N D^2_{ki}u\eta_k+ \frac{1}{2}  \sqrt{w_\epsilon} \ln (\epsilon^2+ |\nabla u|^2) \eta_j D_iq \right) + \sqrt{w_\epsilon} \eta_j D_ib\right]
\\
& \quad \qquad \times
\left(D^2_{ij}u + (r-2)\eta_i\sum_{k=1}^N D^2_{kj}u\eta_k\right)
\\
&
\equiv  w_\epsilon ^{\frac{p+r}{2}-2} \mathcal{K}_{ij}^{(1)} + w_\epsilon ^{\frac{q+r}{2}-2} \mathcal{K}_{ij}^{(2)}.
\end{split}
\end{equation}
Denote by $\mathcal{H}(u)$ the Hessian matrix of $u$
and write $\mathcal{K}_{ij}^{(1)}$ as

\begin{equation}
\label{eq:K}
\begin{split}
\mathcal{K}_{ij}^{(1)} & = a_\epsilon \mathcal{H}^2_{ij}(u) + a_\epsilon\mathcal{H}_{ij}(u)\left[\sum_{k=1}^N((p-2)\eta_j\mathcal{H}_{ki}(u)+(r-2)\eta_i \mathcal{H}_{kj}(u))\eta_k\right]
\\
& + (p-2)(r-2) a_\epsilon \eta_i\eta_j \sum_{k,l=1}^N
\mathcal{H}_{li}(u)\mathcal{H}_{kj}(u)\eta_l\eta_k
\\
& + \frac{a_\epsilon}{2} \mathcal{H}_{ij}(u)\ln w_\epsilon D_iuD_jp
+ \frac{a_\epsilon}{2}(r-2)\eta_i\eta_j  \sqrt{w_\epsilon} \sum_{k=1}^N\mathcal{H}_{ki}(u)\eta_k \ln  w_\epsilon D_jp
\\
& + \sqrt{w_\epsilon}  \mathcal{H}_{ij}(u) \eta_j D_i a + (r-2)\eta_i \eta_j \sqrt{w_\epsilon} \sum_{k=1}^N \mathcal{H}_{kj}(u) \eta_k  D_i a
 \equiv \sum_{s=1}^{7}\mathcal{J}^{(p,s)}_{ij}.
\end{split}
\end{equation}
Summing up and regrouping we represent

\[
\begin{split}
\sum_{i,j=1}^N \mathcal{K}_{ij}^{(1)} &
= a_\epsilon \left(\operatorname{trace}\mathcal{H}^2(u) + (p+r-4)|(\mathcal{H}(u),\eta)|^2+(p-2)(r-2) \left((\mathcal{H}(u),\eta)\cdot \eta\right)^2 \right)
+\sum_{i,j=1}^N\sum_{s=4}^7\mathcal{J}_{ij}^{(p,s)}.
\end{split}
\]
Similarly, the following representation holds for $\mathcal{K}_{ij}^{(2)}$:
\[
\begin{split}
\sum_{i,j=1}^N \mathcal{K}_{ij}^{(2)} & =  b_\epsilon  \left(\operatorname{trace}\mathcal{H}^2(u) + (q+r-4)|(\mathcal{H}(u),\eta)|^2+(q-2)(r-2) \left((\mathcal{H}(u),\eta)\cdot \eta\right)^2 \right)
+\sum_{i,j=1}^N\sum_{s=4}^7\mathcal{J}_{ij}^{(q,s)},
\end{split}
\]
where the residual terms that depend on the derivatives of $p, q$ and $a, b$ have the form

\begin{equation}
\label{eq:residuals}
\begin{split}
& \mathcal{J}_{ij}^{(p,4)} =\frac{a_\epsilon}{2}\mathcal{H}_{ij}(u)\ln w_\epsilon D_iuD_jp,
\quad \mathcal{J}_{ij}^{(p,5)} = \frac{a_\epsilon}{2}(r-2)\sum_{k=1}^N\mathcal{H}_{kj}(u)\eta_k \eta_j\ln  w_\epsilon D_jp D_iu,\
\\
& \mathcal{J}_{ij}^{(p,6)} =   \mathcal{H}_{ij}(u) D_j u D_i a, \qquad \mathcal{J}_{ij}^{(p,7)} =  \frac{(r-2)}{w_\epsilon} D_i u D_j u \sum_{k=1}^N \mathcal{H}_{kj}(u) D_k u  D_i a,
\end{split}
\end{equation}
and
\begin{equation}
\label{eq:residuals-2}
\begin{split}
& \mathcal{J}_{ij}^{(q,4)} =\frac{b_\epsilon }{2}\mathcal{H}_{ij}(u)\ln w_\epsilon D_iu D_jq,
\quad \mathcal{J}_{ij}^{(q,5)} = \frac{b_\epsilon }{2}(r-2)\sum_{k=1}^N\mathcal{H}_{kj}(u)\eta_k \eta_j\ln  w_\epsilon D_j q D_iu,\
\\
& \mathcal{J}_{ij}^{(q,6)} =   \mathcal{H}_{ij}(u) D_j u D_i b, \qquad \mathcal{J}_{ij}^{(q,7)} =  \frac{(r-2)}{w_\epsilon} D_i u D_j u \sum_{k=1}^N \mathcal{H}_{kj}(u) D_k u  D_i b.
\end{split}
\end{equation}
Accept the notation
\[
\mathcal{G}_e(\eta)\equiv \operatorname{trace}\mathcal{H}^2(u) + (e+r-4)|(\mathcal{H}(u),\eta)|^2+(e-2)(r-2) \left((\mathcal{H}(u),\eta)\cdot \eta\right)^2, \quad e \in \{p,q\}.
\]

\begin{proposition}\cite[Proposition 3.2]{Ar-Sh-2025}
\label{pro:pointwise}
Let $u\in C^2(\Omega)$. If $p^-, q^->1$ and $r\geq 2$, then
\begin{equation}
\label{eq:est-from-below}
\mathcal{G}_e( \eta)\geq \sigma\operatorname{trace}\mathcal{H}^2(u)\equiv \sigma|u_{xx}|^2\qquad \forall\,\eta \in \mathbb{R}^N,\;\;|\eta|\leq 1, \quad e = \{p,q\},
\end{equation}
with the constant $\sigma=\min\{1,p^--1, q^--1\}$.
\end{proposition}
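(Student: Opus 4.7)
The plan is to treat $\mathcal{G}_e(\eta)$ as a quadratic form in the Hessian $A:=\mathcal{H}(u)$ depending on $(e,r,\eta)$ and to reduce the bound to three elementary comparisons between the scalar quantities
\[
T:=\operatorname{trace}(A^2),\qquad V:=|A\eta|^2,\qquad S:=(A\eta\cdot\eta)^2.
\]
With the shorthand $\alpha:=e+r-4$ and $\beta:=(e-2)(r-2)$, one has $\mathcal{G}_e(\eta)=T+\alpha V+\beta S$, and a direct computation yields the key identity $1+\alpha+\beta=(e-1)(r-1)$. The chain $0\le S\le V\le T$ (for $|\eta|\le 1$) follows from Cauchy--Schwarz $(A\eta\cdot\eta)^2\le|A\eta|^2|\eta|^2$ and from $|A\eta|^2\le\|A\|_{\mathrm{op}}^2|\eta|^2\le\operatorname{trace}(A^2)|\eta|^2$.

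Next I would run a short sign analysis on $(\alpha,\beta)$. The combination $\alpha<0,\beta>0$ is impossible under $r\ge 2$: $\beta>0$ with $r-2\ge 0$ forces $e>2$, but then $\alpha\ge e-2>0$. In the case $\alpha\ge 0$ and $\beta\ge 0$ (equivalently $e\ge 2$), both perturbations are non-negative and $\mathcal{G}_e(\eta)\ge T$. In the case $\alpha\ge 0$ and $\beta<0$, the inequality $S\le V$ together with the sign of $\beta$ gives $\beta S\ge\beta V$, hence $\mathcal{G}_e(\eta)\ge T+(\alpha+\beta)V$; one then replaces $V$ by $T$ only if $\alpha+\beta<0$. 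In the case $\alpha<0$ and $\beta\le 0$, the estimates $V\le T$ and $S\le T$ can be applied directly, producing $\mathcal{G}_e(\eta)\ge T(1+\alpha+\beta)$.

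In every branch the outcome is $\mathcal{G}_e(\eta)\ge T\min\{1,\,1+\alpha+\beta\}=T\min\{1,(e-1)(r-1)\}$, and since $r\ge 2$ forces $r-1\ge 1$, this simplifies to $\min\{1,e-1\}\,T$. Applying the bound with $e\in\{p,q\}$ and using $p^-,q^->1$ yields $\mathcal{G}_e(\eta)\ge\sigma\,|u_{xx}|^2$ with $\sigma=\min\{1,p^--1,q^--1\}$, which is exactly the assertion. The only delicate point is bookkeeping: one must verify that the pathological case $\alpha<0,\beta>0$ is excluded by $r\ge 2$, and choose carefully when to replace $S$ by $V$ rather than $V$ by $T$ so as to retain the sharp constant $(e-1)(r-1)$. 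There is no genuine analytic obstacle, because the inequality is purely finite-dimensional and algebraic.
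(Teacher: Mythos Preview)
Your argument is correct. The chain $0\le S\le V\le T$ together with the case analysis on the signs of $\alpha=e+r-4$ and $\beta=(e-2)(r-2)$ yields $\mathcal{G}_e(\eta)\ge T\min\{1,(e-1)(r-1)\}$, and the reduction to $\min\{1,e-1\}$ via $r-1\ge 1$ is the right final step. The paper does not supply its own proof of this proposition; it simply imports the result from \cite[Proposition~3.2]{Ar-Sh-2025}, so there is no in-text argument to compare against. Your self-contained algebraic verification is exactly what is needed to make the present paper independent of that citation.
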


Substituting \eqref{eq:est-from-below} into \eqref{eq:product} we arrive at the following pointwise inequality.

\begin{lemma}
\label{le:pointwise-1}
If $u\in C^2(\Omega)$, $p^->1$, $q^- >1$ and $r\geq 2$, then

\begin{equation}
\label{eq:est-from-below-1}
\begin{split}
\sum_{i,j=1}^N & D_i\left(\mathcal{F}_\epsilon(x,\nabla u)D_ju\right)
D_{j}\left( w_\epsilon ^{\frac{r-2}{2}}D_{i}u\right)
\geq \sigma \left( a(x) w_\epsilon^{\frac{p+r}{2}-2} +  b(x) w_\epsilon^{\frac{q+r}{2}-2} \right) |u_{xx}|^2
\\
&
+ w_\epsilon ^{\frac{p+r}{2}-2} \sum_{i,j=1}^N \sum_{s=4}^7 \mathcal{J}_{ij}^{(p,s)}
+  w_\epsilon ^{\frac{q+r}{2}-2}\sum_{i,j=1}^N\sum_{s=4}^7 \mathcal{J}_{ij}^{(q,s)}
\end{split}
\end{equation}
with $\sigma=\min\{1,p^--1, q^--1\}$, and $\mathcal{J}_{ij}^{(p, s)}$, $\mathcal{J}_{ij}^{(q, s)}$ defined in \eqref{eq:residuals} and \eqref{eq:residuals-2}.
\end{lemma}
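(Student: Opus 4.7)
The plan is to simply combine the algebraic identity \eqref{eq:product} with the quadratic form lower bound supplied by Proposition \ref{pro:pointwise}, so no new computation is required beyond bookkeeping. Identity \eqref{eq:product} already writes the product $D_i(\mathcal{F}_\epsilon(x,\nabla u)D_j u)\,D_j(w_\epsilon^{(r-2)/2}D_iu)$ as $w_\epsilon^{(p+r)/2-2}\mathcal{K}_{ij}^{(1)}+w_\epsilon^{(q+r)/2-2}\mathcal{K}_{ij}^{(2)}$, and the decomposition \eqref{eq:K} splits each $\mathcal{K}_{ij}^{(\cdot)}$ into a principal Hessian-quadratic piece and the residual terms $\mathcal{J}_{ij}^{(\cdot,s)}$, $s=4,\ldots,7$, that carry the derivatives of the exponents and of the coefficients $a,b$.

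First I would sum \eqref{eq:product} over $i,j=1,\ldots,N$ and invoke the regrouping displayed right after \eqref{eq:K}, namely
\[
\sum_{i,j}\mathcal{K}_{ij}^{(1)}=a_\epsilon\,\mathcal{G}_p(\eta)+\sum_{i,j}\sum_{s=4}^{7}\mathcal{J}_{ij}^{(p,s)},\qquad \sum_{i,j}\mathcal{K}_{ij}^{(2)}=b_\epsilon\,\mathcal{G}_q(\eta)+\sum_{i,j}\sum_{s=4}^{7}\mathcal{J}_{ij}^{(q,s)},
\]
with the unit-normalized direction $\vec\eta=\nabla u/\sqrt{w_\epsilon}$, which by definition satisfies $|\vec\eta|<1$ pointwise since $|\nabla u|^2\leq w_\epsilon-\epsilon^2<w_\epsilon$. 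Hence Proposition \ref{pro:pointwise} is applicable at every point of $\Omega$ with $e=p$ and $e=q$, yielding $\mathcal{G}_p(\vec\eta),\mathcal{G}_q(\vec\eta)\geq \sigma|u_{xx}|^2$ with $\sigma=\min\{1,p^--1,q^--1\}$.

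Next I would discard the $\epsilon$-contribution in the principal terms by using $a_\epsilon=\epsilon+a(x)\geq a(x)$ and $b_\epsilon=\epsilon+b(x)\geq b(x)$, which are licit since $\mathcal{G}_p,\mathcal{G}_q\geq 0$ after the previous step. Multiplying the two estimates by the nonnegative weights $w_\epsilon^{(p+r)/2-2}$ and $w_\epsilon^{(q+r)/2-2}$ respectively, adding them, and keeping the residual sums $w_\epsilon^{(p+r)/2-2}\sum_{s=4}^{7}\mathcal{J}_{ij}^{(p,s)}$ and $w_\epsilon^{(q+r)/2-2}\sum_{s=4}^{7}\mathcal{J}_{ij}^{(q,s)}$ unchanged on the right-hand side produces exactly \eqref{eq:est-from-below-1}.

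There is no genuine obstacle here: the lemma is a direct substitution. The only points requiring a moment of care are verifying the strict bound $|\vec\eta|<1$ so that Proposition \ref{pro:pointwise} applies uniformly in $\Omega$, and reproducing the decomposition \eqref{eq:K} faithfully—in particular ensuring that the $a_\epsilon\to a(x)$ replacement in the leading term is performed only on the quadratic part (where positivity allows it) and not inside the residuals $\mathcal{J}_{ij}^{(p,s)}$, $\mathcal{J}_{ij}^{(q,s)}$, which must be transcribed verbatim from \eqref{eq:residuals}–\eqref{eq:residuals-2}.
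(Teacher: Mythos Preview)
Your proposal is correct and follows exactly the paper's approach, which consists of the single sentence ``Substituting \eqref{eq:est-from-below} into \eqref{eq:product} we arrive at the following pointwise inequality.'' Your additional care in noting that $a_\epsilon\geq a(x)$, $b_\epsilon\geq b(x)$ can be used on the principal term (because $\sigma|u_{xx}|^2\geq 0$) to pass from $a_\epsilon,b_\epsilon$ to $a(x),b(x)$ as written in \eqref{eq:est-from-below-1} is a welcome clarification that the paper leaves implicit.
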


\subsection{Integral inequalities}

Combining \eqref{eq:est-from-below} with \eqref{eq:double-final-e} we obtain the following integral inequality.

\begin{lemma}
\label{le:principal-e} Let $\partial\Omega\in C^2$, $u\in C^3(\Omega)\cap
C^{2}(\overline{\Omega})$, $p, q \in C^{0,1}(\overline{\Omega})$. If $p^-, q^-> 1$ and $r\geq 2$, then

\begin{equation}
\label{eq:p-est-1}
\begin{split}
\int_{\Omega} & \operatorname{div}\left(a_\epsilon w_\epsilon^{\frac{p-2}{2}}\nabla u + b_\epsilon w_\epsilon^{\frac{q-2}{2}}\nabla u\right)
\operatorname{div}\left(w_\epsilon^{\frac{r-2}{2}}\nabla
u\right)\,dx\\
&  \geq - \int_{\partial\Omega} \left(a_\epsilon w_\epsilon^{\frac{p+r}{2}-2} + b_\epsilon w_\epsilon^{\frac{q+r}{2}-2} \right)|\nabla u|^2\operatorname{trace}\mathcal{B}\,dS
 \\
 & \quad + \sigma \int_{\Omega} \left( a_\epsilon w_\epsilon^{\frac{p+r}{2}-2} +  b_\epsilon w_\epsilon^{\frac{q+r}{2}-2} \right) |u_{xx}|^2 \,dx +
\int_{\Omega} w_\epsilon ^{\frac{p+r}{2}-2}\sum_{i,j=1}^N\sum_{s=4}^7 \mathcal{J}_{ij}^{(p,s)} ~dx \\
& \qquad + \int_{\Omega} w_\epsilon ^{\frac{q+r}{2}-2}\sum_{i,j=1}^N\sum_{s=4}^7 \mathcal{J}_{ij}^{(q,s)} ~dx
\end{split}
\end{equation}
where $\mathcal{B}$ is the second fundamental form of the surface $\partial\Omega$, $\sigma=\min\{1,p^--1, q^--1\}$ and $\mathcal{J}_{ij}^{(p,s)}$, $\mathcal{J}_{ij}^{(q,s)}$ are defined in \eqref{eq:residuals} and \eqref{eq:residuals-2}.
\end{lemma}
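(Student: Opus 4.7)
The plan is to combine the identity \eqref{eq:double-final-e} obtained via the integration by parts formula with the pointwise inequality of Lemma \ref{le:pointwise-1}. Specifically, the regularity assumption $u \in C^3(\Omega)\cap C^2(\overline{\Omega})$, together with $p,q \in C^{0,1}(\overline{\Omega})$, guarantees that the vectors $\vec{\alpha} = \mathcal{F}_\epsilon(x,\nabla u)\nabla u$ and $\vec{\beta} = w_\epsilon^{(r-2)/2}\nabla u$ satisfy the regularity hypothesis \eqref{eq:vec-reg} at every interior point and are continuous on $\overline{\Omega}$; moreover, since $u=0$ on $\partial\Omega$, the boundary values coincide with purely normal vectors so that $\vec{\alpha}_\tau = \vec{\beta}_\tau = 0$. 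Thus formula \eqref{eq:by-parts} applies and yields exactly \eqref{eq:double-final-e}, giving an exact representation of the left-hand side of \eqref{eq:p-est-1} as the sum of the boundary integral involving $\operatorname{trace}\mathcal{B}$ and the interior integral of the quadratic form
\[
\sum_{i,j=1}^N D_i\bigl(\mathcal{F}_\epsilon(x,\nabla u)D_ju\bigr) D_j\bigl(w_\epsilon^{\frac{r-2}{2}} D_i u\bigr).
\]

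Next I would invoke Lemma \ref{le:pointwise-1}, which bounds this integrand pointwise from below by
\[
\sigma\left(a_\epsilon w_\epsilon^{\frac{p+r}{2}-2} + b_\epsilon w_\epsilon^{\frac{q+r}{2}-2}\right)|u_{xx}|^2 + w_\epsilon^{\frac{p+r}{2}-2}\sum_{i,j=1}^N\sum_{s=4}^7\mathcal{J}_{ij}^{(p,s)} + w_\epsilon^{\frac{q+r}{2}-2}\sum_{i,j=1}^N\sum_{s=4}^7\mathcal{J}_{ij}^{(q,s)},
\]
with $\sigma = \min\{1, p^--1, q^--1\}$. Note that the terms in \eqref{eq:residuals} and \eqref{eq:residuals-2} involving $D_i a$, $D_i b$ appear without the $\epsilon$-shift on $a$, $b$, because only the coefficients in front of the quadratic forms in the Hessian get modified by the $\epsilon$-regularization, whereas the gradients of $a$, $b$ pass through unchanged; this is already built into the proof of the pointwise bound and simply propagates. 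I would then drop to $a$ in front of the quadratic $a_\epsilon$-term in the lower bound (which is harmless by $a_\epsilon \geq a$) to match the exact form written in the statement, and similarly for $b_\epsilon \geq b$.

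Integrating this pointwise inequality over $\Omega$ and substituting the bound for the volume integral into \eqref{eq:double-final-e} produces \eqref{eq:p-est-1}. The only point requiring slight care is the sign of the boundary term: after rearranging \eqref{eq:double-final-e}, the boundary contribution appears with a minus sign in front of $\operatorname{trace}\mathcal{B}$, which is precisely what the statement asserts, so no further manipulation is needed.

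The main obstacle, already absorbed into Lemma \ref{le:pointwise-1}, is controlling the cross terms $\mathcal{J}_{ij}^{(p,s)}$, $\mathcal{J}_{ij}^{(q,s)}$ for $s=4,\dots,7$, which involve $\nabla p$, $\nabla q$, $\nabla a$, $\nabla b$ and may have either sign. At the stage of this lemma these terms are simply kept as residuals on the right-hand side — the genuine work of absorbing them into the dominant second-order term will happen in the subsequent sections via the interpolation estimates of Section \ref{sec:interpolation} (in particular the bound on $\mathcal{Q}_2$ from Lemma \ref{le:Q2}, which accommodates irregular $a$, $b$ through the $L^d$-integrability condition \eqref{eq:a-b-d}). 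Hence for the lemma itself, the proof is just a clean substitution and no further estimation is performed.
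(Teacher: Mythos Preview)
Your proposal is correct and matches the paper's own one-line proof: combine the integration-by-parts identity \eqref{eq:double-final-e} with the pointwise lower bound of Lemma \ref{le:pointwise-1} (equivalently, Proposition \ref{pro:pointwise}), then integrate over $\Omega$. One small remark: the statement of Lemma \ref{le:principal-e} keeps $a_\epsilon$, $b_\epsilon$ in the coercive term (not $a$, $b$), so no ``dropping to $a$'' is needed --- the computation in \eqref{eq:K} and the subsequent display already produce $a_\epsilon \mathcal{G}_p(\eta)+b_\epsilon \mathcal{G}_q(\eta)$, and the appearance of $a(x)$, $b(x)$ in the statement of Lemma \ref{le:pointwise-1} is a harmless underestimate (or typographical slip) that you need not replicate here.
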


By the Cauchy-Schwarz inequality

\begin{equation}
\label{eq:F-1}
\begin{split}
w_\epsilon|u_{xx}|^2 & \geq |\nabla u|^2|u_{xx}|^2 =\left(\sum_{k=1}^N\left(D_ku\right)^2\right)\left(\sum_{i,j=1}^N\left(D^2_{ij}u\right)^2\right)
\geq \sum_{i=1}^N\left(\sum_{j=1}^N D^2_{ij}uD_ju\right)^2
\\
& =\frac{1}{4}\sum_{i=1}^N\left(D_i\left(\epsilon^2+\sum_{j=1}^N \left(D_j u\right)^2\right)\right)^2 \equiv \frac{1}{4}|\nabla w_\epsilon|^2,
\end{split}
\end{equation}
and

\begin{equation}
\label{eq:second-order-prim}
\begin{split}
\left|\nabla \left(w_\epsilon^{\frac{p+r-2}{4}}\right)\right|^2 & = \frac{(p+r-2)^2}{16}w_\epsilon^{\frac{p+r}{2}-3}|\nabla w_\epsilon|^2
\\
&
 \quad
+ \frac{p+r-2}{8}w_\epsilon^{\frac{p+r}{2}-2}\ln w_\epsilon (\nabla w_\epsilon,\nabla p)
 +\frac{1}{16}w_\epsilon^{\frac{p+r}{2}-1}\ln^2w_\epsilon|\nabla p|^2.
\end{split}
\end{equation}
Under the assumptions of Lemma \ref{le:principal-e} the second term on the right-hand side of \eqref{eq:p-est-1} has the lower bound: for the term with the power $p$

\begin{equation}
\label{eq:F-2}
\begin{split}
w_\epsilon^{\frac{p+r}{2}-2}|u_{xx}|^2
& \geq
 \dfrac{1}{4} w_\epsilon^{\frac{p+r}{2}-3}|\nabla w_\epsilon|^2
 \geq \frac{4}{(p+r-2)^2} \left|\nabla \left(w_\epsilon^{\frac{p+r-2}{4}}\right)\right|^2
 \\
 & \quad
- \frac{2L_p}{p+r-2} w_\epsilon^{\frac{p+r}{2}-2}|\ln w_\epsilon||\nabla w_\epsilon|
- \frac{4L_p^2}{(p+r-2)^2}  w_\epsilon^{\frac{p+r}{2}-1}\ln^2 w_\epsilon
\\
& \equiv \frac{4}{(p+r-2)^2} \left|\nabla \left(w_\epsilon^{\frac{p+r-2}{4}}\right)\right|^2 + w_\epsilon^{\frac{p+r}{2}-2}\left(\mathcal{M}_{1,p}+\mathcal{M}_{2,p}\right)
\end{split}
\end{equation}
and the analogous inequality holds for the term with the power $q$. Gathering these inequalities with \eqref{eq:p-est-1} we find that

\begin{equation}
\label{eq:p-est-1-trans}
\begin{split}
\int_{\Omega} & \operatorname{div}\left(\mathcal{F}_\epsilon(x,\nabla u)\nabla u\right) \operatorname{div}\left(w_\epsilon^{\frac{r-2}{2}}\nabla
u\right)\,dx
\geq - \int_{\partial\Omega} \left(a_\epsilon w_\epsilon^{\frac{p+r}{2}-2} + b_\epsilon w_\epsilon^{\frac{q+r}{2}-2} \right)|\nabla u|^2\operatorname{trace}\mathcal{B}\,dS
 \\
 & \quad +  \frac{4 \sigma}{(p^++r-2)^2} \int_{\Omega} a_\epsilon \left|\nabla \left(w_\epsilon^{\frac{p+r-2}{4}}\right)\right|^2 \,dx + \int_{\Omega} a_\epsilon w_\epsilon^{\frac{p+r}{2}-2}\left(\mathcal{M}_{1,p}+\mathcal{M}_{2,p}\right) \,dx \\
& \quad +  \frac{4 \sigma}{(q^++r-2)^2} \int_{\Omega} b_\epsilon \left|\nabla \left(w_\epsilon^{\frac{q+r-2}{4}}\right)\right|^2 \,dx + \int_{\Omega} b_\epsilon  w_\epsilon^{\frac{q+r}{2}-2}\left(\mathcal{M}_{1,q}+\mathcal{M}_{2,q}\right) \,dx\\
&
+ \int_{\Omega} w_\epsilon ^{\frac{p+r}{2}-2}\sum_{i,j=1}^N\sum_{s=4}^7 \mathcal{J}_{ij}^{(p,s)} ~dx + \int_{\Omega} w_\epsilon ^{\frac{q+r}{2}-2}\sum_{i,j=1}^N\sum_{s=4}^7 \mathcal{J}_{ij}^{(q,s)} ~dx,
\end{split}
\end{equation}
with

\begin{equation}
\label{eq:residuals-1}
\mathcal{M}_{1,e}=\frac{2L_e}{e+r-2} |\ln w_\epsilon||\nabla w_\epsilon|,\qquad \mathcal{M}_{2,e}=\frac{4L_e^2}{(e+r-2)^2}  w_\epsilon\ln^2 w_\epsilon, \quad e \in \{p,q\}.
\end{equation}

\subsection{Estimates on the residual terms}
We turn to estimating the terms $\mathcal{J}_{ij}^{(e,s)}$ and $\mathcal{M}_{\ell,e}$ for $s \in \{ 4,5,6,7\}$, $\ell \in \{1,2\}$ and $e \in \{p,q\}$.
Assume that $a$ and $b$ satisfy the conditions of Corollary \ref{cor:d-r}, and recall that
in \eqref{eq:p-est-1-trans} the residual terms $\mathcal{J}_{ij}^{(e,s)}$  are defined by formulas \eqref{eq:residuals} and \eqref{eq:residuals-2}, and the terms $\mathcal{M}_{\ell,e}$ are defined in \eqref{eq:residuals-1}.

Since $|\eta|<1$, $\mathcal{J}_{i,j}^{(e,s)}$ with $s =4,5$ satisfy the inequalities

\[
\begin{split}
& |\mathcal{J}_{ij}^{(p,4)} | +|\mathcal{J}_{ij}^{(p,5)} |  \leq C L_p a_\epsilon(x)|\ln w_\epsilon| |\nabla u||u_{xx}|,
\\
& |\mathcal{J}_{ij}^{(q,4)}| +|\mathcal{J}_{ij}^{(q,5)}|\leq C L_q b_\epsilon(x) |\ln w_\epsilon||\nabla u||u_{xx}|,
\\
& |\mathcal{J}_{ij}^{(p,6)}| + |\mathcal{J}_{ij}^{(p,7)}| \leq  C|\nabla a|(r-1)|\nabla u||u_{xx}|,
\\
& |\mathcal{J}_{ij}^{(q,6)}| + |\mathcal{J}_{ij}^{(q,7)}| \leq  C|\nabla b| (r-1)|\nabla u||u_{xx}|,
\qquad i,j=\overline{1,N}.
\end{split}
\]
It follows from \eqref{eq:log} and the Young inequality that for every $\delta>0$
\begin{equation}
\label{eq:est-J-p-s-a}
\begin{split}
\sum_{s=4}^5 \int_\Omega & w_\epsilon^{\frac{p+r}{2}-2}|\mathcal{J}^{(p,s)}_{ij}| \,dx
 \leq C \int_\Omega a_\epsilon w_\epsilon^{\frac{p+r-4}{2}}|\nabla u||u_{xx}||\ln w_\epsilon|\,dx
\\
& = C \int_{\Omega} \left(a_\epsilon^\frac{1}{2} w_\epsilon^{\frac{p+r-4}{4}}|u_{xx}|\right) \left( a_\epsilon^\frac{1}{2} w_\epsilon^{\frac{p+r-4}{4}}|\nabla u||\ln w_\epsilon|\right)\,dx
\\
&
\leq \delta \int_\Omega a_\epsilon w_\epsilon^{\frac{p+r-4}{2}}|u_{xx}|^2 + C_1 \int_\Omega a_\epsilon w_\epsilon^{\frac{p+r+2\lambda-4}{2}}|\nabla u|^2\,dx+C'
\end{split}
\end{equation}
with an independent of $r$ constant $C_1$ and $C'=C'(r,\delta,\lambda)$. By virtue of \eqref{eq:d-r-ell}, the second term on the right-hand side of \eqref{eq:est-J-p-s-a} is bounded for every $0<\lambda<\dfrac{2}{N+2}$. Gathering \eqref{eq:est-J-p-s-a} with the analogous estimate for $w_\epsilon^{\frac{q+r}{2}-2}|\mathcal{J}^{(q,s)}_{ij}|$, $s=4,5$,  we find that for every $\delta'>0$
\begin{equation}
\label{eq:residuals-4-5}
\begin{split}
\sum_{s=4}^5 \int_\Omega & \left(w_\epsilon^{\frac{p+r}{2}-2}|\mathcal{J}^{(p,s)}_{ij}| +w_\epsilon^{\frac{q+r}{2}-2}|\mathcal{J}^{(q,s)}_{ij}|\right) \,dx
\leq \delta'\int_\Omega \left( a_\epsilon w_\epsilon^{\frac{p+r}{2}-2} + b_\epsilon w_\epsilon^{\frac{q+r}{2}-2} \right)|u_{xx}|^2\,dx + C''
\end{split}
\end{equation}
with a constant $C''$ depending on $r$ and $\delta'$.

To estimate $\mathcal{J}^{(\rho,s)}_{ij}$ with $\rho\in \{p,q\}$ and $s=6,7$, we use the inequality $\sqrt{\alpha}\leq \sqrt{a+b}\leq \sqrt{a_\epsilon}+\sqrt{b_\epsilon}$:

\begin{equation}
\label{eq:est-J-p-s-a-1}
\begin{split}
& \sum_{s=6}^7 \int_\Omega w_\epsilon^{\frac{p+r}{2}-2}|\mathcal{J}^{(p,s)}_{ij}| \,dx \leq \frac{C}{\sqrt{\alpha}} \int_\Omega (\sqrt{a_\epsilon}+\sqrt{b_\epsilon})|\nabla a| w_\epsilon^{\frac{p+r}{2}-2}|\nabla u||u_{xx}|\,dx
\\
& = \frac{C}{\sqrt{\alpha}}\int_\Omega \sqrt{a_\epsilon}|\nabla a| w_\epsilon^{\frac{p+r-4}{2}}|\nabla u||u_{xx}|\,dx+\frac{C}{\sqrt{\alpha}}\int_\Omega \sqrt{b_\epsilon}|\nabla a| w_\epsilon^{\frac{p+r-4}{2}}|\nabla u||u_{xx}|\,dx
\equiv C'\left(\mathcal{I}_{p,a}+\mathcal{I}_{p,b}\right).
\end{split}
\end{equation}
By using Young's inequality and noting that $|\nabla u|^2\leq w_{\epsilon}$ we find that for every $\delta>0$
\[
\begin{split}
\mathcal{I}_{p,a} & =\int_{\Omega}\left(|\nabla a|w_{\epsilon}^{\frac{p+r-4}{4}}|\nabla u|\right)\left(a_\epsilon  w_{\epsilon}^{\frac{p+r-4}{2}}|u_{xx}|^2\right)^{\frac{1}{2}}\,dx
\leq \delta \int_{\Omega}a_\epsilon w_{\epsilon}^{\frac{p+r-4}{2}}|u_{xx}|^2\,dx + C_\delta \int_\Omega |\nabla a|^2w_{\epsilon}^{\frac{p+r-2}{2}}\,dx. 
\end{split}
\]
By the Young inequality with the exponents $\frac{d}{2}$ and $\frac{d}{d-2}$ we estimate the second term by

\begin{equation}
\label{eq:2-est}
\int_{\Omega}w_{\epsilon}^{\frac{p+r-2}{2}\frac{d}{d-2}}\,dx + C'\int_\Omega |\nabla a|^{d}\,dx.
\end{equation}
Assume that $d$ satisfies condition \eqref{eq:d-1}. The second term of \eqref{eq:2-est} is bounded by assumption. The estimate on the first one will follow from \eqref{eq:d-r-ell} if we claim that everywhere in $\Omega$

\[
\frac{d}{d-2}(p(x)+r-2)< \underline{s}(x)+r +r^\sharp\quad \Leftrightarrow \quad p(x)+r-2+\dfrac{2}{d-2}(p(x)+r-2) < \underline{s}(x)+r + r^\sharp.
\]
Since $p(x)\leq \overline{s}(x)$, the last inequality is surely fulfilled if $\overline{s}(x)-\underline{s}(x) + \dfrac{2}{d-2}(\overline{s}(x)+r-2)<2 + r^\sharp$. By \eqref{eq:structure-prelim-2} and \eqref{eq:threshold}, the last inequality holds if

\[
\dfrac{2}{d-2}(\overline{s}(x)+r-2)<2 + \frac{2}{N+2}=2\dfrac{N+3}{N+2}\quad \Leftrightarrow \quad \frac{N+2}{N+3}(\overline{s}(x)+r-2)+2<d.
\]
This is true for $d$ satisfying \eqref{eq:d-1}.

To estimate $\mathcal{I}_{p,b}$ we make use of assumption \eqref{eq:structure-prelim-2} on the gap between $p$ and $q$:

\[
\begin{split}
\mathcal{I}_{p,b} & =\int_{\Omega}\left(|\nabla a |w_{\epsilon}^{\frac{2p-q+r-4}{4}}|\nabla u|\right)\left(b_\epsilon w_{\epsilon}^{\frac{q+r-4}{2}}|u_{xx}|^2\right)^{\frac{1}{2}}\,dx
\leq \delta \int_{\Omega}b_\epsilon w_{\epsilon}^{\frac{q+r-4}{2}}|u_{xx}|^2\,dx + C_\delta \int_\Omega |\nabla a|^2w_{\epsilon}^{\frac{2p-q+r-2}{2}}\,dx
\\
& \leq  \delta \int_{\Omega}b_\epsilon w_{\epsilon}^{\frac{q+r-4}{2}}|u_{xx}|^2\,dx + C'\int_{\Omega} w_{\epsilon}^\gamma\,dx +C'' \int_{\Omega}|\nabla a|^d\,dx,\qquad \gamma=\frac{2p-q+r-2}{2}\frac{d}{d-2},
\end{split}
\]
where

\[
\gamma\leq \frac{1}{2}(p+\overline{s}-\underline{s}+r-2)\frac{d}{d-2}< \frac{1}{2}(p+r-2+r_\ast)\frac{d}{d-2}.
\]
By virtue of \eqref{eq:d-r-ell}, the integral of $w_\epsilon^\gamma$ is bounded if we claim

\[
(p+r-2+r_\ast)\frac{d}{d-2}=p+r-2+r_\ast +\dfrac{2}{d-2}(p+r-2+r_\ast)<p+r+r^\sharp\quad \Leftrightarrow \quad \dfrac{1}{d-2}(p+r-2+r_\ast)<\dfrac{N+3}{N+2}.
\]
Since $p\leq \overline{s}$, the last inequality holds if
\[
\dfrac{N+2}{N+3}(\overline{s}+r)+\dfrac{4}{N+3}<d.
\]
which is true due to \eqref{eq:d-1}. The integrals $\mathcal{I}_{q,a}$, $\mathcal{I}_{q,b}$ are estimated in the same way. Gathering the results, we find that if $d$ satisfies \eqref{eq:d-1}, then for every $\delta>0$
\begin{equation}
\label{eq:I-p-q}
\sum_{s=6}^7 \int_\Omega \left(w_\epsilon^{\frac{p+r}{2}-2}|\mathcal{J}^{(p,s)}_{ij}|+ w_\epsilon^{\frac{q+r}{2}-2}|\mathcal{I}^{(q,s)}_{ij}|\right) \,dx\leq  \delta \int_\Omega \left(a_\epsilon w_{\epsilon}^{\frac{p+r}{2}-2}+b_\epsilon w_{\epsilon}^{\frac{q+r}{2}-2}\right)|u_{xx}|^2\,dx + C
\end{equation}
with a constant $C$ depending on $N$, $\max a$, $\max b$, $\|\nabla a\|_{d,\Omega}$, $\|\nabla b\|_{d,\Omega}$, $r$, $\delta$. The terms $\mathcal{M}_{\ell,e}$ are bounded by

\[
\begin{split}
|\mathcal{M}_{1,e}| & \leq C |\ln w_\epsilon| |\nabla u||u_{xx}|,
\qquad
|\mathcal{M}_{2,e}|
\leq C' w_\epsilon \ln^2 w_\epsilon, \quad e \in \{p,q\}
\end{split}
\]
with constants $C$, $C'$ depending on $N$, $p^\pm$, $q^\pm$, $r$ and $L_{p,q}$. The estimate on the integrals involving $\mathcal{M}_{\ell, e}$ follows from \eqref{eq:log} and \eqref{eq:complete-new}: for every $\delta>0$

\begin{equation}\label{eq:est-M}
    \begin{split}
        \sum_{\ell =1}^2 \int_{\Omega} & \left(a_\epsilon w_\epsilon^{\frac{p+r}{2}-2} \mathcal{M}_{\ell,p} + b_\epsilon w_\epsilon^{\frac{q+r}{2}-2} \mathcal{M}_{\ell,q}\right) \,dx
        \leq  \delta \int_\Omega \left( a_\epsilon w_\epsilon^{\frac{p+r}{2}-2} + b_\epsilon w_\epsilon^{\frac{q+r}{2}-2} \right)|u_{xx}|^2\,dx + C.
    \end{split}
\end{equation}

\subsection{The boundary integrals}

\begin{lemma}\label{th:trace-main} Let $\partial\Omega\in C^2$, $u\in C^1(\overline{\Omega})\cap H^1_0(\Omega)\cap H^2(\Omega)$.  Assume that $a(x)$, $b(x)$, $p(x)$, $q(x)$ satisfy the conditions of Corollary \ref{cor:d-r}. If

\begin{equation}
\label{eq:d-boundary}
|\nabla a|, |\nabla b|\in L^d(\Omega)\quad \text{with $\displaystyle d>2+\frac{N+2}{2}\left(\overline{s}^++r\right)$ and $\underline{s}^-+r\geq 2$},
\end{equation}
then for every $\lambda\in (0,1)$
\begin{equation}
\label{eq:trace-3} \int_{\partial \Omega} \mathcal{F}^{(r,r)}_{\epsilon}(x,\nabla u)\vert \nabla u\vert ^{2}\,dS\leq
\lambda \int_{\Omega} \mathcal{F}^{(r,r)}_{\epsilon}(x,\nabla u) \vert u_{xx}\vert ^2\,dx+ C
\end{equation}
with a constant
$C=C'(\lambda,
\underline{s}^-, \underline{s}^+, N, L_{p,q}, \alpha, r, \|u\|_{2,\Omega}) + C''\left( \|\nabla a\|^d_{d,\Omega}, \|\nabla b\|^d_{d,\Omega}\right)
$.
\end{lemma}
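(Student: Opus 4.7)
The plan is to convert the surface integral into a volume integral via the divergence theorem and then absorb the top-order contribution into $\lambda\int_\Omega\mathcal{F}_\epsilon^{(r,r)}|u_{xx}|^2\,dx$, with the residual controlled by Corollary~\ref{cor:d-r}. Since $|\nabla u|^2\le w_\epsilon$, the left-hand side of \eqref{eq:trace-3} is majorised by $\int_{\partial\Omega}(a_\epsilon w_\epsilon^{(p+r)/2}+b_\epsilon w_\epsilon^{(q+r)/2})\,dS$. Using $\partial\Omega\in C^2$, I would extend the outward unit normal $\vec\nu$ to a field $\vec h\in C^1(\overline\Omega;\mathbb{R}^N)$ with $\vec h|_{\partial\Omega}=\vec\nu$; under the standing assumptions on $u$, $a$, $b$ the vector field $a_\epsilon w_\epsilon^{(p+r)/2}\vec h$ belongs to $W^{1,1}(\Omega)$, so the divergence theorem yields
\[
\int_{\partial\Omega}a_\epsilon w_\epsilon^{(p+r)/2}\,dS=\int_\Omega\operatorname{div}\!\left(a_\epsilon w_\epsilon^{(p+r)/2}\vec h\right)\,dx,
\]
and analogously for the $b,q$-block.

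\textbf{Splitting the bulk integrand.} Expanding the divergence produces four kinds of terms: one with $\nabla a$, one with $\nabla w_\epsilon$, a logarithmic one with $\ln w_\epsilon\,\nabla p$ arising from differentiating the variable power, and one with the bounded factor $\operatorname{div}\vec h$. Only the $\nabla w_\epsilon$ term is of top order: using $|\nabla w_\epsilon|\le 2|\nabla u||u_{xx}|$ from \eqref{eq:F-1} together with $|\nabla u|\le w_\epsilon^{1/2}$, Young's inequality gives
\[
a_\epsilon w_\epsilon^{(p+r-2)/2}|\nabla w_\epsilon|\le\delta\,a_\epsilon w_\epsilon^{(p+r-2)/2}|u_{xx}|^2+C_\delta\,a_\epsilon w_\epsilon^{(p+r)/2},
\]
and for $\delta$ small the first piece is absorbed into $\lambda\int_\Omega\mathcal{F}_\epsilon^{(r,r)}|u_{xx}|^2\,dx$. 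The logarithmic factor is tamed by the bound $|\ln w_\epsilon|\le C_\tau(1+w_\epsilon^\tau)$ from \eqref{eq:log}, leaving an integrand of the form $a_\epsilon w_\epsilon^{(p+r+2\tau)/2}$. The $\nabla a$ piece is treated by H\"older with exponents $d$, $d'=d/(d-1)$:
\[
\int_\Omega|\nabla a|\,w_\epsilon^{(p+r)/2}\,dx\le\|\nabla a\|_{d,\Omega}\Bigl(\int_\Omega w_\epsilon^{d'(p+r)/2}\,dx\Bigr)^{1/d'}.
\]

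\textbf{Reduction to Corollary~\ref{cor:d-r} and the main obstacle.} All bulk residuals are reduced to integrals of $w_\epsilon^{\mu/2}$ with $\mu\in\{p+r+2\tau,\,d'(p+r)\}$. For $\epsilon\in(0,1)$ one has $w_\epsilon^{\mu/2}\le C(1+|\nabla u|^\mu)$, and the hypothesis $\underline{s}^-+r\ge 2$ ensures $w_\epsilon^{(\underline{s}+s+r-2)/2}|\nabla u|^2\ge|\nabla u|^{\underline{s}+s+r}$, so Corollary~\ref{cor:d-r} bounds $\int_\Omega|\nabla u|^{\underline{s}+s+r}\,dx$ by $\delta\int_\Omega\mathcal{F}_\epsilon^{(r,r)}|u_{xx}|^2\,dx+C$ for every $s\in(0,r^\sharp)$. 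The pointwise admissibility conditions $\mu(x)\le\underline{s}(x)+s+r$ become $(p-\underline{s})+2\tau<s$ and $(p-\underline{s})+(p+r)/(d-1)<s$; since \eqref{eq:structure-prelim-2} gives $p(x)-\underline{s}(x)\le|p(x)-q(x)|<\tfrac{2}{N+2}$ pointwise, the first is satisfied by choosing $\tau$ small, while the second is satisfied precisely because the hypothesis $d>2+\tfrac{N+2}{2}(\overline{s}^++r)$ forces $(\overline{s}^++r)/(d-1)<\tfrac{2}{N+2}$; in both cases the remaining slack fits below $r^\sharp=\tfrac{4}{N+2}$. The $b,q$-block is treated identically, and collecting the pieces with small $\delta$ and $\lambda$ yields \eqref{eq:trace-3}. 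The principal obstacle is exactly this quantitative book-keeping: each residual exponent produced by the divergence expansion, H\"older's inequality, and the logarithmic bound must be kept strictly below the threshold $\underline{s}+s+r$ on which Corollary~\ref{cor:d-r} operates, and it is the combination of \eqref{eq:structure-prelim-2} with the sharp integrability $d>2+\tfrac{N+2}{2}(\overline{s}^++r)$ that provides the small slack needed to close the argument.
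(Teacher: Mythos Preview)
Your approach is correct and essentially the same as the paper's. The paper converts the boundary integral to a bulk integral using a vector field $\vec\mu\in C^\infty(\overline\Omega)^N$ with $\vec\mu\cdot\nu\ge\gamma>0$ on $\partial\Omega$ (quoted from Grisvard) in place of your extension $\vec h$ of the outward normal, and it keeps the factor $|\nabla u|^2$ rather than majorising it by $w_\epsilon$ at the outset; after expanding the divergence it obtains the same four contributions $\mathcal I_1,\dots,\mathcal I_4$ (the $\operatorname{div}\vec\mu$ term, the $\nabla w_\epsilon$ term, the $\ln w_\epsilon\,\nabla p$ term, and the $\nabla a,\nabla b$ term) and treats them exactly as you do, closing with \eqref{eq:d-r-ell}. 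Your preliminary replacement $|\nabla u|^2\le w_\epsilon$ raises the exponent in the $\nabla a$ residual by one relative to the paper's $\mathcal I_4$, so your admissibility condition $(p-\underline{s})+(p+r)/(d-1)<r^\sharp$ is marginally tighter than the paper's, but as you correctly check it is still implied by the hypothesis $d>2+\tfrac{N+2}{2}(\overline{s}^++r)$.
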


\begin{proof}
By \cite[Lemma 1.5.1.9]{Grisvard-2011} there exist a constant $\gamma>0$ and a function $\vec \mu\in C^{\infty}(\overline{\Omega})^N$ such that $\vec \mu\cdot\nu \geq \gamma>0$ on $\partial\Omega$. Then

\[
\begin{split}
& \gamma \int_{\partial\Omega} \left(a_\epsilon w_\epsilon^{\frac{p+r}{2}-2} + b_\epsilon w_\epsilon^{\frac{q+r}{2}-2} \right)|\nabla u|^2\,dS  \leq \int_{\Omega}\operatorname{div}\left( \left(a_\epsilon w_\epsilon^{\frac{p+r}{2}-2} + b_\epsilon w_\epsilon^{\frac{q+r}{2}-2}\right) |\nabla u|^2\vec \mu\right)\,dx
 \\
 &
 = \int_{\Omega} \vec \mu \cdot\nabla \left(\left(a_\epsilon w_\epsilon^{\frac{p+r}{2}-2} + b_\epsilon w_\epsilon^{\frac{q+r}{2}-2}\right)|\nabla u|^2\right)\,dx
 + \int_{\Omega}(\operatorname{div}\vec \mu ) \left(a_\epsilon w_\epsilon^{\frac{p+r}{2}-2} + b_\epsilon w_\epsilon^{\frac{q+r}{2}-2}\right)|\nabla u|^2\,dx
\\
& \leq C_1 \int_{\Omega} \left(a_\epsilon w_\epsilon^{\frac{p+r}{2}-2} + b_\epsilon w_\epsilon^{\frac{q+r}{2}-2}\right)|\nabla u|^2\,dx\\
& \quad + C_2 \left(\max\{p^+, q^+\} + r-1\right) \int_{\Omega} \left(a_\epsilon w_\epsilon^{\frac{p+r}{2}-3}|\nabla u|^2 +  b_\epsilon w_\epsilon^{\frac{q+r}{2}-3}|\nabla u|^2 \right) |\nabla u||u_{xx}| \,dx
\\
&
+ C_3 \int_{\Omega} \left(a_\epsilon w_\epsilon^{\frac{p+r}{2}-2} |\nabla p| + b_\epsilon w_\epsilon^{\frac{q+r}{2}-2} |\nabla q|\right) |\nabla u|^2 |\ln w_\epsilon|\,dx
+ C_4\int_{\Omega} \left(w_\epsilon^{\frac{p+r}{2}-2} |\nabla a|+ w_\epsilon^{\frac{q+r}{2}-2} |\nabla b|\right)|\nabla u|^2\,dx
\\
& \equiv C_1\mathcal{I}_1 +C_2\mathcal{I}_2 + C_3\mathcal{I}_3 + C_4 \mathcal{I}_4.
\end{split}
\]
To estimate $\mathcal{I}_2$, we set $C_{p,q} = \max\{p^+, q^+\} + r-1$ and apply the Cauchy inequality:

\[
\begin{split}
\mathcal{I}_2 & \leq C_{p,q} \int_\Omega a_\epsilon w_\epsilon^{\frac{p+r}{2}-2}|\nabla u||u_{xx}|\,dx + C_{p,q} \int_\Omega b_\epsilon w_\epsilon^{\frac{q+r}{2}-2}|\nabla u||u_{xx}|\,dx
\\
&
= C_{p,q} \int_{\Omega} \left[ a_\epsilon^\frac{1}{2} w_\epsilon ^{\frac{p+r}{4}-1}|u_{xx}|\right] \left[ a_\epsilon^\frac{1}{2} w_\epsilon^{\frac{p+r}{4}-1}|\nabla u|\right]\,dx
+ C_{p,q} \int_{\Omega} \left[ b_\epsilon^\frac{1}{2}  w_\epsilon ^{\frac{q+r}{4}-1}|u_{xx}|\right] \left[b_\epsilon^\frac{1}{2}  w_\epsilon^{\frac{q+r}{4}-1}|\nabla u|\right]\,dx
\\
& \leq \sigma  \int_{\Omega} \left( a_\epsilon w_\epsilon^{\frac{p+r}{2}-2} + b_\epsilon w_\epsilon^{\frac{q+r}{2}-2} \right)|u_{xx}|^2\,dx
+ C_\sigma C_{p,q}^2 \int_\Omega \left( a_\epsilon w_\epsilon^{\frac{p+r}{2}-2} + b_\epsilon w_\epsilon^{\frac{q+r}{2}-2} \right)|\nabla u|^2\,dx
\\
& \leq \sigma  \int_{\Omega} \left( a_\epsilon w_\epsilon^{\frac{p+r}{2}-2} + b_\epsilon w_\epsilon^{\frac{q+r}{2}-2} \right) |u_{xx}|^2\,dx + C' C_{p,q}^2 \mathcal{I}_1
\end{split}
\]
with an arbitrary $\sigma>0$ and an independent of $r$ constant $C'$. The estimate on $\mathcal{I}_1$ follows directly from \eqref{eq:d-r-ell}. To estimate $\mathcal{I}_3$ we use \eqref{eq:log}: for every $\theta>0$ there is a constant $C_\theta$ such that
\[
\left(w_\epsilon^{\frac{p+r}{2}-1} + w_\epsilon^{\frac{q+r}{2}-1}\right)\vert \ln w_\epsilon \vert
\leq C_\theta\left(1+ \vert \nabla u\vert ^{\underline{s}(x)+ r + \theta}\right).
\]
The integral of this function is bounded by \eqref{eq:d-r-ell} if $\theta\in (0,r^\sharp)$. The estimate on $\mathcal{I}_4$ also follows from \eqref{eq:d-r-ell}. Since $\underline{s}^-+r\geq 2$,  it follows from Young's inequality
\[
\begin{split}
|\nabla a| w_{\epsilon}^{\frac{p+r}{2}-2}|\nabla u|^{2} & + |\nabla b|w_\epsilon^{\frac{q+r}{2}-2} |\nabla u|^{2}
\leq |\nabla a| w_{\epsilon}^{\frac{p+r}{2}-1} + |\nabla b|w_\epsilon^{\frac{q+r}{2}-1}
\\
&
\leq |\nabla a|^d + |\nabla b|^d + w_\epsilon^{\frac{d(p+r-2)}{2(d-1)}} + w_\epsilon^{\frac{d(q+r-2)}{2(d-1)}}
\leq  |\nabla a|^d + |\nabla b|^d + C \left( 1+ w_{\epsilon}^{\frac{d}{2(d-1)}(\overline{s}(x)+r-2)}\right)
\end{split}
\]
with a constant $C$ independent of $u$. By \eqref{eq:d-r-ell}, the integral of the last term is bounded if

\[
\dfrac{d}{d-1} (\overline{s}(x)+r-2) =\overline{s}(x)+r-2+\frac{1}{d-1}(\overline{s}(x)+r-2) < \underline{s}(x) + r^\sharp +r.
\]
Since $0\leq \overline{s}(x)-\underline{s}(x)<\dfrac{2}{N+2}$ by assumption  \eqref{eq:structure-prelim-2}, the previous inequality is fulfilled if

\[
\dfrac{2(d-1)(N+3)}{N+2}>\overline{s}^++r-2\qquad \Leftrightarrow \qquad d>\dfrac{1}{N+3}+\dfrac{N+2}{2(N+3)}(\overline{s}^++r),
\]
which is true due to \eqref{eq:d-boundary}. Gathering these inequalities we obtain \eqref{eq:trace-3}.
\end{proof}

We may now refine Lemma \ref{le:principal-e}.

\begin{lemma}
\label{le:principal-improved}
Let in the conditions of Lemma \ref{le:principal-e}, $p,q\in C^{0,1}(\overline{\Omega})$, and $a$, $b$ satisfy \eqref{eq:d-boundary} with $r \geq 2$.
There exist independent of $\epsilon$ positive constants $C_1,C_1'$ and

\[
C_2=C'_2\left(\alpha, N, r, p^+, q^+, L_{p,q},\|u\|_2,\Omega\right) +C_2''\left(\|\nabla a\|_{d,\Omega}+\|\nabla b\|_{d,\Omega}\right)
\]
such that

\begin{equation}
\label{eq:p-est-1-prim}
\begin{split}
C_1\int_{\Omega} & \left( a_\epsilon w_\epsilon^{\frac{p+r}{2}-2} + b_\epsilon w_\epsilon^{\frac{q+r}{2}-2} \right) |u_{xx}|^2\,dx\\
& \leq \int_{\Omega}\operatorname{div}\left(\left(a_\epsilon w_\epsilon^{\frac{p-2}{2}} + b_\epsilon w_\epsilon^{\frac{q-2}{2}} \right)\nabla u\right)\operatorname{div}\left(w_\epsilon^{\frac{r-2}{2}}\nabla u\right)\,dx +C_2,
\end{split}
\end{equation}

\begin{equation}
\label{eq:p-est-2-prim}
\begin{split}
C'_1\int_{\Omega}  &  \left(a_\epsilon \left|\nabla \left(w_\epsilon^{\frac{p+r-2}{4}}\right)\right|^2 + b_\epsilon w \left|\nabla \left(w_\epsilon^{\frac{q+r-2}{4}}\right)\right|^2 \right)\,dx\\
& \leq \int_{\Omega}\operatorname{div}\left(\left(a_\epsilon w_\epsilon^{\frac{p-2}{2}} + b_\epsilon w_\epsilon^{\frac{q-2}{2}} \right)\nabla u\right)\operatorname{div}\left(w_\epsilon^{\frac{r-2}{2}}\nabla u\right)\,dx +C_2.
\end{split}
\end{equation}
\end{lemma}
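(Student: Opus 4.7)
The plan is to derive both inequalities by starting from the integration-by-parts identities of the previous subsection and then using the absorption estimates already proven in this section. The key observation is that the boundary integral of \eqref{eq:p-est-1}, the residuals $\mathcal{J}^{(e,s)}_{ij}$, and the $\mathcal{M}_{\ell,e}$ terms have all been bounded by quantities of the form $\lambda\!\int (a_\epsilon w_\epsilon^{(p+r)/2-2}+b_\epsilon w_\epsilon^{(q+r)/2-2})|u_{xx}|^2\,dx + C(\lambda)$, so they can be absorbed into the main second-derivative term by choosing the smallness parameters sufficiently small.

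To prove \eqref{eq:p-est-1-prim}, I start from \eqref{eq:p-est-1} and rearrange it to isolate
\[
\sigma\!\int_\Omega\!\bigl(a_\epsilon w_\epsilon^{\frac{p+r}{2}-2}+b_\epsilon w_\epsilon^{\frac{q+r}{2}-2}\bigr)|u_{xx}|^2\,dx
\le \int_\Omega \operatorname{div}(\mathcal{F}_\epsilon(x,\nabla u)\nabla u)\operatorname{div}(w_\epsilon^{\frac{r-2}{2}}\nabla u)\,dx + |\mathcal{T}| + |\mathcal{R}|,
\]
where $\mathcal{T}$ is the boundary integral and $\mathcal{R}$ collects the residuals $w_\epsilon^{(p+r)/2-2}\mathcal{J}^{(p,s)}_{ij}$ and $w_\epsilon^{(q+r)/2-2}\mathcal{J}^{(q,s)}_{ij}$ for $s\in\{4,5,6,7\}$. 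I bound $|\mathcal{T}|$ via Lemma~\ref{th:trace-main}, the residuals with $s=4,5$ via \eqref{eq:residuals-4-5}, and those with $s=6,7$ via \eqref{eq:I-p-q}. Each of these contributes $\kappa\!\int(a_\epsilon w_\epsilon^{(p+r)/2-2}+b_\epsilon w_\epsilon^{(q+r)/2-2})|u_{xx}|^2\,dx + C$ for a freely chosen $\kappa>0$. Fixing $\lambda,\delta,\delta'$ so that their combined contribution is at most $\sigma/2$, the perturbation is absorbed into the left-hand side, yielding \eqref{eq:p-est-1-prim} with $C_1=\sigma/2$ and $C_2$ collecting all remaining constants, which inherit the stated dependence on $\alpha$, $N$, $r$, $p^+,q^+$, $L_{p,q}$, $\|u\|_{2,\Omega}$ and on $\|\nabla a\|_{d,\Omega}$, $\|\nabla b\|_{d,\Omega}$ through Lemma~\ref{th:trace-main}, Corollary~\ref{cor:d-r} and inequality \eqref{eq:I-p-q}.

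For \eqref{eq:p-est-2-prim}, I take the refined identity \eqref{eq:p-est-1-trans} as the starting point, since it already expresses the positive part of the right-hand side in terms of $a_\epsilon|\nabla w_\epsilon^{(p+r-2)/4}|^2$ and $b_\epsilon|\nabla w_\epsilon^{(q+r-2)/4}|^2$. Rearranging and estimating $|\mathcal{T}|$, the $\mathcal{J}$-residuals and the $\mathcal{M}_{\ell,e}$ terms (via \eqref{eq:trace-3}, \eqref{eq:residuals-4-5}, \eqref{eq:I-p-q} and \eqref{eq:est-M} respectively), one obtains a bound of the form
\[
\frac{4\sigma}{(p^++r-2)^2}\!\int a_\epsilon|\nabla w_\epsilon^{(p+r-2)/4}|^2\,dx + \frac{4\sigma}{(q^++r-2)^2}\!\int b_\epsilon|\nabla w_\epsilon^{(q+r-2)/4}|^2\,dx
\le \int\operatorname{div}\!\cdot\!\operatorname{div}\,dx + \kappa\!\int \mathcal{F}^{(r,r)}_\epsilon(x,\nabla u)|u_{xx}|^2\,dx + C.
\]
Now I invoke \eqref{eq:p-est-1-prim}, which has just been established, to replace $\int\mathcal{F}^{(r,r)}_\epsilon(x,\nabla u)|u_{xx}|^2\,dx$ by $C_1^{-1}\!\int\operatorname{div}\!\cdot\!\operatorname{div}\,dx + C$. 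Choosing $\kappa$ so that $\kappa/C_1<1$ and taking $C'_1$ to be half of the minimum of $4\sigma/(p^++r-2)^2$ and $4\sigma/(q^++r-2)^2$ delivers \eqref{eq:p-est-2-prim}.

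The main obstacle is bookkeeping rather than conceptual: one must ensure that every absorbable quantity is controlled uniformly in $\epsilon$, that the exponent condition \eqref{eq:d-boundary} is respected in each application of Corollary~\ref{cor:d-r}, and that the smallness parameters $\lambda,\delta,\delta',\kappa$ are chosen in a consistent order so the final absorption is valid. The fact that all absorbable terms share the common form $\int\mathcal{F}^{(r,r)}_\epsilon(x,\nabla u)|u_{xx}|^2\,dx$ is what makes the two inequalities follow from a single absorption step each.
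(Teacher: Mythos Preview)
Your proposal is correct and follows essentially the same route as the paper: the paper's proof is a one-sentence summary that cites exactly the ingredients you use---\eqref{eq:p-est-1} and \eqref{eq:p-est-1-trans} as the starting inequalities, the trace estimate \eqref{eq:trace-3}, the residual bounds \eqref{eq:residuals-4-5}, \eqref{eq:I-p-q}, \eqref{eq:est-M}, and the pointwise inequality \eqref{eq:F-2}---combined via absorption. Your slightly more explicit logical flow (proving \eqref{eq:p-est-1-prim} first and then feeding it back to handle the $|u_{xx}|^2$ contribution in \eqref{eq:p-est-2-prim}) is a valid way to organize the same computation; note only that the condition ``$\kappa/C_1<1$'' is not actually needed, since any finite $\kappa$ merely changes the resulting constant $C_1'$.
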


\begin{proof}
Inequalities \eqref{eq:p-est-1-prim} and \eqref{eq:p-est-2-prim} follows from \eqref{eq:p-est-1} by using estimates \eqref{eq:p-est-1-trans}, the estimates on $\mathcal{J}_{ij}^{(e,s)}$ in \eqref{eq:I-p-q}, the estimates on $\mathcal{M}_{\ell,e}$ in \eqref{eq:est-M}, the boundary integral estimate \eqref{eq:trace-3}, and inequality \eqref{eq:F-2}.
\end{proof}

\begin{corollary}
\label{cor:p-prim} Let the functions $u(z)$, $p(z)$, $q(z)$ and the constant $r$ satisfy the conditions of Lemma \ref{le:principal-improved} for a.e. $t\in (0,T)$. Then

\begin{equation}
\label{eq:est-cyl}
\begin{split}
c_1\int_{Q_T} & \left(a_\epsilon(z) w_\epsilon^{\frac{p(z)+r}{2}-2} + b_\epsilon(z) w_\epsilon^{\frac{q(z)+r}{2}-2} \right)|u_{xx}|^2\,dz
\\
&
\leq \int_{Q_T}\operatorname{div}\left(\left( a(z) w_\epsilon^{\frac{p(z)-2}{2}} + b(z) w_\epsilon^{\frac{q(z)-2}{2}} \right)\nabla u\right)\operatorname{div}\left(w_\epsilon^{\frac{r-2}{2}}\nabla u\right)\,dz +c_2,
\end{split}
\end{equation}

\begin{equation}
\label{eq:est-cyl-1}
\begin{split}
c'_1\int_{Q_T} & \left( a_\epsilon(z) \left|\nabla \left(w_\epsilon^{\frac{p(z)+r-2}{4}}\right)\right|^2 + b_\epsilon(z) \left|\nabla \left(w_\epsilon^{\frac{q(z)+r-2}{4}}\right)\right|^2 \right)\,dz
\\
&
\leq \int_{Q_T}\operatorname{div}\left( \left(a_\epsilon(z) w_\epsilon^{\frac{p(z)-2}{2}} + b_\epsilon(z) w_\epsilon^{\frac{q(z)-2}{2}} \right)\nabla u\right)\operatorname{div}\left(w_\epsilon^{\frac{r-2}{2}}\nabla u\right)\,dz +c'_2
\end{split}
\end{equation}
with the constants $c_1,c_1',c_2, c_2'$ depending on the same quantities as the constants in \eqref{eq:p-est-1-prim}, $\operatorname{ess}\sup\limits_{(0,T)}\|u\|_{2,\Omega}^2$, $T$, and $\|\nabla a\|_{d,Q_T}$, $\|\nabla b\|_{d,Q_T}$.
\end{corollary}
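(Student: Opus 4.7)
The plan is to apply Lemma \ref{le:principal-improved} slice by slice in time and then integrate over $(0,T)$. For a.e.\ $t\in (0,T)$, regard $u(\cdot,t)$, $p(\cdot,t)$, $q(\cdot,t)$, $a(\cdot,t)$, $b(\cdot,t)$ as functions of $x\in\Omega$ only; this is the point of view adopted throughout Section \ref{sec:interpolation} and Section \ref{sec:int-by-parts}. One first checks that the hypotheses of Lemma \ref{le:principal-improved} hold for a.e.\ $t$: the Lipschitz regularity $p(\cdot,t),q(\cdot,t)\in C^{0,1}(\overline\Omega)$ with Lipschitz constant at most $L_{p,q}$ follows from \eqref{eq:Lip-p-q}, the lower bound $a(\cdot,t)+b(\cdot,t)\geq\alpha$ from \eqref{eq:mod-coeff}, and the inclusion $|\nabla a(\cdot,t)|,|\nabla b(\cdot,t)|\in L^d(\Omega)$ from Fubini applied to $|\nabla a|,|\nabla b|\in L^d(Q_T)$ (so the slice norms are finite for a.e.\ $t$ and $d$-integrable in $t$). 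The condition on $r$ and $d$ in \eqref{eq:d-boundary} is just a time-independent restatement of the standing hypothesis \eqref{eq:mod-coeff}.

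Having verified the hypotheses, inequalities \eqref{eq:p-est-1-prim} and \eqref{eq:p-est-2-prim} applied to $u(\cdot,t)$ yield, for a.e.\ $t\in(0,T)$,
\[
C_1\int_{\Omega}\left( a_\epsilon w_\epsilon^{\frac{p+r}{2}-2} + b_\epsilon w_\epsilon^{\frac{q+r}{2}-2} \right)|u_{xx}|^2\,dx \leq \int_{\Omega}\operatorname{div}(\mathcal{F}_\epsilon(z,\nabla u)\nabla u)\operatorname{div}(w_\epsilon^{\frac{r-2}{2}}\nabla u)\,dx + C_2(t),
\]
and the analogous slicewise version of \eqref{eq:p-est-2-prim}, with the same structural constant $C_1$ (resp.\ $C_1'$) and with $C_2(t)=C_2'(\alpha,N,r,p^+,q^+,L_{p,q},\|u(\cdot,t)\|_{2,\Omega})+C_2''(\|\nabla a(\cdot,t)\|_{d,\Omega}+\|\nabla b(\cdot,t)\|_{d,\Omega})$.

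Integrating these pointwise-in-$t$ inequalities over $(0,T)$ produces the left- and right-hand sides of \eqref{eq:est-cyl} and \eqref{eq:est-cyl-1}, plus a cumulative residual $\int_0^T C_2(t)\,dt$. Here the only nontrivial point is to see that this time-integral is controlled by the data that appear in $c_2,c_2'$. For the $C_2'$-piece, the dependence on $\|u(\cdot,t)\|_{2,\Omega}$ is monotone/continuous, so it is bounded above by $T\cdot C_2'(\alpha,N,r,p^+,q^+,L_{p,q},\operatorname{ess\,sup}_{(0,T)}\|u\|_{2,\Omega})$. For the $C_2''$-piece, tracing back through Lemmas \ref{le:Q2}, \ref{le:final-ell} and the boundary estimate \eqref{eq:trace-3}, the dependence on $\|\nabla a\|_{d,\Omega},\|\nabla b\|_{d,\Omega}$ enters through the $d$-th power (as in \eqref{eq:2-est} and its analogues), so $\int_0^T C_2''(t)\,dt$ is dominated by $\|\nabla a\|_{d,Q_T}^d+\|\nabla b\|_{d,Q_T}^d$ by Fubini, which is finite by \eqref{eq:mod-coeff}. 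Setting $c_1=C_1$, $c_1'=C_1'$, and defining $c_2,c_2'$ accordingly gives \eqref{eq:est-cyl} and \eqref{eq:est-cyl-1}.

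The main obstacle, and the only substantive point beyond bookkeeping, is the verification that the constants $C_2(t)$ are integrable in $t$ with a bound expressible in the stated data. This reduces to confirming that the $C_2''$-part depends on $\|\nabla a(\cdot,t)\|_{d,\Omega},\|\nabla b(\cdot,t)\|_{d,\Omega}$ in an $L^1_t$-compatible way (polynomially of degree at most $d$, which is what the proofs of Lemma \ref{le:Q2} and Lemma \ref{th:trace-main} produce), so that the slicewise constants combine via Fubini to the $L^d(Q_T)$-norms of $\nabla a,\nabla b$. Apart from this, all other quantities in $C_2(t)$ are either independent of $t$ or controlled uniformly by $\operatorname{ess\,sup}_{(0,T)}\|u\|_{2,\Omega}$.
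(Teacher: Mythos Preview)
Your proposal is correct and follows exactly the approach the paper intends: apply Lemma \ref{le:principal-improved} at each fixed $t$ and integrate over $(0,T)$. The paper leaves this corollary without a written proof precisely because this time-integration is the evident argument; your additional care in tracking that the $C_2''$-dependence on $\|\nabla a(\cdot,t)\|_{d,\Omega},\|\nabla b(\cdot,t)\|_{d,\Omega}$ enters through $d$-th powers (so that Fubini yields $\|\nabla a\|_{d,Q_T}^d+\|\nabla b\|_{d,Q_T}^d$) is a useful clarification that the paper omits.
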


\section{Uniform estimates on the gradient of a classical solution}
\label{sec:est-source}
In this section, we establish the global uniform estimates on the gradients of classical solutions to the regularized problem \eqref{eq:main-reg} with smooth data and $\epsilon\in (0,1)$. For the sake of convenience of presentation, we will say that the constants appearing in computations depend on \textbf{data} and they can be evaluated through $r$, $N$, $p^\pm$, $q^\pm$, $L_{p,q}$, $\|\nabla a\|_{d,Q_T}$, $\|\nabla b\|_{d,Q_T}$, $\|u_0\|_{2,\Omega}$, $\|f\|_{\sigma,Q_T}$, and the properties of $\partial\Omega$.

\subsection{Case I: $\sigma= N+2$} Let $u$ be the classical solution of problem \eqref{eq:main-reg} with smooth data.
Multiply equation \eqref{eq:main-reg} by $-\div\left(w_\epsilon^{\frac{r-2}{2}} \nabla u\right)$ with $r \geq  \max\{2,p^+,q^+\}$, $w_\epsilon= \epsilon^2+|\nabla u|^2$, and integrate the result over $\Omega$:

\[
\begin{split}
\frac{1}{r}\dfrac{d}{dt} & \int_{\Omega}w_\epsilon^{\frac{r}{2}}\,dx + \int_{\Omega}\operatorname{div}\left(\left( a_\epsilon (z) w_\epsilon^{\frac{p(z)-2}{2}} + b_\epsilon (z) w_\epsilon^{\frac{q(z)-2}{2}} \right)\nabla u\right)\operatorname{div}\left(w_\epsilon^{\frac{r-2}{2}}\nabla u\right)\,dx\\
& = -\int_{\Omega}f \operatorname{div}\left(w_\epsilon^{\frac{r-2}{2}}\nabla u\right)\,dx.
\end{split}
\]
By \eqref{eq:p-est-1} and \eqref{eq:p-est-1-prim}, \eqref{eq:p-est-2-prim}

\begin{equation}
\label{eq:a-priori-1}
\begin{split}
\frac{1}{r}\dfrac{d}{dt} \int_{\Omega}w_\epsilon^{\frac{r}{2}}\,dx
& + C_0 \int_{\Omega} \left(a_\epsilon (z) w_\epsilon^{\frac{p(z)+r}{2}-2} + b_\epsilon (z) w_\epsilon^{\frac{q(z)+r}{2}-2} \right)|u_{xx}|^2\,dx
 \\
& + C_1\int_\Omega \left(a_\epsilon (z) \left|\nabla \left(w_\epsilon^{\frac{p+r-2}{4}}\right)\right|^2 + b_\epsilon (z) \left|\nabla \left(w_\epsilon^{\frac{q+r-2}{4}}\right)\right|^2 \right) \,dx
\\
&\qquad
\leq \int_\Omega |f|\left|\div\left(w_\epsilon^{\frac{r-2}{2}} \nabla u\right)\right|\,dx +C \equiv\mathcal{J} +C
\end{split}
\end{equation}
with constants $C$, $C_0$, $C_1$ depending on $r$, $p^\pm$, $q^\pm$, $N$, $\partial \Omega$, $\|u(t)\|_{2,\Omega}$. By Young's inequality and assumption \eqref{eq:structure-prelim-1}

\[
\begin{split}
\mathcal{J} & = \int_{\Omega}|f|\left |w_\epsilon^{\frac{r-2}{2}} \Delta u + (r-2) w_\epsilon^{\frac{r-2}{2} -1} \sum_{i,j =1}^N D_iuD_juD_{ij}^2u\right|\,dx
\\
    & \leq C \int_{\Omega} |f| w_\epsilon^{\frac{r-2}{2}} |u_{xx}| ~dx + (r-2) \int_{\Omega} |f| w_\epsilon^{\frac{r-2}{2}-1} |\nabla u|^2 |u_{xx}| ~dx
    \\
    & \leq \frac{C (r-1)}{\alpha} \int_{\Omega} (a(z) + b(z))|f| w_\epsilon^{\frac{r-2}{2}} |u_{xx}|  ~dx
    \\
    & \leq \frac{C (r-1)}{\alpha} \int_{\Omega} \left(\sqrt{a(z)}|f| w_\epsilon^{\frac{r-2}{2} - \frac{r+p}{4}+1} \right) \left(\sqrt{a(z)} w_\epsilon^{\frac{r+p}{4}-1} |u_{xx}| \right) ~dx \\
    & \qquad + \frac{C (r-1)}{\alpha} \int_{\Omega} \left( \sqrt{b(z)} |f| w_\epsilon^{\frac{r-2}{2} - \frac{r+q}{4}+1} \right) \left( \sqrt{b(z)} w_\epsilon^{\frac{r+q}{4}-1} |u_{xx}| \right) ~dx
    \\
    & \leq \delta \int_{\Omega} \left( a_\epsilon (z) w_\epsilon^{\frac{r+p}{2}-2} + b_\epsilon (z) w_\epsilon^{\frac{r+q}{2}-2}\right) |u_{xx}|^2~dx + C_\delta r^2 \int_{\Omega} f^2  \left( a_\epsilon (z)w_\epsilon^{\frac{r-p}{2}} + b_\epsilon (z)w_\epsilon^{\frac{r-q}{2}} \right) ~dx
\end{split}
\]
with any $\delta>0$. For the sufficiently small $\delta$ the first term on the right-hand side is absorbed in the left-hand side, and \eqref{eq:a-priori-1} is continued as

\begin{equation}
\label{eq:a-priori-5}
\begin{split}
\frac{1}{r}\dfrac{d}{dt} \int_{\Omega}w_\epsilon^{\frac{r}{2}}\,dx
& + C_0 \int_{\Omega} \left(a_\epsilon (z) w_\epsilon^{\frac{p+r}{2}-2} + b_\epsilon (z) w_\epsilon^{\frac{q+r}{2}-2} \right)|u_{xx}|^2\,dx
 \\
& + C_1\int_\Omega \left(a_\epsilon (z) \left|\nabla \left(w_\epsilon^{\frac{p+r-2}{4}}\right)\right|^2 + b_\epsilon (z) \left|\nabla \left(w_\epsilon^{\frac{q+r-2}{4}}\right)\right|^2 \right) \,dx
\\
& \leq C_2 +C_3r^2\int_{\Omega}f^2 \left( a_\epsilon (z)w_\epsilon^{\frac{r-p}{2}} + b_\epsilon (z)w_\epsilon^{\frac{r-q}{2}} \right)  \,dx
\end{split}
\end{equation}
with known constants $C_i$ depending on \textbf{data}, and $C_2$ depending also on $\|u(t)\|_{2,\Omega}$.
Integrating \eqref{eq:a-priori-5} in $t$ we obtain

\begin{equation}
\label{eq:ODI-new-1}
\begin{split}
\frac{1}{r}\sup_{(0,T)} \int_{\Omega}w_\epsilon^{\frac{r}{2}}\,dx  & + C_0 \int_{Q_T} \left(a_\epsilon (z) w_\epsilon^{\frac{p+r}{2}-2} + b_\epsilon (z) w_\epsilon^{\frac{q+r}{2}-2} \right)|u_{xx}|^2\,dx
 \\
& + C_1\int_{Q_T} \left(a_\epsilon (z) \left|\nabla \left(w_\epsilon^{\frac{p+r-2}{4}}\right)\right|^2 + b_\epsilon (z) \left|\nabla \left(w_\epsilon^{\frac{q+r-2}{4}}\right)\right|^2 \right) \,dx
\\
&
\leq C_2T+ \frac{1}{r}\int_{\Omega}w_{\epsilon}^{\frac{r}{2}}(x,0)\,dx + C_3r^2\int_{Q_T}f^2 \left(a_\epsilon (z)w_\epsilon^{\frac{r-p}{2}} + b_\epsilon (z) w_\epsilon^{\frac{r-q}{2}} \right) \,dz.
\end{split}
\end{equation}
It remains to estimate

\begin{equation}
\label{eq:integral-I}
\mathcal{I}=\int_{Q_T}f^2 \left( a_\epsilon (z)w_\epsilon^{\frac{r-p}{2}} + b_\epsilon (z)w_\epsilon^{\frac{r-q}{2}} \right)  \,dz.
\end{equation}
By H\"older inequality, we have

\[
\mathcal{I}\leq \|f\|_{N+2,Q_T}^2 \left(\int_{Q_T} \left( a_\epsilon (z)w_\epsilon^{\frac{r-p}{2}} + b_\epsilon (z)w_\epsilon^{\frac{r-q}{2}} \right)^{\frac{N+2}{N}}\,dz\right)^{\frac{N}{N+2}}.
\]
Observe that, since $ \min\{p(z), q(z)\}>\frac{2N}{N+2}$, $r \geq \max\{2, p^+, q^+\}$ and $a, b \in L^\infty(Q_T)$, then

\[
\frac{(r-\ell)(N+2)}{2N}<r\dfrac{N+2}{2N}-1 =r\left(\frac{1}{2}+\frac{1}{N}\right)-1<\frac{\ell+r-2+\frac{2r}{N}}{2}, \quad \ell \in \{p(z), q(z)\},
\]
and by the Young inequality, for every $\mu>0$
\[
\begin{split}
    \left( a_\epsilon (z)w_\epsilon^{\frac{r-p}{2}} + b_\epsilon (z)w_\epsilon^{\frac{r-q}{2}} \right)^{\frac{N+2}{N}} \leq C(\mu) + \mu \left(a_\epsilon (z)w_\epsilon^{\frac{p+r-2+\frac{2r}{N}}{2}} + b_\epsilon (z)w_\epsilon^{\frac{q+r-2+\frac{2r}{N}}{2}}\right)
\end{split}
\]
where $C$ depends upon $\|a\|_{L^\infty}$, $\|b\|_{L^\infty}$, $\mu$ and $N$ but is independent of $\epsilon$. Using the above inequality, we have for every $\mu>0$

\begin{equation}
  \label{eq:I-1}
  \mathcal{I}\leq \|f\|_{N+2,Q_T}^2 \left[C+ \mu \int_{Q_T}\left(a_\epsilon (z)w_\epsilon^{\frac{p+r-2+\frac{2r}{N}}{2}} + b_\epsilon (z)w_\epsilon^{\frac{q+r-2+\frac{2r}{N}}{2}}\right) \,dz\right]^{\frac{N}{N+2}},\qquad C=C(\mu).
  \end{equation}
By the Sobolev embedding $W^{1,\frac{2N}{N+2}}(\Omega)
\subset L^2(\Omega)$. Hence for every $v\in W^{1,\frac{2N}{N+2}}(\Omega)$

\begin{equation}
\label{eq:Sob-1}
\|v\|_{2,\Omega}^2\leq C_{s}\left(\|\nabla v\|_{\frac{2N}{N+2},\Omega}^2 + \|v\|^2_{\frac{2N}{N+2},\Omega}\right)
\end{equation}
with an independent of $v$ constant $C_s$. Denote
\[
v_\epsilon(z) := a_\epsilon (z)w_\epsilon^{\frac{p+r-2+\frac{2r}{N}}{2}} + b_\epsilon (z)w_\epsilon^{\frac{q+r-2+\frac{2r}{N}}{2}}.
\]
Recall notation \eqref{eq:s} and notice that

\begin{equation}\label{lowerest-1}
    v_\epsilon(z) \geq  \begin{cases}
(a(z)+b(z)) w_\epsilon^{\frac{\overline{s}+r-2+\frac{2r}{N}}{2}} \geq \alpha  w_\epsilon^{\frac{\overline{s}+r-2+\frac{2r}{N}}{2}} & \text{if $|w_\epsilon| < 1$},
\\
(a(z)+b(z)) w_\epsilon^{\frac{\underline{s}+r-2+\frac{2r}{N}}{2}} \geq \alpha  w_\epsilon^{\frac{\underline{s}+r-2+\frac{2r}{N}}{2}} & \text{if $|w_\epsilon|\geq 1$}.
\end{cases}
\end{equation}
Applying \eqref{eq:Sob-1} to $v_\epsilon^\frac{1}{2}$ we arrive at the inequality
\[
\int_{\Omega} v_\epsilon ~dx \leq C \left( \left(\int_{\Omega} \left(\frac{\left|\nabla v_\epsilon\right|}{\sqrt{v_\epsilon}}\right)^\frac{2N}{N+2} ~dx\right)^\frac{N+2}{N} + \left(\int_{\Omega} v_\epsilon^\frac{N}{N+2} ~dx\right)^\frac{N+2}{N}\right)
\]
with $C$ depending on $C_s$ and $N$. We start with estimating the first integral on the right-hand side of the above inequality. By the straightforward computation

\begin{equation}\label{eqest-1}
\begin{split}
& D_i \left(a_\epsilon (z)w_\epsilon^{\frac{p+r-2+\frac{2r}{N}}{2}}\right)\\
& =\frac{a_\epsilon (z)}{2}  w_\epsilon^{\frac{p+r-2+\frac{2r}{N}}{2}}\ln w_\epsilon D_ip + \left(p+r-2+\frac{2r}{N}\right) a_\epsilon (z) w_{\epsilon}^{\frac{p+r-2+\frac{2r}{N}}{2}-1}\sum_{j=1}^N
D_{j}uD_{ij}^2u + w_\epsilon^{\frac{p+r-2+\frac{2r}{N}}{2}} D_i a.
\end{split}
\end{equation}
The assumptions
\[
\min\{p(z), q(z)\} >\frac{2N}{N+2} \quad \text{for all} \ z \in \overline{Q}_T \quad \text{and} \ \max_{x \in \overline{Q}_T} |p(z) - q(z)| < \frac{2}{N+2}
\]
imply

\begin{equation}\label{lowerest-2}
    2p - \max\{p,q\} > 0.
\end{equation}
Now, by using \eqref{lowerest-1}, \eqref{eqest-1} and \eqref{lowerest-2}, we have
\begin{equation}\label{upperest-1}
\begin{split}
    \dfrac{1}{\sqrt{v_{\epsilon}}} & \left|\nabla \left(a_\epsilon (z)w_\epsilon^{\frac{p+r-2+\frac{2r}{N}}{2}}\right)\right|\leq C_1 \sqrt{a_\epsilon (z)} w_\epsilon^{\frac{p+r-2+\frac{2r}{N}}{4}} |\ln w_\epsilon| +  C_2 r \sqrt{a_\epsilon (z)} w_\epsilon^{\frac{p+r-4+\frac{2r}{N}}{4}} |u_{xx}| \\
    & \qquad + C_3 \begin{cases}
 w_\epsilon^{\frac{2p-\overline{s} +r-2+\frac{2r}{N}}{4}}  & \text{if $|w_\epsilon| < 1$},
\\
w_\epsilon^{\frac{2p - \underline{s}+r-2+\frac{2r}{N}}{4}}  & \text{if $|w_\epsilon|\geq 1$}
\end{cases}\\
& \leq  C_1 \sqrt{a_\epsilon (z)} w_\epsilon^{\frac{p+r-2+\frac{2r}{N}}{4}} |\ln w_\epsilon| +  C_2 r \sqrt{a_\epsilon (z)} w_\epsilon^{\frac{p+r-4+\frac{2r}{N}}{4}} |u_{xx}|
+ C_3 w_\epsilon^{\frac{p + r-2+\frac{2r}{N} + \lambda}{4}} + C_4,
\end{split}
\end{equation}
where $C_i$ are independent of $\epsilon$ and $r$ and
\[
\lambda:= \max_{z \in \overline{Q}_T} |p(z)-q(z)|.
\]
Similarly, we have
\begin{equation}
\label{eq:upperest-2}
    \begin{split}
    \frac{1}{\sqrt{v_\epsilon}}\left|\nabla \left(b_\epsilon (z)w_\epsilon^{\frac{q+r-2+\frac{2r}{N}}{2}}\right)\right|
& \leq  C_1' \sqrt{b_\epsilon (z)} w_\epsilon^{\frac{q+r-2+\frac{2r}{N}}{4}} |\ln w_\epsilon| +  C_2' r \sqrt{b_\epsilon (z)} w_\epsilon^{\frac{q+r-4+\frac{2r}{N}}{4}} |u_{xx}| \\
& \qquad + C_3' w_\epsilon^{\frac{q+ r-2+\frac{2r}{N} + \lambda}{4}} + C_4'.
\end{split}
\end{equation}
Combining \eqref{upperest-1} and \eqref{eq:upperest-2}, we obtain
\[
\begin{split}
\left(\int_{\Omega} \left(\frac{\left|\nabla v_\epsilon\right|}{\sqrt{v_\epsilon}}\right)^\frac{2N}{N+2} ~dx\right)^\frac{N+2}{N} &\leq C_1'' \left(r^2\int_\Omega \left(\left(\sqrt{a_\epsilon (z)} w_\epsilon^{\frac{p+r-4+\frac{2r}{N}}{4}} + \sqrt{b_\epsilon (z)} w_\epsilon^{\frac{q+r-4+\frac{2r}{N}}{4}}\right)|u_{xx}|\right)^{\frac{2N}{N+2}}\,dx\right)^{\frac{N+2}{N}}
\\
& \quad + C_2'' \left(\int_\Omega \left( \left(\sqrt{a_\epsilon (z)} w_\epsilon^{\frac{p+r-2+\frac{2r}{N}}{4}} + \sqrt{b_\epsilon (z)} w_\epsilon^{\frac{q+r-2+\frac{2r}{N}}{4}}\right) |\ln w_\epsilon| \right)^{\frac{2N}{N+2}}\,dx\right)^{\frac{N+2}{N}}
\\
& \quad + C_3''\left(\int_\Omega \left( w_\epsilon^{\frac{\max\{p,q\}+r-2+\frac{2r}{N} +\lambda}{4}} \right)^{\frac{2N}{N+2}}\,dx\right)^{\frac{N+2}{N}} + C_4''
\\
& \equiv C'' \left(r^2I_1+I_2+I_3 + 1\right)
\end{split}
\]
with a constant $C_i''$ depending on $C_i$, $C_i'$, $p^\pm$, $q^\pm$, $L_{p,q}$, $N$, but independent of $r$ and $\epsilon$. The integrals $I_k$ are estimated separately. By H\"older's inequality with the conjugate exponents $\frac{N+2}{N}$ and $\frac{N+2}{2}$

\[
\begin{split}
I_1^{\frac{N}{N+2}} &  \leq C \int_{\Omega}\left(\left(a_\epsilon (z) w_{\epsilon}^{\frac{p+r-4}{2}} + b_\epsilon (z) w_{\epsilon}^{\frac{q+r-4}{2}}\right)|u_{xx}|^2\right)^{\frac{N}{N+2}} \left(w_{\epsilon}^{\frac{r}{N+2}}\right)\,dx \\
& \leq C \left(\int_{\Omega} \left(a_\epsilon (z) w_{\epsilon}^{\frac{p+r-4}{2}} + b_\epsilon (z) w_{\epsilon}^{\frac{q+r-4}{2}}\right) |u_{xx}|^2\,dx\right)^{\frac{N}{N+2}} \left(\int_\Omega w_\epsilon^{\frac{r}{2}}\,dx\right)^{\frac{2}{N+2}},
\end{split}
\]
whence

\[
I_1 \leq C \left(\int_{\Omega} \left(a_\epsilon (x) w_{\epsilon}^{\frac{p+r-4}{2}} + b_\epsilon (x) w_{\epsilon}^{\frac{q+r-4}{2}}\right) |u_{xx}|^2\,dx \right) \left(\int_\Omega w_\epsilon^{\frac{r}{2}}\,dx\right)^{\frac{2}{N}}.
\]
Proceeding in the same way, we estimate

\[
\begin{split}
I_2 & \equiv \left(\int_\Omega \left(\left(\sqrt{a_\epsilon (z)} w_\epsilon^{\frac{p+r-2}{4}} + \sqrt{b_\epsilon (z)} w_\epsilon^{\frac{q+r-2}{4}}\right)|\ln w_\epsilon|\right)^{\frac{2N}{N+2}}\left(w_\epsilon^{\frac{r}{N+2}}\right)\,dx\right)^{\frac{N+2}{N}}
\\
&
\leq C \left(\int_\Omega  \left(a_\epsilon (z) w_{\epsilon}^{\frac{p+r-2}{2}} + b_\epsilon (z) w_{\epsilon}^{\frac{q+r-2}{2}}\right)  \ln^2 w_\epsilon\,dx\right) \left(\int_\Omega w_\epsilon^{\frac{r}{2}}\,dx\right)^{\frac{2}{N}}.
\end{split}
\]
To estimate $I_3$, we apply the higher integrability estimate of Corollary \ref{cor:d-r} with
\[
s = 2 \lambda,\quad \mathfrak{p} = p+r-2, \quad\mathfrak{q} = q+r-2.
\]
It is easy to see that for the so chosen $\mathfrak{p}$ and $\mathfrak{q}$

\[
\min\{\mathfrak{p}(z), \mathfrak{q}(z)\} >\frac{2N}{N+2} \quad \text{for all} \ z \in \overline{Q}_T \quad \text{and} \ \max_{z \in \overline{Q}_T} |\mathfrak{p}(z) - \mathfrak{q}(z)| < \frac{2}{N+2}.
\]
By applying H\"older's inequality with the conjugate exponents $\frac{N+2}{N}$ and $\frac{N+2}{2}$ and using \eqref{eq:d-r-ell}, we have the following estimate:

\[
\begin{split}
I_3 & \equiv \left(\int_\Omega \left( w_\epsilon^{\frac{\overline{s}(z)+r-2+\frac{2r}{N} +\lambda}{4}} \right)^{\frac{2N}{N+2}}\,dx\right)^{\frac{N+2}{N}} \leq C_1 + C_2 \left(\int_\Omega \left( w_\epsilon^{\frac{\underline{s}(z)+r-2+\frac{2r}{N} + 2\lambda}{4}} \right)^{\frac{2N}{N+2}}\,dx\right)^{\frac{N+2}{N}} \\
& = C_1 + C_2 \left(\int_\Omega \left( w_\epsilon^{\frac{\underline{s}(z)+r-2+ 2\lambda}{4}} \right)^{\frac{2N}{N+2}} w_\epsilon^\frac{r}{N+2} \,dx\right)^{\frac{N+2}{N}}
\\
& \leq C_1 + C_2 \left(\int_\Omega  w_\epsilon^{\frac{\underline{s}(z)+r-2+ 2\lambda}{2}} \,dx\right) \left(\int_{\Omega} w_\epsilon^\frac{r}{2} \,dx\right)^\frac{2}{N}\\
& \leq C_3 + C_4 \left(\int_{\Omega} w_\epsilon^\frac{r}{2} \,dx\right)^\frac{2}{N} + C_5 \left(\int_{\Omega} \left(a_\epsilon (x) w_{\epsilon}^{\frac{p+r-4}{2}} + b_\epsilon (x) w_{\epsilon}^{\frac{q+r-4}{2}}\right)|u_{xx}|^2\,dx\right)\left(\int_{\Omega} w_\epsilon^\frac{r}{2} \,dx\right)^\frac{2}{N}
\end{split}
\]
Gathering the estimates on $I_k$, we obtain the inequality
\begin{equation}
\label{eq:new-1}
\left(\int_{\Omega} \left(\frac{\left|\nabla v_\epsilon\right|}{\sqrt{v_\epsilon}}\right)^\frac{2N}{N+2} ~dx\right)^\frac{N+2}{N} \leq \widetilde{C} \Pi_1(t) \Pi_2^{\frac{2}{N}}(t) + \tilde{C},
\end{equation}
where $\widetilde C$, $\tilde{C}$ are constants independent of $r$ and $\epsilon$ and
\[
\begin{split}
& \Pi_1(t) = 1+ r^2\int_\Omega \left(a_\epsilon (z) w_{\epsilon}^{\frac{p+r-4}{2}} + b_\epsilon (z) w_{\epsilon}^{\frac{q+r-4}{2}}\right) |u_{xx}|^2\,dx \\
& \qquad \qquad \qquad \qquad  + \int_{\Omega} \left(a_\epsilon (z) w_{\epsilon}^{\frac{p+r-2}{2}} + b_\epsilon (z) w_{\epsilon}^{\frac{q+r-2}{2}}\right) \left(1+\ln^2w_\epsilon\right)\,dx,
\\
& \Pi_2(t)= \int_\Omega w_\epsilon^{\frac{r}{2}}\,dx.
\end{split}
\]
Again, applying the H\"older's inequality with the conjugate exponents $\frac{N+2}{N}$ and $\frac{N+2}{2}$, we have
\[
\begin{split}
\left(\int_{\Omega} v_\epsilon^\frac{N}{N+2} ~dx\right)^\frac{N+2}{N} & \leq C \left(\int_\Omega  \left(a_\epsilon (z) w_{\epsilon}^{\frac{p+r-2}{2}} + b_\epsilon (z) w_{\epsilon}^{\frac{q+r-2}{2}}\right)\,dx\right) \left(\int_\Omega w_\epsilon^{\frac{r}{2}}\,dx\right)^{\frac{2}{N}}
\leq C \Pi_1(t) \Pi_2^{\frac{2}{N}}(t).
\end{split}
\]
It follows that
\begin{equation}
\label{eq:new-2}
\int_{Q_T}\left(a_\epsilon (z)w_\epsilon^{\frac{p+r-2+\frac{2r}{N}}{2}} + b_\epsilon (z)w_\epsilon^{\frac{q+r-2+\frac{2r}{N}}{2}}\right) \,dz \leq \widetilde{C} \left(\sup_{(0,T)}\Pi_2(t)\right)^{\frac{2}{N}}\int_{Q_T}\Pi_1(t)\,dt.
\end{equation}
Now we plug the obtained inequalities into \eqref{eq:I-1}:

\[
\begin{split}
\mathcal{I} & \leq \|f\|^2_{N+2,Q_T}\left(C(\mu)+\mu C'\left(\sup_{(0,T)}\Pi_2(t)\right)^{\frac{2}{N}}\int_{Q_T}\Pi_1(t)\,dt \right)^{\frac{N}{N+2}}
\\
& \leq C''\|f\|^2_{N+2,Q_T}\left(C(\mu)+\mu C' \left(\sup_{(0,T)}\Pi_2(t)\right)^{\frac{2}{N+2}}\left(\int_{Q_T}\Pi_1(t)\,dt\right)^{\frac{N}{N+2}} \right)
\end{split}
\]
with an arbitrary $\mu>0$ and constants $C(\mu)$, $C'$, $C''$ independent of $w_\epsilon$ and $u$. By Young's inequality with the conjugate exponents $\frac{N+2}{N}$ and $\frac{N+2}{2}$ we continue the last inequality as follows:

\[
\begin{split}
\mathcal{I} & \leq C''\|f\|^2_{N+2,Q_T}\left(C(\mu)+ C'\left(\mu^{\frac{N+2}{4}} \sup_{(0,T)}\Pi_2(t)\right)^{\frac{2}{N+2}} \left(\mu^{\frac{N+2}{2N}}\int_{Q_T}\Pi_1(t)\,dt\right)^{\frac{N}{N+2}}
\right)
\\
& \leq  C''\|f\|^2_{N+2,Q_T}\left(C(\mu)+ C'\mu^{\frac{N+2}{4}} \sup_{(0,T)}\Pi_2(t)+ C'  \mu^{\frac{N+2}{2N}}\int_{Q_T}\Pi_1(t)\,dt\right)
\\
& \equiv C''\|f\|^2_{N+2,Q_T}\left(C(\mu)+ \mu^{\frac{N+2}{4}} \sup_{(0,T)}\|w_\epsilon\|_{\frac{r}{2},\Omega}^{\frac{r}{2}} + C'\mu^{\frac{N+2}{2N}}\left[r^2\int_{Q_T} \left(a_\epsilon (z) w_{\epsilon}^{\frac{p+r-4}{2}} + b_\epsilon (z) w_{\epsilon}^{\frac{q+r-4}{2}}\right) |u_{xx}|^2\,dz \right.\right.\\
& \left.\left. \qquad \qquad + \int_{Q_T} \left(a_\epsilon (z) w_{\epsilon}^{\frac{p+r-2}{2}} + b_\epsilon (z) w_{\epsilon}^{\frac{q+r-2}{2}}\right)\left(1+\ln^2w_\epsilon\right)\,dz\right]\right).
\end{split}
\]
By \eqref{eq:log} and Theorem \ref{th:integr-par} the second integral in the square brackets is bounded by

\[
\nu \int_{Q_T}\left(a_\epsilon (z) w_{\epsilon}^{\frac{p+r-4}{2}} + b_\epsilon (z) w_{\epsilon}^{\frac{q+r-4}{2}}\right)|u_{xx}|^2\,dz + \widehat C
\]
with an arbitrary $\nu>0$ and a constant $\widehat C=\widehat C(r,N,p^\pm, q^\pm, r,L_{p,q},\nu,\|u\|_{2,\Omega})$. Thus,

\begin{equation}
\label{eq:I-2}
\begin{split}
\mathcal{I}&\leq C_1 \|f\|_{N+2,Q_T}^2\left( \mu^{\frac{N+2}{4}} \sup_{(0,T)}\|w_\epsilon\|_{\frac{r}{2},\Omega}^{\frac{r}{2}} \right.
\\
&
\left. \qquad + \mu^{\frac{N+2}{2N}} \left(r^2+\nu\right) \int_{Q_T}\left(a_\epsilon (z) w_{\epsilon}^{\frac{p+r-4}{2}} + b_\epsilon (z) w_{\epsilon}^{\frac{q+r-4}{2}}\right) |u_{xx}|^2\,dz + C_2\right)
\end{split}
\end{equation}
with a constant $C_1$ depending only on $\textbf{data}$, and $C_2$ depending on $\textbf{data}$ and $\mu$, $\nu$. Substituting \eqref{eq:I-2} into \eqref{eq:ODI-new-1}
and choosing $\mu$ and $\nu$ sufficiently small we transform \eqref{eq:ODI-new-1} into

\begin{equation}
\label{eq:final}
\begin{split}
\sup_{(0,T)} \int_{\Omega}w_\epsilon^{\frac{r}{2}}\,dx  & + \alpha  \int_{Q_T} \left(a_\epsilon (z) w_\epsilon^{\frac{p+r}{2}-2} + b_\epsilon (z) w_\epsilon^{\frac{q+r}{2}-2} \right)|u_{xx}|^2\,dz
 \\
& + \beta \int_{Q_T} \left(a_\epsilon (z) \left|\nabla \left(w_\epsilon^{\frac{p+r-2}{4}}\right)\right|^2 + b_\epsilon (z) \left|\nabla \left(w_\epsilon^{\frac{q+r-2}{4}}\right)\right|^2 \right) \,dz
\leq \gamma + \int_{\Omega}w_{\epsilon}^{\frac{r}{2}}(x,0)\,dx
\end{split}
\end{equation}
with finite positive constants $\alpha$, $\beta$, $\gamma$ depending only on $\textbf{data}$, and independent of $\epsilon$.

\subsection{Case II: $\sigma \in (2,N+2)$}
\label{subsec:est-2}
We refine the estimates on the integral \eqref{eq:integral-I} and extend them to the case of low integrability of $f$.
Let us take $\sigma>2$, $r\geq \max\{2,p^+,q^+\}$, and search for a (optimal) $\beta$ such that the following inequality holds true:
\begin{equation}\label{imp:ineq:modi}
    \frac{(r-p)\sigma}{2(\sigma-2)} \leq \frac{p+r-2+\frac{2r}{\beta}}{2} \quad \text{and} \quad \frac{(r-q)\sigma}{2(\sigma-2)} \leq \frac{q+r-2+\frac{2r}{\beta}}{2}.
\end{equation}
By the Young inequality (cf. with \eqref{eq:I-1})
\begin{equation}
  \label{eq:I-1-modi}
  \begin{split}
  \mathcal{I} & \leq \|f\|_{\sigma,Q_T}^2 \left(\int_{Q_T} \left( a_\epsilon (z)w_\epsilon^{\frac{r-p}{2}} + b_\epsilon (z)w_\epsilon^{\frac{r-q}{2}} \right)^{\frac{\sigma}{\sigma-2}}\,dz\right)^{\frac{\sigma-2}{\sigma}}\\
  & \leq \|f\|_{\sigma,Q_T}^2 \left(C + \int_{Q_T} \left( a_\epsilon (z)w_\epsilon^\frac{p+r-2+\frac{2r}{\beta}}{2} + b_\epsilon (z)w_\epsilon^\frac{q+r-2+\frac{2r}{\beta}}{2} \right)\,dz\right)^{\frac{\sigma-2}{\sigma}}.
  \end{split}
  \end{equation}
Fix
\[
\mu \in \left(\frac{N}{N+2},1\right) \qquad \text{and set} \qquad \alpha_\sharp = 2 \mu, \quad  \quad \alpha_\sharp^\ast =\frac{2\mu N}{N-2\mu} > 2.
\]
Since $\alpha_\sharp^\ast$ is the Sobolev conjugate exponent, by the Sobolev embedding theorem $W^{1,\alpha_\sharp}(\Omega)
\subset L^{\alpha_\sharp^\ast}(\Omega) \subset L^2(\Omega)$, and for every $v\in W^{1, \alpha_\sharp}(\Omega)$
\begin{equation}
\label{eq:Sob-1-modi}
\|v\|_{2,\Omega}^{2} \leq C_{s}\left(\|\nabla v\|_{\alpha_\sharp,\Omega}^{2} + \|v\|^{2}_{\alpha_\sharp,\Omega}\right)
\end{equation}
with an independent of $v$ constant $C_s$. Denote
\[
s_\epsilon(z) := a_\epsilon (z)w_\epsilon^{\frac{p+r-2+\frac{2r}{\beta}}{2}} + b_\epsilon (z)w_\epsilon^{\frac{q+r-2+\frac{2r}{\beta}}{2}}.
\]
Applying \eqref{eq:Sob-1-modi} to $s_\epsilon^\frac{1}{2}$ we arrive at the inequality

\[
\begin{split}
\int_{\Omega} s_\epsilon \,dx \leq C_s'\left(\int_\Omega \left|\frac{\nabla s_\epsilon}{\sqrt{s_\epsilon}}\right|^{2 \mu}\,dz \right)^{\frac{1}{\mu}} + C_s' \left(\int_\Omega s_\epsilon^{\mu}\,dz \right)^{\frac{1}{\mu}},
\end{split}
\]
with $C'_s$ depending on $C_s$ and $\sigma$. Using \eqref{lowerest-2}, we have
\begin{equation}\label{upperest-3}
\begin{split}
    \frac{1}{\sqrt{s_\epsilon}}\left|\nabla \left(a_\epsilon (z)w_\epsilon^{\frac{p+r-2+\frac{2r}{\beta}}{2}}\right)\right|
& \leq  C_1 a_\epsilon (z)^\frac{1}{2} w_\epsilon^{\frac{p+r-2+\frac{2r}{\beta}}{4}} |\ln w_\epsilon| +  C_2 r a_\epsilon (z)^\frac{1}{2} w_\epsilon^{\frac{p+r-4+\frac{2r}{\beta}}{4}} |u_{xx}| \\
& \qquad + C_3 w_\epsilon^{\frac{p + r-2+\frac{2r}{\beta} + \lambda}{4}} + C_4
\end{split}
\end{equation}
and
\begin{equation}\label{upperest-2}
    \begin{split}
    \frac{1}{\sqrt{s_\epsilon}}\left|\nabla \left(b_\epsilon (z)w_\epsilon^{\frac{q+r-2+\frac{2r}{\beta}}{2}}\right)\right|
& \leq  C_1' b_\epsilon (z)^\frac{1}{2} w_\epsilon^{\frac{q+r-2+\frac{2r}{\beta}}{4}} |\ln w_\epsilon| +  C_2' r b_\epsilon (z)^\frac{1}{2} w_\epsilon^{\frac{q+r-4+\frac{2r}{\beta}}{4}} |u_{xx}| \\
& \qquad + C_3' w_\epsilon^{\frac{q+ r-2+\frac{2r}{\beta} + \lambda}{4}} + C_4'.
\end{split}
\end{equation}
Combining \eqref{upperest-1} and \eqref{upperest-1}, we obtain
\[
\begin{split}
\left(\int_{\Omega} \left(\frac{\left|\nabla s_\epsilon\right|}{\sqrt{s_\epsilon}}\right)^{2 \mu} ~dx\right)^\frac{1}{\mu} &\leq C_1'' \left(r^2\int_\Omega \left(\left(\sqrt{a_\epsilon (z)} w_\epsilon^{\frac{p+r-4+\frac{2r}{\beta}}{4}} + \sqrt{b_\epsilon (z)} w_\epsilon^{\frac{q+r-4+\frac{2r}{\beta}}{4}}\right)|u_{xx}|\right)^{2 \mu}\,dx\right)^{\frac{1}{\mu}}
\\
& \quad + C_2'' \left(\int_\Omega \left( \left(\sqrt{a_\epsilon (z)} w_\epsilon^{\frac{p+r-2+\frac{2r}{\beta}}{4}} + \sqrt{b_\epsilon (z)} w_\epsilon^{\frac{q+r-2+\frac{2r}{\beta}}{4}}\right) |\ln w_\epsilon| \right)^{2 \mu}\,dx\right)^{\frac{1}{\mu}}
\\
& \quad + C_3''\left(\int_\Omega \left( w_\epsilon^{\frac{\max\{p,q\}+r-2+\frac{2r}{\beta} +\lambda}{4}} \right)^{2 \mu}\,dx\right)^{\frac{1}{\mu}} + C_4''
\\
& \equiv C'' \left(r^2I_1+I_2+I_3 + 1\right)
\end{split}
\]
with a constant $C_i''$ depending on $C_i$, $C_i'$, $p^\pm$, $q^\pm$, $L_{p,q}$, $\beta$, but independent of $r$ and $\epsilon$. The integrals $I_k$ are estimated separately. By H\"older's inequality with the conjugate exponents $\frac{2}{\alpha_\sharp} =\frac{1}{\mu} >1$ and $ \frac{1}{1-\mu}$

\[
\begin{split}
I_1^{\mu} &  \leq C \int_{\Omega} \left(\left(a_\epsilon (z) w_{\epsilon}^{\frac{p+r-4}{2}} + b_\epsilon (z) w_{\epsilon}^{\frac{q+r-4}{2}}\right) |u_{xx}|^2\right)^{\mu} \left(w_{\epsilon}^{\frac{r}{2}}\right)^{\frac{2\mu}{\beta}}\,dx \\
& \leq \left(\int_{\Omega} \left(a_\epsilon (z) w_{\epsilon}^{\frac{p+r-4}{2}} + b_\epsilon (z) w_{\epsilon}^{\frac{q+r-4}{2}}\right)|u_{xx}|^2\,dx\right)^{\mu} \left(\int_\Omega \left(w_\epsilon^{\frac{r}{2}}\right)^{\frac{2\mu }{\beta(1-\mu)}}\,dx\right)^{1-\mu}.
\end{split}
\]
Let

\[
\beta=\frac{2\mu}{1-\mu}.
\]
With this choice of $\beta$ we may estimate

\[
I_1\leq \left(\int_{\Omega} \left(a_\epsilon (z) w_{\epsilon}^{\frac{p+r-4}{2}} + b_\epsilon (z) w_{\epsilon}^{\frac{q+r-4}{2}}\right)|u_{xx}|^2\,dx\right) \left(\int_\Omega w_\epsilon^{\frac{r}{2}}\,dx\right)^{\frac{(1-\mu)}{\mu}},
\]
while inequality \eqref{imp:ineq:modi} takes on the form

\begin{equation}
\notag
     \begin{split}
     \frac{(r-p)\sigma}{2(\sigma-2)} & \leq \frac{p+r-2+\frac{r(1-\mu)}{\mu}}{2} \qquad  \Leftrightarrow \quad
      \mu \sigma (r-p)  \leq  \mu (p+r-2)(\sigma-2) + r(1-\mu) (\sigma-2)
     \end{split}
     \end{equation}
and
\begin{equation}
\notag
     \begin{split}
     \frac{(r-q)\sigma}{2(\sigma-2)} & \leq \frac{q+r-2+\frac{r(1-\mu)}{\mu}}{2} \qquad  \Leftrightarrow \quad
      \mu \sigma (r-q)  \leq  \mu (q+r-2)(\sigma-2) + r(1-\mu) (\sigma-2).
     \end{split}
     \end{equation}

Solving the last two inequalities for $r$, gathering the results with the conditions of Proposition \ref{pro:pointwise}, and using the fact that $\mu > \frac{N}{N+2} > \frac{\sigma-2}{\sigma}$, we arrive at the following restriction on $r$ in terms of $\sigma$, $p^\pm$, $q^\pm$ and $N$:

\begin{equation}
\label{eq:r-modi}
\max\{p^+, q^+, 2\}\leq r \leq \frac{2(\min\{p^-, q^-\}(\sigma-1) -\sigma + 2)}{\sigma - \frac{\sigma-2}{\mu}}:=\kappa(\mu).
\end{equation}

Proceeding in the same way, we estimate

\[
\begin{split}
I_2 & \equiv \left(\int_\Omega \left( \left(a_\epsilon (z) w_{\epsilon}^{\frac{p+r-2}{2}} + b_\epsilon (z) w_{\epsilon}^{\frac{q+r-2}{2}}\right) \ln^2 w_\epsilon\right)^{\mu}\left(w_{\epsilon}^{\frac{r}{2}} \right)^\frac{2\mu}{\beta}\,dx\right)^{{\frac{1}{\mu}}}
\\
&
\leq \int_\Omega  \left(a_\epsilon (z) w_{\epsilon}^{\frac{p+r-2}{2}} + b_\epsilon (z) w_{\epsilon}^{\frac{q+r-2}{2}}\right)  \ln^{2} w_\epsilon\,dx \left(\int_\Omega w_\epsilon^{\frac{r}{2}}\,dx\right)^{\frac{1-\mu}{\mu}},
\end{split}
\]

\[
\begin{split}
I_3 & \equiv \left(\int_\Omega \left( w_\epsilon^{\frac{\max\{p,q\}+r-2+\frac{2r}{\beta} +\lambda}{4}} \right)^{2\mu}\,dx\right)^{\frac{1}{\mu}} \leq C_1 + C_2 \left(\int_\Omega \left( w_\epsilon^{\frac{\min\{p,q\}+r-2+\frac{2r}{\beta} + 2\lambda}{4}} \right)^{2\mu}\,dx\right)^{\frac{1}{\mu}} \\
& =  C_1 + C_2 \left(\int_\Omega \left( w_\epsilon^{\frac{\min\{p,q\}+r-2+\frac{2r}{\beta} + 2\lambda}{4}} \right)^{2\mu} (w_\epsilon^\frac{r}{2})^\frac{2\mu}{\beta}\,dx\right)^{\frac{1}{\mu}}\\
&\leq C_1 + C_2 \left(\int_\Omega w_\epsilon^{\frac{r}{2}}\,dx\right)^{\frac{1-\mu}{\mu}} + C_3 \int_\Omega  \left(a_\epsilon (z) w_{\epsilon}^{\frac{p+r-2}{2}} + b_\epsilon (z) w_{\epsilon}^{\frac{q+r-2}{2}}\right) \,dx \left(\int_\Omega w_\epsilon^{\frac{r}{2}}\,dx\right)^{\frac{1-\mu}{\mu}}.
\end{split}
\]
Gathering the estimates on $I_k$, $k=1,2,3$, we obtain the inequality

\begin{equation}
\label{eq:new-1-modi}
\left(\int_{\Omega} \left(\frac{\left|\nabla s_\epsilon\right|}{\sqrt{s_\epsilon}}\right)^{2\mu} ~dx\right)^\frac{1}{\mu}
 \leq \widetilde{C} \Pi_1(t) \Pi_2^{\frac{1-\mu}{\mu}}(t),
\end{equation}
where

\[
\begin{split}
& \Pi_1(t) = 1+ r^2\int_\Omega \left(a_\epsilon (z) w_{\epsilon}^{\frac{p+r-4}{2}} + b_\epsilon (z) w_{\epsilon}^{\frac{q+r-4}{2}}\right) |u_{xx}|^2\,dx \\
& \qquad \qquad \qquad \qquad  + \int_{\Omega} \left(a_\epsilon (z) w_{\epsilon}^{\frac{p+r-2}{2}} + b_\epsilon (z) w_{\epsilon}^{\frac{q+r-2}{2}}\right) \left(1+\ln^2w_\epsilon\right)\,dx,
\\
& \Pi_2(t)= \int_\Omega w_\epsilon^{\frac{r}{2}}\,dx.
\end{split}
\]
Again, applying the H\"older's inequality with the conjugate exponents $\frac{1}{\mu}$ and $\frac{1}{1-\mu}$, we have
\[
\begin{split}
\left(\int_{\Omega} s_\epsilon^\mu~dx\right)^\frac{1}{\mu} & \leq C \left(\int_\Omega  \left(a_\epsilon (z) w_{\epsilon}^{\frac{p+r-2}{2}} + b_\epsilon (z) w_{\epsilon}^{\frac{q+r-2}{2}}\right)\,dx\right) \left(\int_\Omega w_\epsilon^{\frac{r}{2}}\,dx\right)^{\frac{1-\mu}{\mu}}\\
& \leq \widetilde{C} \left(\sup_{(0,T)}\Pi_2(t)\right)^{\frac{1-\mu}{\mu}}\int_{Q_T}\Pi_1(t)\,dt.
\end{split}
\]
Now we plug the obtained inequalities into \eqref{eq:I-1-modi}:

\begin{equation}
\label{eq:last-modi}
\begin{split}
\mathcal{I} & \leq \|f\|^2_{\sigma,Q_T}\left(C+ C'\left(\sup_{(0,T)}\Pi_2(t)\right)^{\frac{1-\mu}{\mu}
}\int_{Q_T}\Pi_1(t)\,dt \right)^{\frac{\sigma-2}{\sigma}}
\\
& \leq C''\|f\|^2_{\sigma,Q_T}\left(1+ \left(\sup_{(0,T)}\Pi_2(t)\right)^{\frac{(1-\mu)(\sigma-2)}{\mu \sigma} }\left(\int_{Q_T}\Pi_1(t)\,dt\right)^{\frac{\sigma-2}{\sigma}} \right)
\end{split}
\end{equation}
with constants $C'$, $C''$ independent of $w_\epsilon$ and $u$.

For any $\sigma \in (2, N+2)$ and $1>\mu >  \frac{N}{N+2}$, we have the following inequalities:
\begin{equation}
\label{eq:ineq-app}
\begin{split}
\mu > \frac{N}{N+2} > \frac{\sigma-2}{\sigma} \quad  & \Leftrightarrow \quad 2 \mu - (1-\mu)(\sigma-2) >0 \quad \Leftrightarrow \quad  \frac{\mu(\sigma-2)}{(\mu \sigma-(1-\mu)(\sigma-2))}<1
\end{split}
\end{equation}
and
\begin{equation}
     \mu > \frac{N}{N+2} > \frac{(\sigma-2)}{2(\sigma-1)}  \qquad \Leftrightarrow \quad \sigma \mu - (1-\mu)(\sigma-2) >0 \qquad \Leftrightarrow \quad  \frac{(1-\mu)(\sigma-2)}{\mu \sigma} < 1.
\end{equation}
By applying Young's inequality two times with the exponents
\[
\frac{\mu \sigma}{(1-\mu)(\sigma-2)} ,\quad \gamma= \frac{\mu \sigma}{\mu \sigma-(1-\mu)(\sigma-2)}, \quad \text{and}\quad  \frac{\sigma}{\gamma(\sigma-2)},\quad  \frac{\sigma}{\sigma-\gamma(\sigma-2)},
\]
we find that for every $\lambda >0$
\[
\begin{split}
\left(\sup_{(0,T)}\Pi_2(t)\right)^{\frac{(1-\mu)(\sigma-2)}{\mu \sigma}} & \left(\int_{Q_T}\Pi_1(t)\,dt\right)^{\frac{\sigma-2}{\sigma}}
\leq\lambda^\frac{\mu \sigma}{(1-\mu)(\sigma-2)}\sup_{(0,T)}\Pi_2(t) + \frac{1}{\lambda^\gamma} \left(\int_{Q_T}\Pi_1(t)\,dt\right)^{\frac{\gamma(\sigma-2)}{\sigma}}\\
& \leq\lambda^\frac{\mu \sigma}{(1-\mu)(\sigma-2)}\sup_{(0,T)}\Pi_2(t) +  \lambda^\frac{\sigma}{\sigma-2} \int_{Q_T}\Pi_1(t)\,dt + C(\lambda).
\end{split}
\]
Estimate \eqref{eq:last-modi} is then continued as follows: for $\sigma\in (2,N+2)$ and  $\mu \in \left(\frac{N}{N+2}, 1 \right)$,
\[
\begin{split}
\mathcal{I}
& \leq  C''\|f\|^2_{\sigma,Q_T}\left(C(\lambda)+ \lambda^\frac{\mu \sigma}{(1-\mu)(\sigma-2)}
 \sup_{(0,T)}\Pi_2(t)+  \lambda^{\frac{\sigma}{\sigma-2}}\int_{Q_T}\Pi_1(t)\,dt\right)
\\
& \equiv C''\|f\|^2_{\sigma,Q_T}\left(C(\lambda)+ \lambda^\frac{\mu \sigma}{(1-\mu)(\sigma-2)}  \sup_{(0,T)}\|w_\epsilon\|_{\frac{r}{2},\Omega}^{\frac{r}{2}}\right.
\\
& \qquad \qquad \qquad \qquad  \left.
+ C'\lambda^{\frac{\sigma}{(\sigma-2)}}\left[r^2\int_{Q_T} \left(a_\epsilon (z) w_{\epsilon}^{\frac{p+r-2}{2}} + b_\epsilon (z) w_{\epsilon}^{\frac{q+r-2}{2}}\right) |u_{xx}|^2\,dz\right.\right.\\
& \left.\left. \qquad \qquad \qquad \qquad+\int_{Q_T} \left(a_\epsilon (z) w_{\epsilon}^{\frac{p+r-2}{2}} + b_\epsilon (z) w_{\epsilon}^{\frac{q+r-2}{2}}\right)\left(1+\ln^2w_\epsilon\right)\,dz\right]\right).
\end{split}
\]
By Corollary \ref{cor:d-r} 
and \eqref{eq:log}, the second integral in the square brackets is bounded by

\[
\nu \int_{Q_T} \left(a_\epsilon (z) w_{\epsilon}^{\frac{p+r-2}{2}} + b_\epsilon (z) w_{\epsilon}^{\frac{q+r-2}{2}}\right) |u_{xx}|^2\,dz + \widehat C
\]
with an arbitrary $\nu>0$ and a constant $\widehat C$ depending on $r$, $N$, $p^\pm$, $q^\pm$ $L_{p,q}$, $\nu$, $\sup_{(0,T)}\|u(t)\|_{2,\Omega}$, and $a^\pm$, $b^\pm$, $\|\nabla a\|_{d,Q_T}$, $\|\nabla b\|_{d,Q_T}$. Thus,

\begin{equation}
\label{eq:I-2-new}
\begin{split}
\mathcal{I}\leq C_1 \|f\|_{\sigma,Q_T}^2 & \left(\lambda^\frac{\mu \sigma}{(1-\mu)(\sigma-2)}  \sup_{(0,T)}\|w_\epsilon\|_{\frac{r}{2},\Omega}^{\frac{r}{2}}\right.
\\
& \qquad \left.
 + \lambda^{\frac{\sigma}{\sigma-2}}\left(r^2+\nu\right) \int_{Q_T} \left(a_\epsilon(z) w_{\epsilon}^{\frac{p+r-2}{2}} + b_\epsilon(z) w_{\epsilon}^{\frac{q+r-2}{2}}\right) |u_{xx}|^2\,dz + C_2\right)
 \end{split}
\end{equation}
with a constant $C_1$ depending only on the same quantities as $\widehat C$, and $C_2$ depending also on $\mu$, $\nu$. Substituting \eqref{eq:I-2-new} into \eqref{eq:ODI-new-1}, using \eqref{eq:d-r-ell} and choosing $\lambda$ and $\nu$ sufficiently small we transform \eqref{eq:ODI-new-1} into \eqref{eq:final} with finite positive constants $\alpha$, $\beta$, $\gamma$ independent of $\epsilon$ and depending on the same quantities as $\widehat C$ and on $\|f\|_{\sigma,Q_T}$. The function $\kappa(\mu)$ in \eqref{eq:r-modi} is monotone decreasing and attains its maximum as $\mu\to \frac{N}{N+2}$. Since $\mu \in \left(\frac{N}{N+2},1\right)$ is arbitrary, condition \eqref{eq:r-modi} allows one to choose $\mu$ so close to $\frac{N}{N+2}$ that \eqref{eq:r-modi} holds true.

The results of this section are summarized in the following assertion.

\begin{lemma}
\label{le:clas-sol-summary}
The classical solution $u$ of problem \eqref{eq:main-reg} with smooth $u_0$, $f$, $a$, $b$, $p$, $q$ satisfies estimate \eqref{eq:final} with constants $\alpha$, $\beta$, $\gamma$ depending only on \textbf{data}. If $\sigma=N+2$, $r\geq \max\{2,p^+,q^+\}$ is an arbitrary finite number. If $\sigma\in (2,N+2)$, estimate \eqref{eq:final} holds for $r$ satisfying inequalities  \eqref{eq:r-modi}.
\end{lemma}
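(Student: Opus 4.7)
The proof is a synthesis of the two long computations performed in Subsections~5.1 and~5.2, and the plan is simply to give it a unified presentation. First I would test the regularized equation against the test function $-\operatorname{div}\bigl(w_\epsilon^{(r-2)/2}\nabla u\bigr)$, recognize the parabolic term as $\tfrac{1}{r}\tfrac{d}{dt}\int_\Omega w_\epsilon^{r/2}\,dx$, and invoke Corollary~\ref{cor:p-prim} on the diffusion term to generate, modulo controllable boundary/residual pieces, the full Hessian energy $\int_\Omega(a_\epsilon w_\epsilon^{(p+r)/2-2}+b_\epsilon w_\epsilon^{(q+r)/2-2})|u_{xx}|^2\,dx$ together with the square-gradient of $w_\epsilon^{(p+r-2)/4}$ and $w_\epsilon^{(q+r-2)/4}$. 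This is exactly the identity \eqref{eq:a-priori-1}.

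The core of the argument is then the estimate on the source integral $\mathcal{I}$ from \eqref{eq:integral-I}. Since $a+b\geq\alpha$, I would use $\sqrt{a+b}\le\sqrt{a_\epsilon}+\sqrt{b_\epsilon}$ to expose the natural weights of the Hessian energy on the right, and then apply H\"older's inequality with $f\in L^\sigma(Q_T)$ to reduce matters to estimating the spatial integral of
\[
s_\epsilon = a_\epsilon w_\epsilon^{(p+r-2+2r/\beta)/2} + b_\epsilon w_\epsilon^{(q+r-2+2r/\beta)/2},
\]
where $\beta$ is tuned to the dual exponent of $\sigma$. For $\sigma=N+2$ I would take $\beta=N$ and apply the Sobolev embedding $W^{1,2N/(N+2)}(\Omega)\hookrightarrow L^2(\Omega)$ to $\sqrt{s_\epsilon}$; for $\sigma\in(2,N+2)$, I would instead pick $\mu\in(N/(N+2),1)$ and $\beta=2\mu/(1-\mu)$, and use $W^{1,2\mu}(\Omega)\hookrightarrow L^2(\Omega)$, as in \eqref{eq:Sob-1-modi}.

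Expanding $|\nabla s_\epsilon|/\sqrt{s_\epsilon}$ via the chain rule produces three terms: one proportional to $w_\epsilon^{(e+r-4+2r/\beta)/4}|u_{xx}|$ with coefficient $r$, one involving $|\ln w_\epsilon|$ coming from the derivatives of $p,q$, and one involving $|\nabla a|,|\nabla b|$. Each of these yields, after a second H\"older with conjugate exponents $1/\mu$ and $1/(1-\mu)$, a product of a quantity already present on the left of \eqref{eq:a-priori-1} and a power of $\Pi_2(t)=\int_\Omega w_\epsilon^{r/2}\,dx$. The logarithmic factors are swallowed by \eqref{eq:log} and Corollary~\ref{cor:d-r}, the terms carrying $|\nabla a|,|\nabla b|$ are controlled by the hypothesis \eqref{eq:mod-coeff} and the gap condition \eqref{eq:structure-prelim-2} exactly as in Lemma~\ref{le:Q2}, and the $I_3$ piece is handled by Corollary~\ref{cor:d-r} applied with $\mathfrak{p}=p+r-2$, $\mathfrak{q}=q+r-2$ (for which the assumption \eqref{eq:structure-prelim-2} still holds). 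Two applications of Young's inequality with carefully chosen exponents then absorb the Hessian factor into the left-hand side and the supremum factor into the $\sup_{(0,T)}\|w_\epsilon\|_{r/2,\Omega}^{r/2}$ produced by the time derivative, leading to \eqref{eq:final} after integration in $t$.

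The main obstacle is the precise bookkeeping of admissible exponents: the Sobolev step requires $\mu>N/(N+2)$, and this in turn forces the balance \eqref{imp:ineq:modi}, which solved for $r$ gives the range \eqref{eq:r-modi}. When $\sigma=N+2$ the relation $\mu=N/(N+2)$ is legal and $r$ can be taken arbitrarily large, whereas for $\sigma\in(2,N+2)$ one can only push $\mu\searrow N/(N+2)$ within the open range, producing the supremum of admissible $r$ expressed in \eqref{eq:r-modi}. Making sure simultaneously that the coefficients in the Young absorption remain strictly less than those on the left, that the residual logarithmic and $|\nabla a|,|\nabla b|$ contributions stay subcritical under \eqref{eq:d-1} and \eqref{eq:structure-prelim-2}, and that all resulting constants depend only on the \textbf{data}, is the delicate part; once this is done, \eqref{eq:final} follows with the announced constants $\alpha,\beta,\gamma$.
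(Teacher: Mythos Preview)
Your proposal is correct and follows essentially the same approach as the paper: the lemma is indeed just a summary of the computations in the two cases (Case~I: $\sigma=N+2$ and Case~II: $\sigma\in(2,N+2)$), and you have accurately reconstructed all the key steps---testing against $-\operatorname{div}(w_\epsilon^{(r-2)/2}\nabla u)$, invoking the integration-by-parts estimates of Lemma~\ref{le:principal-improved} to obtain \eqref{eq:a-priori-1}, handling $\mathcal{I}$ via H\"older plus the Sobolev embedding applied to $\sqrt{s_\epsilon}$ with the correct choice of $\beta$ in each case, and absorbing by Young's inequality under the exponent bookkeeping that yields \eqref{eq:r-modi}. The only minor slip is the section numbering (the two cases sit in Section~\ref{sec:est-source}, not Section~5), and the paper invokes Lemma~\ref{le:principal-improved} directly rather than Corollary~\ref{cor:p-prim}, but the content is identical.
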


\begin{corollary}
\label{cor:high-int-fin}
Estimate \eqref{eq:final} and Theorem \ref{th:integr-par} yield the uniform estimate

\begin{equation}
\label{eq:high-int-uniform}
\int_{Q_T}\left(a_\epsilon(z)w_{\epsilon}^{\frac{p(z)+r+s}{2}-1} +b_\epsilon(z)w_{\epsilon}^{\frac{q(z)+r+s}{2}-1}\right)|\nabla u|^2\,dz\leq C, \qquad s\in \left(0,\frac{4}{N+2}\right)
\end{equation}
with a constant $C$ depending on the same quantities as the constant in \eqref{eq:final}. By Corollary \ref{cor:cont-embed}

\[
\|u\|_{\mathbb{W}_{\underline{s}(\cdot)+r+s}(Q_T)}\leq C
\]
with a constant $C$ depending on the same quantities as the constant in \eqref{eq:high-int-uniform}.
\end{corollary}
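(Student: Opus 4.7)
My plan is to extract from \eqref{eq:final} a uniform-in-$\epsilon$ bound on the weighted Hessian norm, and then feed it into the interpolation inequality provided by Theorem \ref{th:integr-par}. Concretely, the second summand on the left-hand side of \eqref{eq:final} is exactly a multiple of $\int_{Q_T}\mathcal{F}^{(r,r)}_\epsilon(z,\nabla u)|u_{xx}|^2\,dz$, so Lemma \ref{le:clas-sol-summary} delivers the uniform bound
\[
\int_{Q_T}\mathcal{F}^{(r,r)}_\epsilon(z,\nabla u)|u_{xx}|^2\,dz \leq C_0,
\]
which is precisely the quantity controlling the right-hand sides of \eqref{eq:principal2} and \eqref{eq:principal-3}.

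Given $s\in(0,r^\sharp)$, I would then set $\varsigma = r^\sharp - s\in (0,r^\sharp)$ and apply \eqref{eq:principal2} with, say, $\beta = 1/2$. With the canonical choice $r_1(z)=\underline{s}(z)+r^\sharp-p(z)$, $r_2(z)=\underline{s}(z)+r^\sharp-q(z)$ prescribed in Theorem \ref{th:integr-par}, both exponents of $w_\epsilon$ on the left of \eqref{eq:principal2} collapse to $\underline{s}(z)+r+s-2$, yielding the uniform bound
\[
\int_{Q_T}(a_\epsilon+b_\epsilon)\,w_\epsilon^{(\underline{s}(z)+r+s-2)/2}|\nabla u|^2\,dz \leq C.
\]
To recover the form with separate $p$- and $q$-exponents claimed in the corollary, I would use that at each $z$ either $p(z)=\underline{s}(z)$ or $q(z)=\underline{s}(z)$, so one of the two summands in the claimed integrand matches the display above directly; the mismatched summand involves $\overline{s}(z)=\underline{s}(z)+|p-q|$, and by \eqref{eq:structure-prelim-2} the gap satisfies $|p-q|<r_\ast=2/(N+2)$. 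Splitting $Q_T=\{w_\epsilon\leq 1\}\cup\{w_\epsilon>1\}$, the mismatched term is controlled pointwise by $(1+\|a\|_\infty+\|b\|_\infty)(a_\epsilon+b_\epsilon)$ on the first piece (using $r+s\geq 2$), and by $(a_\epsilon+b_\epsilon)w_\epsilon^{(\underline{s}+r+(s+r_\ast)-2)/2}|\nabla u|^2$ on the second; the latter is absorbed by a second application of \eqref{eq:principal2} with the shifted parameter $s+r_\ast$ whenever $s+r_\ast<r^\sharp$.

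For the norm bound $\|u\|_{\mathbb{W}_{\underline{s}(\cdot)+r+s}(Q_T)}\leq C$, I would combine inequality \eqref{eq:principal-3} of Theorem \ref{th:integr-par} (which, together with Step 1, gives $\int_{Q_T}|\nabla u|^{\underline{s}(z)+r+s}\,dz\leq C$) with the $L^\infty(0,T;L^r(\Omega))$-bound on $\nabla u$ read off from the first summand of \eqref{eq:final}; Poincar\'e's inequality then yields $u\in L^2(Q_T)$, and Corollary \ref{cor:cont-embed} produces the norm estimate. The only delicate point is the reconciliation between the single $\underline{s}$-exponent produced by Theorem \ref{th:integr-par} and the separate $p$- and $q$-exponents in the claim: a single shift by $r_\ast$ immediately covers $s\in (0,r_\ast)$, whereas for $s$ closer to $r^\sharp=2r_\ast$ one must partition $Q_T$ more finely and balance the two summands, crucially using the strict inequality in \eqref{eq:structure-prelim-2}.
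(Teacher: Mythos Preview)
Your approach is correct and essentially matches the paper's, which offers no explicit proof beyond citing \eqref{eq:final} and Theorem~\ref{th:integr-par}. You go further than the paper in one respect: you notice that the left-hand side of \eqref{eq:principal2}, with the specific $r_1,r_2$ fixed in Theorem~\ref{th:integr-par}, actually carries the \emph{common} exponent $\underline{s}(z)+r+s$ in both summands, not the separate exponents $p(z)+r+s$ and $q(z)+r+s$ displayed in \eqref{eq:high-int-uniform}. Your splitting argument recovers the separate-exponent form whenever $s+\max_{\overline Q_T}|p-q|<r^\sharp$; the looser ``finer partition'' suggestion for $s$ nearer $r^\sharp$ does not in fact close the gap (the obstruction is global: on the region where $|p-q|$ is close to its maximum, no local refinement helps). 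This is harmless, however, since every later use of the corollary in the paper---\eqref{eq:interm-1}, \eqref{eq:summ-est-reg}, and the statement of Theorem~\ref{th:main-1}---requires only the $\underline{s}$-exponent version, i.e.\ \eqref{eq:principal-3}. Your derivation of the $\mathbb{W}_{\underline{s}(\cdot)+r+s}$-norm bound directly from \eqref{eq:principal-3} and Poincar\'e is in fact slightly more direct than the paper's stated route through Corollary~\ref{cor:cont-embed}, which presupposes the separate-exponent modular bound.
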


\section{Regularized problem with nonsmooth data}
\label{sec:nonsmooth-data}
Consider the regularized problem \eqref{eq:main-reg} with the data satisfying conditions \eqref{assum1}, \eqref{eq:Lip-p-q}, \eqref{eq:coeff}, \eqref{eq:mod-coeff}. Following \cite[Subsec.3.2]{Ar-Sh-2024}, we approximate the data by sequences of smooth functions $\{p_m\}$, $\{q_m\}$, $\{f_m\}$, $\{u_{0m}\}$, $\{a_m\}$, $\{b_m\}$ such that

\begin{equation}
\label{eq:data-approx-1}
\begin{split}
& \text{$p_m(z)\nearrow p(z)$, $q_m(z)\nearrow q(z)$ in $C^{0,1}(Q_T)$},
\\
& \text{$u_{0m}\in C_0^{\infty}(\Omega)$, $u_{0m}\to u_0$ in $W_0^{1,r}(\Omega)$},
\\
& \text{$f_m\in C_0^{\infty}(Q_T)$, $f_m\to f$ in $L^{\sigma}(Q_T)$},
\\
& \text{$a_m,b_m\in C^{2+\alpha,\frac{2+\alpha}{2}}(Q_T)$},
\\
& \text{$a_{m t}, b_{m t}, D_ia_m,\,D_ib_m \to a_{t}, b_{t}, D_ia,\,D_ib$ in $L^d(Q_T)$}.
\end{split}
\end{equation}
By agreement we will denote $\mathbf{data}_m=\{f_m,u_{0m},a_{m},b_m,p_m,q_m\}$ and $\mathbf{data}=\{f,u_0,a,b,p,q\}$. When $m\to \infty$ the norms of the components of $\mathbf{data}_m$ converge to the norms of the corresponding components of $\mathbf{data}$, which means that the constants depending on $\|\mathbf{data}_m\|$ in fact depend on $\|\mathbf{data}\|$.

By Lemma \ref{le:class-sol-existence} the problem

\begin{equation}
\label{eq:main-nonsmooth}
\begin{split}
& (u_m)_t-\operatorname{div}\left(\mathcal{F}^{(p_m-p,q_m-q)}_{\epsilon,m}(z,\nabla u_m)\nabla u_m\right)=f_m(z)\quad \text{in $Q_T$},
\\
& \text{$u_m=0$ on $\partial\Omega\times (0,T)$},\qquad \text{$u_m(x,0)=u_{0m}(x)$ in $\Omega$}
\end{split}
\end{equation}
with the smooth $\mathbf{data}_m$ and the flux function
\[
\mathcal{F}^{(p_m-p,q_m-q)}_{\epsilon,m}(z,\xi)=(a_m+\epsilon)(\epsilon^2+|\xi|^2)^{\frac{p_m-2}{2}} + (b_m+\epsilon)(\epsilon^2+|\xi|^2)^{\frac{q_m-2}{2}},\quad \epsilon
>0,
\]
has a unique classical solution.

\begin{theorem}
\label{th:limit-m}
Let the data of problem \eqref{eq:main-reg} satisfy conditions \eqref{eq:structure-prelim-1}, \eqref{assum1}, \eqref{eq:Lip-p-q}, \eqref{eq:structure-prelim-2} and \eqref{eq:coeff}. Then the sequence $\{u_m\}$ of classical solutions to problem \eqref{eq:main-nonsmooth} converges to the unique strong solution $u$ of problem \eqref{eq:main-reg}. The solution satisfies the estimates

\begin{equation}
\label{eq:a-priori-eps-1}
\|u_t\|^2_{2,Q_T}+ \int_{Q_T}|\nabla u|^{\underline{s}(z)+r+s}\,dz\leq C_1,\qquad s\in \left(0,\frac{4}{N+2}\right),
\end{equation}

\begin{equation}
\label{eq:a-priori-eps-2}
\begin{split}
\sup_{(0,T)} \int_{\Omega}w_\epsilon^{\frac{r}{2}}\,dx  &
\leq C_2 + \int_{\Omega}w_{\epsilon}^{\frac{r}{2}}(x,0)\,dx
\end{split}
\end{equation}
with positive constants $C_i$, depending only on $\textbf{data}$, $r$, and $\|f\|_{\sigma,Q_T}$ but independent of $\epsilon$.
\end{theorem}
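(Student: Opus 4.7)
The plan is to obtain $u$ as the limit of the classical solutions $u_m$ of the smoothed problem \eqref{eq:main-nonsmooth}, whose existence is guaranteed by Lemma \ref{le:class-sol-existence} and whose uniform a priori bounds are supplied by Lemma \ref{le:clas-sol-summary} and Corollary \ref{cor:high-int-fin}. Since each component of $\mathbf{data}_m$ converges to the corresponding component of $\mathbf{data}$ in the sense of \eqref{eq:data-approx-1}, the constants appearing in \eqref{eq:final} are controlled by quantities determined by $\mathbf{data}$ alone, and the resulting estimates are therefore uniform both in $m$ and in $\epsilon$. In particular, the bound on $\sup_{(0,T)}\int_{\Omega}w_{\epsilon,m}^{r/2}\,dx$ and on the second-order integral passes directly to $u_m$, while Corollary \ref{cor:high-int-fin} supplies the uniform higher integrability $\int_{Q_T}|\nabla u_m|^{\underline{s}(z)+r+s}\,dz\leq C$. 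The bound on $(u_m)_t$ in $L^2(Q_T)$ is obtained by multiplying \eqref{eq:main-nonsmooth} by $(u_m)_t$ and, as in the derivation of \eqref{eq:final}, absorbing the resulting diffusion integral into the already-controlled quantities; the $L^d$-integrability of $(a_m)_t,(b_m)_t$ handles the residuals produced by the time dependence of the coefficients.

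With these uniform bounds at hand, the Aubin--Lions lemma yields a (not relabeled) subsequence such that $u_m\to u$ strongly in $L^2(Q_T)$ and in $C([0,T];L^2(\Omega))$, $\nabla u_m\rightharpoonup \nabla u$ weakly in $L^{\underline{s}(\cdot)+r+s}(Q_T)$ for every admissible $s$, and $(u_m)_t\rightharpoonup u_t$ weakly in $L^2(Q_T)$. Weak lower semicontinuity transfers estimates \eqref{eq:a-priori-eps-1} and \eqref{eq:a-priori-eps-2} to $u$; the initial condition $u(\cdot,0)=u_0$ in $L^2(\Omega)$ is inherited from $u_{0m}\to u_0$ in $W^{1,r}_0(\Omega)$ together with the $C([0,T];L^2(\Omega))$ convergence.

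The main obstacle is to identify the weak limit of the nonlinear flux $\mathcal{F}^{(p_m-p,q_m-q)}_{\epsilon,m}(z,\nabla u_m)\nabla u_m$ as $\mathcal{F}_\epsilon(z,\nabla u)\nabla u$, complicated by the simultaneous variation of three different objects: the coefficients $a_m\to a$, $b_m\to b$, the exponents $p_m\to p$, $q_m\to q$, and the gradients $\nabla u_m\to \nabla u$. My plan is to apply a Minty--Browder monotonicity argument. Testing \eqref{eq:main-nonsmooth} with $u_m-u$ and using the strong $L^2$-convergence $u_m\to u$ together with $f_m\to f$ in $L^{\sigma}(Q_T)\hookrightarrow L^2(Q_T)$ gives
\[
\int_{Q_T}\mathcal{F}^{(p_m-p,q_m-q)}_{\epsilon,m}(z,\nabla u_m)\nabla u_m\cdot\nabla(u_m-u)\,dz\to 0.
\]
Combined with the strict monotonicity inequality for $\mathcal{F}_\epsilon$ from \cite[Proposition~3.1]{RACSAM-2023}, the pointwise convergences $a_m\to a$ and $b_m\to b$ a.e. (which follow from the standard mollification construction underlying \eqref{eq:data-approx-1} together with the $L^\infty$ bound in \eqref{eq:mod-coeff}), and the uniform convergence $p_m\to p$, $q_m\to q$, this forces $\nabla u_m\to \nabla u$ almost everywhere in $Q_T$. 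Vitali's theorem, justified by the uniform higher integrability $|\nabla u_m|^{\underline{s}(z)+r+s}\in L^1(Q_T)$, then upgrades the a.e. convergence into strong $L^{\overline{s}(\cdot)}$-convergence of the flux and permits passage to the limit in the weak formulation of \eqref{eq:main-nonsmooth}, yielding \eqref{eq:def-nondiv} for $u$.

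Uniqueness of the strong solution of \eqref{eq:main-reg} is then standard: subtracting two strong solutions, testing the resulting equation with their difference and discarding the nonnegative term produced by the monotonicity of $\mathcal{F}_\epsilon$ yields $u^{(1)}\equiv u^{(2)}$. This in turn upgrades the extracted-subsequence convergence above to convergence of the whole family $\{u_m\}$ to $u$, completing the proof.
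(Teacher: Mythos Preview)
Your overall strategy is sound and close in spirit to the paper's, but you take a Minty--Browder route (test the $m$th equation with $u_m-u$, $u$ being the weak limit) whereas the paper proceeds via a Cauchy-sequence argument (test the difference of the $m$th and $n$th equations with $u_m-u_n$; this is Lemma~\ref{le:fund-sequence}). Both schemes are standard for monotone operators, and either can be made to work here; the paper's variant has the minor advantage that it never needs to check that the weak limit $u$ already has enough regularity to serve as a test function in the smoothed equation.

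There is, however, one genuinely under-justified step in your argument. From testing with $u_m-u$ you obtain
\[
\int_{Q_T}\mathcal{F}^{(p_m-p,q_m-q)}_{\epsilon,m}(z,\nabla u_m)\nabla u_m\cdot\nabla(u_m-u)\,dz\to 0,
\]
but the monotonicity you wish to invoke is that of the \emph{limit} flux $\mathcal{F}_\epsilon$, not of the $m$-dependent flux. To pass from one to the other you must show
\[
\int_{Q_T}\bigl(\mathcal{F}^{(p_m-p,q_m-q)}_{\epsilon,m}(z,\nabla u_m)-\mathcal{F}_\epsilon(z,\nabla u_m)\bigr)\nabla u_m\cdot\nabla(u_m-u)\,dz\to 0.
\]
Pointwise (or even uniform) convergence of $a_m,b_m,p_m,q_m$ is not by itself enough here, because $\nabla(u_m-u)$ converges only weakly; what is needed is \emph{strong} convergence of the flux difference in the dual of $L^{\overline{s}(\cdot)}$. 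This is precisely the content of the paper's Lemma~\ref{le:conv-F}, whose proof combines the uniform higher integrability of $\nabla u_m$ (from Theorem~\ref{th:integr-par}) with the Lagrange mean-value representation for $(\epsilon^2+|\nabla u_m|^2)^{(p_m-1)/2}-(\epsilon^2+|\nabla u_m|^2)^{(p-1)/2}$ and the log-estimate \eqref{eq:log}. Once this is in place, adding $\int_{Q_T}\mathcal{F}_\epsilon(z,\nabla u)\nabla u\cdot\nabla(u_m-u)\,dz\to 0$ (weak convergence of $\nabla u_m$) yields $\mathcal{G}_\epsilon(\nabla u_m,\nabla u)\to 0$, and Proposition~\ref{pro:conv-W} then gives the a.e.\ gradient convergence you claim. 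So your outline is correct, but the sentence ``the pointwise convergences \dots\ and the uniform convergence \dots\ force $\nabla u_m\to\nabla u$ a.e.'' hides a nontrivial quantitative step that the paper isolates as Lemma~\ref{le:conv-F}.
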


\begin{theorem}
\label{th:second-order-epsilon}
Under the conditions of Theorem \ref{th:limit-m} the solution possesses the second order regularity:

\begin{equation}
\label{eq:second-order-eps}
\mathcal{G}_\epsilon \equiv a_\epsilon w_\epsilon^{\frac{p+r-2}{4}}+b_\epsilon w_\epsilon^{\frac{q+r-2}{4}}\in L^2(0,T;W^{1,2}(\Omega)),\qquad \|\mathcal{G}_{\epsilon}\|_{L^2(0,T;W^{1,2}(\Omega))}\leq C
\end{equation}
with an independent of $\epsilon$ constant $C$.
\end{theorem}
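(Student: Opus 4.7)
The plan is to establish \eqref{eq:second-order-eps} by passing to the limit $m\to\infty$ in uniform (in both $m$ and $\epsilon$) estimates for the classical solutions $u_m$ of the smoothed problem \eqref{eq:main-nonsmooth}.

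First, I would apply Lemma \ref{le:clas-sol-summary} to each $u_m$. Under the stated assumptions, estimate \eqref{eq:final} holds for $u_m$ with constants depending only on $\mathbf{data}_m$, hence, via the convergences \eqref{eq:data-approx-1}, bounded uniformly in $m$. Since the derivation in Section \ref{sec:est-source} produces $\epsilon$-independent constants, the bound is also uniform in $\epsilon$. Setting
$$
\mathcal{G}_{\epsilon,m}:=a_{m,\epsilon}\,w_{\epsilon,m}^{\frac{p_m+r-2}{4}}+b_{m,\epsilon}\,w_{\epsilon,m}^{\frac{q_m+r-2}{4}},\qquad w_{\epsilon,m}:=\epsilon^2+|\nabla u_m|^2,
$$
the last two integrals on the left-hand side of \eqref{eq:final} provide a uniform control on $\|\nabla \mathcal{G}_{\epsilon,m}\|_{2,Q_T}$. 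Combined with the $L^2(Q_T)$ bound on $\mathcal{G}_{\epsilon,m}$ itself, which follows from the higher-integrability estimate \eqref{eq:high-int-uniform} of Corollary \ref{cor:high-int-fin} and the boundedness of $a_m,b_m$, this yields $\|\mathcal{G}_{\epsilon,m}\|_{L^2(0,T;W^{1,2}(\Omega))}\leq C$ with $C$ independent of both $m$ and $\epsilon$.

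Second, reflexivity of $L^2(0,T;W^{1,2}(\Omega))$ supplies a subsequence with $\mathcal{G}_{\epsilon,m_k}\rightharpoonup \mathcal{H}$ in that space. To identify $\mathcal{H}=\mathcal{G}_\epsilon$, I would invoke the a.e.\ convergence $\nabla u_m\to \nabla u$ in $Q_T$ furnished by the proof of Theorem \ref{th:limit-m} (obtained through the Minty--Browder monotonicity argument applied to the approximating equations), together with the uniform convergences $p_m\to p$, $q_m\to q$ in $C^{0,1}(\overline{Q}_T)$ and the a.e.\ convergences $a_m\to a$, $b_m\to b$, extracted by passing to a further subsequence of their $L^d(Q_T)$ convergence. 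Continuity of the map $(w,\rho)\mapsto w^{(\rho+r-2)/4}$ on $\{w\geq \epsilon^2\}\times \mathbb{R}$ then gives $\mathcal{G}_{\epsilon,m}\to\mathcal{G}_\epsilon$ a.e.\ in $Q_T$, whence $\mathcal{H}=\mathcal{G}_\epsilon$ by uniqueness of limits, and the estimate \eqref{eq:second-order-eps} follows from the lower semicontinuity of the norm under weak convergence.

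The main obstacle is the identification of the weak limit. Because the exponents $p_m$, $q_m$ vary with $m$, the functions $w_{\epsilon,m}^{(p_m+r-2)/4}$ do not lie in a fixed Orlicz class, and passing to the limit inside the exponent is not formally automatic; it hinges on the joint a.e.\ convergence of $\nabla u_m$ and $p_m$, together with the Lipschitz continuity of the exponent convergence. Securing the a.e.\ convergence of $\nabla u_m$ itself is the delicate point and relies on the monotonicity-type arguments underlying Section \ref{sec:nonsmooth-data}; once this is in place, the uniform higher-integrability provided by Corollary \ref{cor:high-int-fin} rules out concentration and allows the a.e.\ limit to be reconciled with the weak $L^2$ limit of $\mathcal{G}_{\epsilon,m}$.
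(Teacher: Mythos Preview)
Your overall strategy matches the paper's: derive uniform $L^2(0,T;W^{1,2}(\Omega))$ bounds for the approximants $\mathcal{G}_{\epsilon,m}$, extract a weak limit, and identify it with $\mathcal{G}_\epsilon$ via the a.e.\ convergence $\nabla u_m\to\nabla u$. The identification step is handled the same way in the paper (integration by parts against test functions, then pointwise passage to the limit), so your discussion of that part is fine.

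There is, however, a genuine gap in your first step. You assert that the last two integrals on the left of \eqref{eq:final} control $\|\nabla\mathcal{G}_{\epsilon,m}\|_{2,Q_T}$, but those integrals only bound
\[
\int_{Q_T} a_{m,\epsilon}\,\bigl|\nabla\bigl(w_{\epsilon,m}^{\frac{p_m+r-2}{4}}\bigr)\bigr|^2\,dz
\quad\text{and}\quad
\int_{Q_T} b_{m,\epsilon}\,\bigl|\nabla\bigl(w_{\epsilon,m}^{\frac{q_m+r-2}{4}}\bigr)\bigr|^2\,dz,
\]
i.e.\ the coefficient sits \emph{outside} the gradient. Expanding $\nabla\bigl(a_{m,\epsilon}\,w_{\epsilon,m}^{\frac{p_m+r-2}{4}}\bigr)$ produces an additional term $(\nabla a_m)\,w_{\epsilon,m}^{\frac{p_m+r-2}{4}}$, whose $L^2$ norm is not covered by \eqref{eq:final}. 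The paper handles this explicitly: by Young's inequality with exponents $\tfrac{d}{2}$ and $\tfrac{d}{d-2}$,
\[
\int_{Q_T}|\nabla a_m|^2\,w_{\epsilon,m}^{\frac{p_m+r-2}{2}}\,dz
\le C\int_{Q_T}|\nabla a_m|^{d}\,dz
+ C\int_{Q_T} w_{\epsilon,m}^{\frac{d}{d-2}\cdot\frac{p_m+r-2}{2}}\,dz,
\]
and the second integral is controlled by the higher-integrability estimate (this is exactly the computation around \eqref{eq:2-est}, and it is here that the hypothesis $d>2+\tfrac{N+2}{2}(\overline{s}^{+}+r)$ on the coefficients enters). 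Without this step your bound on $\|\nabla\mathcal{G}_{\epsilon,m}\|_{2,Q_T}$ is incomplete.
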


\subsection{Proof of Theorem \ref{th:limit-m}}
Multiplication of equation \eqref{eq:main-nonsmooth} by $u_m$ and integration by parts in $\Omega$ lead to the inequality

\begin{equation}
\label{eq:prelim}
\dfrac{1}{2}\dfrac{d}{dt }\|u_m(t)\|_{2,\Omega}^2+\int_{\Omega}\mathcal{F}^{^{(p_m-p,q_m-q)}}_{\epsilon,m}(z,\nabla u_m)|\nabla u_m|^2\,dz= \int_{\Omega}f_mu_m\,dz\leq \frac{1}{2}\|f_m\|_{2,\Omega}^2 + \frac{1}{2}\|u_{m}\|_{2,\Omega}^2.
\end{equation}
Dropping the second term on the left-hand side, integrating in $t$ and applying the Gr\"onwall inequality we estimate $\sup_{(0,T)}\|u_m\|_{2,\Omega}^2$ by a constant depending on $\|u_0\|_{2,\Omega}$, $\|f\|_{2,Q_T}$, and $T$. Using this estimate in \eqref{eq:prelim} and then using \eqref{eq:null-eps-prelim} with $s_1=s_2=0$ we obtain the uniform in $m$ and $\epsilon$ estimate

\begin{equation}
\label{eq:1-est}
\begin{split}
\sup_{(0,T)}\|u_m(t)\|_{2,\Omega}^2 & +\int_{Q_T}\mathcal{F}_{\epsilon, m}^{(p_m-p,q_m-q)}(z,\nabla u_m)|\nabla u_m|^2\,dz\leq C
\end{split}
\end{equation}
with an independent of $m$ and $\epsilon$ constant $C$. By \eqref{eq:1-est}, \eqref{eq:principal-3} and \eqref{eq:structure-prelim-2}, for all sufficiently large $m$

\begin{equation}
\label{eq:interm-1}
\int_{Q_T}|\nabla u_m|^{\overline{s}(z)}\,dz\leq 1+\int_{Q_T}|\nabla u_m|^{\underline{s}_m(z)+r_\ast}\,dz \leq C
\end{equation}
uniformly in $m$ and $\epsilon$.

\begin{lemma}
\label{le:conv-F}
For every $\phi\in \mathbb{W}_{\overline{s}(\cdot)}(Q_T)$

\[
\int_{Q_T}\left(\mathcal{F}_{\epsilon m}^{(p_m-p,q_m-q)}(z,\nabla u_m)-\mathcal{F}_{\epsilon}(z,\nabla u_m)\right)\nabla u_m\cdot \nabla \phi\,dz\to 0\quad \text{as $m\to \infty$}.
\]
\end{lemma}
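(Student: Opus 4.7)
My plan is to decompose the flux difference into four algebraic pieces—two involving coefficient differences $a_m-a$, $b_m-b$ and two involving exponent differences $w_\epsilon^{(p_m-2)/2}-w_\epsilon^{(p-2)/2}$, $w_\epsilon^{(q_m-2)/2}-w_\epsilon^{(q-2)/2}$—and to treat each piece independently by Vitali's convergence theorem. Explicitly, I would write
\[
\mathcal{F}_{\epsilon m}^{(p_m-p,q_m-q)}(z,\nabla u_m)-\mathcal{F}_\epsilon(z,\nabla u_m)
=(a_m-a)w_\epsilon^{\frac{p-2}{2}}+(a_m+\epsilon)\bigl[w_\epsilon^{\frac{p_m-2}{2}}-w_\epsilon^{\frac{p-2}{2}}\bigr]
\]
plus the two analogous $b$/$q$-terms, and show that each contribution to the integral vanishes as $m\to\infty$.

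Pointwise almost everywhere convergence of every piece of the integrand to zero is immediate from \eqref{eq:data-approx-1}: the mollifications yield $a_m\to a$ and $b_m\to b$ a.e.\ in $Q_T$ with $\|a_m\|_\infty$, $\|b_m\|_\infty$ uniformly bounded, while $p_m\to p$ and $q_m\to q$ uniformly on $\overline{Q}_T$; for fixed $\epsilon>0$ the map $\lambda\mapsto w_\epsilon^{(\lambda-2)/2}$ is continuous, so each factor of the decomposition tends to zero pointwise.

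The core of the argument is constructing a uniform-in-$m$ equi-integrable majorant. For the exponent-difference terms I would use the identity
\[
w_\epsilon^{\frac{p_m-2}{2}}-w_\epsilon^{\frac{p-2}{2}}
=\frac{p_m-p}{2}\int_0^1 w_\epsilon^{\frac{\theta p_m+(1-\theta)p-2}{2}}\ln w_\epsilon\,d\theta,
\]
and then invoke the logarithmic bound \eqref{eq:log} together with $\|p_m-p\|_{C^0(\overline Q_T)}\to 0$ to dominate the corresponding integrand by $C\omega_m\bigl(1+|\nabla u_m|^{\overline{s}(z)-1+\tau}\bigr)|\nabla\phi|$ with $\omega_m\to 0$ and $\tau>0$ arbitrarily small; the coefficient-difference pieces admit the same bound with $|a_m-a|$ (resp.\ $|b_m-b|$) in place of $\omega_m$. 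Equi-integrability of these majorants follows by pairing the uniform higher integrability $|\nabla u_m|^{\underline{s}(z)+r+s}\in L^1(Q_T)$ supplied by Corollary \ref{cor:high-int-fin} with $\nabla\phi\in L^{\overline{s}(\cdot)}(Q_T)$ via the variable-exponent H\"older inequality, provided $\tau$ is chosen small enough that the H\"older-conjugate exponent remains below the improved integrability threshold $\underline{s}(z)+r+s$. The principal obstacle is precisely the logarithmic singularity produced by perturbing a variable exponent; it is exactly \eqref{eq:log} together with the strict gain of integrability from Corollary \ref{cor:high-int-fin} that tames it. Applying Vitali's theorem to each of the four pieces then delivers the claim.
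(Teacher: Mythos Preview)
Your proposal is correct and follows essentially the same route as the paper: the same decomposition into coefficient-difference and exponent-difference pieces, the same integral/mean-value identity for $w_\epsilon^{(p_m-2)/2}-w_\epsilon^{(p-2)/2}$ combined with \eqref{eq:log}, and the same appeal to the uniform higher-integrability estimates for $|\nabla u_m|$ paired with $\nabla\phi\in L^{\overline{s}(\cdot)}$ via Young/H\"older. The only cosmetic difference is that the paper bypasses Vitali by exploiting the \emph{uniform} convergence $a_m\rightrightarrows a$ and $p_m\rightrightarrows p$ on $\overline{Q}_T$ (available since $p_m\nearrow p$ in $C^{0,1}$ and $a$ is H\"older continuous under \eqref{eq:mod-coeff}), pulling the small factors $\max_{\overline{Q}_T}|a_m-a|$ and $\max_{\overline{Q}_T}|p_m-p|$ outside the integrals directly; your Vitali framing is slightly more robust but not needed here.
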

\begin{proof}
It is sufficient to prove the convergence for the first term of the flux, the convergence of the second term follows by the same arguments. Let us estimate
\[
\begin{split}
& \left|\int_{Q_T}  a_{\epsilon m}(\epsilon^2+|\nabla u_m|^2)^{\frac{p_m-2}{2}}(\nabla u_m\cdot\nabla \phi)\,dz
-\int_{Q_T}a_\epsilon (\epsilon^2+|\nabla u_m|^2)^{\frac{p-2}{2}}(\nabla u_m\cdot\nabla \phi)\,dz\right|
\\
& \qquad \leq  \int_{Q_T}|a_{\epsilon m}-a_\epsilon| (\epsilon^2+|\nabla u_m|^2)^{\frac{p-2}{2}}|\nabla u_m||\nabla \phi|\,dz
\\
&
\qquad \qquad +\int_{Q_T}a_{\epsilon m} \left|(\epsilon^2+|\nabla u_m|^2)^{\frac{p_m-2}{2}}-(\epsilon^2+|\nabla u_m|^2)^{\frac{p-2}{2}}\right||\nabla u_m||\nabla \phi|\,dz.
\end{split}
\]
The terms on the right-hand side are estimated by
\[
\begin{split}
& \max_{\overline{Q}_T}|a_{m}-a|\int_{Q_T}(\epsilon^2+|\nabla u_m|^2)^{\frac{p-1}{2}}|\nabla \phi|\,dz
\\
& \qquad \qquad + \max_{\overline{Q}_T}a_{\epsilon m}\int_{Q_T}\left|(\epsilon^2+|\nabla u_m|^2)^{\frac{p_m-1}{2}}-(\epsilon^2+|\nabla u_m|^2)^{\frac{p-1}{2}}\right||\nabla \phi|\,dz\equiv \mathcal{I}_1+\mathcal{I}_2.
\end{split}
\]

For the sufficiently large $m$
\begin{equation}
\label{eq:p-m-p}
(p-1)\frac{\overline s}{\overline s-1}\leq \overline{s}< p_m+\dfrac{4}{N+2},
\end{equation}
whence, by Theorem \ref{th:integr-par} with $r=0$,
\[
\begin{split}
\int_{Q_T}(\epsilon^2+|\nabla u_m|^2)^{\frac{p-1}{2}}|\nabla \phi|\,dz & \leq \int_{Q_T}|\nabla \phi|^{\overline{s}(z)}\,dz + \int_{Q_T}(\epsilon^2+|\nabla u_m|^2)^{\frac{p-1}{2}\frac{\overline s}{\overline s-1}}\,dz\leq C
\end{split}
\]
with an independent of $\epsilon$ and $m$ constant $C$. Hence, $\mathcal{I}_1\to 0$ as $m\to \infty$.

By the Lagrange mean value theorem

\[
\begin{split}
\mathcal{J} & := (\epsilon^2+|\nabla u_m|^2)^{\frac{p_m-1}{2}} -(\epsilon^2+|\nabla u_m|^2)^{\frac{p-1}{2}} = \int_0^1\dfrac{d}{d\theta}(\epsilon^2+|\nabla u_m|^2)^{\frac{\theta p_m+(1-\theta)p-1}{2}}\,d\theta
\\
& =  \int_0^1(\epsilon^2+|\nabla u_m|^2)^{\frac{\theta p_m+(1-\theta)p-1}{2}}\,d\theta \ln (\epsilon^2+|\nabla u_m|^2) (p_m-p).
\end{split}
\]
Since the sequence $\{p_m\}$ is monotone increasing, then $\theta p_m+(1-\theta)p-1 \leq p-1$. Combining this inequality with \eqref{eq:log} and applying Young's inequality we estimate

\[
|\mathcal{J}|\leq C_\lambda\left(1+(\epsilon^2+|\nabla u_m|^2)^{\frac{p-1+\lambda}{2}}\right)|p_m-p|
\]
with any $\lambda>0$ and an absolute constant $C_\lambda$. It follows that

\[
|\mathcal{I}_2|\leq C_\lambda\max_{\overline{Q}_T}|p_m-p|\left(1+\int_{Q_T}|\nabla \phi|^{\overline{s}}\,dz + \int_{Q_T}|\nabla u_m|^{(p-1+\lambda)\frac{\overline{s}}{\overline{s}-1}}\,dz\right).
\]
The strict inequality \eqref{eq:p-m-p} allows one to choose $m$ so large and $\lambda$ so small that $(p-1+\lambda)\frac{\overline{s}}{\overline{s}-1}<p_m+\frac{4}{N+2}$. Using Theorem \ref{th:integr-par} with $r=0$ we conclude that $\mathcal{I}_2\to 0$ as $m\to \infty$.
\end{proof}

\begin{lemma}
\label{le:fund-sequence}
The sequence $\{u_m\}$ is a Cauchy sequence in $L^\infty(0,T;L^2(\Omega))\cap \mathbb{W}_{\underline{s}(\cdot)}(Q_T)$, and

\[
\int_{Q_\tau}\left( \mathcal{F}_{\epsilon}(z,\nabla u_m)\nabla u_m -\mathcal{F}_{\epsilon}(z,\nabla u_n)\nabla u_n\right)\nabla (u_m-u_n)\,dz\to 0\quad \text{as $m,n\to \infty$}.
\]
\end{lemma}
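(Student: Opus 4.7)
The plan is to subtract the equations in \eqref{eq:main-nonsmooth} satisfied by $u_m$ and $u_n$, test with $u_m-u_n\in \mathbb{W}_{\overline{s}(\cdot)}(Q_T)$ (an admissible test function thanks to the uniform bound \eqref{eq:interm-1}), and integrate over $Q_\tau=\Omega\times(0,\tau)$. This yields
\[
\begin{split}
\frac{1}{2}\|(u_m-u_n)(\tau)\|_{2,\Omega}^2 &+ \int_{Q_\tau}\left(\mathcal{F}^{(p_m-p,q_m-q)}_{\epsilon,m}(z,\nabla u_m)\nabla u_m - \mathcal{F}^{(p_n-p,q_n-q)}_{\epsilon,n}(z,\nabla u_n)\nabla u_n\right)\cdot\nabla(u_m-u_n)\,dz \\
&= \frac{1}{2}\|u_{0m}-u_{0n}\|_{2,\Omega}^2 + \int_{Q_\tau}(f_m-f_n)(u_m-u_n)\,dz.
\end{split}
\]
By \eqref{eq:data-approx-1}, H\"older's inequality, and the uniform bound from \eqref{eq:1-est}, the right-hand side tends to zero as $m,n\to\infty$.

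Next, I would split the second integral on the left as $\mathcal{A}_{m,n}+\mathcal{E}_{m,n}$ by adding and subtracting $\mathcal{F}_\epsilon(z,\nabla u_m)\nabla u_m$ and $\mathcal{F}_\epsilon(z,\nabla u_n)\nabla u_n$, where
\[
\mathcal{A}_{m,n} = \int_{Q_\tau}\left(\mathcal{F}_\epsilon(z,\nabla u_m)\nabla u_m - \mathcal{F}_\epsilon(z,\nabla u_n)\nabla u_n\right)\cdot\nabla(u_m-u_n)\,dz
\]
involves only the limit flux, while $\mathcal{E}_{m,n}$ collects the two residual contributions of the form
\[
\int_{Q_\tau}\left(\mathcal{F}^{(p_m-p,q_m-q)}_{\epsilon,m}(z,\nabla u_m) - \mathcal{F}_\epsilon(z,\nabla u_m)\right)\nabla u_m\cdot\nabla(u_m-u_n)\,dz
\]
tested against $\nabla(u_m-u_n)$ (and the analogous term with $n$). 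The residual $\mathcal{E}_{m,n}$ vanishes by the argument of Lemma~\ref{le:conv-F}, with the test function $\nabla\phi$ replaced by $\nabla(u_m-u_n)$; its $L^{\overline{s}(\cdot)}(Q_T)$-norm is bounded uniformly in $m,n$ by \eqref{eq:interm-1}, while the smallness is provided by $\max|a_m-a|,\max|b_m-b|,\max|p_m-p|,\max|q_m-q|\to 0$ in \eqref{eq:data-approx-1}.

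By the monotonicity of $\mathcal{F}_\epsilon$ (the pointwise inequality derived in Section~\ref{sec:reg-smooth} to verify condition~(i) of \eqref{eq:structure}), we have $\mathcal{A}_{m,n}\geq 0$ and, more precisely,
\[
\mathcal{A}_{m,n} \geq C\,\alpha\int_{Q_\tau}\left(|\nabla(u_m-u_n)|^{\overline{s}(z)}\mathbf{1}_{\{\overline{s}\geq 2\}} + (\epsilon^2+|\nabla u_m|^2+|\nabla u_n|^2)^{\frac{\overline{s}(z)-2}{2}}|\nabla(u_m-u_n)|^2\mathbf{1}_{\{\overline{s}<2\}}\right)dz.
\]
Combining the decay of the right-hand side with the vanishing of $\mathcal{E}_{m,n}$ therefore forces $\sup_{\tau\in(0,T)}\|(u_m-u_n)(\tau)\|_{2,\Omega}^2\to 0$ and $\mathcal{A}_{m,n}\to 0$; in the singular regime one recovers $\|\nabla(u_m-u_n)\|_{\overline{s}(\cdot),Q_T}\to 0$ by the standard Minkowski-type trick (writing $|\nabla(u_m-u_n)|^{\overline{s}}$ as the product of the weighted factor above and a power of $(\epsilon^2+|\nabla u_m|^2+|\nabla u_n|^2)^{\overline{s}/2}$, the latter being uniformly bounded in $L^1(Q_T)$ by \eqref{eq:interm-1}, and applying H\"older's inequality). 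Since $\underline{s}\leq\overline{s}$, the Cauchy property in $\mathbb{W}_{\underline{s}(\cdot)}(Q_T)$ follows at once.

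The main obstacle I expect is controlling $\mathcal{E}_{m,n}$ uniformly with respect to both $m,n$ and $\epsilon$: the pointwise bound produced along the lines of Lemma~\ref{le:conv-F} contains the logarithmic factor $\ln(\epsilon^2+|\nabla u_m|^2)$ together with a power of $|\nabla u_m|$ that, after pairing with $|\nabla(u_m-u_n)|$, lies strictly above the natural growth $\overline{s}(z)-1$. Absorbing this gap requires invoking the strict inequality \eqref{eq:structure-prelim-2} and the higher-integrability bound \eqref{eq:interm-1} exactly as in Lemma~\ref{le:conv-F}, choosing the Young exponents so that the resulting integrand remains in $L^1(Q_T)$ for all sufficiently large $m$.
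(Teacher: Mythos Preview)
Your argument is correct and follows the same route as the paper: subtract the two approximate equations, test with $u_m-u_n$, move the residual flux differences $\mathcal{F}^{(p_m-p,q_m-q)}_{\epsilon,m}-\mathcal{F}_\epsilon$ to the right and kill them via Lemma~\ref{le:conv-F}, then use monotonicity of $\mathcal{F}_\epsilon$ on what remains. The only differences are cosmetic. First, the paper does not spell out the monotonicity lower bound and the H\"older trick in the singular range; it simply observes that the right-hand side tends to zero and then invokes Proposition~\ref{pro:racsam-prop-3.2} (equivalently Proposition~\ref{pro:conv-W}) to pass from $\mathcal{G}_\epsilon(\nabla u_m,\nabla u_n)\to 0$ to the Cauchy property in $\mathbb{W}_{\underline{s}(\cdot)}(Q_T)$. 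Second, in your displayed monotonicity bound the constant should be $C\epsilon$ rather than $C\alpha$: the lower bound with exponent $\overline{s}(z)$ comes from dropping one of the two flux terms and using $a_\epsilon,b_\epsilon\geq\epsilon$, not from $a+b\geq\alpha$ (cf.\ the verification of condition~(i) in Section~\ref{sec:reg-smooth}). Since $\epsilon$ is fixed throughout this lemma, the slip is harmless.
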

\begin{proof}
Take $n,m\in \mathbb{N}$, multiply equations \eqref{eq:main-nonsmooth} for $u_m$, $u_n$ by $u_m-u_n$, integrate by parts in $Q_\tau$ with any $\tau\in (0,T]$ and combine the results:

\[
\begin{split}
\dfrac{1}{2} & \|u_m-u_n\|_{2,\Omega}^2(\tau) +\int_{Q_\tau}\left( \mathcal{F}_{\epsilon}(z,\nabla u_m)\nabla u_m -\mathcal{F}_{\epsilon}(z,\nabla u_n)\nabla u_n\right)\nabla (u_m-u_n)\,dz
\\
&
=\frac{1}{2}\|u_{0m}-u_{0n}\|^2_{2,\Omega} + \int_{Q_\tau}(f_m-f_n)(u_m-u_n)\,dz
\\
& + \int_{Q_\tau}\left(\mathcal{F}_{\epsilon m}^{(p_m-p,q_m-q)}(z,\nabla u_m)-\mathcal{F}_{\epsilon}(z,\nabla u_m)\right)\nabla u_m\cdot \nabla (u_m-u_n)\,dz
\\
& +\int_{Q_\tau}\left(\mathcal{F}_{\epsilon n}^{(p_n-p,q_n-q)}(z,\nabla u_n)-\mathcal{F}_{\epsilon}(z,\nabla u_n)\right)\nabla u_n\cdot \nabla (u_m-u_n)\,dz.
\end{split}
\]
By Lemma \ref{le:conv-F} the last two terms on the right-hand side tend to zero as $n,m\to \infty$. The first term tends to zero by the choice of the sequence $\{u_{0m}\}$, the second term tends to zero by the choice of $\{f_m\}$ and the uniform estimate on $\|u_m\|_{2,Q_T}$. The claim follows now from Proposition \ref{pro:racsam-prop-3.2}.
\end{proof}

By Lemma \ref{le:fund-sequence} the sequence $\{u_m\}$ contains a subsequence (for which we will use the same notation) such that for some function $u(z)$

\begin{equation}
\label{eq:converge-2}
\text{$u_m\to u$ $\star$-weakly in $L^\infty(0,T;L^2(\Omega))$ and strongly in $ \mathbb{W}_{\underline{s}(\cdot)}(Q_T)$},
\end{equation}
and $\nabla u_m\to \nabla u$ a.e. in $Q_T$. The sequence $\{\nabla u_m\}$ is uniformly bounded in $L^{\underline{s}(\cdot)+s}(Q_T)$ with any $s<\frac{4}{N+2}$, and $\overline{s}(z)<\underline{s}(z)+\frac{2}{N+2}$. Since $a_{\epsilon m},b_{\epsilon m} \rightrightarrows a_\epsilon, b_\epsilon$,  it follows from the Vitali convergence theorem that

\begin{equation}
\label{eq:converge-3}
\begin{split}
& \text{$a_{\epsilon m}(\epsilon^2+|\nabla u_m|^2)^{\frac{p-2}{2}}\nabla u_m\to a_{\epsilon}(\epsilon^2+|\nabla u|^2)^{\frac{p-2}{2}}\nabla u$ in $L^{p'(\cdot)}(Q_T)$},
\\
& \text{$b_{\epsilon m}(\epsilon^2+|\nabla u_m|^2)^{\frac{q-2}{2}}\nabla u_m\to b_{\epsilon}(\epsilon^2+|\nabla u|^2)^{\frac{q-2}{2}}\nabla u$ in $L^{q'(\cdot)}(Q_T)$}\quad \text{as $m\to \infty$}.
\end{split}
\end{equation}

Multiply equation \eqref{eq:main-nonsmooth} by $u_{m t}$ and integrate over $Q_T$. By the straightforward computation (see, e.g., \cite[Lemma 6.4]{RACSAM-2023})

\begin{equation}
\label{eq:time-der-1}
\begin{split}
\frac{1}{2}\|u_{m t}\|^2_{2,Q_T} & +\int_{\Omega} \left.\left(\dfrac{a_{\epsilon m}(\epsilon^2+|\nabla u_m|^2)^{\frac{p_m}{2}}}{p_m}+ \dfrac{b_{\epsilon m}(\epsilon^2+|\nabla u_m|^2)^{\frac{q_m}{2}}}{q_m}\right)\right|^{t=T}_{t=0}\,dx \leq \frac{1}{2}\int_{Q_T}f^2_m \,dz
 + \sum_{i=1}^4 \mathcal{I}_i,
\end{split}
\end{equation}
where

\[
\begin{split}
& \mathcal{I}_1 = - \int_{Q_T}\frac{a_{\epsilon m}}{p_m^2}p_{m t}(\epsilon^2+|\nabla u_m|^2)^{\frac{p_m}{2}}\left(1-\frac{p_m}{2}\ln (\epsilon^2+|\nabla u_m|^2)\right)\,dz,
\\
& \mathcal{I}_2 = - \int_{Q_T}\frac{b_{\epsilon m}}{q_m^2}q_{m t}(\epsilon^2+|\nabla u_m|^2)^{\frac{q_m}{2}}\left(1-\frac{q_m}{2}\ln (\epsilon^2+|\nabla u_m|^2)\right)\,dz,
\\
&
\mathcal{I}_3 = \int_{Q_T}\dfrac{\partial_t a_{\epsilon m}}{p_m}(\epsilon^2+|\nabla u_m|^2)^{\frac{p_m}{2}}\,dz,
\\
& \mathcal{I}_4 = \int_{Q_T}\dfrac{\partial_t b_{\epsilon m}}{q_m}(\epsilon^2+|\nabla u_m|^2)^{\frac{q_m}{2}}\,dz.
\end{split}
\]
The integrals $\mathcal{I}_1$ and $\mathcal{I}_2$ are bounded by virtue \eqref{eq:interm-1} and \eqref{eq:log}. By the Young inequality

\[
\mathcal{I}_3\leq C\int_{Q_T}|\partial_ta_{\epsilon m}|^{\lambda'}\,dz + \int_{Q_T} (\epsilon^2+|\nabla u_m|^2)^{\lambda\frac{p_m}{2}}\,dz
\]
with any $\lambda>0$. For all sufficiently large $m$, the second integral is uniformly bounded if $\lambda$ satisfies the inequality

\[
\begin{split}
\lambda p(z) & \leq \lambda \overline{s}(z)<\lambda (\underline{s}(z)+r_\ast)<\underline{s}(z)+r^\sharp\quad \Leftrightarrow \quad (\lambda -1)\underline{s}(z) <\frac{4}{N+2}-\lambda\dfrac{2}{N+2}=\frac{2}{N+2}-(\lambda-1)\frac{2}{N+2}
\\
& \Leftrightarrow \qquad \lambda <1+ \frac{2}{2+(N+2)\underline{s}(z)} =\dfrac{4+(N+2)\underline{s}(z)}{2+(N+2)\underline{s}(z)}\quad \Leftrightarrow \quad \lambda'> 2+\frac{N+2}{2}\underline{s}(z).
\end{split}
\]
The last inequality is fulfilled due to assumption  \eqref{eq:mod-coeff}. The integral $\mathcal{I}_4$ is estimated in the same way. It follows that

\begin{equation}
\label{eq:u-t-unif-1}
\|u_{m t}\|_{2,Q_T}^2\leq C,\quad C=C\left(\textbf{data},\|a_t\|_{d,Q_T},\|b_t\|_{d,Q_T},\|\nabla u_0\|_{r,\Omega}\right),
\end{equation}
which allows one to choose a subsequence of $\{u_{m t}\}$ that converges to $u_t$ weakly in $L^2(Q_T)$. By the Aubin-Lions lemma the inclusions $u_t\in L^2(Q_T)$, $u\in \mathbb{W}_{\underline{s}(\cdot)}(Q_T)$ with $\underline{s}^->\frac{2N}{N+2}$ imply $u\in C([0,T];L^2(\Omega))$.

We are now in position to show that the sequence $\{u_{m}\}$ of classical solutions to problems \eqref{eq:main-nonsmooth} with $\mathbf{data}_m$ converges to a strong solution of the regularized problem \eqref{eq:main-reg} with nonsmooth $\mathbf{data}$. For an arbitrary test-function $\phi$ satisfying the conditions of Definition \ref{def:strong-sol}

\[
\begin{split}
\int_{Q_T} & \left(u_{m t}\phi + \mathcal{F}_{\epsilon,m}^{(p_m-p,q_m-q)}(z,\nabla u_m)\nabla u_m\cdot \nabla \phi\right)\,dz=\int_{Q_T}f_m\phi\,dz.
\end{split}
\]
Letting $m\to \infty$ and using \eqref{eq:converge-3}, the weak convergence $u_{m t}$ to $u_t$ in $L^2(Q_T)$, and the properties of $\mathbf{data}_m$, we conclude that $u$ satisfies \eqref{eq:def-nondiv}. The initial condition for $u$ is fulfilled by continuity.

If $u_0$ and $f$ satisfy the conditions of Lemma \ref{le:clas-sol-summary}, then for every $s\in \left(0,\frac{4}{N+2}\right)$

\begin{equation}
\label{eq:summ-est-reg}
\begin{split}
\sup_{(0,T)}\|\nabla u_m\|_{r,\Omega}^{r} & + \int_{Q_T}(\epsilon^2+|\nabla u_m|^2)^{\frac{\underline{s}(z)+r+s}{2}-1}|\nabla u_m|^2
\,dz
\\
&
+\int_{Q_T}\mathcal{F}_{\epsilon,m}^{(p_m-p+r-2,q_m-q+r-2)}(z,\nabla u_m)|(u_m)_{xx}|^2\,dz\leq C
\end{split}
\end{equation}
with a constant $C$ depending on the $\mathbf{data}$, $s$, $r$, but independent of $\epsilon$ and $m$.

Estimates \eqref{eq:a-priori-eps-1}, \eqref{eq:a-priori-eps-2} follow from the uniform estimates \eqref{eq:principal-3}, \eqref{eq:final}, \eqref{eq:1-est}, \eqref{eq:u-t-unif-1}.

\subsection{Proof of Theorem \ref{th:second-order-epsilon}}
Denote $V_m=\nabla \left((a_m+\epsilon) (\epsilon^2+|\nabla u_m|^2)^{\frac{p_m+r-2}{4}}\right)$ and observe that

\[
\begin{split}
\|V_m\|_{2,Q_T}^2 & \equiv \int_{Q_T}\left|\nabla \left((a_m+\epsilon) (\epsilon^2+|\nabla u_m|^2)^{\frac{p_m+r-2}{4}}\right)\right|^2 \,dz \leq  2\int_{Q_T}|\nabla a_m|^2 (\epsilon^2+|\nabla u_m|^2)^{\frac{p_m+r-2}{2}}\,dz
\\
&
+ 2\int_{Q_T}|(a_m+\epsilon)|^2 \left|\nabla \left((\epsilon^2+|\nabla u_m|^2)^{\frac{p_m+r-2}{4}}\right)\right|^2\,dz\equiv \mathcal{I}_1 +\mathcal{I}_2.
\end{split}
\]
By virtue of \eqref{eq:final}, $\mathcal{I}_2$ is bounded uniformly with respect to $m$ and $\epsilon$. By Young's inequality

\[
\mathcal{I}_1\leq C\left(\int_{Q_T}|\nabla a_m|^d\,dz + \int_{Q_T}(\epsilon^2+|\nabla u_m|^2)^{\frac{d}{d-2}\frac{p_m+r-2}{2}}\,dz\right),
\]
where the first integral is bounded by assumption while the second one coincides with the first term of \eqref{eq:2-est} and is estimated in the same way. It follows that the sequence $\{V_m\}$ converges weakly in $L^2(Q_T)^N$ to some $\eta \in L^{2}(Q_T)^N$. To identify $\eta$ we use the a.e. convergence $a_m\to a$ and $\nabla u_m\to \nabla u$.  For every $\phi\in C^\infty_0(Q_T)$ and $i=\overline{1,N}$

\[
\begin{split}
\int_{Q_T}\eta_i \phi\,dz & = \lim_{m\to \infty} \int_{Q_T}D_i\left((a_m+\epsilon) (\epsilon^2+|\nabla u_m|^2)^{\frac{p_m+r-2}{4}}\right)\phi\,dz
\\
&
= -\lim_{m\to \infty} \int_{Q_T} (a_m+\epsilon) (\epsilon^2+|\nabla u_m|^2)^{\frac{p_m+r-2}{4}}D_i\phi \,dz
\\
&
= -\int_{Q_T}(a+\epsilon)(\epsilon^2+|\nabla u|^2)^{\frac{p+r-2}{4}}D_i\phi \,dz,
\end{split}
\]
which means that $\eta=\nabla \left((a+\epsilon)(\epsilon^2+|\nabla u|^2)^{\frac{p+r-2}{4}}\right)$.

\section{The degenerate problem. Proof of Theorems \ref{th:main-1}, \ref{th:main-2}}
\label{sec:degenerate-problem}

Let $\{u_\epsilon\}$ be the family of strong solutions of the regularized problem \eqref{eq:main-reg}. The proof of the existence theorem reduces to justifying the limit passage as $\epsilon\to 0^+$ in the integral identity \eqref{eq:def-nondiv} for $u_\epsilon$. We omit the details of the proof because it is a literal repetition of the proof in \cite[Sec.~8]{RACSAM-2023}. The proof of the second-order regularity follows the proof of Theorem 2.3 in \cite[Subsec.~8.4]{RACSAM-2023}. The estimate on $\|\nabla u\|_{L^\infty(0,T;L^r(\Omega))}$ is an immediate byproduct of the uniform estimate \eqref{eq:a-priori-eps-2}.

\appendix

\section{Auxiliaries}
\label{sec:aux}
Let $\mathcal{F}_\epsilon^{(s_1,s_2)}(x,\xi)$ be the function defined in \eqref{eq:gamma}.
The following easily verified properties hold.

\begin{enumerate}
\item  For the nonnegative coefficients $a, b \in L^\infty(\Omega)$, $\epsilon \in (0,1)$, $\xi\in \mathbb{R}^N$,  and the parameters $s_1,s_2 \geq 0$
\begin{equation}
\label{eq:null-eps-prelim}
\begin{split}
\mathcal{F}^{(s_1, s_2)}_{\epsilon}(x,\xi)\beta_{\epsilon}(\xi) & \leq \begin{cases}
a_\epsilon(2\epsilon^2)^{p+s_1}+b_\epsilon (2\epsilon^2)^{q+s_2} & \text{if $\vert \xi\vert \leq \epsilon$},
\\
2 \mathcal{F}^{(s_1, s_2)}_\epsilon(x,\xi)\vert \xi\vert ^2 & \text{if $\vert \xi\vert >\epsilon$}
\end{cases}
\leq C+ 2\mathcal{F}_\epsilon^{(s_1, s_2)}(x,\xi)\vert \xi\vert ^2.
\end{split}
\end{equation}
In particular,
\begin{equation}
\label{eq:null-eps}
\begin{split}
\mathcal{F}^{(s_1, s_2)}_0(x,\xi)\vert \xi\vert ^2\,dx & \equiv a_\epsilon\vert \xi\vert ^{p+s_1}+b_\epsilon \vert \xi\vert ^{q+s_2}
\leq \mathcal{F}^{(s_1, s_2)}_{\epsilon}(x,\xi)\beta_{\epsilon}(\xi)
\leq  C+2\mathcal{F}^{(s_1, s_2)}_\epsilon(x,\xi)\vert \xi\vert ^2.
\end{split}
\end{equation}
\item For every $\lambda>0$, $\xi\in \mathbb{R}^N$, and $\mu\in (0,\lambda)$
\begin{equation}
\label{eq:log}
\begin{split}
\vert \xi\vert ^{\lambda}\vert \ln \vert \xi\vert \vert  & =\begin{cases}
\left(\vert \xi\vert ^{\lambda+\mu}\right)\left(\vert \xi\vert ^{-\mu}\vert \ln \vert \xi\vert \vert \right) & \text{if $\vert \xi\vert \geq 1$},
\\
\left(\vert \xi\vert ^{\lambda-\mu}\right)\left(\vert \xi\vert ^{\mu}\vert \ln \vert \xi\vert \vert \right)  & \text{if $\vert \xi\vert < 1$},
\end{cases}
\leq C(\mu,\lambda)\left(1+\vert \xi\vert ^{\lambda+\mu}\right).
\end{split}
\end{equation}
\end{enumerate}

Let $a,b\in C^{0}(\overline{Q}_T)$. Introduce the quantities

\[
\begin{split}
& \mathcal{N}(\nabla w)=\int_{Q_T}\left(a(z)|\nabla w|^{p(z)}+b(z)|\nabla w|^{q(z)}\right)\,dz,
\\
& \mathcal{G}_\epsilon(\nabla u,\nabla v)=\int_{Q_T}\left(\mathcal{F}_{\epsilon}(z,\nabla u)\nabla u- \mathcal{F}_{\epsilon}(z,\nabla v)\nabla v\right)\cdot \nabla (u-v)\,dz.
\end{split}
\]

\begin{proposition}[\cite{RACSAM-2023}, Proposition 3.2]
\label{pro:racsam-prop-3.2}
For every $u,v\in \mathbb{W}_{\overline{s}(\cdot)}(Q_T)$ and any $\epsilon\in (0,1)$

\[
\mathcal{N}(\nabla (u-v))\leq C\left(\mathcal{G}_\epsilon^{\frac{\overline{s}^+}{2}}(\nabla u,\nabla v)+ \mathcal{G}_\epsilon^{\frac{\underline{s}^-}{2}}(\nabla u,\nabla v)+\mathcal{G}_\epsilon(\nabla u,\nabla v)\right)
\]
with a constant C depending on $p^\pm$, $q^\pm$, $a^+$, $b^+$, $\|\nabla u\|_{\overline{s}(\cdot),Q_T }$, $\|\nabla v\|_{\overline{s}(\cdot),Q_T }$.
\end{proposition}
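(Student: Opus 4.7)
The plan is to derive the modular bound from the pointwise monotonicity inequality for the regularized flux, combined with H\"older's inequality applied on carefully chosen subregions of $Q_T$. The strategy distinguishes the degenerate and singular branches of each power law, and accounts for the fact that the exponents $p(z), q(z)$ may cross the threshold $2$.

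First, I would invoke the pointwise monotonicity estimate already used in Section \ref{sec:reg-smooth} (quoted from \cite[Proposition~3.1]{RACSAM-2023}), which asserts, for all $\xi,\eta\in\mathbb{R}^N$,
\[
(\mathcal{F}_\epsilon(z,\xi)\xi-\mathcal{F}_\epsilon(z,\eta)\eta)\cdot(\xi-\eta)\geq Ca_\epsilon\Psi_{p(z),\epsilon}(\xi,\eta)+Cb_\epsilon\Psi_{q(z),\epsilon}(\xi,\eta),
\]
where $\Psi_{e,\epsilon}(\xi,\eta)=|\xi-\eta|^e$ if $e\geq2$ and $\Psi_{e,\epsilon}(\xi,\eta)=(\epsilon^2+|\xi|^2+|\eta|^2)^{(e-2)/2}|\xi-\eta|^2$ if $1<e<2$. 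Since $a\leq a_\epsilon$ and $b\leq b_\epsilon$ and the two terms are nonnegative, this reduces the proposition to estimating each of the integrals $\int a(z)|\nabla(u-v)|^{p(z)}\,dz$ and $\int b(z)|\nabla(u-v)|^{q(z)}\,dz$ by the corresponding pointwise quantity.

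Second, I would split $Q_T$ into the measurable subregions $Q_+^p=\{p(z)\geq 2\}$ and $Q_-^p=\{p(z)<2\}$, and analogously $Q_\pm^q$. On $Q_+^p$ the pointwise bound is immediate and gives
\[
\int_{Q_+^p}a(z)|\nabla(u-v)|^{p(z)}\,dz\leq C\,\mathcal{G}_\epsilon(\nabla u,\nabla v).
\]
On $Q_-^p$, I apply the decomposition
\[
|\nabla(u-v)|^{p(z)}=\bigl[(\epsilon^2+|\nabla u|^2+|\nabla v|^2)^{\frac{p(z)-2}{2}}|\nabla(u-v)|^2\bigr]^{p(z)/2}(\epsilon^2+|\nabla u|^2+|\nabla v|^2)^{\frac{(2-p(z))p(z)}{4}},
\]
and use H\"older's inequality with the conjugate exponents $2/p(z)$ and $2/(2-p(z))$. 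The first factor integrates to a power of $\mathcal{G}_\epsilon$, and the second factor is controlled by the modular of $\nabla u,\nabla v$ in $\mathbb{W}_{\overline{s}(\cdot)}(Q_T)$, which is finite by hypothesis. Because the exponent $p(z)$ is variable, the H\"older step has to be localized further on subregions where $p(z)$ takes essentially constant values, and the resulting exponent $p(z)/2$ on $\mathcal{G}_\epsilon$ is finally replaced by the envelope $\mathcal{G}_\epsilon^{\overline{s}^+/2}+\mathcal{G}_\epsilon^{\underline{s}^-/2}+\mathcal{G}_\epsilon$ using $t^\theta\leq t^a+t^b+t$ whenever $\theta\in[a,b]$, $a,b>0$, $t\geq 0$. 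The integral over $Q_-^q$ is handled identically with $q$ in place of $p$ and $b$ in place of $a$.

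The main obstacle is the variable-exponent H\"older step: one cannot apply H\"older's inequality with an integrand exponent depending on $z$ in a single stroke, so the core technical work is to decompose $Q_-^p$ and $Q_-^q$ into countably many layers on which $p$, resp.\ $q$, is nearly constant, perform the H\"older estimate with a fixed exponent on each layer, and then sum using the uniform bound $\|\nabla u\|_{\overline s(\cdot),Q_T}+\|\nabla v\|_{\overline s(\cdot),Q_T}\leq C$ together with the elementary envelope $t^\theta\leq t^{\underline{s}^-/2}+t^{\overline{s}^+/2}+t$ to absorb the layer-dependent powers into the three canonical terms appearing on the right-hand side of the stated inequality.
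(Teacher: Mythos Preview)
The present paper does not supply a proof of this proposition; it is quoted from \cite[Proposition~3.2]{RACSAM-2023} and listed in the appendix as an auxiliary fact. Your sketch is the standard route to such an inequality and is consistent with the argument in the cited reference: start from the pointwise monotonicity of $\mathcal{F}_\epsilon$, split $Q_T$ into the degenerate ($p\geq 2$, resp.\ $q\geq 2$) and singular ($p<2$, resp.\ $q<2$) subregions, read off the bound directly on the former, and on the latter recover the modular via H\"older's inequality against the weight $(\epsilon^2+|\nabla u|^2+|\nabla v|^2)^{p/2}$ (resp.\ with exponent $q/2$), whose integral is controlled by the $\mathbb{W}_{\overline{s}(\cdot)}$ norms that enter the constant. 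Your identification of the variable-exponent H\"older step as the only genuine obstacle, and the cure by localizing to layers on which the exponent is essentially constant before passing to the three-term envelope in $\mathcal{G}_\epsilon$, is correct and is exactly how the argument is carried out in the source.
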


\begin{corollary}[\cite{RACSAM-2023}, Lemma 3.1]
\label{cor:cont-embed}
There is the continuous embedding

\[
\{u:Q_T\mapsto \mathbb{R}:\,u\in L^{2}(Q_T),\,\mathcal{N}(\nabla u)<\infty\}\subset \mathbb{W}_{\underline{s}(\cdot)}(Q_T).
\]
\end{corollary}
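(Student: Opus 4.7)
The plan is to prove the set-theoretic inclusion by a pointwise estimate on $|\nabla u|^{\underline s(z)}$ that exploits the nondegeneracy $a(z)+b(z)\geq\alpha>0$, and then extract continuity from the resulting modular bound. The only structural fact needed is that at each $z\in Q_T$ at least one of $a(z),b(z)$ is bounded below by $\alpha/2$, while $\underline s(z)$ is simultaneously dominated by both $p(z)$ and $q(z)$.

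Concretely, I would split $Q_T=E_a\cup E_b$ with $E_a=\{z\in Q_T:a(z)\geq \alpha/2\}$ and $E_b=Q_T\setminus E_a\subset\{z:b(z)>\alpha/2\}$. On $E_a$, the elementary inequality $|\xi|^{\underline s(z)}\leq 1+|\xi|^{p(z)}$ (valid since $\underline s(z)\leq p(z)$) combined with $a(z)\geq \alpha/2$ gives
\[
|\nabla u|^{\underline s(z)}\leq 1+\tfrac{2}{\alpha}\,a(z)|\nabla u|^{p(z)}\leq 1+\tfrac{2}{\alpha}\left(a(z)|\nabla u|^{p(z)}+b(z)|\nabla u|^{q(z)}\right).
\]
On $E_b$ the symmetric argument with $\underline s(z)\leq q(z)$ and $b(z)>\alpha/2$ yields the same bound. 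Integrating over $Q_T$ I obtain
\[
\int_{Q_T}|\nabla u|^{\underline s(z)}\,dz\leq |Q_T|+\tfrac{2}{\alpha}\,\mathcal{N}(\nabla u),
\]
so whenever $\mathcal{N}(\nabla u)<\infty$ one has $|\nabla u|^{\underline s(\cdot)}\in L^1(Q_T)$. Combined with the standing hypothesis $u\in L^2(Q_T)$ and the fact that $\underline s^->\tfrac{2N}{N+2}>1$, this places $u$ in $\mathbb{V}_{\underline s(\cdot,t)}(\Omega)$ for a.e.\ $t$ and delivers the inclusion.

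For continuity, I would invoke the standard modular/Luxemburg equivalence for variable-exponent Lebesgue spaces: if $\int_{Q_T}|\nabla u|^{\underline s(z)}\,dz\leq M$, then $\|\nabla u\|_{\underline s(\cdot),Q_T}\leq \max\{M^{1/\underline s^-},M^{1/\underline s^+}\}$, and hence
\[
\|u\|_{\mathbb{W}_{\underline s(\cdot)}(Q_T)}\leq \|u\|_{2,Q_T}+C\bigl(1+\mathcal{N}(\nabla u)\bigr)^{1/\underline s^-},
\]
with $C=C(\alpha,|Q_T|,\underline s^\pm)$. This controls the target norm by the natural modular quantity on the domain side, establishing the continuous embedding. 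There is no real obstacle in the argument; the entire content is the pointwise selection on $E_a$ versus $E_b$ made possible by the ellipticity condition \eqref{eq:mod-coeff}$_1$, which converts the double-phase modular $\mathcal N(\nabla u)$ into a genuine lower bound for $|\nabla u|^{\underline s(z)}$ up to additive constants.
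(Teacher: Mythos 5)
Your argument is correct and is the natural one: the pointwise decomposition $Q_T=E_a\cup E_b$ using the ellipticity condition $a+b\geq\alpha$, together with $\underline{s}\leq\min\{p,q\}$, yields the modular bound $\int_{Q_T}|\nabla u|^{\underline{s}(z)}\,dz\leq|Q_T|+\tfrac{2}{\alpha}\,\mathcal N(\nabla u)$, and the standard modular-to-Luxemburg-norm estimate for variable-exponent spaces then gives continuity of the embedding. This is essentially the same argument behind \cite[Lemma~3.1]{RACSAM-2023}, which the paper cites without reproducing the proof. One cosmetic remark: the inequality $\tfrac{2N}{N+2}>1$ fails at $N=2$ (where it equals $1$); what you actually use, $\underline{s}^->\tfrac{2N}{N+2}\geq 1$, is still true and suffices, so this does not affect the argument.
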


\begin{proposition}[\cite{RACSAM-2023}, Lemma 3.2]
\label{pro:conv-W}
If $u,v\in \mathbb{W}_{\overline{s}(\cdot)}(Q_T)$ and $\epsilon\in (0,1)$, then

\[
\int_{Q_T}|\nabla (u-v)|^{\underline{s}(z)}\,dz\to 0\quad \text{when $\mathcal{G}_\epsilon(\nabla u,\nabla v)\to 0$}.
\]
\end{proposition}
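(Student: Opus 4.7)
The plan is to chain Proposition \ref{pro:racsam-prop-3.2} with Corollary \ref{cor:cont-embed}: first convert $\mathcal{G}_\epsilon$-convergence into convergence of the natural weighted modular $\mathcal{N}$, then transfer that to convergence of the unweighted $\underline{s}(\cdot)$-modular via the continuous embedding. Fix a sequence $(u_n,v_n)$ in $\mathbb{W}_{\overline{s}(\cdot)}(Q_T)$ with uniformly bounded norms (otherwise the statement is vacuous) and $\mathcal{G}_\epsilon(\nabla u_n,\nabla v_n)\to 0$, and set $w_n:=u_n-v_n$.

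Applying Proposition \ref{pro:racsam-prop-3.2} to $(u_n,v_n)$ delivers
\[
\mathcal{N}(\nabla w_n)=\int_{Q_T}\bigl(a(z)|\nabla w_n|^{p(z)}+b(z)|\nabla w_n|^{q(z)}\bigr)\,dz\leq C\bigl(\mathcal{G}_\epsilon^{\overline{s}^+/2}+\mathcal{G}_\epsilon^{\underline{s}^-/2}+\mathcal{G}_\epsilon\bigr)\longrightarrow 0.
\]
To pass from $\mathcal{N}$-convergence to $\int|\nabla w_n|^{\underline{s}(z)}\,dz\to 0$, I would either invoke Corollary \ref{cor:cont-embed} and use that Luxemburg-norm convergence and modular convergence are equivalent under $1<\underline{s}^-\leq\underline{s}^+<\infty$, or realize the embedding directly via a splitting argument, as follows.

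The nondegeneracy $a+b\geq \alpha$ permits the decomposition $Q_T=E_1\cup E_2$ with $E_1=\{a\geq \alpha/2\}$ and $E_2\subseteq\{b\geq \alpha/2\}$; on $E_1$ one has $(\alpha/2)|\nabla w_n|^{p(z)}\leq a|\nabla w_n|^{p(z)}$, and symmetrically on $E_2$ with $q(z)$. Since the right-hand sides integrate to zero, $\nabla w_n\to 0$ in measure on $Q_T$, and along a subsequence pointwise a.e. Splitting the target integral by $\{|\nabla w_n|\geq 1\}$ (where $|\nabla w_n|^{\underline{s}(z)}\leq |\nabla w_n|^{p(z)}$ on $E_1$ and $\leq|\nabla w_n|^{q(z)}$ on $E_2$, since $\underline{s}\leq\min\{p,q\}$) versus $\{|\nabla w_n|<1\}$ (where the integrand is dominated by $1$ and converges to zero a.e.\ along the subsequence), the first piece is bounded by $(2/\alpha)\mathcal{N}(\nabla w_n)\to 0$ and the second by Lebesgue's dominated convergence theorem. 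An Urysohn subsequence argument then upgrades subsequential vanishing to convergence of the full sequence.

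The main obstacle is that neither weight $a$ nor $b$ individually bounds $|\nabla w|^{\underline{s}(z)}$ from below, so $\mathcal{N}$-convergence does not pointwise dominate the target modular on either phase alone; the nondegeneracy $a+b\geq\alpha$ is what bridges this gap. Even after splitting, one must handle separately the large-gradient regime, where the modular inequality suffices, and the small-gradient regime, where a.e.\ convergence paired with dominated convergence is necessary, since in that regime $|\nabla w_n|^{\underline{s}(z)}$ can be bounded but not automatically small.
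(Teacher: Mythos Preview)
Your argument is correct. The paper does not supply its own proof here: Proposition~\ref{pro:conv-W} is quoted from \cite{RACSAM-2023} (Lemma~3.2) alongside Proposition~\ref{pro:racsam-prop-3.2} and Corollary~\ref{cor:cont-embed}, so there is no in-paper argument to compare against. Your reconstruction, chaining Proposition~\ref{pro:racsam-prop-3.2} to obtain $\mathcal{N}(\nabla w_n)\to 0$ and then exploiting the nondegeneracy $a+b\geq\alpha$ via the $E_1/E_2$ split, is precisely the mechanism behind the continuous embedding of Corollary~\ref{cor:cont-embed} made explicit, and it is sound.

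Two minor remarks. First, on $\{|\nabla w_n|<1\}$ the subsequence/Urysohn detour is unnecessary: $\mathcal{N}(\nabla w_n)\to 0$ together with $a+b\geq\alpha$ already yields $|\nabla w_n|\to 0$ in measure on all of $Q_T$, and since the integrand is dominated by $1$ on a finite-measure set, the bounded convergence theorem applies directly to the full sequence. Second, your observation that one needs uniform control on $\|\nabla u_n\|_{\overline{s}(\cdot)}$, $\|\nabla v_n\|_{\overline{s}(\cdot)}$ (because the constant $C$ in Proposition~\ref{pro:racsam-prop-3.2} depends on these norms) is correct and matches exactly how the paper invokes the result in Lemma~\ref{le:fund-sequence} and Corollary~\ref{cor:conv-1}, where such bounds are available from the a priori estimates.
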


\begin{corollary}
\label{cor:conv-1}
If a sequence $\{u_m\}$ is uniformly bounded in $\mathbb{W}_{\overline{s}(\cdot)}(Q_T)$ and $\mathcal{G}_\epsilon(\nabla u_m,\nabla u_k)\to 0$ as $m,k\to \infty$, then

\[
\text{$|\nabla (u_m-u_k)|\to 0$ a.e. in $Q_T$}.
\]
\end{corollary}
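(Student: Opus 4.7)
\smallskip
\noindent\textbf{Proof proposal for Corollary \ref{cor:conv-1}.}

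The plan is to reduce the claim to Proposition \ref{pro:conv-W} and then invoke a standard measure-theoretic extraction argument. First, I would apply Proposition \ref{pro:conv-W} to the pair $(u,v)=(u_m,u_k)$. Since $\{u_m\}$ is uniformly bounded in $\mathbb{W}_{\overline{s}(\cdot)}(Q_T)$ and $\mathcal{G}_\epsilon(\nabla u_m,\nabla u_k)\to 0$ by hypothesis, the proposition yields the modular convergence
\[
\int_{Q_T}|\nabla(u_m-u_k)|^{\underline{s}(z)}\,dz\;\longrightarrow\; 0\qquad\text{as } m,k\to\infty.
\]

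Next I would deduce that $\{\nabla u_m\}$ is Cauchy in measure on $Q_T$. Given any $\lambda>0$, split the integration set according to whether $|\nabla(u_m-u_k)|\leq 1$ or $>1$; since $\underline{s}^-\leq \underline{s}(z)\leq \overline{s}^+$, on each piece one has $|\nabla(u_m-u_k)|^{\underline{s}(z)}\geq |\nabla(u_m-u_k)|^{s_\lambda}$ with $s_\lambda=\overline{s}^+$ if $\lambda<1$ and $s_\lambda=\underline{s}^-$ if $\lambda\geq 1$. A Chebyshev-type bound then gives
\[
\bigl|\{z\in Q_T:\,|\nabla(u_m-u_k)(z)|>\lambda\}\bigr|\;\leq\; \frac{1}{\min\{\lambda^{\underline{s}^-},\,\lambda^{\overline{s}^+}\}}\int_{Q_T}|\nabla(u_m-u_k)|^{\underline{s}(z)}\,dz,
\]
and the right-hand side tends to $0$ for every fixed $\lambda>0$.

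Finally, a sequence which is Cauchy in measure admits a subsequence that is a.e.\ Cauchy: one chooses indices $m_1<m_2<\cdots$ so that $|\{z:\,|\nabla u_{m_{j+1}}-\nabla u_{m_j}|>2^{-j}\}|<2^{-j}$, applies the Borel--Cantelli lemma to conclude that for a.e.\ $z$ the tail inequality $|\nabla u_{m_{j+1}}(z)-\nabla u_{m_j}(z)|\leq 2^{-j}$ holds for all sufficiently large $j$, and hence $\{\nabla u_{m_j}(z)\}$ is a Cauchy numerical sequence for a.e.\ $z\in Q_T$. This produces the claimed pointwise convergence $|\nabla(u_{m_j}-u_{m_k})|\to 0$ a.e.\ in $Q_T$.

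The main obstacle is essentially notational rather than analytical: strictly speaking, modular (hence in-measure) convergence of a double sequence does not deliver a.e.\ convergence of the full sequence, only along a subsequence. The statement of the corollary is therefore to be read in the standard way (up to passing to a subsequence), which suffices for its intended use in identifying pointwise limits of $\nabla u_m$ in the compactness arguments of Sections \ref{sec:nonsmooth-data}--\ref{sec:degenerate-problem}. The only quantitative subtlety is handling the non-constant exponent $\underline{s}(z)$ in the Chebyshev step, which is resolved by the elementary case split above using the uniform bounds $\underline{s}^-\leq \underline{s}(z)\leq \overline{s}^+$ and the finiteness of $|Q_T|$.
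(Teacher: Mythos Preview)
Your argument is correct and is precisely the standard route one would take here; the paper does not give an explicit proof of this corollary (it is stated in the appendix as an immediate consequence of Proposition~\ref{pro:conv-W}), so there is nothing to compare against beyond noting that your reduction to modular convergence via Proposition~\ref{pro:conv-W}, followed by a Chebyshev-in-measure step and a Borel--Cantelli extraction, is exactly the intended derivation. Two minor remarks: in the Chebyshev step the relevant upper exponent is $\underline{s}^+$ rather than $\overline{s}^+$ (though your choice still yields a valid, merely looser, bound), and your observation that the conclusion holds only along a subsequence is accurate and consistent with how the corollary is used downstream (cf.\ \eqref{eq:converge-2}).
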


\bibliographystyle{elsarticle-num}
\bibliography{bib-db-14-2}

\end{document}